\newtheorem{theorem}{Theorem}[section]
\newtheorem{lemma}[theorem]{Lemma}
\newtheorem{proposition}[theorem]{Proposition}
\newtheorem{corollary}[theorem]{Corollary}
\theoremstyle{definition}
\newtheorem{definition}[theorem]{Definition}
\theoremstyle{remark}
\newtheorem{remark}[theorem]{Remark}
\numberwithin{equation}{section}
\newcommand{\Z}{\mathbb{Z}}
\newcommand{\Q}{\mathbb{Q}}
\newcommand{\C}{\mathcal{C}}
\newcommand{\I}{\mathcal{I}}
\newcommand{\J}{\mathcal{J}}
\newcommand{\M}{\mathcal{M}}
\newcommand{\oo}{\mathit{oo}}
\newcommand{\rr}{\mathfrak{r}}
\newcommand{\A}{\mathcal{A}}
\renewcommand{\H}{\mathcal{H}}
\newcommand{\Mag}{\operatorname{Mag}}
\newcommand{\Der}{\operatorname{Der}}
\renewcommand{\Im}{\operatorname{Im}}
\newcommand{\id}{\mathrm{id}}
\newcommand{\Ker}{\operatorname{Ker}}
\newcommand{\Aut}{\operatorname{Aut}}
\newcommand{\IAut}{\operatorname{IAut}}
\newcommand{\incl}{\mathrm{incl}}
\newcommand{\Int}{\operatorname{Int}}
\newcommand{\Diff}{\mathrm{Diff}}
\newcommand{\Hom}{\mathrm{Hom}}
\newcommand{\sign}{\operatorname{sign}}
\newcommand{\Eul}{\mathrm{Eul}}
\newcommand{\pr}{\mathrm{pr}}
\newcommand{\GL}{\mathrm{GL}}
\newcommand{\ab}{\mathrm{ab}}
\newcommand{\ev}{\mathrm{ev}}
\newcommand{\plim}{\varprojlim}
\newcommand{\Lie}{\mathrm{Lie}}
\newcommand{\comm}{\mathrm{comm}}
\newcommand{\ldet}{\ell\!\operatorname{det}}
\newcommand{\Tr}{\operatorname{Tr}}
\newcommand{\tr}{\operatorname{tr}}
\newcommand{\cl}{\operatorname{cl}}
\newcommand{\col}{\mathrm{color}}
\title[A non-commutative Reidemeister-Turaev torsion]{A non-commutative Reidemeister-Turaev torsion of homology cylinders
\footnote{
\lowercase{\uppercase{F}irst electronically published in \uppercase{T}rans.\ \uppercase{A}mer.\ \uppercase{M}ath.\ \uppercase{S}oc.\ on \uppercase{A}pril 19, 2023, \uppercase{DOI}: \url{https://doi.org/10.1090/tran/8925} (to appear in print), published by the \uppercase{A}merican \uppercase{M}athematical \uppercase{S}ociety.
}}}
\author{Yuta Nozaki}
\address{
Faculty of Environment and Information Sciences, Yokohama National University \\
79-7 Tokiwadai, Hodogaya-ku, Yokohama, 240-8501 \\
Japan\vspace{-0.6em}}
\address{
SKCM$^2$, Hiroshima University \\
1-3-2 Kagamiyama, Higashi-Hiroshima City, Hiroshima, 739-8511 \\
Japan}
\email{nozaki-yuta-vn@ynu.ac.jp}
\author{Masatoshi Sato}
\address{
Department of Mathematics \\
Tokyo Denki University \\
5 Senjuasahi-cho, Adachi-ku, Tokyo 120-8551 \\
Japan}
\email{msato@mail.dendai.ac.jp}
\author{Masaaki Suzuki}
\address{Department of Frontier Media Science, Meiji University \\
4-21-1 Nakano, Nakano-ku, Tokyo, 164-8525 \\
Japan}
\email{mackysuzuki@meiji.ac.jp}
\subjclass[2020]{Primary 57K16, 57Q10, 19B28, Secondary 57K31, 57K20, 17B01}
\keywords{Homology cylinder, surgery map, clasper, LMO functor, Reidemeister torsion, Enomoto-Satoh trace, $K_1$-group}
\begin{document}
\maketitle

\begin{abstract}
We compute the Reidemeister-Turaev torsion of homology cylinders which takes values in the $K_1$-group of the $I$-adic completion of the group ring $\mathbb{Q}\pi_1\Sigma_{g,1}$,
and prove that its reduction to $\widehat{\mathbb{Q}\pi_1\Sigma_{g,1}}/\hat{I}^{d+1}$ is a finite-type invariant of degree $d$.
We also show that the $1$-loop part of the LMO homomorphism and the Enomoto-Satoh trace can be recovered from the leading term of our torsion.
\end{abstract}

\setcounter{tocdepth}{1}
\tableofcontents

\section{Introduction}\label{sec:Introduction}
The  graph-valued invariant of a knot in $S^3$, called the Kontsevich invariant, was extended to an invariant of links in arbitrary closed 3-manifolds called the Le-Murakami-Ohtsuki invariant in \cite{LMO98}.
Cheptea Habiro and Massuyeau~\cite{CHM08} extended it to the case of 3-manifolds with boundary and constructed the LMO functor, which is a functor from some category of cobordisms between surfaces to a category of graphs.

Let $\Sigma_{g,1}$ be a compact oriented surface of genus $g$ with one boundary component.
A homology cylinder over $\Sigma_{g,1}$ is a compact oriented 3-manifold which is homologically the same as the product $\Sigma_{g,1}\times [-1,1]$.
The set of homology cylinders $\I\C=\I\C_{g,1}$ forms a monoid,
and by restricting the LMO functor to $\I\C$,
it induces a monoid homomorphism from $\I\C$ to some Hopf algebra of Jacobi diagrams,
where a Jacobi diagram is a uni-trivalent graph with additional information.
We call the number of trivalent vertices in a Jacobi diagram the \emph{degree}.
Let us consider a Jacobi diagram without strut component whose univalent vertices are labeled by elements of $H$,
and denote by $\A(H)$ the degree completion of the $\Q$-vector space spanned by such Jacobi diagrams modulo relations, called the AS, IHX, and multi-linear relations.
The completion $\A(H)$ has a Hopf algebra structure,
and Habiro and Massuyeau~\cite[Section~4]{HaMa12} considered a monoid homomorphism
\[
Z=\kappa\circ \widetilde{Z}^Y\colon \I\C\to \A(H)
\]
called the LMO homomorphism,
where $\widetilde{Z}^Y$ is the ``$Y$-part'' of the LMO functor,
and $\kappa$ is some isomorphism between spaces of Jacobi diagrams in \cite[Section~4]{HaMa12}.
See also Habiro and Massuyeau~\cite{HaMa09}.
Jacobi diagrams have another grading given by the number of loops,
and Massuyeau \cite{Mas12} showed that the $0$-loop part of the LMO homomorphism $Z(M)$ for $M\in\I\C$ is written in terms of the total Johnson map.

In this paper, we give an interpretation of the 1-loop part of the LMO homomorphism by considering a non-commutative Reidemeister(-Turaev) torsion of homology cylinders which takes values in $K_1(\widehat{\Q\pi})$,
where $\pi=\pi_1\Sigma_{g,1}$
and $\widehat{\Q\pi}$ is the $I$-adic completion of the group ring $\Q\pi$.
More precisely,
we show that the torsion is a finite-type invariant of homology cylinders
and that the ``leading term'' of the $1$-loop part of the LMO homomorphism and the Enomoto-Satoh trace of homology cylinders are essentially equal to the ``leading term'' of the torsion.
We also give a surgery formula of the torsion for $k$-loop graph claspers when $k\ge1$.

Our work is motivated by the study of the loop expansion of the Kontsevich invariant and the LMO invariant.
For a knot $K$ in the 3-sphere $S^3$,
the 1-loop part of the logarithm $\log Z(K)$ of the Kontsevich invariant is known to be written in terms of the Alexander polynomial of $K$.
See~\cite[(4.11)]{Roz03} for the explicit formula.
This follows from the Melvin-Morton-Rozansky conjecture~\cite{MeMo95} proved by Bar-Natan and Garoufalidis~\cite{BaGa96}.
See Ito~\cite{Ito19} for another proof and Rozansky~\cite{Roz98} for some generalization.
The Alexander polynomial also plays an important role to describe higher loop parts.
For a knot $K$ in an integral homology 3-sphere $M$,
the logarithm of the LMO invariant decomposes into an infinite sum of $n$-loop part $Q_n(M,K)$ for $n\ge1$.
Rozansky's rationality conjecture~\cite{Roz03} proved by Kricker~\cite{Kri00} states that $Q_n(M,K)$ is described by some rational functions whose denominators are the Alexander polynomial for $n\ge1$.
See also Garoufalidis and Kricker~\cite{GaKr04} for the Kontsevich invariant of links.

The relation between the Alexander polynomial and the Reidemeister(-Turaev) torsion of 3-manifolds is given by Milnor and Turaev.
In \cite{Mil62},
Milnor considered the Alexander polynomial of link complements and interpreted it as a kind of Reidemeister torsion.
Turaev \cite{Tur86} generalized it to compact oriented 3-manifolds.
Turaev also introduced an Euler structure to construct a refinement of the Reidemeister torsion in \cite{Tur86} and \cite{Tur89},
which is now called the Reidemeister-Turaev torsion.
Let $H_\Z=H_1(\Sigma_{g,1};\Z)$ and denote by $Q(\Z H_\Z)$ the field of fractions of the group ring $\Z H_\Z$.
For homology cylinders, Friedl-Juh\'{a}sz-Rasmussen~\cite{FJR11} and Massuyeau-Meilhan~\cite{MaMe13} considered the (relative) Reidemeister-Turaev torsion which takes values in $K_1(Q(\Z H_\Z))$.
Massuyeau and Meilhan also constructed a canonical Euler structure $\xi_0$ for homology cylinders, which they call the preferred Euler structure,
and constructed a monoid homomorphism
\[
\alpha\colon \I\C\to K_1(Q(\Z H_\Z))
\]
defined by the torsion.
Our torsion $\tilde{\alpha}\colon \I\C\to K_1(\widehat{\Q\pi})$ is considered as a lift of their torsion,
and we extend some of their results to our torsion.

The paper is organized as follows.
In Section~\ref{section:pre},
we review the Reidemeister(-Turaev) torsion of pairs of CW-complexes and give precise definitions of homology cylinders and two filtrations on the homology cylinders called the $Y$-filtration and the Johnson filtration.
In Section~\ref{section:reidemeistertorsion},
we introduce a $K_1(\widehat{\Q\pi})$-valued torsion and construct a homomorphism 
\[
\tilde{\alpha}\colon \I\C\to K_1(\widehat{\Q\pi})
\]
using the preferred Euler structure $\xi_0$.
We also show that our torsion $\tilde{\alpha}$ is essentially a lift of $\alpha\colon \I\C\to K_1(Q(\Z H_\Z))$.
Let $H=H_1(\Sigma_{g,1};\Q)$.
In Section~\ref{sec:K1_of_Qpi},
we determine the group structure of $K_1(\widehat{\Q\pi})$ as follows:

\begin{theorem}
\label{thm:K1Qpihat}
The group $K_1(\widehat{\Q\pi})$ is isomorphic to $\Q^\times\oplus\prod_{n=1}^\infty (H^{\otimes n})_{\Z_n}$.
\end{theorem}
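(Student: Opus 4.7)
The plan is to identify $\widehat{\Q\pi}$ with a completed tensor algebra via a Magnus-type expansion, reduce the computation of $K_1$ to units modulo commutators by exploiting a local structure, and then transport multiplication to addition by the logarithm.

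Since $\pi=\pi_1\Sigma_{g,1}$ is a free group of rank $2g$, any free basis yields an isomorphism of complete filtered $\Q$-algebras
\[
\widehat{\Q\pi} \;\cong\; \widehat{T}(H) := \prod_{n\ge 0} H^{\otimes n},
\]
carrying the augmentation ideal onto $\hat{I}=\prod_{n\ge 1} H^{\otimes n}$. I would then observe that $\widehat{T}(H)$ is a (noncommutative) local ring: the quotient $\widehat{T}(H)/\hat{I}=\Q$ is a field, and every $1+x$ with $x\in\hat{I}$ is invertible because $\sum_{k\ge 0}(-x)^k$ converges in the $\hat{I}$-adic topology. Hence $\hat{I}$ is the unique maximal one-sided ideal, and the classical fact that elementary reduction diagonalizes invertible matrices over a local ring gives
\[
K_1(\widehat{\Q\pi}) \;\cong\; \widehat{T}(H)^\times/[\widehat{T}(H)^\times,\widehat{T}(H)^\times].
\]

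Next, the augmentation $\widehat{T}(H)\to\Q$ and its unit splitting decompose $\widehat{T}(H)^\times\cong \Q^\times\times(1+\hat{I})$, so the abelianization splits as $\Q^\times\oplus (1+\hat{I})^{\ab}$. The essential point is that the $\hat{I}$-adically convergent series for $\log$ and $\exp$ are mutually inverse bijections between $1+\hat{I}$ and $\hat{I}$, and that the Baker--Campbell--Hausdorff formula makes $\log$ descend to an isomorphism of abelian groups
\[
(1+\hat{I})^{\ab} \;\xrightarrow{\;\cong\;}\; \hat{I}/\overline{[\widehat{T}(H),\widehat{T}(H)]},
\]
where $\overline{[\widehat{T}(H),\widehat{T}(H)]}$ denotes the closure of the additive span of elements $ab-ba$.

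Finally, a direct graded-piece computation shows that the additive commutator relation $x_1\otimes\cdots\otimes x_n \sim x_2\otimes\cdots\otimes x_n\otimes x_1$ identifies the $n$-th graded quotient with the cyclic coinvariants $H^{\otimes n}/\Z_n$, yielding $\hat{I}/\overline{[\widehat{T}(H),\widehat{T}(H)]}\cong\prod_{n\ge 1}H^{\otimes n}/\Z_n$ and hence the theorem. The main obstacle I expect is the BCH step: verifying carefully that all higher-order terms in $\log(xy)-\log x-\log y$ lie in the closed additive commutator subspace, so that $\log$ really is well-defined and bijective on the group abelianization. The reduction of $K_1$ to units over a local ring is standard, the augmentation splitting is formal, and the final cyclic-coinvariant identification is degree-by-degree.
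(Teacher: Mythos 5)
Your overall strategy coincides with the paper's: identify $\widehat{\Q\pi}$ with the completed tensor algebra $\hat{T}$ via a Magnus expansion, observe that $\widehat{\Q\pi}$ is local so $K_1$ reduces to $(\widehat{\Q\pi}^\times)_{\ab}$ via the Dieudonn\'e determinant, split off $\Q^\times$ by the augmentation, pass from $(1+\hat{I})_{\ab}$ to the additive quotient by $\log/\exp$ and BCH, and finally read off cyclic coinvariants degree by degree. So the route is the same in outline. However, there is a genuine gap at the step you yourself flag as the main obstacle, and it is more delicate than the way you describe it.

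The $K_1$-reduction gives the honest abelianization, i.e.\ the quotient by the ordinary commutator subgroup $[1+\hat{I},1+\hat{I}]$, while the degree-by-degree computation at the end naturally identifies the quotient by the \emph{closed} commutator subspace $\cl[\hat{I},\hat{I}]$ with $\prod_n H^{\otimes n}/\Z_n$. You state the target as $\hat{I}/\overline{[\hat T,\hat T]}$, so to finish you must know that $\exp$ carries $\cl[\hat{I},\hat{I}]$ into the honest subgroup $[1+\hat{I},1+\hat{I}]$ (not just its closure); otherwise $\log$ need not be injective on $(1+\hat{I})_{\ab}$. Your obstacle note focuses on whether the BCH tail lies in the closed commutator subspace, which is easy (each term is a Lie bracket and the series converges). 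The hard part is the reverse containment. The paper settles it in two steps that do not appear in your sketch: Lemma~4.2 shows, using a \emph{symplectic expansion} of a free group, that $\exp$ of a finite sum $\sum_j [x_j,y_j]$ is a finite product of group commutators in $1+\hat{I}$; and in the proof of Theorem~4.5 one verifies directly that $\cl[\hat{T}_1,\hat{T}_1]=[\hat{T}_1,\hat{T}_1]$, using the finite-dimensionality of $H$ via the identity $[\hat{T}_1,\hat{T}_1]=\sum_i\bigl([a_i,\hat{T}_1]+[b_i,\hat{T}_1]\bigr)$ and a convergence argument. Without some substitute for these two facts, the claimed isomorphism from the group abelianization to the topological Lie abelianization is not justified.
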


Here, the cyclic group $\Z_n$ acts on $H^{\otimes n}$ by the cyclic permutation.
Note that the isomorphism in Theorem~\ref{thm:K1Qpihat} is not canonical,
and it depends on the choice of a Magnus expansion.
In Section~\ref{section:finitetype},
we show that the $I$-adic reduction of $\tilde{\alpha}\colon \I\C\to K_1(\widehat{\Q\pi})$ is a finite-type invariant of homology cylinders.
More precisely, we prove:

\begin{theorem}\label{thm:Y-filtration}
Let $M\in \I\C$,
and let $V_1,\ldots, V_d$ denote disjoint handlebodies embedded in $M$.
Choose mapping classes $\varphi_i$ of the surface $\partial V_i$ which acts on $H_1(\partial V_i;\Z)$ trivially.
For a subset $J\subset\{1,2,\ldots,d\}$,
let us denote
\[
M_J=\left(M\setminus \bigsqcup_{i\in J} \Int V_i\right) \cup_{\{\varphi_i\}_{i\in J}} \bigsqcup_{i\in J}V_i.
\]
Then we have
\[
\prod_{J\subset\{1,2,\ldots, d\}}\tilde{\alpha}(M_J)^{(-1)^{|J|}} \in \Ker\left(K_1(\widehat{\Q\pi})\to K_1(\widehat{\Q\pi}/\hat{I}^d)\right).
\]
\end{theorem}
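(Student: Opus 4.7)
My plan is to reduce the theorem, via a gluing formula for the torsion, to an estimate on a lifted determinant with entries in $\hat I$, and then finish by an inclusion–exclusion argument in the noncommutative ring $\widehat{\Q\pi}$.

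\textbf{Step 1 (Gluing decomposition).} Set $M_0 := M \setminus \bigsqcup_i \Int V_i$, so that every $M_J$ is obtained by gluing $M_0$ and $\bigsqcup_i V_i$ along $\bigsqcup_i \partial V_i$, using $\varphi_i$ on the $i$-th component when $i\in J$ and $\id$ otherwise. Applying the multiplicativity of the Reidemeister–Turaev torsion under gluing (with the compatibility of the preferred Euler structure $\xi_0$ from Section~\ref{section:reidemeistertorsion}) gives a factorization
\[
\tilde{\alpha}(M_J)=A\cdot\tau_J,
\]
where $A\in K_1(\widehat{\Q\pi})$ collects all contributions independent of $J$ (from $M_0$, each $V_i$, and the Euler structure correction) and $\tau_J$ is the ``gluing term.'' Since $\sum_J (-1)^{|J|}=0$ for $d\ge 1$, the factor $A$ cancels in the alternating product, reducing the statement to
\[
\prod_{J\subset\{1,\ldots,d\}}\tau_J^{(-1)^{|J|}}\in \Ker\!\bigl(K_1(\widehat{\Q\pi})\to K_1(\widehat{\Q\pi}/\hat{I}^d)\bigr).
\]

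\textbf{Step 2 (Lifted determinant form and the Torelli hypothesis).} Using the Mayer–Vietoris short exact sequence of $\widehat{\Q\pi}$-twisted chain complexes associated with this decomposition, $\tau_J$ is realized as the $K_1$-class of a unit of the form $\id+\sum_{i\in J}A_i$ (plus possible block-diagonal corrections absorbed into $A$), where each $A_i$ is a matrix over $\widehat{\Q\pi}$ that encodes the difference ``$\varphi_{i\ast}-\id$'' on the twisted complex of $\partial V_i$, supported in the block associated with that surface. The assumption that $\varphi_i$ acts trivially on $H_1(\partial V_i;\Z)$ means precisely that the lift of $\varphi_{i\ast}$ to $\widehat{\Q\pi}$-coefficients differs from the identity by a matrix with entries in $\hat I$; equivalently every entry of $A_i$ lies in $\hat I$.

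\textbf{Step 3 (Inclusion–exclusion).} Expanding logarithms via $\log(1+X)=\sum_{n\ge 1}\frac{(-1)^{n+1}}{n}X^n$ and taking traces,
\[
\sum_J(-1)^{|J|}\log\tau_J=\sum_{n\ge 1}\tfrac{(-1)^{n+1}}{n}\sum_{(i_1,\ldots,i_n)}\Bigl(\sum_{J\supset\{i_1,\ldots,i_n\}}(-1)^{|J|}\Bigr)\tr(A_{i_1}\cdots A_{i_n}).
\]
The inner alternating sum equals $(-1)^{|S|}(1-1)^{d-|S|}$, where $S=\{i_1,\ldots,i_n\}$; it vanishes unless $S=\{1,\ldots,d\}$. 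The surviving tuples satisfy $n\ge d$, and the corresponding product $A_{i_1}\cdots A_{i_n}$ has entries in $\hat{I}^n\subset\hat{I}^d$. Therefore $\log\prod_J \tau_J^{(-1)^{|J|}}\in \hat{I}^d$, and exponentiating gives $\prod_J \tau_J^{(-1)^{|J|}}\in 1+\hat{I}^d$, which maps to $1$ in $K_1(\widehat{\Q\pi}/\hat{I}^d)$.

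\textbf{Main obstacle.} The technical core is Step 2: obtaining an honest ``lifted determinant'' representation of $\tau_J$ whose local summands $A_i$ are genuinely indexed by the individual handlebodies and not entangled by the noncommutative ring structure. This demands a careful analysis of how the preferred Euler structure $\xi_0$ behaves under simultaneous regluings along several disjoint surfaces, so that the contributions from distinct $\varphi_i$'s decouple into block-supported matrices $A_i$ at the level of $K_1(\widehat{\Q\pi})$ (rather than only modulo some commutator ideal). Once this localization is in place, Steps 1 and 3 are essentially formal manipulations.
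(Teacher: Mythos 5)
Your proposal follows essentially the same route as the paper: reduce each torsion to an Alexander-matrix class via a balanced presentation, observe that the Torelli hypothesis makes the $J$-dependent part of the matrix additive in $k\in J$ with entries in $\hat I$, and apply a finite-differences identity to kill all trace terms of degree less than $d$. Your Steps 2 and 3 correspond respectively to Lemma~\ref{lem:torsion-presentation} (together with the fact that $A_k=\tilde{\rho}(A(P_{\{k\}}))-\tilde{\rho}(A(P))\in M(N,\hat I)$ and $\tilde{\rho}(A(P_J))=\tilde{\rho}(A(P))+\sum_{k\in J}A_k$) and to Lemma~\ref{lem:altprod}, and your identification of the realization of $\tau_J$ as $\id+\sum_{i\in J}A_i$ as the technical core is accurate; the paper carries this out by explicit Fox-derivative computations on a presentation of $\pi_1(M')$ with $M'=M\setminus\bigsqcup_k\Int(V_k\cup N(\alpha_k))$.

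One imprecision in Step 1 should be flagged. The preferred-Euler-structure correction between $\tilde{\alpha}(M_J)$ and $\tilde{\rho}(A(P_J))$ is \emph{not} independent of $J$: by the Proposition following Lemma~\ref{lem:torsion-presentation}, it is $\tilde{\rho}$ applied to an $H_\Z$-valued sum over $k\in J$, i.e.\ it is multiplicative over singletons. Its alternating product over $J$ vanishes because $\sum_{J\ni k}(-1)^{|J|}=-(1-1)^{d-1}=0$ for $d\geq 2$, not because $\sum_J(-1)^{|J|}=0$; the case $d=1$ must be treated separately by noting that every $\tilde{\alpha}(M_J)$ augments to $1\in K_1(\Q)$ (this is exactly how the paper disposes of $d=1$ and why Corollary~\ref{cor:changeeulerstr} is stated only for $d\geq 2$). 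With that adjustment and the Fox-derivative analysis supplying Step 2, your outline matches the paper's proof.
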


In Section~\ref{section:change-rt},
we show that $\tilde{\alpha}$ does not change under $k$-loop clasper surgeries for $k\ge2$ (Theorem~\ref{thm:k-loop}).
We also give a surgery formula (Theorem~\ref{thm:value-1-loop}) for $\tilde{\alpha}$ with respect to $1$-loop clasper surgeries.

Let $\{Y_d\I\C\}_{d\ge1}$ denote the $Y$-filtration on the monoid $\I\C$ of homology cylinders.
The precise definition is given in Section~\ref{sec:homologycylinders}.
Since the torsion $\tilde{\alpha}\colon \I\C\to K_1(\widehat{\Q\pi}/\hat{I}^{d+1})$ is a finite-type invariant of degree at most $d$,
it induces a homomorphism
\[
Y_d\I\C/Y_{d+1}\to \Ker\left(K_1(\widehat{\Q\pi}/\hat{I}^{d+1}) \to K_1(\widehat{\Q\pi}/\hat{I}^d)\right),
\]
where $Y_d\I\C/Y_{d+1}$ denotes the abelian group consisting of $Y_{d+1}$-equivalence classes.
Composing it with a canonical isomorphism
\[
\Ker\left(K_1(\widehat{\Q\pi}/\hat{I}^{d+1})\to K_1(\widehat{\Q\pi}/\hat{I}^d)\right) \cong (H^{\otimes d})_{\Z_d}
\]
given in Corollary~\ref{cor:gr}, 
we write it as
\[
\tilde{\alpha}_d\colon Y_d\I\C/Y_{d+1}\to (H^{\otimes d})_{\Z_d}.
\]
In Section~\ref{section:proof-of-main}, we prove the main theorem (Theorem~\ref{thm:main}) which relates $\tilde{\alpha}_d$ with the 1-loop part of the LMO homomorphism $Z\colon\I\C\to \A(H)$ and the Enomoto-Satoh trace.

Let us fix notation for the Johnson homomorphism and the Enomoto-Satoh trace.
We write $J_d\C\subset \C$ for the $d$th submonoid of the Johnson filtration on $\C$ (see Section~\ref{subsec:Johnson}) and $L_d(H_\Z)$ for the degree $d$ part of the free Lie ring generated by $H_\Z$.
Let $x\cdot y\in\Z$ denote the intersection number of $x,y\in H_\Z$,
and denote by
\[
\tau_d\colon J_d\C\to \Hom(H_\Z,L_{d+1}(H_\Z))\cong H_\Z\otimes L_{d+1}(H_\Z)
\]
the $d$th Johnson homomorphism,
where the right isomorphism is induced by Poincar\'e duality $H_\Z\cong H_\Z^*$ which sends $x\mapsto x\cdot \textendash$.
It induces a homomorphism on the graded quotient $Y_d\I\C/Y_{d+1}$.
The Enomoto-Satoh trace is defined by the composition map
\[
\Tr_d\colon H_\Z\otimes H_\Z^{\otimes(d+1)}\xrightarrow{C}H_\Z^{\otimes d} \twoheadrightarrow (H_\Z^{\otimes d})_{\Z_d},
\]
where the first homomorphism is the contraction
\[
C(x\otimes x_1\otimes x_2\otimes\cdots \otimes x_{d+1})=(x\cdot x_1)x_2\otimes \cdots \otimes x_{d+1},
\]
and the second one is the projection.
Let $\rr\colon (H^{\otimes d})_{\Z_d}\to (H^{\otimes d})_{\Z_d}$ be an involution defined by
\[
\rr(x_1\otimes x_2\otimes\cdots\otimes x_d)=(-1)^dx_d\otimes\cdots\otimes x_2\otimes x_1,
\]
and denote by $(H^{\otimes d})_{\Z_d}^{+}$ and $(H^{\otimes d})_{\Z_d}^{-}$ the $(+1)$- and $(-1)$-eigenspaces of $\rr$, respectively.
As shown by Conant~\cite[Theorem~4.2(1)]{Con15},
the image of the composition map
\[
\Tr_d\circ \tau_d\colon J_d\C\to (H_\Z^{\otimes d})_{\Z_d}
\]
lies in $(H^{\otimes d})_{\Z_d}^-$.
Here, we regard $L_{d+1}(H_\Z)$ as a subgroup of $H_\Z^{\otimes(d+1)}$ via $[x,y]=x\otimes y-y\otimes x$.

Let $\A_d^c(H)$ denote the submodule of $\A(H)$ generated by connected Jacobi diagrams of degree $d$.
The projection of the LMO homomorphism $Z\colon \I\C\to \A(H)$ to the degree $d$ part induces a homomorphism $Z_d\colon Y_d\I\C/Y_{d+1}\to \A^c_d(H)$,
and we denote by
\[
Z_{d,1}\colon Y_d\I\C/Y_{d+1}\to \A_{d,1}^c(H)
\]
its projection to the $1$-loop part.
Let $p_+\colon (H^{\otimes d})_{\Z_d}\to \A_{d,1}^c(H)$ be a homomorphism defined by
\[
p_+(x_1\otimes x_2\otimes\cdots\otimes x_d)
=O(x_1,x_2,\ldots,x_d),
\]
where
\[
O(x_1,x_2,\dots,x_d)=
\begin{tikzpicture}[scale=0.25, baseline={(0,0.1)}, densely dashed]
 \draw (0,0) circle [radius=2];
 \draw (90:2)--(90:4) node[anchor=south] {$x_1$};
 \draw (50:2)--(50:4) node[anchor=south west] {$x_2$};
 \draw (10:2)--(10:4) node[anchor=west] {$x_3$};
 \draw[loosely dotted, very thick] (-20:4) arc (-20:-50:4);
 \draw (130:2)--(130:4) node[anchor=south east] {$x_d$};
 \draw[loosely dotted, very thick] (160:4) arc (160:190:4);
\end{tikzpicture}.
\]
The homomorphism $p_+$ induces an isomorphism $(H^{\otimes d})_{\Z_d}^+\to \A_{d,1}^c(H)$ as we can deduce from \cite[Proposition~5.1]{NSS22GT}.
We also denote by $p_-\colon (H^{\otimes d})_{\Z_d} \to (H^{\otimes d})_{\Z_d}^-$ the projection defined by
\[
p_-(x_1\otimes x_2\otimes\cdots\otimes x_d)
=\frac{1}{2}(x_1\otimes x_2\otimes\cdots\otimes x_d-(-1)^dx_d\otimes \cdots \otimes x_2\otimes x_1).
\]
Our main theorem describes the leading term $\tilde{\alpha}_d$ as follows:

\begin{theorem}\label{thm:main}
For $d\ge 1$, the diagram
\[
\xymatrix{
&Y_d\I\C/Y_{d+1}\ar[d]^-{\tilde{\alpha}_d}\ar[ld]_-{-\frac{1}{2}\Tr_d\circ \tau_d}\ar[rd]^-{-2Z_{d,1}}&\\
(H^{\otimes d})_{\Z_d}^-&(H^{\otimes d})_{\Z_d}\ar[l]^-{p_-}\ar[r]_-{p_+}& \A_{d,1}^c(H)
}
\]
commutes.
\end{theorem}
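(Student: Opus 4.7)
The strategy is to verify the commutativity on generators of the graded quotient $Y_d\I\C/Y_{d+1}$. By the clasper calculus, this graded monoid is generated by surgery classes $[M_G]$ with $G$ a connected degree-$d$ graph clasper in $\Sigma_{g,1}\times[-1,1]$, and I would split the verification according to the loop number $\ell\ge0$ of $G$. The two main computational inputs are the surgery behaviors of $\tilde{\alpha}$ established in Section~\ref{section:change-rt}: Theorem~\ref{thm:k-loop}, which states that $\tilde{\alpha}$ is invariant under $k$-loop clasper surgery for $k\ge2$, and Theorem~\ref{thm:value-1-loop}, which gives an explicit formula in the $1$-loop case. The idea is that these two theorems, combined with the known computations of $\tau_d$ and $Z_{d,1}$ on clasper-surgery generators, determine $\tilde{\alpha}_d$ completely on $Y_d\I\C/Y_{d+1}$.

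For the right-hand triangle $p_+\circ\tilde{\alpha}_d=-2Z_{d,1}$, the cases $\ell\ge2$ are trivial: Theorem~\ref{thm:k-loop} gives $\tilde{\alpha}_d(M_G)=0$, and $Z_{d,1}(M_G)=0$ as a $k$-loop clasper produces only $k$-loop Jacobi diagrams under $Z$. For $\ell=0$, $Z_{d,1}$ again vanishes, and I would argue $\tilde{\alpha}_d(M_G)$ lies in the $(-)$-eigenspace of $\rr$ by using the lift compatibility of $\tilde{\alpha}$ with Massuyeau-Meilhan's commutative torsion $\alpha$ (Section~\ref{section:reidemeistertorsion}) and their description of $\alpha$ on tree claspers via the total Johnson map, so that $p_+$ kills it. For $\ell=1$, Theorem~\ref{thm:value-1-loop} yields a cyclic tensor whose entries are read off from the leaves of $G$; this matches the standard diagrammatic computation of the $1$-loop LMO invariant of a $1$-loop clasper surgery (a wheel $O(x_1,\dots,x_d)$) up to the normalization constant $-2$.

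For the left-hand triangle $p_-\circ\tilde{\alpha}_d=-\tfrac12\Tr_d\circ\tau_d$, the cases $\ell\ge1$ are disposed of symmetrically: $\tau_d$ vanishes on classes of $\ell\ge1$ loop clasper surgeries (only tree claspers realize the $d$th Johnson homomorphism in the $Y_d$-graded quotient), while on the torsion side either Theorem~\ref{thm:k-loop} kills $\tilde{\alpha}_d$ (for $\ell\ge2$) or Theorem~\ref{thm:value-1-loop} places its value in the $(+)$-eigenspace of $\rr$ (for $\ell=1$), so that $p_-=0$ in both cases. For $\ell=0$ the heart of the left triangle appears: $\tau_d(M_G)$ is the Lie tree element of $G$ and $\Tr_d\circ\tau_d(M_G)$ its Enomoto-Satoh closure, while $\tilde{\alpha}_d(M_G)$ is computed by combining the lift property of $\tilde{\alpha}$ over $\alpha$ with the Massuyeau-Meilhan tree formula. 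The non-commutative refinement is precisely what upgrades their commutative cyclic symmetrization to the full Enomoto-Satoh trace, producing the factor $-\tfrac12$.

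The hardest part will be the careful tracking of the normalization constants $-2$ and $-\tfrac12$ across several frameworks: the Magnus expansion underlying Theorem~\ref{thm:K1Qpihat} and Corollary~\ref{cor:gr}, the isomorphism $\kappa$ and the LMO conventions of Cheptea-Habiro-Massuyeau, and the surgery formula of Theorem~\ref{thm:value-1-loop}. A secondary subtlety is promoting the Massuyeau-Meilhan tree computation, valued in the commutative $K_1(Q(\Z H_\Z))$, into $K_1(\widehat{\Q\pi})$ with enough precision to distinguish the Enomoto-Satoh trace from its commutative shadow; this is exactly where the non-commutativity of $\widehat{\Q\pi}$ plays its essential role, detecting the cyclic structure on $H^{\otimes d}$ that the commutative torsion $\alpha$ cannot see.
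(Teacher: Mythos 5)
Your proposal correctly identifies the key surgery-side inputs (Theorem~\ref{thm:k-loop} for $k$-loop invariance, $k\ge 2$, Theorem~\ref{thm:value-1-loop} and Corollary~\ref{cor:1-loopvalue} for $1$-loop values) and correctly observes that on $1$-loop generators $\psi(O(x_1,\dots,x_d))$ the value lies in the $(+1)$-eigenspace of $\rr$, and that $\tau_d$ kills loop classes. However, there are two genuine gaps.

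First, the claim that $Z_{d,1}$ vanishes on tree-clasper surgeries is false. The surgery map $\psi$ and the LMO homomorphism $Z$ are not mutually inverse on the nose: the relation is $Z_d\circ\psi = \chi_d^{-1}$, where $\chi_d\colon\A_d^c(H)\to\A_d^{<,c}(H_\Z)$ is the isomorphism involving the STU-like relation. For a tree diagram $J_0$, forgetting the total order produces cross-terms obtained by contracting pairs of univalent vertices, and it is exactly these $C(J_0)$-terms that land in the $1$-loop part: in the paper's notation, $Z_{d,1}(\psi(J_0)) = \chi'_{d,1}\bigl(\tfrac{1}{2}C(J_0)\bigr)$, which is generally nonzero. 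Thus the right triangle on $\ell=0$ is not ``$0=0$''; this is precisely where the nontrivial identity $p_+\circ\tilde{\alpha}_d = -2Z_{d,1}$ must be proved. Your claim that $\tilde{\alpha}_d$ of a tree surgery lies in the $(-1)$-eigenspace of $\rr$ is therefore also incorrect (else the right triangle would fail), and there is no argument offered for it.

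Second, and more fundamentally, your mechanism for the $\ell=0$ case of the left triangle --- ``promote the Massuyeau--Meilhan tree computation from $K_1(Q(\Z H_\Z))$'' --- cannot work, a difficulty you yourself flag but do not resolve: the commutative torsion $\alpha$ lives in an abelianized target and cannot see the cyclic word structure in $H^{\otimes d}/\Z_d$ that the Enomoto--Satoh trace detects. The paper's actual argument is structurally different and does not proceed generator-by-generator at all for this part. It introduces the mirror image $\overline{M}$ and decomposes $\tilde{\alpha}_d(M)$ via the sum and difference $\tilde{\alpha}_d(M)\pm\tilde{\alpha}_d(\overline{M})$ (Lemmas~\ref{lem:-1-eigenspace} and~\ref{lem:1-eigenspace}). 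For the difference, the essential input is the relation between the torsion and the Magnus representation, Theorem~\ref{thm:torsion-magnus}: $\tilde{\alpha}(M)^{-1}\cdot(\sigma_M)_*\tilde{\alpha}(\overline{M}) = r(M)$ in $K_1(\widehat{\Q\pi})$, combined with Lemma~\ref{lem:degree-d} identifying $\theta_*\circ\tr_d\circ\log\circ\Mag$ with $\Tr_d\circ\log\circ\theta_*$. The Magnus representation, not a lifted commutative tree formula, is the non-commutative device that realizes the Enomoto--Satoh trace. Your proposal would need this ingredient, or something equivalent, to close the left triangle.
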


In other words, the $(+1)$- and $(-1)$-eigenspaces of $\tilde{\alpha}_d$ are essentially the 1-loop part of the LMO homomorphism and the Enomoto-Satoh trace, respectively.

\subsection*{Acknowledgments}
The authors would like to thank
Tetsuya Ito, Nariya Kawazumi, Yusuke Kuno, and Takuya Sakasai for helpful comments.
In particular, Yusuke Kuno gave the authors an idea to use symplectic expansions in the proof of Lemma~\ref{lem:SympExp}.
This study was supported in part by JSPS KAKENHI Grant Numbers JP20K14317, JP18K03310, JP22K03298, JP19H01785, and JP20K03596.
The authors would like to thank the referee for his/her careful reading of the manuscript and for various comments.

\section{Preliminaries}
\label{section:pre}
In this section,
we recall the definitions of the Reidemeister torsion and its refinement called the Reidemeister-Turaev torsion of pairs of CW-complexes.
For details on the Reidemeister(-Turaev) torsion, see \cite[Section~3]{Mil66}, \cite[Chapters~I and II]{Tur01}, \cite[Chapters III and IV]{Coh73}, and \cite{Mas11}.
We also review homology cylinders and two filtrations on the monoid of homology cylinders called the $Y$-filtration and the Johnson filtration.
Homology cylinders are introduced by Goussarov~\cite{Gou99} and Habiro~\cite{Hab00C} in connection with finite-type invariants of 3-manifolds.
See also \cite{GaLe05}, \cite{HaMa09, HaMa12} and \cite{NSS22GT,NSS22JT} for related topics.

In this paper, all homology groups are with rational coefficients, if not specified.
Throughout this section, 
$R$ denotes a local ring or a commutative PID. 

\subsection{The torsion of a chain complex}
Here, we review the torsion of an acyclic chain complex over $R$.
Let $C$ be an acyclic chain complex of finitely generated free $R$-modules:
\[
C_*=(0\to C_m\xrightarrow{\partial_m} C_{m-1}\xrightarrow{\partial_{m-1}}\cdots \xrightarrow{\partial_2}C_1\xrightarrow{\partial_1} C_0\to 0).
\]
The submodule $B_i=\Im\partial_{i+1}$ is a free $R$-module of finite rank when $R$ is a PID.
Since $C_*$ is acyclic, this is also true when $R$ is a local ring as in \cite[Lemma~1.2]{Mil71}.
Since $C_i$ and $B_i$ are free $R$-modules for $0\le i\le m$,
we can choose and fix their ordered bases $c_i$ and $b_i$, respectively.
Taking a lift $\tilde{b}_{i-1}$ of $b_{i-1}$,
we obtain another ordered basis of $C_i$ which we denote by $b_i\tilde{b}_{i-1}$.
We compare it with $c_i$, and denote
\[
[b_i\tilde{b}_{i-1}/c_i]=(\text{matrix expressing $b_i\tilde{b}_{i-1}$ w.r.t. the basis $c_i$}).
\]

\begin{definition}
The torsion of $C_*$ based by $c=\{c_i\}_{i=0}^m$ is defined by
\[
\tau(C_*,c)=\prod_{i=0}^m[b_i\tilde{b}_{i-1}/c_i]^{(-1)^{i+1}}\in K_1(R).
\]
\end{definition}

\begin{remark}
The torsion $\tau(C_*,c)$ does not depend on the choices of bases $\{b_i\}_{i=0}^m$ and their lifts $\{\tilde{b}_{i-1}\}_{i=0}^m$, where $\tilde{b}_{-1}$ denotes the empty set.
\end{remark}

\begin{lemma}\label{lem:choiceofC}
Let $c_i$ and $c_i'$ be ordered bases of $C_i$.
Then, we have
\[
\tau(C_*,\{c'_i\})=\tau(C_*,\{c_i\})\prod_{i=0}^m[c_i/c'_i]^{(-1)^{i+1}},
\]
where $[c_i/c'_i]\in K_1(R)$ is the matrix expressing $c_i$ with respect to the basis $c'_i$.
\end{lemma}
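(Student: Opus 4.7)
The plan is to unpack both sides directly from the definition of $\tau(C_*,c)$ and compare factor by factor, using only the transitivity of change-of-basis matrices and the fact that $K_1(R)$ is abelian.

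First, I would choose bases $b_i$ of $B_i=\Im\partial_{i+1}$ and lifts $\tilde{b}_{i-1}$, and use the \emph{same} choices in computing both $\tau(C_*,\{c_i\})$ and $\tau(C_*,\{c_i'\})$. By the remark after the definition, the torsion is independent of these choices, so this is legitimate. With these choices fixed, both torsions are alternating products whose $i$th factor is the class in $K_1(R)$ of a matrix expressing the basis $b_i\tilde{b}_{i-1}$ of $C_i$ in terms of either $c_i$ or $c_i'$.

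Next I would invoke the standard transitivity identity for change-of-basis matrices: if $u,v,w$ are three ordered bases of the same free $R$-module, then $[u/w]=[u/v]\cdot[v/w]$ as elements of $\GL(R)$. Applied with $u=b_i\tilde{b}_{i-1}$, $v=c_i$, $w=c_i'$, this gives
\[
[b_i\tilde{b}_{i-1}/c_i']=[b_i\tilde{b}_{i-1}/c_i]\cdot[c_i/c_i']\qquad\text{in }K_1(R).
\]
Raising this to the $(-1)^{i+1}$ power and multiplying over $i$, I would then use that $K_1(R)$ is abelian to split the resulting product into two pieces, one of which is $\tau(C_*,\{c_i\})$ and the other of which is exactly $\prod_{i=0}^m[c_i/c_i']^{(-1)^{i+1}}$.

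There is no real obstacle here; the only mildly delicate point is bookkeeping of conventions. One must fix once and for all whether $[v/c]$ means ``the matrix whose columns are the coordinates of $v$ in the basis $c$'' or the transpose of that, so that the multiplicative relation $[u/w]=[u/v]\cdot[v/w]$ holds in the order asserted. Once that convention is the same one used in the definition of $\tau$, the claim is immediate and no further input (in particular no properties specific to acyclic complexes, nor any use of the boundary maps $\partial_i$) is needed.
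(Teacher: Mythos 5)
Your proof is correct. The paper states Lemma~\ref{lem:choiceofC} without proof, since it is a standard multiplicativity fact about torsion (see, e.g., Milnor's survey); your argument — fixing a common choice of $b_i$ and $\tilde{b}_{i-1}$, applying the transitivity identity $[b_i\tilde{b}_{i-1}/c_i']=[b_i\tilde{b}_{i-1}/c_i]\cdot[c_i/c_i']$, and rearranging in the abelian group $K_1(R)$ — is exactly the intended one, and you correctly flag the only delicate point, namely that the cocycle identity for change-of-basis matrices must be taken in the order consistent with the convention used in the definition of $\tau$.
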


\subsection{The torsion of a pair of CW-complexes}
\label{section:torsion}
Let $X$ be a connected finite CW-complex, and $Y$ a connected subcomplex.
Let $p\colon \tilde{X}\to X$ be the universal covering,
and choose points $*$ in $Y$ and $\tilde{*}$ in $p^{-1}(*)$.
For a based loop $\gamma\in\pi_1(X,*)$,
we denote by $t_\gamma$ the deck transformation which sends $\tilde{*}$ to $\tilde{\gamma}(1)$,
where $\tilde{\gamma}$ is the unique lift of $\gamma$ in $\tilde{X}$ starting at $\tilde{*}$.
For a homomorphism $\rho\colon\mathbb{Z}\pi_1(X,*)\to R$,
consider the chain complex
\[
C_*(X,Y;R)=C_*(\tilde{X}, p^{-1}(Y);\Z)\otimes_{\mathbb{Z}\pi_1(X,*)}R,
\]
where the right action of $\gamma\in \pi_1(X,*)$ on $C_*(\tilde{X}, p^{-1}(Y);\Z)$ is given by $c\cdot \gamma=t_\gamma^{-1}(c)$ and the left action on $R$ is given by $\gamma\cdot r=\rho(\gamma)r$.
Note that, in \cite{Mil66} and \cite{Tur01}, chain complexes of the form $R\otimes_{\mathbb{Z}\pi_1(X,*)}C_*(\tilde{X}, p^{-1}(Y);\Z)$ are considered, 
and our torsion depends on the convention used in its definition.

The module $C_i(X,Y;R)$ is a finitely generated free right $R$-module.
Let $E$ denote the set of cells in $X\setminus Y$.
For each cell $e\in E$, we choose a lift $\tilde{e}$ to $\tilde{X}\setminus p^{-1}(Y)$.
We denote by $\tilde{E}$ the set of the lifted cells.
We also put a total ordering on $E$,
and choose an orientation for each cell $e\in E$.
This double choice is denoted by $\oo$.
The choice of $\tilde{E}$ combined to $\oo$ induces a basis $\tilde{E}_{\oo}$ of $C_*(\tilde{X}, p^{-1}(Y);\Z)$,
which defines itself a basis $\tilde{E}_{\oo}\otimes 1$ of $C_*(X,Y;R)$.

\begin{definition}
The Reidemeister torsion with $\rho$-twisted coefficients of the pair $(X,Y)$ of CW-complexes is
\[
\tau(C_*(X,Y;R),\tilde{E}_{\oo}\otimes 1)\in K_1(R)/{\pm\rho(H_1(X;\Z))},
\]
where $\rho\colon H_1(X;\Z)\to K_1(R)$ is the induced homomorphism by $\rho\colon \pi_1X\to R^\times$.
\end{definition}

\subsection{The Reidemeister-Turaev torsion}\label{section:RT-torsion}
Turaev gives refinements of commutative Reidemeister torsions in \cite{Tur89}.
Here, we review the construction following \cite{Mas11},
noting that the construction is also valid for non-commutative Reidemeister torsions.

In order to lift $\tau(C_*(X,Y;R),\tilde{E}_{\oo}\otimes 1)$ to an element in $K_1(R)$,
there are two ambiguities which come from the choice of a basis of $C_*(\tilde{X}, p^{-1}(Y);\Z)$; the sign indeterminacy and the $\rho(H_1(X;\Z))$-indeterminacy.
The sign indeterminacy comes from the choice of $oo$,
and $\rho(H_1(X;\Z))$-indeterminacy comes from the choice of $\tilde{E}$.
When the chain complex $C_*(X,Y;\Q)$ is also acyclic,
we can eliminate the sign indeterminacy.
\begin{lemma}
The element
\[
\tau^\rho(X,Y,\tilde{E})=(\sign\tau(C_*(X,Y;\Q), \oo))\tau(C_*^\rho(X,Y;R),\tilde{E}_{\oo}\otimes 1)\in K_1(R)
\]
does not depend on the choice of $\oo$.
\end{lemma}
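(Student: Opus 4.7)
The plan is to verify that under a change $\oo \leadsto \oo'$, both $\sign\tau(C_*(X,Y;\Q),\oo)$ and $\tau(C_*^\rho(X,Y;R),\tilde{E}_{\oo}\otimes 1)$ get multiplied by the same element $\epsilon\in\{\pm 1\}$ (viewed inside $K_1(R)$), so that their product is multiplied by $\epsilon^2=1$.

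First, I would observe that a change $\oo\leadsto\oo'$ is a composition of reorderings of cells of a fixed dimension and reversals of individual cell orientations. These act on the cellular basis of $C_i(X,Y;\Q)$ and on the lifted basis $\tilde{E}_{\oo}\otimes 1$ of $C_i^\rho(X,Y;R)$ by one and the same signed permutation matrix $P_i\in GL_{n_i}(\Z)$, since the lift $\tilde{E}$ is held fixed throughout, and only the ordering and sign assignment of the cells of $E$ change, which affect both bases identically.

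Second, applying Lemma~\ref{lem:choiceofC} to both chain complexes simultaneously yields
\[
\tau(C_*(X,Y;\Q),\oo')=\tau(C_*(X,Y;\Q),\oo)\prod_{i=0}^m\det(P_i)^{(-1)^{i+1}},
\]
\[
\tau(C_*^\rho(X,Y;R),\tilde{E}_{\oo'}\otimes 1)=\tau(C_*^\rho(X,Y;R),\tilde{E}_{\oo}\otimes 1)\prod_{i=0}^m[P_i]^{(-1)^{i+1}}.
\]
The crucial observation is that for any signed permutation matrix $P\in GL_n(\Z)$, the class $[P]\in K_1(R)$ equals the image of $\det(P)\in\{\pm 1\}$ under the canonical map $\{\pm 1\}\to K_1(R)$: writing $P=DQ$ with $Q$ a permutation matrix and $D$ a diagonal matrix of $\pm 1$'s, one reduces $Q$ modulo elementary matrices to the diagonal matrix with a single entry $-1$ (present iff $Q$ is an odd permutation), so $[P]=\det(P)$ in $K_1(R)$.

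Third, setting $\epsilon=\prod_{i=0}^m\det(P_i)^{(-1)^{i+1}}\in\{\pm 1\}$, we find that $\sign\tau(C_*(X,Y;\Q),\oo)$ is multiplied by $\epsilon$ and $\tau(C_*^\rho(X,Y;R),\tilde{E}_{\oo}\otimes 1)$ is multiplied by the image of $\epsilon$ in $K_1(R)$. Since $\tau^\rho$ is formed as a product inside $K_1(R)$, with $\sign\tau(\cdots;\Q)\in\{\pm 1\}$ mapped into $K_1(R)$ via the same inclusion, the total change factor is $\epsilon\cdot\epsilon=1$, proving independence of $\oo$. The only technical point is the identification $[P]=\det P$ in $K_1(R)$ for signed permutations, which relies on the local-ring or PID hypothesis on $R$ so that elementary matrices vanish in $K_1(R)$; this is a standard consequence of Whitehead's lemma.
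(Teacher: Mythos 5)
Your argument is correct, and it is the standard proof of this fact; the paper itself states the lemma without proof (it is a classical point going back to Milnor and Turaev, cf.\ \cite[Section~3.2]{Tur89} and \cite[Section~7]{Mas11}). You correctly identify that a change $\oo \leadsto \oo'$ acts by the \emph{same} signed permutation matrices $P_i \in \GL(n_i,\Z)$ on the cellular bases of both $C_*(X,Y;\Q)$ and $C_*^\rho(X,Y;R)$, since $\tilde{E}$ is fixed and only the ordering and orientations of the cells of $E$ change; then Lemma~\ref{lem:choiceofC} shows both torsions change by the alternating product of the classes of the $P_i$, and the key reduction $[P]=\det P \in\{\pm1\}$ inside $K_1(R)$ makes the two change factors cancel.

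One small inaccuracy worth flagging: you write that $[P]=\det P$ in $K_1(R)$ ``relies on the local-ring or PID hypothesis on $R$ so that elementary matrices vanish in $K_1(R)$.'' In fact elementary matrices vanish in $K_1(R)$ for \emph{every} ring $R$: this is precisely the content of Whitehead's lemma combined with the definition $K_1(R)=\GL(R)_{\ab}$, with no hypothesis on $R$ required. The local-ring/PID hypothesis is used elsewhere in this section (e.g.\ to ensure the modules $B_i$ are free and to define the torsion at all), but not at this particular step. This does not affect the validity of your argument; if anything it shows the sign-cancellation holds more generally than you claim.
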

We also denote
\[
\tau^\rho(X,Y)=(\sign\tau(C_*(X,Y;\Q), \oo))\tau(C_*^\rho(X,Y;R),\tilde{E}_{\oo}\otimes 1)\in K_1(R)/\rho(H_1(X;\Z)),
\]
which does not depend on the choices of $\tilde{E}$ and $\oo$.
To eliminate the $\rho(H_1(X;\Z))$-indeterminacy,
we review the notion of fundamental family of cells.
A family $\tilde{E}$ of cells of $\tilde{X}\setminus p^{-1}(Y)$ is said to be fundamental if each cell $e$ of $X\setminus Y$ has a unique lift $\tilde{e}$ in $\tilde{E}$.
Two fundamental families of cells $\tilde{E}$ and $\tilde{E}'$ are equivalent when the alternating sum
\[
\sum_{e\in E}(-1)^{\dim(e)}\left[\overrightarrow{\tilde{e}\tilde{e}'}\right]\in H_1(X;\Z)
\]
vanishes, where $\overrightarrow{\tilde{e}\tilde{e}'}\in \pi_1X$ is the loop such that the covering transformation $t_{\overrightarrow{\tilde{e}\tilde{e}'}}$ maps $\tilde{e}$ to $\tilde{e}'$.
For an equivalence class $[\tilde{E}]$ and $[\gamma]\in H_1(X;\Z)$ represented by $\gamma\in\pi_1X$,
let us denote by $[\tilde{E}]+[\gamma]$ another equivalence class obtained by replacing one cell $\tilde{e}$ of $\tilde{E}$ with $t_\gamma^{\epsilon}(\tilde{e})$, where $\epsilon=(-1)^{\dim (\tilde{e})+1}$.
Note that the sign $\epsilon$ is different from \cite[Section~3.2]{Tur89} and \cite[Section~7.2]{Mas11}. 
Denote by $\widehat{\Eul}(X,Y)$ the set of equivalence classes of fundamental families of cells,
which is an $H_1(X;\Z)$-affine space under the above action
and is identified with the set of combinatorial Euler structures in \cite{Tur01} and \cite[Section~7]{Mas11}.
For an equivalence class $\xi=[\tilde{E}]\in \widehat{\Eul}(X,Y)$,
let us denote
\[
\tau^\rho(X,Y, \xi)=\tau^\rho(X,Y,\tilde{E})\in K_1(R),
\]
which does not depend on the choice of a representative $\tilde{E}$.
It is called the \emph{Reidemeister-Turaev torsion} of the pair $(X,Y)$ equipped with $\xi$.

Let us denote the composition map
\[
\pi_1X\xrightarrow{\rho} R^\times\cong\GL(1,R)\to K_1(R)
\]
by the same symbol as $\rho\colon \Z\pi_1X\to R$.
Note that it factors through $H_1(X;\Z)$ since $K_1(R)$ is abelian.
If we change an equivalence class $\xi$ of fundamental families of cells,
the Reidemeister-Turaev torsion changes as follows.
\begin{lemma}\label{lem:euler structure}
For $\gamma\in \pi$,
\[
\tau^{\rho}(X,Y,\xi+[\gamma])=\rho(\gamma)\tau^{\rho}(X,Y,\xi)\in K_1(R),
\]
where $[\gamma]\in H_1(X;\Z)$ is represented by $\gamma$.
\end{lemma}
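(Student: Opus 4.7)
The plan is to unwind both sides to the level of ordered $R$-bases and then invoke Lemma~\ref{lem:choiceofC}. A representative $\tilde E'$ of $\xi+[\gamma]$ is, by definition, obtained from a representative $\tilde E$ of $\xi$ by replacing a single lifted cell $\tilde e$ of some dimension $i=\dim(e)$ by $t_\gamma^\epsilon(\tilde e)$, with $\epsilon=(-1)^{i+1}$; all other cells and their orientations remain unchanged, so the associated sign $\sign\tau(C_*(X,Y;\Q),oo)$ is manifestly unaffected by the substitution. Hence it suffices to track how $\tau(C_*^\rho(X,Y;R),\tilde E_{oo}\otimes 1)$ changes.

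Next I would translate the replacement into rescaling of a single $R$-basis vector. Using the right action convention $c\cdot\gamma=t_\gamma^{-1}(c)$, we have $t_\gamma^\epsilon(\tilde e)=\tilde e\cdot \gamma^{-\epsilon}$, and therefore in the tensor product
\[
t_\gamma^\epsilon(\tilde e)\otimes 1=\tilde e\otimes\rho(\gamma)^{-\epsilon}.
\]
Thus the new basis $\tilde E'_{oo}\otimes 1$ of $C_i(X,Y;R)$ agrees with $\tilde E_{oo}\otimes 1$ except that the basis vector at position $\tilde e$ is rescaled by $\rho(\gamma)^{-\epsilon}$, while in all other degrees $j\neq i$ the bases coincide.

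Now I would apply Lemma~\ref{lem:choiceofC}. The change-of-basis matrix $[c_i/c'_i]$ is diagonal, with entry $\rho(\gamma)^{\epsilon}$ at the position of $\tilde e$ and $1$ elsewhere, so its class in $K_1(R)$ equals $\rho(\gamma)^{\epsilon}$; for $j\neq i$ the matrices $[c_j/c'_j]$ are identities and contribute $1$. The lemma then gives
\[
\tau(C_*^\rho(X,Y;R),\tilde E'_{oo}\otimes 1)=\tau(C_*^\rho(X,Y;R),\tilde E_{oo}\otimes 1)\cdot\rho(\gamma)^{\epsilon\cdot(-1)^{i+1}}.
\]
Since $\epsilon=(-1)^{i+1}$, the exponent $\epsilon\cdot(-1)^{i+1}$ equals $+1$ independently of $i$, so the extra factor is exactly $\rho(\gamma)$. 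Multiplying by the sign term, which is unchanged, yields the claimed identity $\tau^\rho(X,Y,\xi+[\gamma])=\rho(\gamma)\,\tau^\rho(X,Y,\xi)$ in $K_1(R)$.

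There is no real obstacle here, only bookkeeping; the only subtle point is the sign convention. The definition of the $H_1(X;\Z)$-action on $\widehat{\Eul}(X,Y)$ uses precisely the exponent $\epsilon=(-1)^{\dim(\tilde e)+1}$ (as the excerpt emphasizes, differing from \cite{Tur89} and \cite{Mas11}), and this is the choice which cancels the alternating sign $(-1)^{i+1}$ in the torsion product to give the clean formula $\rho(\gamma)$, with no dependence on which cell was replaced. This is what makes the action well-defined on equivalence classes in the first place.
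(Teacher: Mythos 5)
Your proof is correct, and since the paper states Lemma~\ref{lem:euler structure} without a written proof, this is exactly the bookkeeping argument the authors leave to the reader. You correctly unwind the right-action convention $c\cdot\gamma=t_\gamma^{-1}(c)$ to get $t_\gamma^\epsilon(\tilde e)\otimes 1=(\tilde e\otimes 1)\cdot\rho(\gamma)^{-\epsilon}$, apply Lemma~\ref{lem:choiceofC} with the resulting diagonal change-of-basis matrix, and observe that the sign $\epsilon=(-1)^{\dim(e)+1}$ from the paper's (nonstandard, as flagged in Section~\ref{section:RT-torsion}) definition of the $H_1$-action exactly cancels the $(-1)^{i+1}$ from the torsion product, while the sign correction $\sign\tau(C_*(X,Y;\Q),\oo)$ is untouched since $\oo$ does not change.
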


\subsection{Homology cylinders and the $Y$-filtration}
\label{sec:homologycylinders}
Let $\Sigma_{g,1}$ be a compact oriented surface of genus $g$ with connected boundary.
Let $M$ be a compact oriented 3-manifold and let $i\colon \partial(\Sigma_{g,1}\times [-1,1])\to \partial M$ be an orientation-preserving homeomorphism.
Two pairs $(M,i)$ and $(M',i')$ are said to be equivalent
if there is an orientation-preserving homeomorphism $f\colon M\to M'$ satisfying $f\circ i = i'$.
Let $i_{\pm}$ denotes the restriction of $i$ to $\Sigma_{g,1}\times\{\pm1\}$, respectively.
A \emph{homology cobordism} over $\Sigma_{g,1}$ is an equivalence class of a pair $(M,i)$ such that  $i_{\pm}$ induce isomorphisms $H_\Z\to H_1(M;\Z)$.
The set of homology cobordisms $\C$ over $\Sigma_{g,1}$ forms a monoid by stacking operation defined by $(M,i)\circ (N,j)=(M\cup_{i_+=j_-}N, i_-\cup j_+)$,
A homology cobordism $(M,i)$ is called a \emph{homology cylinder} if $i_+$ and $i_-$ induce the same isomorphism $H_\Z\to H_1(M;\Z)$.
We write $\I\C$ for the submonoid consisting of homology cylinders.

Compact 3-manifolds $M$ and $N$ are called \emph{$Y_d$-equivalent} if $N$ is obtained from $M$ by a series of surgeries along connected claspers of degree $d$,
and we denote it as $M\sim_{Y_d} N$.
As in \cite[Section~8.4.1]{Hab00C} (see \cite[Lemma~A.2]{Mas07}, for the proof),
it is equivalent to saying that $N$ is obtained from $M$ by a series of Torelli surgeries of class $d$,
that is, surgeries along embedded surfaces $\Sigma_{h,1}$ by elements of the $d$th term $\I_{h,1}(d)$ of the lower central series of the Torelli group $\I_{h,1}$ (see Section~\ref{subsec:Johnson}).
Let $Y_n\I\C$ denote the submonoid of $\I\C$ consisting of homology cylinders that are $Y_n$-equivalent to the trivial one $\Sigma_{g,1}\times[-1,1]$.
This gives a descending series $\I\C = Y_1\I\C \supset Y_2\I\C \supset \cdots$, which is called the \emph{$Y$-filtration}.
It is known that the monoid $Y_n\I\C/Y_{n+1}$ consisting of the $Y_{n+1}$-equivalence classes is a finitely generated abelian group.

\subsection{The Johnson filtration and the Johnson homomorphism}
\label{subsec:Johnson}
For a group $G$,
we write $G(d)$ for the $d$th term of the lower central series of $G$, that is, 
\[
G(1)=G\text{ and }G(d)=[G(d-1),G].
\]
Let $(M,i) \in \C$ and $d\ge1$.
The embeddings $i_\pm\colon \Sigma_{g,1}\to M$ induce isomorphisms 
\[
(i_\pm)_\ast \colon \frac{\pi}{\pi(d+1)} \to \frac{\pi_1(M)}{\pi_1(M)(d+1)}
\]
by \cite[Theorem~5.1]{Sta65}.
This gives a monoid homomorphism 
\[
\C \to \Aut\left(\frac{\pi}{\pi(d+1)}\right)
\]
defined by $(M,i) \mapsto (i_-)_\ast^{-1}\circ(i_+)_\ast$,
and its kernel is denoted by $J_d\C$.
The series of submonoids 
\[
\C \supset J_1\C=\I\C \supset J_2\C \supset \cdots
\]
is called the \emph{Johnson filtration} on $\C$. 
See \cite[Section~3]{HaMa12}, for more details.
As in \cite[(5.2)]{HaMa12},
there is an inclusion $Y_d\I\C\subset J_d\C$.

Since $\pi$ is a free group,
there is a natural isomorphism
\[
\frac{\pi(d)}{\pi(d+1)}\cong L_d(H_\Z),
\]
where $L_d(H_\Z)$ is the degree $d$ part of the free Lie ring generated by $H_\Z$.
The $d$th Johnson homomorphism
\[
\tau_d\colon J_d\C\to \Hom(H,L_d(H_\Z))
\]
is defined by
\[
\tau_d(M,i)([x])=((i_-)_\ast^{-1}\circ(i_+)_\ast)(x)x^{-1}\in \frac{\pi(d+1)}{\pi(d+2)}
\]
for $x\in \pi$.
As in \cite{Joh83},
the Johnson filtration was originally defined for the mapping class group $\M$ of the surface $\Sigma_{g,1}$.
We also denote the $d$th subgroup of the filtration as $J_d\M=\Ker(\M \to \Aut(\pi/\pi(d+1)))$.

\section{Two Reidemeister-Turaev torsions $\tilde{\alpha}$ and $\alpha$}
\label{section:reidemeistertorsion}
We set $\pi=\pi_1\Sigma_{g,1}$ and denote by $\widehat{\Q\pi}=\plim_n\Q\pi/I^n$ the $I$-adic completion of the rational group ring $\Q\pi$,
where $I$ is the augmentation ideal.
In this section, we construct a crossed homomorphism
\[
\tilde{\alpha}\colon\C\to K_1(\widehat{\Q\pi})/\tilde{\rho}(H_\Z),
\]
where $\tilde{\rho}\colon H_\Z\to K_1(\widehat{\Q\pi})$ is induced by the natural inclusion $\pi \hookrightarrow \widehat{\Q\pi}^\times$.
The restriction of $\tilde{\alpha}$ to $\I\C$ lifts to a homomorphism
\[
\tilde{\alpha}\colon\I\C\to K_1(\widehat{\Q\pi})
\]
which can be considered as a lift of the $K_1(Q(\Z H_\Z))$-valued Reidemeister-Turaev torsion in \cite{MaMe13}.

\subsection{The Reidemeister-Turaev torsion valued in $K_1(\widehat{\Q\pi})$}
For $(M, i)\in \C$,
let us denote $\partial_{\pm} M=i_{\pm}(\Sigma_{g,1})$.
First, we check that the chain complex $C_*(M,\partial_-M;\widehat{\Q\pi})$ is acyclic to obtain the Reidemeister torsion.
The following lemma is proved in the same way as \cite[Proposition~2.1]{KLW01}.
\begin{lemma}\label{lem:acylic1}
Let $X$ be a finite connected CW-complex and $Y$ a connected subcomplex satisfying $H_*(X,Y;\mathbb{Z})=0$.
Let $R$ be a local ring with maximal ideal $\mathfrak{m}$.
If a homomorphism $\rho\colon \pi_1X\to R^{\times}$ satisfies $\rho(\pi_1 X)=\{1\}\subset \mathfrak{m}\backslash R$,
the chain complex $C_*(X,Y;R)$ is acyclic.
\end{lemma}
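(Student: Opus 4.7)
The plan is to reduce modulo the maximal ideal $\mathfrak{m}$, obtain a chain contraction of the residue complex, lift it to $R$, and correct it by an invertible chain automorphism that exists thanks to $R$ being local.

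First, I would observe that the hypothesis $\rho(g)-1 \in \mathfrak{m}$ for every $g \in \pi_1 X$ makes the induced $\pi_1 X$-action on $R/\mathfrak{m}$ trivial, so the residue complex
\[
\bar{C}_* := C_*(X,Y;R) \otimes_R R/\mathfrak{m}
\]
is canonically isomorphic to $C_*(X,Y;\mathbb{Z}) \otimes_{\mathbb{Z}} R/\mathfrak{m}$. Since $H_*(X,Y;\mathbb{Z})=0$ and $C_*(X,Y;\mathbb{Z})$ is a bounded complex of free abelian groups, the universal coefficient theorem (or equivalently, the contractibility of a bounded exact complex of free $\mathbb{Z}$-modules) gives $H_*(\bar{C}_*)=0$. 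As a bounded exact complex of vector spaces over the field $R/\mathfrak{m}$, the complex $\bar{C}_*$ is then chain contractible: there is a graded map $\bar{s}$ with $\bar{s}\bar{\partial}+\bar{\partial}\bar{s}=\mathrm{id}_{\bar{C}_*}$.

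Next, I would lift $\bar{s}$ degree by degree to an $R$-linear graded map $s \colon C_* \to C_{*+1}$ (possible because each $C_i$ is free) and set $h := s\partial + \partial s - \mathrm{id}_{C_*}$. Then $h$ is automatically a chain map, since $s\partial+\partial s$ always is, and $h(C_*) \subseteq \mathfrak{m}C_*$ by construction. In each degree, the endomorphism $\mathrm{id}+h_i$ of the finitely generated free module $C_i$ has determinant congruent to $1$ modulo $\mathfrak{m}$, hence a unit of the local ring $R$. Therefore $\mathrm{id}+h$ is invertible in every degree, and being a chain map its inverse is a chain map as well. The computation
\[
\mathrm{id}_{C_*} = (\mathrm{id}+h)^{-1}(s\partial + \partial s) = s'\partial + \partial s', \qquad s' := (\mathrm{id}+h)^{-1}s,
\]
(using that $(\mathrm{id}+h)^{-1}$ commutes with $\partial$) then exhibits $\mathrm{id}_{C_*}$ as null-homotopic, so $C_*(X,Y;R)$ is chain contractible and in particular acyclic.

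The main subtlety I expect is that the lemma is stated for an arbitrary local ring, not only a complete one, so the naive Neumann-series inversion $(\mathrm{id}+h)^{-1} = \sum_{k}(-h)^k$ need not converge. The resolution is precisely the finiteness coming from $X$ being a finite CW-complex: each $C_i$ has finite rank, so invertibility of $\mathrm{id}+h_i$ reduces to its determinant being a unit, which holds over any local ring without a completeness hypothesis.
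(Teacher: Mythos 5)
Your argument follows essentially the same strategy as the paper's proof: pass to the residue field, observe that the trivial $\pi_1X$-action on $R/\mathfrak{m}$ identifies the reduced complex with $C_*(X,Y;\Z)\otimes_{\Z}R/\mathfrak{m}$, lift a chain homotopy, and use the local property of $R$ to promote it. The paper lifts the integral chain contraction $D$ of $C_*(X,Y;\Z)$ directly and argues that $\Phi=\tilde{D}\tilde{\partial}+\tilde{\partial}\tilde{D}$ is an invertible chain map because its reduction is the identity; you instead build a contraction $\bar{s}$ of the residue complex and then correct the lift by the inverse of $\id + h$. These come to the same thing, and one could recover the paper's version by taking $\bar{s}=D\otimes\id$.

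There is one genuine flaw in the justification of invertibility. The lemma does not assume $R$ commutative, and indeed in the paper's main application $R=\widehat{\Q\pi}$ is non-commutative, so $\mathfrak{m}\backslash R$ is only a skew field and ``determinant congruent to $1$ modulo $\mathfrak{m}$, hence a unit'' is not available. The conclusion you need is true, but the correct justification is the non-commutative local-ring fact cited in the paper's proof from \cite[Proposition~2.2.4]{Ros94}: a square matrix over a local ring $R$ whose image in $M(n,\mathfrak{m}\backslash R)$ is invertible is itself invertible in $M(n,R)$. Applied to $\id + h_i$, whose reduction is the identity matrix, this gives the invertibility you want without any commutativity (or completeness) assumption, and the rest of your argument goes through unchanged.
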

\begin{proof}
Let us fix a basis of $C_*(X,Y;\Z)$ and denote by $p\colon \tilde{X}\to X$ the universal covering.
Lifting the basis to elements in $C_*(\tilde{X},p^{-1}(Y);\Z)$,
we obtain a basis of $C_*(X,Y;R)$ as a free right $R$-module.
Denoting a chain homotopy $D\colon C_*(X,Y;\Z)\to C_{*+1}(X,Y;\Z)$,
we have 
\[
D\partial+\partial D=\id \colon C_*(X,Y;\Z)\to C_*(X,Y;\Z).
\]
We define a right $R$-module homomorphism
\[
\tilde{D}\colon C_*(X,Y;R)\to C_*(X,Y;R)
\]
by $\tilde{D}(\tilde{e})=\sum q_i\tilde{f}_i$ if $D(e)=\sum q_if_i$,
where $\tilde{e},\tilde{f}_i\in C_*(\tilde{X},p^{-1}(Y);\Z)$ are the chosen lifts of $e,f_i$ in the basis of $C_*(X,Y;\Z)$, respectively.
Let us set
\[
\Phi=\tilde{D}\tilde{\partial}+\tilde{\partial} \tilde{D}\colon C_*(X,Y;R)\to C_*(X,Y;R),
\]
where $\tilde{\partial}\colon C_*(X,Y;R)\to C_{*-1}(X,Y;R)$ is the boundary operator.
Since $\pi_1X$ acts on $\mathfrak{m}\backslash R$ from the right trivially,
the induced endomorphism on $C_*(X,Y;\mathfrak{m}\backslash R)$ by $\Phi$ is the identity map.
As in the proof of Proposition~2.2.4 in \cite{Ros94},
the matrix representing $\Phi\colon C_*(X,Y;R)\to C_*(X,Y;R)$ is invertible
since its reduction to $\mathfrak{m}\backslash R$ is invertible.
Thus, we conclude that $C_*(X,Y;R)$ is acyclic.
\end{proof}

Consider the case when $X=M$ and $Y=\partial_-M$ for $M\in\C$.
Choosing a triangulation of the pair $(M, \partial_-M)$,
we treat it as a pair of finite CW-complexes.
If we have an acyclic chain complex $C_*(M, \partial_-M;R)$ with local system $\rho\colon \pi_1M\to R^{\times}$,
we obtain the torsion $\tau^\rho(M,\partial_- M)\in K_1(R)/\rho(H_\Z)$,
where we denote by the same symbol $\rho$ the induced homomorphism
\[
H_\Z \xrightarrow[\cong]{(i_{-})_\ast} H_1(M;\Z)\to K_1(R).
\]
If we fix an equivalence class $\xi=\tilde{E}$ of families of cells,
we obtain the Reidemeister-Turaev torsion $\tau^\rho(M,\partial_-M,\xi)\in K_1(R)$ as in Section~\ref{section:RT-torsion}.

In \cite[Definition~3.8]{MaMe13},
Massuyeau and Meilhan constructed a canonical Euler structure called the preferred Euler structure for $M\in\I\C$,
and it gives a canonical equivalence class which we denote by $\xi_0\in\widehat{\Eul}(M,\partial_-M)$.
Let 
\[
\tilde{\rho}\colon \Z\pi_1M\to \widehat{\Q\pi_1M}\cong \widehat{\Q\pi},
\]
where the last isomorphism is induced by the embedding $i_-\colon \Sigma_{g,1}\to M$  using \cite[Theorem~5.1]{Sta65} and \cite{Qui68}.
See also \cite[Theorem~A.5]{HaMa09}. 
Let 
\[
\rho\colon \Z\pi_1M\to \Z(H_1(M;\Z))\cong \Z H_\Z\to Q(\Z H_\Z),
\]
where $Q(\Z H_\Z)$ is the field of fractions of $\Z H_\Z$,
and the second isomorphism is induced by $i_-$.
For $M\in\I\C$,
let us denote
\begin{align*}
\tilde{\alpha}(M)&=\tau^{\tilde{\rho}}(M,\partial_-M;\xi_0)\in K_1(\widehat{\Q\pi}),\\
\alpha(M)&=\tau^{\rho}(M,\partial_-M;\xi_0)\in K_1(Q(\Z H_\Z)).
\end{align*}
For $M\in\C$,
we also denote 
\[
\tau^{\tilde{\rho}}(M,\partial_-M)\in K_1(\widehat{\Q\pi})/\tilde{\rho}(H_\Z)\text{ and }
\tau^{\rho}(M,\partial_-M)\in K_1(Q(\Z H_\Z))/\rho(H_\Z)
\]
by the same symbols $\tilde{\alpha}$ and $\alpha$, respectively.

\begin{remark}
The mapping class group $\M$ of the surface $\Sigma_{g,1}$ is embedded into $\C$.
As in \cite[Example~3.9]{MaMe13},
$\alpha(\varphi)=0\in K_1(Q(\Z H_\Z))/\rho(H_\Z)$ for $\varphi\in\M$.
In the same way, we see that $\tilde{\alpha}(\varphi)=0\in K_1(\widehat{\Q\pi})/\tilde{\rho}(H_\Z)$.
\end{remark}

By \cite[Proposition~4.1]{FJR11} or \cite[Proposition~3(1)]{GoSa13},
the torsion $\alpha(M)\in K_1(Q(\Z H_\Z))$ under the identification $\det\colon K_1(Q(\Z H_\Z))\cong Q(\Z H_\Z)^\times$ is equal to the determinant of a square matrix with coefficients in $\Z H_\Z$.
Thus, we may consider that $\alpha(M)$ for $M\in\I\C$ lies in $\Z H_\Z$.
See also \cite[Proposition~3.5]{MaMe13}.
By \cite[Theorem~1.1.1 and Lemma~1.11.4]{Tur86},
the image of $\alpha\colon \I\C\to\Z H_\Z$ is contained in $(1+I H_\Z)\cup (-1+IH_\Z)$,
and it is actually contained in $1+I H_\Z$ since we eliminated the sign indeterminacy. 
Let $\widehat{\Q H_\Z}$ denote the completion of the group ring $\Q H_\Z$ with respect to the augmentation ideal $IH_\Z$.

\begin{proposition}\label{prop:compare-torsions}
The diagram
\[
\xymatrix{
\I\C \ar[r]^-{\tilde{\alpha}} \ar[d]_-{\alpha} & K_1(\widehat{\Q\pi}) \ar[d]\\
1+I H_\Z\ar[r] & K_1(\widehat{\Q H_\Z})
}
\]
commutes,
where the bottom homomorphism $1+IH_\Z\to \widehat{\Q H_\Z}^\times\cong K_1(\widehat{\Q H_\Z})$ is the natural inclusion, and the right vertical homomorphism is the induced map by the natural homomorphism $\widehat{\Q\pi}\to\widehat{\Q H_\Z}$.
\end{proposition}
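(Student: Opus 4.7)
The plan is to invoke the naturality of the Reidemeister-Turaev torsion under change of coefficient rings. Specifically, I would first record as a lemma the following: given a ring homomorphism $\phi\colon R\to R'$ between rings of the type used in Section~\ref{section:pre} and a representation $\rho\colon \Z\pi_1X\to R$ such that both $C_*(X,Y;R)$ (via $\rho$) and $C_*(X,Y;R')$ (via $\phi\circ\rho$) are acyclic, one has $K_1(\phi)(\tau^{\rho}(X,Y,\xi))=\tau^{\phi\circ\rho}(X,Y,\xi)$ in $K_1(R')$. This is immediate from the alternating-product definition recalled in Section~\ref{section:torsion}: the matrices $[b_i\tilde{b}_{i-1}/c_i]$ transform covariantly under $\phi$, and the sign correction $\sign\tau(C_*(X,Y;\Q),\oo)$ is independent of the coefficient ring.

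Apply this naturality to the continuous ring homomorphism $\phi\colon\widehat{\Q\pi}\to \widehat{\Q H_\Z}$ induced by the abelianization $\pi\to H_\Z$. Both chain complexes $C_*(M,\partial_-M;\widehat{\Q\pi})$ and $C_*(M,\partial_-M;\widehat{\Q H_\Z})$ are acyclic by Lemma~\ref{lem:acylic1}: in each case group elements have augmentation $1$, so the representation lands in $1$ plus the maximal ideal. This yields
\[
K_1(\phi)(\tilde{\alpha}(M))=\tau^{\phi\circ\tilde{\rho}}(M,\partial_-M,\xi_0)\in K_1(\widehat{\Q H_\Z}),
\]
where $\phi\circ\tilde{\rho}$ factors as $\Z\pi_1M\to \Z H_1(M;\Z)\cong \Z H_\Z\hookrightarrow \widehat{\Q H_\Z}$.

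It remains to identify the right-hand side with the image of $\alpha(M)\in 1+IH_\Z$ under $1+IH_\Z\hookrightarrow \widehat{\Q H_\Z}^\times\cong K_1(\widehat{\Q H_\Z})$. By \cite[Proposition~4.1]{FJR11} and \cite[Proposition~3(1)]{GoSa13}, $\alpha(M)$ equals $\det A$ in $K_1(Q(\Z H_\Z))\cong Q(\Z H_\Z)^\times$ for some square matrix $A$ with entries in $\Z H_\Z$, obtained from the $\Z H_\Z$-twisted cellular chain complex of $(M,\partial_-M)$ together with the preferred Euler structure $\xi_0$; Turaev's theorem forces $\det A\in 1+IH_\Z$. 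Viewing the same matrix $A$ over $\widehat{\Q H_\Z}$ through the inclusion $\Z H_\Z\hookrightarrow \widehat{\Q H_\Z}$, the very same alternating-product recipe applied to the tensored complex presents $\tau^{\phi\circ\tilde{\rho}}(M,\partial_-M,\xi_0)$ as $\det A$ in $K_1(\widehat{\Q H_\Z})$ as well, matching the image of $\alpha(M)$.

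The main obstacle is this last compatibility: one must verify that the single cellular basis and the fundamental family of cells induced by $\xi_0$ give rise to the same matrix $A$ whether one tensors up to $Q(\Z H_\Z)$ or to $\widehat{\Q H_\Z}$, so that the integrality statement of \cite{FJR11,GoSa13} can be reused verbatim in the completed setting. This requires a careful tracking of how the preferred Euler structure interacts with the two coefficient extensions; once it is in place, the two applications of naturality line up and the square commutes.
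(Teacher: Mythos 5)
Your first step—naturality of the Reidemeister--Turaev torsion applied to $\phi\colon\widehat{\Q\pi}\to\widehat{\Q H_\Z}$—matches the paper's argument exactly (this is the paper's ``first diagram,'' where $\alpha'=\tau^{\rho'}$ lives in $K_1(\widehat{\Q H_\Z})$). Where the two approaches diverge is the second step: relating $\tau^{\phi\circ\tilde\rho}(M,\partial_-M,\xi_0)$ to the image of $\alpha(M)\in 1+IH_\Z$. You reach for the matrix formula of \cite{FJR11,GoSa13} and argue that a single matrix $A$ over $\Z H_\Z$ computes both torsions; but you then flag, correctly, that this compatibility is precisely the piece that still needs proof. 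The difficulty is that $Q(\Z H_\Z)$ and $\widehat{\Q H_\Z}$ are not connected by a ring homomorphism, so one cannot simply invoke naturality a second time, and ``reusing the same matrix'' is exactly the assertion at stake.

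The paper closes this gap differently and more cleanly: it introduces the localization $(\Q H_\Z)\Gamma^{-1}$ with $\Gamma=1+IH_\Z$, which does map into both $Q(\Z H_\Z)$ (via $\xi$) and $\widehat{\Q H_\Z}$ (via $\xi'$), with both induced maps on $K_1$ injective. The torsion $\alpha''$ defined over $(\Q H_\Z)\Gamma^{-1}$ maps by naturality to $\alpha$ and to $\alpha'$; injectivity of $\xi_*$ then forces $\alpha''$ to be the image of $\alpha(M)\in 1+IH_\Z$, and applying $\xi'_*$ gives the desired identification without ever tracking an explicit cellular matrix. This intermediate ring plays precisely the role your ``same matrix $A$'' heuristic was trying to play: it is a common domain where the torsion lives before being pushed to either target. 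Your route is salvageable—one would essentially show that the presentation-matrix formula holds over $(\Q H_\Z)\Gamma^{-1}$, which is a local ring to which Lemma~\ref{lem:torsion-of-presentation} applies—but as written it leaves the central compatibility unproven, whereas the paper's two-naturality-plus-injectivity argument supplies it.
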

\begin{proof}
Let us denote by $(\Q H_\Z)\Gamma^{-1}$ the localization of $\Q H_\Z$ with respect to the multiplicatively closed set $\Gamma=1+IH_\Z\subset \Q H_\Z$,
and let
\begin{align*}
&\rho'\colon\Z\pi_1M\to \Z H_1(M;\Z)\cong \Z H_\Z \to \widehat{\Q H_\Z},\\
&\rho''\colon \Z\pi_1M\to \Z H_1(M;\Z) \cong\Z H_\Z\to (\Q H_\Z)\Gamma^{-1}.
\end{align*}
Since $\widehat{\Q H_\Z}$ and $(\Q H_\Z)\Gamma^{-1}$ are local rings,
we obtain the Reidemeister-Turaev torsions
\begin{align*}
\alpha'(M) &= \tau^{\rho'}(M,\partial_-M,\xi_0)\in K_1(\widehat{\Q H_\Z}), \\
\alpha''(M) &= \tau^{\rho''}(M,\partial_-M,\xi_0)\in K_1((\Q H_\Z)\Gamma^{-1})
\end{align*}
by Lemma~\ref{lem:acylic1}.
We show that two diagrams
\[
\xymatrix{
\I\C \ar[r]^-{\tilde{\alpha}} \ar[rd]_-{\alpha'} & K_1(\widehat{\Q\pi}) \ar[d]\\
 & K_1(\widehat{\Q H_\Z}),
}
\qquad
\xymatrix{
\I\C \ar[d]_-{\alpha} \ar[rd]^-{\alpha'} &\\
1+IH_\Z \ar[r]& K_1(\widehat{\Q H_\Z})
}
\]
commute.
The first diagram commutes by the naturality of the Reidemeister-Turaev torsion.
Let us denote by $\xi\colon (\Q H_\Z)\Gamma^{-1}\to Q(\Z H_\Z)$ the composition of the natural homomorphism $(\Q H_\Z)\Gamma^{-1}\to Q(\Q H_\Z)$ and the isomorphism $Q(\Q H_\Z)\cong Q(\Z H_\Z)$.
It induces an injective homomorphism $\xi_*\colon K_1((\Q H_\Z)\Gamma^{-1})\to K_1(Q(\Z H_\Z))$ since $K_1((\Q H_\Z)\Gamma^{-1})\cong ((\Q H_\Z)\Gamma^{-1})^\times$ and $\Q H_\Z$ is an integral domain.
The homomorphism $\xi'\colon (\Q H_\Z)\Gamma^{-1}\to \widehat{\Q H_\Z}$ defined by $f/g\mapsto fg^{-1}$ also induces an injective homomorphism $\xi'_*\colon K_1((\Q H_\Z)\Gamma^{-1})\to K_1(\widehat{\Q H_\Z})$.
As a consequence, we have a commutative diagram
\[
\xymatrix{
&\I\C \ar[ld]_-{\alpha} \ar[rd] \ar[d]^-{\alpha''} \ar[rd]^-{\alpha'}& \\
K_1(Q(\Z H_\Z))& K_1((\Q H_\Z)\Gamma^{-1})\ar[l]^{\xi_*}  \ar[r]_{\quad \xi'_*} &K_1(\widehat{\Q H_\Z}).
}
\]
Since $K_1((\Q H_\Z)\Gamma^{-1})\to K_1(Q(\Z H_\Z))$ is injective,
the homomorphism $\alpha''\colon \I\C\to K_1((\Q H_\Z)\Gamma^{-1})$ lifts to $\alpha\colon \I\C\to 1+I H_\Z$
with respect to the injective homomorphism $1+I H_\Z \to K_1((\Q H_\Z)\Gamma^{-1})$
defined by $f\mapsto f/1$,
and we see that the second diagram commutes.
\end{proof}

\subsection{$\tilde{\alpha}$ is a crossed homomorphism}
\label{subsec:crossed_hom}
Given a homology cylinder $(M,i)\in\C$,
we denote the automorphism
\[
\widehat{\Q\pi}\xrightarrow{i_+} \widehat{\Q\pi_1(M)} \xrightarrow{i_-^{-1}}\widehat{\Q\pi}
\]
by $\sigma_M$.
It also induces an automorphism on $K_1(\widehat{\Q\pi})$.
Here, we prove:
\begin{proposition}\label{prop:crossed homo}
The map $\tilde{\alpha}\colon \C\to K_1(\widehat{\Q\pi})/\tilde{\rho}(H_\Z)$ is a crossed homomorphism, that is,
\[
\tilde{\alpha}(M\circ N)=\tilde{\alpha}(M)\cdot(\sigma_M)_*\tilde{\alpha}(N)\in K_1(\widehat{\Q\pi})/\tilde{\rho}(H_\Z).
\]
Moreover,  $\tilde{\alpha}\colon \I\C\to K_1(\widehat{\Q\pi})$ is also a crossed homomorphism. 
\end{proposition}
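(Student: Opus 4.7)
The plan is to deduce the crossed-homomorphism identity from the standard multiplicativity of torsion applied to the triple of CW-pairs $(M\circ N,\, M,\, \partial_-M)$. After choosing compatible triangulations, this triple gives a short exact sequence of chain complexes
\[
0\to C_*(M, \partial_-M; \widehat{\Q\pi}) \to C_*(M\circ N, \partial_-M; \widehat{\Q\pi}) \to C_*(M\circ N, M; \widehat{\Q\pi})\to 0,
\]
where the coefficient system on $M\circ N$ is determined by $\pi_1(M\circ N)\xrightarrow{(i_-)_*^{-1}}\pi\hookrightarrow \widehat{\Q\pi}^\times$. All three complexes are acyclic by Lemma~\ref{lem:acylic1}, so the standard multiplicativity of torsion (see, e.g., \cite[Chapter~I]{Tur01}) applied to compatible bases yields
\[
\tilde{\alpha}(M\circ N) = \tau^{\tilde{\rho}}(M,\partial_-M)\cdot \tau(C_*(M\circ N, M; \widehat{\Q\pi}))
\]
in $K_1(\widehat{\Q\pi})/\tilde{\rho}(H_\Z)$. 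The first factor is $\tilde{\alpha}(M)$, since the coefficient system on $M\circ N$ restricts on $M$ to the natural one via $(i_-)_*^{-1}$.

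For the second factor, excision identifies $C_*(M\circ N, M; \widehat{\Q\pi})$ with $C_*(N, \partial_-N; \widehat{\Q\pi})$ as chain complexes, and the task is to compare the coefficient system on $N$ inherited from $N\hookrightarrow M\circ N$ with its natural one. Fix an auxiliary arc $\beta\subset M$ from $i_-(*)$ to $i_+(*)=j_-(*)$. For $x\in\pi$, the loop $(j_-)_*(x)\subset N$, conjugated by $\beta$ for basepoint change, represents in $\pi_1(M\circ N, i_-(*))$ a loop that can be homotoped into $M$; since $j_-=i_+$ on the middle surface, this loop equals $\beta\cdot(i_+)_*(x)\cdot\beta^{-1}$ inside $\pi_1(M, i_-(*))$. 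Applying $(i_-)_*^{-1}$ produces $(i_-)_*^{-1}(\beta)\cdot \sigma_M(x)\cdot (i_-)_*^{-1}(\beta)^{-1}$. Since conjugation by a unit of $\widehat{\Q\pi}$ acts trivially on $K_1(\widehat{\Q\pi})$, the inherited coefficient system on $N$ coincides, up to a $K_1$-trivial conjugation, with $\sigma_M$ composed with the natural one. Hence the second torsion factor equals $(\sigma_M)_*\tilde{\alpha}(N)$ modulo $\tilde{\rho}(H_\Z)$, which establishes the crossed-homomorphism identity on $\C$.

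For the refinement to $\I\C$, the identity must be lifted from $K_1(\widehat{\Q\pi})/\tilde{\rho}(H_\Z)$ to $K_1(\widehat{\Q\pi})$. This amounts to verifying that the preferred Euler structures of \cite[Definition~3.8]{MaMe13} are multiplicative under stacking, i.e.\ that $\xi_0(M\circ N)$ agrees with the concatenation of $\xi_0(M)$ and $\xi_0(N)$ under the natural affine action induced by the short exact sequence above on $\widehat{\Eul}(M\circ N,\partial_-M)$. This compatibility follows from the construction of $\xi_0$ in \cite{MaMe13} from canonical vector fields that glue naturally across the middle surface. The principal technical obstacle is this bookkeeping of Euler structures and coefficient systems: identifying the restricted system on the excised subcomplex $N$ as $\sigma_M$-twisted, verifying that the auxiliary conjugation by $\beta$ leaves no trace at the $K_1$-level, and confirming that preferred Euler structures compose as expected under stacking.
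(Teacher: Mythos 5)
Your first half (the triple-torsion/excision argument) is essentially a spelled-out version of what the paper invokes by citing \cite[Proposition~3.5]{CFK11}; it is the right idea, though note that the Milnor multiplicativity theorem only gives the identity in $K_1(\widehat{\Q\pi})/\pm\tilde{\rho}(H_\Z)$, and you do not address the residual sign. The paper disposes of the sign by pushing both sides forward along $K_1(\widehat{\Q\pi})\to K_1(\Q)$ (augmentation) and comparing with the rational torsion via Proposition~\ref{prop:compare-torsions}; your proposal is silent on this.

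The second half is where the real gap is. You propose to lift the identity from $K_1(\widehat{\Q\pi})/\tilde{\rho}(H_\Z)$ to $K_1(\widehat{\Q\pi})$ by showing directly that the preferred Euler structures of \cite[Definition~3.8]{MaMe13} are compatible with stacking, but the sentence ``This compatibility follows from the construction of $\xi_0$ \ldots\ from canonical vector fields that glue naturally across the middle surface'' is an assertion of exactly what needs to be verified, not an argument. The behaviour of $\xi_0$ under composition is itself the nontrivial content of \cite[Lemma~3.14]{MaMe13}, so appealing to ``naturality'' here is circular unless you actually run that bookkeeping. The paper sidesteps this entirely: it observes that any discrepancy is of the form $\tilde{\rho}(h)$ with $h\in H_\Z$, and then kills $h$ by reducing modulo $\hat{I}^2$ and invoking Lemma~\ref{lem:euler-str} together with the additivity of $\tau_1$ (plus the fact that $\sigma_M$ acts trivially on $K_1(\widehat{\Q\pi}/\hat{I}^2)$ and that $\tilde{\rho}|_{H_\Z}$ is injective there). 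This degree-one computation replaces all the Euler-structure gluing you are trying to do by hand, and is the step your proposal is missing.
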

Note that $\alpha\colon \I\C\to K_1(Q(\Z H_\Z))$ is shown to be a crossed homomorphism in \cite[Lemma~3.14]{MaMe13}.
To prove Proposition~\ref{prop:crossed homo},
we need the following lemma.

\begin{lemma}\label{lem:euler-str}
For $M\in \I\C$,
\[
\tilde{\alpha}(M) = \tilde{\rho}\left(-\frac{1}{2}(C\circ\tau_1)(M)\right) \in K_1(\widehat{\Q\pi}/\hat{I}^2),
\]
where $\tilde{\rho}\colon H_\Z\to K_1(\widehat{\Q\pi}/\hat{I}^2)$ is the induced homomorphism by $\tilde{\rho}\colon \pi_1M\to \widehat{\Q\pi}/\hat{I}^2$.
\end{lemma}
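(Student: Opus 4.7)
\emph{Plan.}
Because every commutator $ghg^{-1}h^{-1}$ lies in $1+I^2$, the quotient $\widehat{\Q\pi}/\hat{I}^2$ is commutative and the natural surjection identifies it with $\Q H_\Z/(IH_\Z)^2\cong \Q\oplus H$, where $H\cdot H=0$. Via Proposition~\ref{prop:compare-torsions}, the reduction of $\tilde{\alpha}(M)$ modulo $\hat{I}^2$ corresponds to the reduction modulo $(IH_\Z)^2$ of the abelian torsion $\alpha(M)\in 1+IH_\Z$. Writing $\alpha(M)\equiv 1+\ell(M)\pmod{(IH_\Z)^2}$ with $\ell(M)\in H$, the lemma becomes the identity $\ell(M)=-\tfrac{1}{2}(C\circ\tau_1)(M)$ in $H$.

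To compute $\ell(M)$, I would choose a Morse function on $(M,\partial_-M)$ with only index-$1$ and index-$2$ critical points, necessarily in equal number $n$ because $M$ is a homology cobordism between connected surfaces. The resulting chain complex over $\Q H_\Z$ has the form
\[
0\longrightarrow (\Q H_\Z)^n \xrightarrow{A} (\Q H_\Z)^n\longrightarrow 0,
\]
where $A$ is the Fox Jacobian of the 2-handle attaching words relative to loops through the 1-handles. Writing $A\equiv A_0+A_1\pmod{(IH_\Z)^2}$ with $A_0\in M_n(\Z)$ unimodular and $A_1\in M_n(H)$, the vanishing $H\cdot H=0$ in the quotient gives $\det A\equiv (\det A_0)\bigl(1+\tr(A_0^{-1}A_1)\bigr)$. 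Combined with the sign correction in the definition of $\tau^\rho$ and the canonical first-order vanishing of the Euler-structure adjustment for $\xi_0$ (by the construction of the preferred Euler structure in \cite{MaMe13}), this yields $\ell(M)=\tr(A_0^{-1}A_1)$.

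For the final identification, I would argue that both sides define group homomorphisms $\I\C\to H$: on the left because $\sigma_M$ acts trivially on $H$ for $M\in\I\C$, which reduces Massuyeau-Meilhan's crossed-homomorphism property of $\alpha$ to a genuine homomorphism modulo $(IH_\Z)^2$; on the right because $\tau_1$ is a homomorphism on $J_1\C=\I\C$ and $C$ is $\Q$-linear. Moreover both vanish on $Y_2\I\C\subset J_2\C=\Ker\tau_1$, the vanishing of $\ell$ following from a direct $Y_2$-clasper computation showing that Fox derivatives of commutators-of-commutators lie in $(IH_\Z)^2$. It therefore suffices to check the identity on generators of $\I\C/Y_2\I\C$, which are realized by single $Y$-graph clasper surgeries $M_T$ on the trivial cylinder. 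For such $M_T$ with leaves representing $a,b,c\in H_\Z$, the standard surgery formula gives $\tau_1(M_T)$ as an explicit expression in $a,b,c$, and $\alpha(M_T)$ can be computed via the Borromean-rings description of $Y$-claspers; comparison of these two explicit expressions produces the identity $\ell(M_T)=-\tfrac{1}{2}C(\tau_1(M_T))$.

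The technical heart is the Borromean computation of $\alpha(M_T)$: the Alexander-type invariant of a Borromean surgery is classical, but one has to propagate Massuyeau-Meilhan's preferred Euler structure $\xi_0$ through the clasper description so that both the sign and the coefficient $\tfrac{1}{2}$ emerge correctly. An alternative route, matching $\tr(A_0^{-1}A_1)$ with $-\tfrac{1}{2}(C\circ\tau_1)(M)$ directly via Magnus expansion of Fox derivatives and Poincar\'e duality on the cobordism, is feasible but involves substantially more intricate linear-algebraic bookkeeping.
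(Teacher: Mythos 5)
Your reduction from $K_1(\widehat{\Q\pi}/\hat{I}^2)$ to the abelian torsion is exactly what the paper does: observe that $\widehat{\Q\pi}/\hat{I}^2$ is commutative and naturally isomorphic to $\widehat{\Q H_\Z}/(\widehat{IH_\Z})^2$, so that via Proposition~\ref{prop:compare-torsions} the non-commutative statement is equivalent to
\[
\alpha(M)\equiv \rho\Bigl(-\tfrac{1}{2}(C\circ\tau_1)(M)\Bigr)\quad\text{in }(1+IH_\Z)/(IH_\Z)^2.
\]
Where you diverge from the paper is that the paper's proof stops here, because this abelian identity \emph{is} Massuyeau--Meilhan's \cite[Lemma~3.12]{MaMe13}, which it cites. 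You instead propose to re-derive it from scratch via a Morse-theoretic handle decomposition, a homomorphism-on-$\I\C$ argument, vanishing on $Y_2\I\C$, and a $Y$-clasper surgery computation. That program is morally the content of the MaMe13 lemma, so you are reinventing a published result rather than invoking it.

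Beyond the inefficiency, one step in your plan as written is dubious: you assert ``the canonical first-order vanishing of the Euler-structure adjustment for $\xi_0$'' and conclude $\ell(M)=\tr(A_0^{-1}A_1)$. But the Euler-structure ambiguity is $\rho(H_\Z)$, and modulo $(IH_\Z)^2$ each $\rho(\gamma)=1+[\gamma]$ contributes precisely at first order; the whole point of the preferred Euler structure $\xi_0$ is that it makes a \emph{specific}, generally nonzero, first-order contribution (and this is exactly what produces the $-\tfrac{1}{2}C\circ\tau_1$ on the right-hand side). So the phrase ``first-order vanishing'' is either a misstatement or presupposes the conclusion; the required tracking of $\xi_0$ through the Morse/Borromean model is precisely what you later acknowledge as ``the technical heart'' but do not carry out. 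If you do want a self-contained argument, you need to spell out that step; otherwise the cleaner route is to cite \cite[Lemma~3.12]{MaMe13} directly, exactly as the paper does.
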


\begin{proof}
In our convention for chain complexes,
we have
\[
\alpha(M)=\rho\left(-\frac{1}{2}(C\circ \tau_1)(M)\right)\in (1+IH_\Z)/(IH_\Z)^2
\]
by \cite[Lemma~3.12]{MaMe13}.
By the commutative diagrams in the proof of Proposition~\ref{prop:compare-torsions},
we have
\[
\alpha'(M)=\rho'\left(-\frac{1}{2}(C\circ \tau_1)(M)\right)\in K_1(\widehat{\Q H_\Z}/(\widehat{IH_\Z})^2).
\]
The determinant gives an isomorphism
\[
K_1(\widehat{\Q H_\Z}/(\widehat{IH_\Z})^2)\cong (\widehat{\Q H_\Z}/(\widehat{IH_\Z})^2)^\times,
\]
and it is isomorphic to the abelian group $(\widehat{\Q \pi}/\hat{I}^2)^\times$,
where the isomorphism is induced by the natural homomorphism $\Q\pi\to\Q H_\Z$.
Thus, we have an isomorphism
\begin{equation}
\label{eq:k1isom}
K_1(\widehat{\Q\pi}/\hat{I}^2)
\cong K_1(\widehat{\Q H_\Z}/(\widehat{IH_\Z})^2),
\end{equation}
and we obtain
\[
\tilde{\alpha}(M)=\tilde{\rho}\left(-\frac{1}{2}(C\circ\tau_1)(M)\right)\in K_1(\widehat{\Q \pi}/\hat{I}^2)
\]
by the commutative diagrams in the proof of Proposition~\ref{prop:compare-torsions}.
\end{proof}

\begin{proof}[Proof of Proposition~\ref{prop:crossed homo}]
Let $M,N\in \C$.
Similar to \cite[Proposition 3.5]{CFK11},
we obtain
\[
\tilde{\alpha}(M\circ N)
=\tilde{\alpha}(M)
\cdot(\sigma_M)_*\tilde{\alpha}(N)\in K_1(\widehat{\Q\pi})/{\pm}\tilde{\rho}(H_\Z).
\]
See also \cite[Section~7]{Mil66}, \cite[Proposition~6.6]{Sak08}, and \cite[Proposition~3.10]{Kit12} for related results.
Thus, there exists $\epsilon\in\{\pm1\}$ such that
\[
\tilde{\alpha}(M\circ N)
=\epsilon\cdot \tilde{\alpha}(M)
\cdot(\sigma_M)_*\tilde{\alpha}(N)\in K_1(\widehat{\Q\pi})/\tilde{\rho}(H_\Z).
\]
If we map it by the induced map $K_1(\widehat{\Q\pi})\to K_1(\Q)$ by the augmentation map $\widehat{\Q\pi}\to\Q$,
we see that $\epsilon=1$ by Proposition~\ref{prop:compare-torsions}.
Next, suppose that $M,N\in\I\C$.
We see that 
\[
\tilde{\alpha}(M\circ N)
=\tilde{\rho}(h)\cdot \tilde{\alpha}(M)
\cdot(\sigma_M)_*\tilde{\alpha}(N)\in K_1(\widehat{\Q\pi})
\]
for some $h\in H_\Z$.
By the equality~(\ref{eq:k1isom}),
the homomorphism $\sigma_M$ induces the identity map on $K_1(\widehat{\Q \pi}/\hat{I}^2)$.
Thus, Lemma~\ref{lem:euler-str} implies that
\[
1 = \tilde{\rho}(h)\in K_1(\widehat{\Q\pi}/\hat{I}^2)\cong (\widehat{\Q H_\Z}/(\widehat{IH_\Z})^2)^\times,
\]
and we have $1=\tilde{\rho}(h) \in K_1(\widehat{\Q\pi})$ by Corollary~\ref{cor:gr} below.
\end{proof}

\subsection{Computing Reidemeister-Turaev torsions}
\label{section:compute-RT}
In \cite{GoSa13},
Goda and \mbox{Sakasai} described the Reidemeister torsion from a presentation of the fundamental group of a homology cylinder.
See also \cite[Proposition~4.1]{FJR11}, for a similar result in sutured 3-manifolds.
Here, we apply their results to our torsion $\tilde{\alpha}\colon \C\to K_1(\widehat{\Q\pi})/\tilde{\rho}(H_\Z)$.

Let $(M,i)\in\C$.
Recall that we denote by $i_{\pm}\colon\Sigma_{g,1}\to M$ the restriction of $i$ to the top and bottom surfaces $\Sigma_{g,1}\times\{\pm 1\}$, respectively.
A presentation $P$ of $\pi_1M$ of the form
\[
\braket{
i_+(\gamma_1),\ldots, i_+(\gamma_{2g}),z_1,\ldots,z_l,i_-(\gamma_1),\ldots,i_-(\gamma_{2g})\mid
r_1,\ldots,r_{2g+l}}
\]
for some integer $l\ge0$ is called balanced.
For a balanced presentation $P$, 
let us consider the matrices
\[
A=\left(
\overline{\frac{\partial r_j}{\partial i_+(\gamma_i)}}
\right)_{\substack{1\le i\le 2g\\1\le j\le 2g+l}}
\text{ and }
B=\left(
\overline{\frac{\partial r_j}{\partial z_i}}
\right)_{\substack{1\le i\le l\\1\le j\le 2g+l}},
\]
where each entry is defined by Fox's free derivative and $\bar{v}\in \Z\pi_1M$ is obtained from $v\in \Z\pi_1M$ by applying the involution induced from $x\mapsto x^{-1}$ for $x\in\pi_1M$.
We denote
\[
A(P)=
\begin{pmatrix}
A\\
B
\end{pmatrix}.
\]

\begin{remark}
The matrix $A(P)$ corresponds to the transpose of the Alexander matrix in \cite[Definition~16.4]{Tur01} after taking the bar of each entry.
This is due to the convention for chain complexes we adopted.
\end{remark}

\begin{proposition}[{\cite[Proposition~3(1)]{GoSa13}}]\label{prop:torsion-of-presentation}
Let $\Gamma$ be a poly-torsion-free abelian group, 
and let  $\rho\colon\Z\pi_1M\to Q(\Z\Gamma)$ be a homomorphism induced by a group homomorphism $\pi_1 M\to \Gamma$.
If $P$ is a balanced presentation of $\pi_1M$,
we have
\[
\tau^\rho(M,\partial_- M)=\rho(A(P))
\in K_1(Q(\Z\Gamma))/{\pm}\rho(H_\Z).
\]
\end{proposition}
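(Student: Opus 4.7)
The plan is to realize the balanced presentation $P$ geometrically as a relative CW decomposition of the pair $(M,\partial_-M)$, and then read off the boundary operator as a Fox Jacobian. First I would equip $\partial_-M$ with the standard CW structure on $\Sigma_{g,1}$: one $0$-cell at the basepoint, $2g$ oriented $1$-cells realizing the loops $i_-(\gamma_1),\ldots, i_-(\gamma_{2g})$, and a single $2$-cell. Then, since $M$ is a compact $3$-manifold with non-empty boundary, it collapses to a spine relative to $\partial_-M$, so we may extend the CW structure on $\partial_-M$ to one on $M$ whose only relative cells are $2g+l$ relative $1$-cells (one for each of $i_+(\gamma_1),\ldots,i_+(\gamma_{2g}),z_1,\ldots,z_l$) and $2g+l$ relative $2$-cells attached along the relators $r_1,\ldots,r_{2g+l}$. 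The balance in the presentation matches the balance forced by the 3-manifold topology and the vanishing of $H_*(M,\partial_-M;\Z)$.

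With this structure in hand, the twisted chain complex $C_*(M,\partial_-M;Q(\Z\Gamma))$ is concentrated in degrees $1$ and $2$, with bases coming from the relative cells above and of equal rank $2g+l$. Applying Fox's free differential calculus and being careful about the right-module convention used for chain complexes in Section~\ref{section:torsion} — which is the reason the bar involution appears in the definition of $A(P)$ — the boundary operator $\partial_2$ in these bases is represented by the matrix $\rho(A(P))$. Only the derivatives with respect to $i_+(\gamma_i)$ and $z_k$ contribute because the relative chain group $C_1(M,\partial_-M)$ is generated only by the cells in $M\setminus \partial_-M$; the crossings of a $2$-cell with $1$-cells lying in $\partial_-M$ are killed by the quotient. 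Acyclicity then makes $\partial_2$ invertible over $Q(\Z\Gamma)$, and since this is a two-term acyclic complex the torsion is simply $[\partial_2]=[\rho(A(P))]\in K_1(Q(\Z\Gamma))$.

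The last step is to compare this computation with the definition of $\tau^\rho(M,\partial_-M)$ in Section~\ref{section:torsion}, which introduces a possible sign from the choice of ordering/orientation $\oo$ and a $\rho(H_\Z)$-indeterminacy from the choice of a family of lifts $\tilde E$. Both ambiguities together are exactly the $\pm\rho(H_\Z)$ coset appearing in the statement, so the equality holds modulo $\pm\rho(H_\Z)$. The main technical obstacle I would expect is bookkeeping: translating the chosen convention (right action, transpose, bar involution, orientations of cells) so that the Fox Jacobian appears as $A(P)$ on the nose rather than as its transpose or its un-barred version. A secondary but real subtlety is producing the spine-type CW structure above directly from an abstract balanced presentation $P$; this is standard for compact $3$-manifolds with boundary, but in practice one needs to realize the generators $i_+(\gamma_i),z_k$ and relators $r_j$ as embedded $1$- and $2$-handles, which is the place where one imports \cite[Proposition~3(1)]{GoSa13} to finish cleanly.
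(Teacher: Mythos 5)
The paper does not give its own proof of this proposition---it is taken verbatim from Goda--Sakasai \cite[Proposition~3(1)]{GoSa13}---so there is no in-paper argument to compare against; but your strategy is the standard one and, as far as I can tell, the one used in \cite{GoSa13}: build a relative CW spine for $(M,\partial_-M)$, identify $\partial_2$ of the resulting two-term acyclic relative complex with the (barred) Fox Jacobian, and conclude $\tau=[\partial_2]$. Your bookkeeping is right: the cells of $\partial_-M$ die in the relative complex, so only the derivatives $\overline{\partial r_j/\partial i_+(\gamma_i)}$ and $\overline{\partial r_j/\partial z_k}$ survive, and the bar is forced by the right-module convention $c\cdot\gamma = t_\gamma^{-1}(c)$ used in Section~\ref{section:torsion}.

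The genuine gap is the realizability step. You assert that the given abstract balanced presentation $P$ can be realized directly as a relative CW structure on $(M,\partial_-M)$, but this is not automatic for an arbitrary $P$. What one actually proves is: (i) collapsing $M$ to a spine rel $\partial_-M$ produces one specific geometric balanced presentation $P_0$, to which your two-term computation applies, giving $\tau^\rho(M,\partial_-M)=\rho(A(P_0))$; and (ii) any other balanced presentation $P$ of the prescribed form is related to $P_0$ by Tietze transformations and relator conjugations/reorderings, and these moves change $[\rho(A(P))]\in K_1(Q(\Z\Gamma))$ only by factors in $\pm\rho(H_\Z)$. Step (ii) is a nontrivial algebraic invariance statement that you neither formulate nor prove; your closing sentence acknowledges the difficulty but then proposes to ``import \cite[Proposition~3(1)]{GoSa13}'' to finish, which is exactly the proposition being proved, so the argument becomes circular precisely at the step where all the content lies. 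Much more minor: your stated CW structure on $\partial_-M$ (one $0$-cell, $2g$ one-cells, one $2$-cell) has the wrong Euler characteristic for $\Sigma_{g,1}$; either drop the $2$-cell or add a $1$-cell. This does not affect the relative complex $C_*(M,\partial_-M)$, but it should be stated correctly.
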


Here we recall its proof to obtain a corresponding result in our situation.
Goda and Sakasai showed that a square matrix with coefficients in $Q(\Z\Gamma)$ is invertible when its reduction to $\Z$-coefficients is invertible.
The argument holds if we replace $Q(\Z\Gamma)$ and $\Z$ by a local ring $R$ and a skew field $\mathfrak{m}\backslash R$, respectively,
as in the proof of Proposition~2.2.4 in \cite{Ros94}.
Moreover, in the proof of the equality in \cite[Proposition~3(1)]{GoSa13}, the case where a $3$-manifold $M$ is irreducible is easy to treat.
In the case where $M$ is reducible, they need the fact that any homomorphism from a group whose abelianization is finite to a poly-torsion-free abelian group is trivial.
We replace this argument by the fact that for a complete augmented algebra $R$ over $\Q$ (see \cite[Appendix~A.1]{Qui69} for the definition), any homomorphism from a perfect group to $R^\times$ is trivial.
Note that the augmented ideal $I$ is the maximal ideal $\mathfrak{m}$ for such an $R$.
Hence, we obtain the following.

\begin{lemma}\label{lem:goda-sakasai}
Let $R$ be a complete augmented algebra over $\Q$
and let $\rho\colon\Z\pi_1M\to R$ be a homomorphism satisfying $\rho(\pi_1M)\subset 1+I$.
If $P$ is a balanced presentation of $\pi_1M$,
we have
\[
\tau^{\rho}(M,\partial_-M)=\rho(A(P))
\in K_1(R)/{\pm} \rho(H_\Z).
\]
\end{lemma}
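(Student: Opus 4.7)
The plan is to mimic the proof of Proposition~\ref{prop:torsion-of-presentation} from Goda--Sakasai verbatim, replacing $Q(\Z\Gamma)$ by the local ring $R$ at every step, and only checking that each place where they invoke a field/fraction-ring property still goes through under our weaker hypothesis that $\rho(\pi_1M)\subset 1+\mathfrak{m}$.

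First I would build a CW-structure on the pair $(M,\partial_-M)$ from the balanced presentation~$P$. Thicken $\partial_-M$ and attach a $1$-handle for each generator $z_j$ together with a $1$-handle realising the identification of each $i_+(\gamma_i)$ with a loop in $M$; then attach one $2$-handle along each relator $r_j$. (Equivalently, collapse $M$ onto a $2$-complex relative to $\partial_-M$ with $2g+l$ one-cells and $2g+l$ two-cells.) Lifting these cells to the universal cover and using Fox calculus, the $\rho$-twisted cellular chain complex of $(M,\partial_-M)$ reduces to the two-term complex
\[
0 \longrightarrow R^{2g+l} \xrightarrow{\ \rho(A(P))\ } R^{2g+l} \longrightarrow 0
\]
concentrated in degrees $2$ and $1$, where the basis on the right comes from the generators $i_+(\gamma_i),z_j$ and the basis on the left comes from the relators $r_j$, up to the usual chain-level sign conventions for our choice of twisting.

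The key technical point is the following local-ring analogue of the Goda--Sakasai invertibility lemma: a square matrix $X$ with entries in $R$ is invertible if and only if its reduction $\bar X$ modulo~$\mathfrak{m}$ is invertible over the skew field $R/\mathfrak{m}$. This is exactly the content of the proof of Proposition~2.2.4 in \cite{Ros94} and uses nothing beyond $R$ being local. Applying this with the hypothesis $\rho(\pi_1M)\subset 1+\mathfrak{m}$: the matrix $\rho(A(P))$ reduces modulo~$\mathfrak{m}$ to the integral Fox Jacobian of the presentation~$P$, which is invertible over $\Z$ (hence over $R/\mathfrak{m}$) because $P$ is a balanced presentation of a group whose integral cellular chain complex for $(M,\partial_-M)$ is acyclic. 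Thus $\rho(A(P))\in\GL_{2g+l}(R)$, so the twisted complex is acyclic and its torsion is defined.

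With the chain complex reduced to a single invertible matrix in this way, the definition of the torsion gives, up to sign and up to the ordering/orientation ambiguity of cells (which is exactly the $\pm\rho(H_\Z)$ indeterminacy appearing in the statement),
\[
\tau^{\rho}(M,\partial_-M)=\rho(A(P))\in K_1(R)/{\pm}\rho(H_\Z),
\]
matching Proposition~\ref{prop:torsion-of-presentation}. The only step requiring care is that the twisting convention used here (right action via $c\cdot\gamma=t_\gamma^{-1}(c)$, and the bar $\overline{\partial r/\partial x}$ in the definition of $A(P)$) matches Goda--Sakasai's conventions; this is what forces the bar in the entries of $A(P)$ and is the one place where sign/convention errors could creep in. Once the conventions are aligned, the entire argument of \cite[Proposition~3(1)]{GoSa13} is purely formal manipulation of change-of-basis matrices and carries over word-for-word to~$R$.
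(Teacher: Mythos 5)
Your approach is essentially identical to the paper's. The paper's own treatment of Lemma~\ref{lem:goda-sakasai} is very terse — it simply observes (in the paragraph preceding the lemma) that the invertibility argument in the proof of \cite[Proposition~3(1)]{GoSa13} goes through verbatim after replacing $Q(\Z\Gamma)$ and $\Z$ by the local ring $R$ and the skew field $\mathfrak{m}\backslash R$, citing \cite[Proposition~2.2.4]{Ros94} — and then asserts the lemma. Your proposal correctly identifies that invertibility step as the one point where the local-ring hypothesis is used, correctly notes that the $\pm 1$ determinant of the integral Fox Jacobian (coming from $H_*(M,\partial_-M;\Z)=0$ for $M\in\C$) makes the reduction modulo~$\mathfrak{m}$ invertible, and otherwise follows the Goda--Sakasai argument, which matches what the paper does.Your approach is essentially the same as the paper's. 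The paper handles this lemma very tersely: in the paragraph just before the statement it observes that Goda--Sakasai's key step — a square matrix over $Q(\Z\Gamma)$ is invertible when its reduction to $\Z$ is — carries over with $Q(\Z\Gamma)$ and $\Z$ replaced by the local ring $R$ and the skew field $\mathfrak{m}\backslash R$, citing \cite[Proposition~2.2.4]{Ros94}, and then simply asserts the lemma. Your proposal isolates exactly the same invertibility point as the crux, invokes the same Rosenberg argument, correctly uses that the integral Fox Jacobian has determinant $\pm1$ (since $H_*(M,\partial_-M;\Z)=0$) so its image in $R/\mathfrak{m}$ is invertible, and otherwise follows the Goda--Sakasai proof verbatim; this is a fleshed-out version of the paper's own (largely implicit) argument.
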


Furthermore, there exists a balanced presentation such that the equality in Lemma~\ref{lem:goda-sakasai} holds as elements of $K_1(R)$.

\begin{lemma}\label{lem:torsion-of-presentation}
Let $R$ and $\rho\colon\Z\pi_1M\to R$ be as above.
For any Euler structure $\xi$ of $M\in\C$,
there exists a balanced presentation $P$ such that
\[
\tau^\rho(M,\partial_-M,\xi)=\rho(A(P))
\in K_1(R).
\]
\end{lemma}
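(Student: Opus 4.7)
The plan is to refine Lemma~\ref{lem:goda-sakasai} by tracking the Euler structure that a balanced presentation implicitly determines, and then adjusting the presentation by a Tietze-type move to realize any prescribed $\xi$.

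First I would make explicit the correspondence between balanced presentations and Euler structures. A balanced presentation $P$ comes from a relative handle decomposition of $(M,\partial_-M)$ with one $0$-cell on $\partial_-M$, one $1$-cell per generator, and one $2$-cell per relator. Lifting the basepoint to $\tilde{*}\in p^{-1}(*)$ and lifting each attaching word starting at $\tilde{*}$ yields a canonical fundamental family $\tilde{E}_P\subset \tilde{M}\setminus p^{-1}(\partial_-M)$, hence an Euler structure $\xi_P=[\tilde{E}_P]\in\widehat{\Eul}(M,\partial_-M)$. The Fox-calculus argument already used in the proof of Lemma~\ref{lem:goda-sakasai} is then an \emph{equality} in $K_1(R)$ with respect to the basis $\tilde{E}_P\otimes 1$:
\[
\tau^\rho(M,\partial_-M,\xi_P)=\rho(A(P))\in K_1(R).
\]
The $\rho(H_\Z)$-ambiguity in Lemma~\ref{lem:goda-sakasai} comes precisely from the freedom to choose $\xi_P$.

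Next, given an arbitrary $\xi\in\widehat{\Eul}(M,\partial_-M)$, write $\xi=\xi_P+[\gamma]$ for some $\gamma\in\pi_1 M$, and modify $P$ to a balanced presentation $P'$ with $\xi_{P'}=\xi$ and $\rho(A(P'))=\rho(\gamma)\rho(A(P))$. A convenient move is a trivial Tietze extension: introduce a new generator $z_{l+1}$ and the relator $r_{2g+l+1}=z_{l+1}\gamma^{-1}$. The new presentation remains balanced; a direct Fox-derivative calculation shows that the $K_1$-class of $A(P')$ equals $\rho(\gamma)$ times that of $A(P)$; and the canonical lift of the new $1$-cell can be chosen so that $\xi_{P'}=\xi_P+[\gamma]=\xi$, using the sign convention $\epsilon=(-1)^{\dim+1}$ fixed in Section~\ref{section:RT-torsion}.

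Combining the two steps with Lemma~\ref{lem:euler structure} gives
\[
\tau^\rho(M,\partial_-M,\xi)=\rho(\gamma)\,\tau^\rho(M,\partial_-M,\xi_P)=\rho(\gamma)\rho(A(P))=\rho(A(P')),
\]
as desired. The main obstacle lies in the bookkeeping of the first step: one must carefully identify the Euler structure $\xi_P$ arising from a presentation and respect the non-standard sign convention on the $H_\Z$-action on $\widehat{\Eul}(M,\partial_-M)$ adopted in this paper (which differs from Turaev's in \cite{Tur89}), so that the Tietze extension shifts both the matrix determinant and the family of cells in the asserted compatible way.
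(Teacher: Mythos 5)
Your overall plan is close to the paper's: both start from the $\bmod\ \rho(H_\Z)$ statement (Lemma~\ref{lem:goda-sakasai}) and then modify the presentation to realize a prescribed Euler structure. The gap is in the adjustment move. With the Tietze extension $r_{2g+l+1}=z_{l+1}\gamma^{-1}$, Fox calculus gives $\overline{\partial r_{2g+l+1}/\partial z_{l+1}}=\overline{1}=1$, and since $z_{l+1}$ does not occur in the old relators, $A(P')$ is block triangular with diagonal blocks $A(P)$ and $(1)$, so $\rho(A(P'))=\rho(A(P))$ rather than $\rho(\gamma)\rho(A(P))$. Consistently, the canonical lift of the new $2$-cell has $\tilde{z}_{l+1}$ as its free face, so collapsing the pair is compatible with the canonical lifts and $\xi_{P'}=\xi_P$: the move is a no-op on both sides of the equation. (Writing the relator as $\gamma^{-1}z_{l+1}$ instead would give $\overline{\partial r/\partial z_{l+1}}=\gamma$, and the free face of the lifted $2$-cell would then be $t_{\gamma^{-1}}(\tilde{z}_{l+1})$, so the Euler structure would shift; but this is exactly the bookkeeping that needs to be redone.)

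The paper's move is simpler and avoids the expansion altogether: replace $r_i$ by $sr_is^{-1}$ for a generator $s$. Since $\partial(sr_is^{-1})/\partial x_j\equiv s\,\partial r_i/\partial x_j$ modulo $r_i$, the $i$th column of $A(P)$ is multiplied by $\overline{s}=s^{-1}$, so $\rho(A(P))$ changes by $\rho(s)^{\mp1}$; geometrically this is the translation of the lift of the $i$th $2$-cell by $t_s^{\pm1}$, which shifts the Euler structure by $\pm[s]$ with the sign convention of Section~\ref{section:RT-torsion}. Iterating over generators realizes any $\rho(h)$ with $h\in H_\Z$, which suffices. I'd also flag that your first step — the claim that the proof of Lemma~\ref{lem:goda-sakasai} already yields an equality in $K_1(R)$ for the basis $\tilde{E}_P\otimes 1$ — needs to be verified by tracing through \cite[Proposition~3(1)]{GoSa13}; the paper sidesteps this by working modulo $\rho(H_\Z)$, fixing the sign via $\epsilon$, and then adjusting.
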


\begin{proof}
Let $P$ be a presentation of $\pi_1(M)$ as above.
If we change the order of the relators $r_1,\ldots, r_{2g+l}$,
we may assume
$\epsilon(\rho(A(P)))=1\in K_1(\Q)$.
Combining with \cite[Lemma~3.12]{MaMe13} and Proposition~\ref{prop:compare-torsions},
we obtain
\[
\tau^\rho(M,\partial_-M)=\rho(A(P))
\in K_1(R)/\rho(H_\Z).
\]
Moreover, if we change the relator $r_i$ to $sr_is^{-1}$ for some generator $s$ and $1\le i\le 2g+l$,
$\rho(A(P))$ is multiplied by $\rho([s])$ represented by the generator $s$.
Thus, we can change $P$ into the one satisfying
\[
\tau^\rho(M,\partial_-M,\xi)=\rho(A(P))
\in K_1(R).
\]
\end{proof}

\section{The $K_1$-group of $\widehat{\Q\pi}$}
\label{sec:K1_of_Qpi}
This section is devoted to showing Theorem~\ref{thm:K1Qpihat} concerning $\widehat{\Q\pi}$.
We first discuss a more general situation and prove some propositions and lemmas below.
For a unital associative algebra $R$ over $\Q$ with a surjective homomorphism $\epsilon\colon R \twoheadrightarrow \Q$, let $I=\Ker\epsilon$ and $\hat{R}=\varprojlim_n R/I^n$.
Define $\log \colon 1+\hat{I} \to \hat{I}$ and $\exp \colon \hat{I} \to 1+\hat{I}$ by
\[
\log(1+x) = \sum_{n=1}^\infty \frac{(-1)^{n-1}}{n}x^n,\quad \exp(x) = \sum_{n=1}^\infty \frac{1}{n!}x^n,
\]
respectively.
Since $\log$ and $\exp$ preserve the filtration, they are continuous with respect to the $I$-adic topology.

\begin{proposition}
\label{prop:log-exp}
The maps $\log$ and $\exp$ induce isomorphisms which fit into a commutative diagram
\begin{equation}
\label{eq:log-exp}
\vcenter{\xymatrix{
H_1(1+\hat{I}) \ar@{->>}[r] \ar@<-0.2em>[d]_-\log & \dfrac{1+\hat{I}}{\cl[1+\hat{I},1+\hat{I}]} \ar[r]^-{\cong} \ar@<-0.2em>[d]_-\log & \varprojlim_n H_1(1+\hat{I}/\hat{I}^n) \ar@<-0.2em>[d]_-\log \\
H_1^\Lie(\hat{I}) \ar@{->>}[r] \ar@<-0.2em>[u]_-\exp & \dfrac{\hat{I}}{\cl[\hat{I},\hat{I}]} \ar[r]^-{\cong} \ar@<-0.2em>[u]_-\exp & \varprojlim_n H_1^\Lie(\hat{I}/\hat{I}^n) \ar@<-0.2em>[u]_-\exp ,}}
\end{equation}
\end{proposition}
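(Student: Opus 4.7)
The proof naturally splits along two axes: the vertical maps induced by $\log$ and $\exp$, and the horizontal identification of closed abelianizations with inverse limits. I would proceed in three steps, treating the Lie-algebra row and the group row in parallel.

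First, I would verify that $\log$ and $\exp$ are mutually inverse set-theoretic bijections (in fact, homeomorphisms) between $\hat I$ and $1+\hat I$. The identities $\log\circ\exp=\mathrm{id}$ and $\exp\circ\log=\mathrm{id}$ hold formally in $\Q[[X]]$ and extend by $I$-adic continuity to the completion, using that the $n$th term of each series applied to an element of $\hat I^k$ lies in $\hat I^{kn}$.

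Second, I would invoke the Baker--Campbell--Hausdorff formula
\[
\log\bigl(\exp(x)\exp(y)\bigr) = x + y + \tfrac{1}{2}[x,y] + \tfrac{1}{12}[x,[x,y]] + \cdots
\]
to handle the middle column. The right-hand side converges in $\hat I$ (the $d$th summand lies in $\hat I^d$), and every partial sum of the non-trivial terms is a finite $\Q$-linear combination of iterated brackets, hence lies in $[\hat I,\hat I]$; passing to the limit, $\log(\exp(x)\exp(y))-(x+y)\in\cl[\hat I,\hat I]$. Equivalently, for $a,b\in 1+\hat I$, one has $\log(ab)\equiv\log(a)+\log(b)\pmod{\cl[\hat I,\hat I]}$, and the analogous congruence holds for $\exp$. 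These two congruences, together with the inverse relation from the first step, show that $\log$ and $\exp$ descend to mutually inverse homomorphisms between $(1+\hat I)/\cl[1+\hat I,1+\hat I]$ and $\hat I/\cl[\hat I,\hat I]$. The same BCH computation, applied to group commutators $aba^{-1}b^{-1}$, shows that $\log$ maps $\cl[1+\hat I,1+\hat I]$ into $\cl[\hat I,\hat I]$, making the middle vertical isomorphism well-defined.

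Third, I would identify both closed abelianizations with the displayed inverse limits. The quotients $\hat I\twoheadrightarrow\hat I/\hat I^n$ factor through Lie abelianizations, giving a compatible system and hence a map $\hat I/\cl[\hat I,\hat I]\to\varprojlim_n H_1^\Lie(\hat I/\hat I^n)$. Its kernel is $\bigcap_n\bigl(\hat I^n+[\hat I,\hat I]\bigr)=\cl[\hat I,\hat I]$ by definition of the $I$-adic closure, so the map is injective; surjectivity follows from the Mittag--Leffler property for the system $\{H_1^\Lie(\hat I/\hat I^n)\}_n$, whose transition maps are surjective. The top row is handled identically upon using $(1+\hat I)/(1+\hat I^n)\cong 1+\hat I/\hat I^n$. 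The leftmost horizontal arrows are then the canonical surjections from the naive abelianization onto the closed one, the leftmost vertical arrows are defined so as to factor the middle vertical isomorphism, and commutativity of the whole diagram follows from the naturality of $\log$, $\exp$, and the quotients.

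The chief technical point is the distinction between $[\hat I,\hat I]$ and its $I$-adic closure: BCH only places the higher-order correction terms in the closure, not in $[\hat I,\hat I]$ itself, which is precisely why the diagram records the quotient by $\cl[\hat I,\hat I]$ rather than by the naive bracket subspace.
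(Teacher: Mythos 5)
Your Step~1 and Step~3 match the paper's argument closely, and the treatment of $\log$ in Step~2 is correct. The genuine gap is the sentence ``and the analogous congruence holds for $\exp$.'' That claim is not analogous, and it is exactly where the proof has to do real work. For $\log$, BCH writes $\log(\exp x\exp y)-(x+y)$ as a convergent sum of iterated Lie brackets, and since the $\Q$-span of brackets is $[\hat I,\hat I]$, every partial sum lies there and the limit lies in $\cl[\hat I,\hat I]$; this is straightforward. For $\exp$, you need to show that $\exp(-x-y)\exp(x)\exp(y)\in\cl[1+\hat I,1+\hat I]$, i.e.\ that $\exp$ carries (the closure of) the Lie commutator subspace into (the closure of) the group commutator subgroup. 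BCH gives you $\log\bigl(\exp(-y)\exp(-x)\exp(x+y)\bigr)=[x,z]+[y,w]$ for suitable $z,w\in\hat I$, but that only tells you the element is $\exp$ of a sum of brackets; it does \emph{not} tell you it is a product of group commutators or a limit of such. The passage from ``$\exp$ of a finite sum of Lie brackets'' to ``product of group commutators'' is the content of Lemma~\ref{lem:SympExp} in the paper, which is proved by a genuinely non-obvious device (a symplectic expansion of the free group of rank~$2k$, so that $\theta\bigl(\prod_j a_jb_ja_j^{-1}b_j^{-1}\bigr)=\exp\bigl(\sum_j[a_j,b_j]\bigr)$ on the nose, and then pushing forward). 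Without some substitute for that lemma, the injectivity of the middle vertical $\log$ --- equivalently the well-definedness of $\exp$ on the quotient --- is not established, and the iterative BCH argument one is tempted to run in its place founders because the BCH correction terms only land in $\cl[\hat I,\hat I]$, not in $[\hat I,\hat I]$, so the induction does not close up in a filtered (as opposed to graded) algebra.

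One smaller discrepancy worth noting: the paper does \emph{not} reprove the top horizontal isomorphism from scratch. It proves the bottom one by the $\varprojlim$/Mittag--Leffler argument you gave, and then transports it to the top row through the (already established) vertical isomorphisms. Your plan to ``handle the top row identically'' is fine in spirit, but reproving $\varprojlim^1$-vanishing for the non-abelian groups $[1+\hat I/\hat I^n,1+\hat I/\hat I^n]$ requires a bit more care than the abelian Lie case, which is presumably why the paper routes through the Lie side.
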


\begin{proof}
First, the Baker-Campbell-Hausdorff formula shows that for any $x,y \in \hat{I}$ there exist $z,w \in \hat{I}$ such that
\[
\log(\exp(x)\exp(y)) = x+y+\frac{1}{2}[x,y]+\cdots = x+y+[x,z]+[y,w] \in \hat{I}.
\]
Thus, the map $\log$ induces a homomorphism $\log\colon H_1(1+\hat{I}) \to H_1^\Lie(\hat{I})$.
In particular, we have $\log([1+\hat{I},1+\hat{I}]) \subset [\hat{I},\hat{I}]$, and hence $\log(\cl[1+\hat{I},1+\hat{I}]) \subset \cl[\hat{I},\hat{I}]$.
It follows that $\log$ induces a homomorphism $\frac{1+\hat{I}}{\cl[1+\hat{I},1+\hat{I}]} \to \frac{\hat{I}}{\cl[\hat{I},\hat{I}]}$.
Also, $\log$ gives a homomorphism $H_1(1+\hat{I}/\hat{I}^n) \to H_1^\Lie(\hat{I}/\hat{I}^n)$.

Next, by the Baker-Campbell-Hausdorff formula, we see that for any $x,y \in \hat{I}$ there exist $z,w \in \hat{I}$ such that
\[
\log(\exp(-y)\exp(-x)\exp(x+y)) = [x,z]+[y,w] \in \hat{I}.
\]
Lemma~\ref{lem:SympExp} below implies that $\exp(-y)\exp(-x)\exp(x+y)$ lies in the commutator subgroup $[1+\hat{I},1+\hat{I}]$, and hence the map $\exp$ induces a homomorphism $\exp\colon \hat{I} \to H_1(1+\hat{I})$.
This map factors through $H_1^\Lie(\hat{I})$ by Lemma~\ref{lem:SympExp} again.
The same argument as above shows that $\exp$ derives homomorphisms $\frac{\hat{I}}{\cl[\hat{I},\hat{I}]} \to \frac{1+\hat{I}}{\cl[1+\hat{I},1+\hat{I}]}$ and $H_1^\Lie(\hat{I}/\hat{I}^n) \to H_1(1+\hat{I}/\hat{I}^n)$.
Now, these three homomorphisms induced by $\exp$ are respectively the inverse maps of the homomorphisms induced by $\log$, and hence all the vertical maps in the diagram \eqref{eq:log-exp} are isomorphisms.

We next prove that the map $\frac{\hat{I}}{\cl[\hat{I},\hat{I}]} \to \varprojlim_n H_1^\Lie(\hat{I}/\hat{I}^n)$ in \eqref{eq:log-exp} is an isomorphism.
Then the commutativity implies that $\frac{1+\hat{I}}{\cl[1+\hat{I},1+\hat{I}]} \to \varprojlim_n H_1(1+\hat{I}/\hat{I}^n)$ is an isomorphism as well.
Since the inverse system $\{[\hat{I}/\hat{I}^{n+1}, \hat{I}/\hat{I}^{n+1}] \to [\hat{I}/\hat{I}^n, \hat{I}/\hat{I}^n]\}_n$ satisfies the Mittag-Leffler condition, we have an exact sequence
\[
0 \to \varprojlim_n [\hat{I}/\hat{I}^n, \hat{I}/\hat{I}^n] \to \hat{I} \to \varprojlim_n H_1^\Lie(\hat{I}/\hat{I}^n) \to 0.
\]
Here a natural homomorphism $\cl[\hat{I},\hat{I}] \to [\hat{I}/\hat{I}^n, \hat{I}/\hat{I}^n]$ induces $f\colon \cl[\hat{I},\hat{I}] \to \varprojlim_n [\hat{I}/\hat{I}^n, \hat{I}/\hat{I}^n]$.
If $f$ is an isomorphism, then we conclude that $\frac{\hat{I}}{\cl[\hat{I},\hat{I}]} \to \varprojlim_n H_1^\Lie(\hat{I}/\hat{I}^n)$ is an isomorphism.
The injectivity of $f$ follows from $\Ker f \subset \bigcap_{n=1}^\infty \hat{I}^n = \{0\}$.
Let $\{x_n\}_n \in \varprojlim_n [\hat{I}/\hat{I}^n, \hat{I}/\hat{I}^n]$.
Take a lift $\tilde{x}_n \in [\hat{I}, \hat{I}]$ of $x_n$.
Then, by the definition of the inverse limit, $\tilde{x}_{n+1} = \tilde{x}_n$ in $\hat{I}/\hat{I}^n$.
Therefore, $\{\tilde{x}_n\}_n$ defines an element of $\hat{I}$.
Moreover, it is contained in $\cl[\hat{I},\hat{I}]$ since $\tilde{x}_n \in [\hat{I}, \hat{I}]$.
It follows that $f$ is surjective, and thus it is an isomorphism.
\end{proof}

\begin{lemma}
\label{lem:SympExp}
Let $k \geq 1$.
For $x_j, y_j \in \hat{I}$ $(j=1,\dots,k)$, $\exp\left(\sum_{j=1}^k[x_j,y_j]\right)$ is written by a product of $k$ commutators in $1+\hat{I}$.
\end{lemma}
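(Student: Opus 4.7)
My plan is to reduce the statement to a universal situation and then invoke the existence of a \emph{symplectic expansion} of a free group.

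For the reduction, let $T := \Q\langle\langle X_1, Y_1, \ldots, X_k, Y_k\rangle\rangle$ denote the completed free associative $\Q$-algebra on $2k$ indeterminates, and let $\mathfrak{J} \subset T$ be its augmentation ideal. The universal property of $T$ together with the $\hat{I}$-adic completeness of $\hat{R}$ provides a continuous $\Q$-algebra homomorphism $\phi \colon T \to \hat{R}$ sending $X_j \mapsto x_j$, $Y_j \mapsto y_j$, and therefore $\mathfrak{J}$ into $\hat{I}$. The symplectic element $\omega := \sum_{j=1}^k [X_j, Y_j] \in \mathfrak{J}$ satisfies $\phi(\omega) = \sum_{j=1}^k [x_j, y_j]$, and by continuity of $\exp$ we obtain $\phi(\exp(\omega)) = \exp\bigl(\sum_{j=1}^k [x_j, y_j]\bigr)$. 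It therefore suffices to express $\exp(\omega)$ as a product of $k$ commutators inside the multiplicative group $1+\mathfrak{J}$.

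For the universal case, I identify $T$ with the completed group ring $\widehat{\Q F}$ of the free group $F$ on $s_1, t_1, \ldots, s_k, t_k$ via any Magnus expansion sending $s_j \mapsto 1 + X_j + \cdots$ and $t_j \mapsto 1 + Y_j + \cdots$, and set $\zeta := \prod_{j=1}^k [s_j, t_j] \in F$. The crucial input is the existence of a \emph{symplectic expansion} of $F$: a group homomorphism $\theta \colon F \to 1+\mathfrak{J}$ satisfying $\theta(\zeta) = \exp(\omega)$. This is an established result due to Massuyeau (with alternative accounts by Kawazumi and Kawazumi--Kuno), and the acknowledgments confirm that Yusuke Kuno suggested this route. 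Granting such a $\theta$ and setting $a_j := \theta(s_j)$, $b_j := \theta(t_j) \in 1+\mathfrak{J}$, the homomorphism property yields
\[
\exp(\omega) = \theta(\zeta) = \prod_{j=1}^k [\theta(s_j), \theta(t_j)] = \prod_{j=1}^k [a_j, b_j],
\]
which exhibits $\exp(\omega)$ as a product of exactly $k$ commutators in $1+\mathfrak{J}$. Pushing this identity forward under $\phi$ transports the factorization into $1+\hat{I}$ and completes the proof.

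The main obstacle is the existence of the symplectic expansion $\theta$, which is itself a nontrivial theorem: constructing it amounts to iteratively adjusting a given Magnus expansion so that the successive obstructions to $\theta(\zeta) = \exp(\omega)$, lying in graded pieces of the free Lie algebra, are killed. As this has become a standard tool in the study of the Goldman--Turaev Lie bialgebra and the Kashiwara--Vergne problem, I would invoke it as a black box rather than reprove it. A self-contained alternative would be a direct inductive construction of $a_j, b_j$ modulo higher powers of $\hat{I}$, with the Baker-Campbell-Hausdorff formula used at each stage to absorb the discrepancy, and the resulting Cauchy sequences converging in the complete algebra.
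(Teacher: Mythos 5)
Your proposal is correct and follows essentially the same route as the paper: both reduce to the free group $F$ on $2k$ generators, invoke the existence of a symplectic expansion $\theta\colon F \to 1+\hat{T}_1$ (Massuyeau, with Kuno's combinatorial construction) so that $\theta$ sends $\prod_j[s_j,t_j]$ to $\exp\bigl(\sum_j[X_j,Y_j]\bigr)$, and push the resulting product-of-$k$-commutators identity forward along the continuous algebra map $X_j\mapsto x_j$, $Y_j\mapsto y_j$ into $\hat R$. The only cosmetic difference is that you interpose an identification $T\cong\widehat{\Q F}$ via a Magnus expansion, which is not needed for the argument.
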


\begin{proof}
In this proof, we write $\pi^{(k)}$ for the free group generated by $a_1,\dots,a_k$, $b_1,\dots,b_k$.
Let $\hat{T}^{(k)}$ denote the completed tensor algebra generated by $\pi^{(k)}_\ab\otimes\Q$ and define an ideal $\hat{T}_1^{(k)}$ by $\hat{T}_1^{(k)}=\prod_{n=1}^\infty (\pi^{(k)}_\ab\otimes\Q)^{\otimes n}$.
Choose a symplectic expansion $\theta\colon \pi^{(k)} \to 1+\hat{T}_1^{(k)}$, which was introduced in \cite[Definition~2.15]{Mas12} and another construction was given in \cite[Theorem~1.1]{Kun12}.
By definition, we have
\[
\theta\left(\prod_{j=1}^k a_j b_j a_j^{-1} b_j^{-1}\right) = \exp\left(\sum_{j=1}^k[a_j,b_j]\right) \in 1+\hat{T}_1^{(k)}.
\]
Define an algebra homomorphism $\phi\colon T_1^{(k)} \to \hat{I}^{(k)}$ by $\phi(a_j)=x_j$ and $\phi(b_j)=y_j$.
Since $\phi$ preserves the filtration, it induces an algebra homomorphism $\phi\colon \hat{T}^{(k)} \to \hat{R}$.
We obtain $\prod_{j=1}^k(\phi\circ\theta)(a_jb_ja_j^{-1}b_j^{-1}) = \exp\left(\sum_{j=1}^k[x_j,y_j]\right)$ from the above equality by applying $\phi$.
\end{proof}

Here, we define
\[
\widetilde{\log} \colon H_1(\hat{R}^{\times})
\to
\Q^\times \oplus H_1^\Lie(\hat{I})
\]
by $x \mapsto (\epsilon(x),\log(x \epsilon(x)^{-1}))$.

\begin{lemma}
\label{lem:log_tilde}
The homomorphism $\widetilde{\log}$ is an isomorphism.
\end{lemma}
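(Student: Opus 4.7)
The plan is to decompose $\hat{R}^\times$ as a direct product of abelian groups and reduce to Proposition~\ref{prop:log-exp}. First I would observe that the augmentation $\epsilon$ extends continuously to $\hat{R} \to \Q$, and any unit $x \in \hat{R}^\times$ must satisfy $\epsilon(x) \in \Q^\times$ (otherwise $x \in \hat{I}$, but $\hat{I}$ is contained in the Jacobson radical and hence consists of non-units). Together with the fact that $1+\hat{I}$ is a subgroup of $\hat{R}^\times$ (the geometric series $\sum_n (-y)^n$ converges in the $I$-adic topology and inverts $1+y$), we obtain a well-defined map
\[
\hat{R}^\times \to \Q^\times \times (1+\hat{I}), \qquad x \mapsto (\epsilon(x),\, x\epsilon(x)^{-1}).
\]
This is a group homomorphism because the constants $\Q^\times \subset \hat{R}$ are central, so $\Q^\times$ commutes with $1+\hat{I}$; its inverse is given by multiplication $(q,u) \mapsto q u$. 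Hence $\hat{R}^\times \cong \Q^\times \times (1+\hat{I})$ as groups.

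Next I would take abelianizations. Since $\Q^\times$ is abelian and the direct product decomposition respects the group operation, we get
\[
H_1(\hat{R}^\times) \;\cong\; \Q^\times \oplus H_1(1+\hat{I}).
\]
Proposition~\ref{prop:log-exp} then supplies an isomorphism $\log \colon H_1(1+\hat{I}) \xrightarrow{\cong} H_1^\Lie(\hat{I})$. Combining the two, the map $\mathrm{id}_{\Q^\times} \oplus \log$ is an isomorphism $H_1(\hat{R}^\times) \xrightarrow{\cong} \Q^\times \oplus H_1^\Lie(\hat{I})$.

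Finally, I would verify this composite is exactly $\widetilde{\log}$. Unwinding the decomposition, $x \in \hat{R}^\times$ maps first to $(\epsilon(x), x\epsilon(x)^{-1}) \in \Q^\times \times (1+\hat{I})$ and then, passing to the abelianization and applying $\log$ on the second factor, to $(\epsilon(x), \log(x\epsilon(x)^{-1}))$, which is the definition of $\widetilde{\log}(x)$. No step here presents a genuine obstacle: the only minor point to check carefully is that units in $\hat{R}$ really do have $\epsilon(x) \neq 0$, which is immediate from the local nature of the $I$-adic completion; the substantive content—the isomorphism $\log$ between the two abelianizations—has already been carried out in Proposition~\ref{prop:log-exp}.
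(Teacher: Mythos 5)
Your proof is correct and follows essentially the same route as the paper: decompose $\hat{R}^\times$ as $\Q^\times \times (1+\hat{I})$ via $x \mapsto (\epsilon(x), x\epsilon(x)^{-1})$, pass to abelianizations, and invoke Proposition~\ref{prop:log-exp} for the isomorphism $\log\colon H_1(1+\hat{I}) \to H_1^\Lie(\hat{I})$. The only difference is that you spell out the verifications (units have nonzero augmentation, $1+\hat{I}$ is a group, centrality of $\Q^\times$) that the paper leaves implicit.
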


\begin{proof}
Define an isomorphism
\[
f \colon \hat{R}^{\times} \to \Q^\times \times (1+\hat{I})
\]
by $f(x)=(\epsilon(x), x \epsilon(x)^{-1})$.
It induces an isomorphism $f_\ast \colon H_1(\hat{R}^{\times}) \to \Q^\times \oplus H_1(1+\hat{I})$.
Then we have $\widetilde{\log} = (\id_{\Q^\times}\oplus\log) \circ f_\ast$.
Since $\log$ is an isomorphism by Proposition~\ref{prop:log-exp}, so is $\widetilde{\log}$.
\end{proof}

Here, note that $\hat{R}$ is a local ring since the set of non-unit elements is the ideal $\hat{I}$.
Thus, the Dieudonn\'e determinant 
\[
\det\colon K_1(\hat{R})\to (\hat{R}^{\times})_{\ab}
\]
is defined.
It is known to be an isomorphism as in \cite[Corollary~2.2.6]{Ros94},
and the inverse map is induced by the natural map
\[
\hat{R}^\times\to  \GL(1,\hat{R})\to K_1(\hat{R}).
\]
For a reference to Dieudonn\'e determinant, see \cite[Chapter~2.2]{Ros94} and \cite[Theorem~1.3]{LiYu21} for example.
Also note that, in \cite[Theorem~2.2.5(f)]{Ros94}, it is erroneously claimed that $\det A=\det A^T$.
When $\hat{R}$ is a non-commutative local ring, it is not true in general.

On the other hand, in \cite[Example~A.4]{MaSa20} (see also \cite{Kri02}), the ``log-determinant'' map
\[
\ldet\colon K_1(\hat{R}) \to H_1^\Lie(\hat{I})
\]
is constructed.
We write $M(N,R)$ for the ring of $N\times N$ matrices over a ring $R$,
and denote by $\epsilon\colon \GL(N,\hat{R}) \to\GL(N,\Q)$ the map which applies $\epsilon\colon \hat{R} \twoheadrightarrow \Q$ to all entries.
Define the logarithm map as
\begin{equation}
\label{eq:log}
\log B=\sum_{n=1}^\infty\frac{(-1)^{n-1}}{n}(B-I_N)^n\in M(N,\hat{I})
\end{equation}
for $B \in \Ker(\epsilon\colon \GL(N,\hat{R})\to\GL(N,\Q))$.
The map $\ldet$ is a homomorphism defined by
\[
\ldet(A) =(\tr\circ\log)(A\epsilon(A)^{-1})
\]
for $A \in \GL(N,\hat{R})$.

\begin{proposition}
\label{prop:ldet}
The diagram
\[
\xymatrix{
K_1(\hat{R}) \ar[r]^-{\widetilde{\ldet}} \ar[d]_-{\det} & \Q^\times\oplus H_1^\Lie(\hat{I}) \\
\hat{R}^\times_{\ab} \ar[ru]_-{\widetilde{\log}} &
}
\]
commutes and all the maps are isomorphisms, where $\widetilde{\ldet}(A)=(\det\epsilon(A), \ldet(A))$.
\end{proposition}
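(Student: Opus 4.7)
The plan is first to reduce the proposition to a single commutativity check. The map $\det\colon K_1(\hat R)\to \hat R^\times_{\ab}$ is the Dieudonn\'e determinant, which is known to be an isomorphism for any local ring (\cite[Corollary~2.2.6]{Ros94}), and $\widetilde{\log}$ is an isomorphism by Lemma~\ref{lem:log_tilde}. Hence, once the identity $\widetilde{\log}\circ\det = \widetilde{\ldet}$ is verified, the map $\widetilde{\ldet}$ is automatically a composition of isomorphisms, settling the proposition.

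For the commutativity I would use that both $\widetilde{\log}\circ\det$ and $\widetilde{\ldet}$ are group homomorphisms from $K_1(\hat R)$ into the abelian group $\Q^\times\oplus H_1^\Lie(\hat I)$: the $\ldet$-component of $\widetilde{\ldet}$ is a homomorphism by the cited \cite[Example~A.4]{MaSa20}, the $\det\circ\epsilon$-component is clearly one, and $\widetilde{\log}\circ\det$ is manifestly a composition of homomorphisms. Since the inverse of the Dieudonn\'e determinant is induced by the natural map $\hat R^\times\hookrightarrow \GL(1,\hat R)\to K_1(\hat R)$, the image of $\hat R^\times$ generates $K_1(\hat R)$, so it suffices to check the identity on $1\times 1$ matrices.

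For $a\in\hat R^\times$ regarded as the $1\times 1$ matrix $(a)$, the matrix logarithm defined in \eqref{eq:log} reduces to the scalar logarithm of $a\epsilon(a)^{-1}\in 1+\hat I$, and the trace is the identity on a $1\times 1$ matrix. Together with $\det a = a\in\hat R^\times_{\ab}$ and $\det\epsilon(a)=\epsilon(a)$, a direct computation gives
\[
\widetilde{\log}(\det a) = \bigl(\epsilon(a),\,\log(a\,\epsilon(a)^{-1})\bigr) = \bigl(\det\epsilon(a),\,\tr\log(a\,\epsilon(a)^{-1})\bigr) = \widetilde{\ldet}(a),
\]
so the diagram commutes on generators and therefore on all of $K_1(\hat R)$.

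The main obstacle is the homomorphism property of $\ldet$ on the non-commutative group $K_1(\hat R)$, which I am taking for granted by citing \cite[Example~A.4]{MaSa20}. In a self-contained treatment one would verify it directly from $\tr[X,Y]\in[\hat I,\hat I]$ together with the $I$-adic convergence of the Baker-Campbell-Hausdorff formula, exactly in the spirit of Proposition~\ref{prop:log-exp} and Lemma~\ref{lem:SympExp} above; once this is granted, the remainder of the argument is the short reduction to $1\times 1$ matrices outlined here.
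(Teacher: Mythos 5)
Your proof is correct and takes essentially the same approach as the paper: you observe that $\det$ and $\widetilde{\log}$ are isomorphisms and then verify $\widetilde{\ldet}\circ\det^{-1}=\widetilde{\log}$ by the same direct computation on $1\times1$ matrices, since $\det^{-1}$ is induced by the inclusion $\hat R^\times\hookrightarrow\GL(\hat R)$. The detour through ``image generates, so check on generators using the homomorphism property of $\ldet$'' is harmless but unnecessary — since $\det$ is an isomorphism, $\det^{-1}$ is already surjective onto $K_1(\hat R)$, so the computation on $1\times1$ matrices covers everything without invoking that $\ldet$ is a homomorphism, and this is exactly how the paper phrases it.
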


\begin{proof}
First, the inverse of $\det$ is the map induced by the inclusion $\hat{R}^\times \hookrightarrow \GL(\hat{R})$.
Hence we have
\[
\widetilde{\ldet}({\det}^{-1}(x)) = \left(\epsilon(x), \sum_{n=1}^\infty\frac{(-1)^{n-1}}{n}(x\epsilon(x)^{-1}-1)^n\right) = \widetilde{\log}(x)
\]
for $x \in \hat{R}^\times_{\ab}$.
This implies that the diagram is commutative.
Finally, since $\widetilde{\log}$ is an isomorphism by Lemma~\ref{lem:log_tilde}, so is $\widetilde{\ldet}$.
\end{proof}

In the rest of this section, we focus on the case $R=\Q\pi$ and $\epsilon$ is the augmentation map.
Let $\hat{T}$ denote the completed tensor algebra generated by $H$.
If we choose a Magnus expansion $\theta\colon \pi\to 1+\hat{T}_1$, it induces an isomorphism $\widehat{\Q\pi} \to \hat{T}$ as shown in \cite[Theorem~1.3]{Kaw06}.
Here, for $n\geq 1$, let $\hat{T}_n$ denote the ideal of $\hat{T}$ consisting of elements whose lowest degrees are greater than or equal to $n$.

\begin{proof}[Proof of Theorem~\ref{thm:K1Qpihat}]
By Proposition~\ref{prop:ldet}, the map $\widetilde{\ldet}\colon K_1(\widehat{\Q\pi}) \to \Q^\times\oplus H_1^\Lie(\hat{I})$ is an isomorphism.
Also, the map $\theta$ induces an isomorphism $H_1^\Lie(\hat{I}) \to H_1^\Lie(\hat{T}_1)$.
Thus, it suffices to prove that a natural homomorphism
\[
\phi\colon H_1^\Lie(\hat{T}_1) \to \varprojlim_n H_1^\Lie(\hat{T}_1/\hat{T}_n) = \prod_{n=1}^{\infty} (H^{\otimes n})_{\Z_n}
\]
is an isomorphism.
Note that a similar description of $\hat{T}/{\cl[\hat{T},\hat{T}]}$ can be found in \cite[Lemma~4.3.4]{KaKu12}.
We now have a natural homomorphism $\prod_{n=1}^{\infty} H^{\otimes n} \to H_1^\Lie(\hat{T}_1)$.
Let us prove that it factors through $\prod_{n=1}^{\infty} (H^{\otimes n})_{\Z_n}$, and thus it gives the inverse of $\phi$.
Here, since
\[
\Ker(H^{\otimes n} \twoheadrightarrow (H^{\otimes n})_{\Z_n}) \subset \sum_{i+j=n} [\hat{T}_i,\hat{T}_j],
\]
we have
\[
\Ker\left( \prod_{n=1}^{\infty} H^{\otimes n} \twoheadrightarrow \prod_{n=1}^{\infty} (H^{\otimes n})_{\Z_n} \right) \subset \cl[\hat{T}_1,\hat{T}_1]
\]
as subspaces of $\hat{T}_1$.
Therefore, it suffices to see that $\cl[\hat{T}_1,\hat{T}_1] = [\hat{T}_1,\hat{T}_1]$.
Fix a basis $\{a_1,\dots,a_g,b_1,\dots,b_g\}$ of $H$.
For $h \in H$ and $x,y \in \hat{T}_1$, we have $[hx,y]=[h,xy]+[x,yh]$.
Using this equality inductively, one can check that $[\hat{T}_1,\hat{T}_1] = \sum_{i=1}^g \left([a_i,\hat{T}_1]+[b_i,\hat{T}_1]\right)$.
Furthermore, for $d\geq 2$, we have
\[
[\hat{T}_1,\hat{T}_1]\cap \hat{T}_d = \sum_{i=1}^g \left([a_i,\hat{T}_{d-1}]+[b_i,\hat{T}_{d-1}]\right).
\]
Indeed, any $z=[\hat{T}_1,\hat{T}_1]\cap \hat{T}_d$ is expressed as $z=\sum_{i=1}^g \left([a_i,x_i]+[b_i,y_i]\right)$ for $x_i,y_i \in \hat{T}_1$, and then, for $x'_i, y'_i \in \hat{T}_{d-1}$ obtained from $x_i,y_i$ by eliminating terms of degree less than $d-1$, the sum $\sum_{i=1}^g \left([a_i,x'_i]+[b_i,y'_i]\right)$ coincides with $z$.

Here, let $z \in \cl[\hat{T}_1,\hat{T}_1]$.
Then there is a sequence $\{z_n\}$ in $[\hat{T}_1,\hat{T}_1]$ satisfying $\lim_{n\to\infty}z_n=z$ and $z_{n+1}-z_{n} \in \hat{T}_{n+1}$.
It follows from the above equality that $z_{n+1}-z_{n}=\sum_{i=1}^g \left([a_i,x'_{n,i}]+[b_i,y'_{n,i}]\right)$ for some $x'_{n,i}, y'_{n,i} \in \hat{T}_{n}$.
Since $\sum_{n=1}^\infty x'_{n,i}$ and $\sum_{n=1}^\infty y'_{n,i}$ lie in $\hat{T}_1$ for each $i$, we conclude that $z \in [\hat{T}_1,\hat{T}_1]$.
\end{proof}

A Magnus expansion $\theta$ induces an isomorphism
\[
\theta_*\colon \Ker\left(H_1^{\Lie}(\hat{I}/\hat{I}^{n+1}) \to H_1^{\Lie}(\hat{I}/\hat{I}^n)\right) \cong (H^{\otimes n})_{\Z_n},
\]
and it does not depend on the choice of $\theta$.
Setting $R=\Q\pi/I^{n+1}$ and $R=\Q\pi/I^n$ in Proposition~\ref{prop:ldet},
we have the following.

\begin{corollary}
\label{cor:gr}
The homomorphism
\[
\ldet_n\colon \Ker\left(K_1(\widehat{\Q\pi}/\hat{I}^{n+1})\to K_1(\widehat{\Q\pi}/\hat{I}^{n})\right) \to (H^{\otimes n})_{\Z_n}
\]
defined by 
$\ldet_n(A)=(\theta_*\circ \tr \circ \log)(A\epsilon(A)^{-1})$ is an isomorphism,
where 
$A\in\GL(N,\widehat{\Q\pi}/\hat{I}^{n+1})$.
In particular,
$\ldet_1(A)=\theta_*(\tr(A\epsilon(A)^{-1}-I_N))$
for $A\in\GL(N,\widehat{\Q\pi}/\hat{I}^2)$.
\end{corollary}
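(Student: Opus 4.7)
The plan is to deduce Corollary~\ref{cor:gr} from Proposition~\ref{prop:ldet} applied to the truncated rings $R=\Q\pi/I^m$ for $m\in\{n,n+1\}$. First I would note that since the augmentation ideal $I/I^m$ of $R$ is nilpotent, the $I/I^m$-adic completion of $R$ equals $R$ itself, so $\widehat{\Q\pi}/\hat{I}^m=\Q\pi/I^m$ and Proposition~\ref{prop:ldet} supplies isomorphisms
\[
\widetilde{\ldet}\colon K_1(\Q\pi/I^m) \xrightarrow{\cong} \Q^\times \oplus H_1^\Lie(I/I^m).
\]
All ingredients ($\det$, $\epsilon$, $\log$, $\tr$) defining $\widetilde{\ldet}$ are natural in ring homomorphisms, so the two isomorphisms fit into a commutative square with the projections induced by $\Q\pi/I^{n+1}\twoheadrightarrow\Q\pi/I^n$. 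Passing to kernels---the $\Q^\times$-component cancels since the augmentation $\Q\pi/I^{n+1}\to\Q$ factors through $\Q\pi/I^n$ for $n\ge 1$---yields an isomorphism
\[
\Ker\bigl(K_1(\widehat{\Q\pi}/\hat{I}^{n+1})\to K_1(\widehat{\Q\pi}/\hat{I}^n)\bigr) \xrightarrow{\cong} \Ker\bigl(H_1^\Lie(I/I^{n+1})\to H_1^\Lie(I/I^n)\bigr).
\]

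Next I would identify the right-hand kernel with $H^{\otimes n}/\Z_n$ by way of the isomorphism $\theta_*$ stated immediately above the corollary, which the authors already note is well-defined and independent of the chosen Magnus expansion (this identification in turn rests on the short exact sequence $0\to I^n/I^{n+1}\to I/I^{n+1}\to I/I^n\to 0$ together with the computation in the proof of Theorem~\ref{thm:K1Qpihat} that $\Ker(H^{\otimes n}\twoheadrightarrow H^{\otimes n}/\Z_n)=\sum_{i+j=n,\,i,j\ge 1}[\hat{T}_i,\hat{T}_j]$). Composing with $\theta_*$ yields the homomorphism $\ldet_n$, and the explicit formula $\ldet_n(A)=(\theta_*\circ\tr\circ\log)(A\epsilon(A)^{-1})$ then falls directly out of $\widetilde{\ldet}(A)=(\det\epsilon(A),\tr(\log(A\epsilon(A)^{-1})))$: for $[A]$ in the kernel the first component is trivial, and what remains is the $H_1^\Lie$-component, post-composed with $\theta_*$.

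Finally, for the $n=1$ specialization, I would expand $\log(A\epsilon(A)^{-1})=\sum_{k\ge 1}\frac{(-1)^{k-1}}{k}(A\epsilon(A)^{-1}-I_N)^k$ and observe that the $k$-th summand has matrix entries in $\hat{I}^k$, so the terms with $k\ge 2$ vanish modulo $\hat{I}^2$; only the linear term $A\epsilon(A)^{-1}-I_N$ survives, yielding $\ldet_1(A)=\theta_*(\tr(A\epsilon(A)^{-1}-I_N))$. The main potential obstacle is bookkeeping---confirming that the abstractly-produced isomorphism on kernels coincides with the stated explicit formula after post-composing with $\theta_*$---but this is entirely settled by the explicit form of $\widetilde{\ldet}$ in Proposition~\ref{prop:ldet} combined with the pre-existing identification $\theta_*$, so no genuinely new technical input is required.
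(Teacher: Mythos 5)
Your proposal is correct and follows essentially the same route as the paper: the paper's entire proof is the sentence preceding the statement, namely "Setting $R=\Q\pi/I^{n+1}$ and $R=\Q\pi/I^n$ in Proposition~\ref{prop:ldet}, we have the following," together with the isomorphism $\theta_*$ introduced just above. You have simply filled in the naturality, kernel-chasing, and degree-$1$ specialization details that the paper leaves implicit.
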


\section{Finite-type invariants}
\label{section:finitetype}
The $I$-adic reduction of the Reidemeister-Turaev torsion $\alpha\colon \I\C\to\mathbb{Z}H_\Z$ is proved to be a finite-type invariant in \cite[Theorem~3.11]{MaMe13}.
In this section, we show Theorem~\ref{thm:Y-filtration},
which states that the reduction $\tilde{\alpha}\colon \I\C\to K_1(\widehat{\Q\pi}/\hat{I}^{d})$ of $\tilde{\alpha}$ is also a finite-type invariant for $d\ge1$.

\subsection{Torelli surgeries and Euler structures}
Using Lemma~\ref{lem:torsion-of-presentation},
we prove that the $I$-adic reduction of our torsion is a finite-type invariant of homology cylinders.

Let $M\in \I\C$,
and let $V_1,\ldots, V_d$ denote disjoint handlebodies embedded in $M$.
Pick points $p$ in $\partial_-M$ and $q_k$ in $\partial V_k$ for $1\le k\le d$,
and choose paths $\alpha_1, \dots, \alpha_d$ in $M\setminus\bigcup_{k=1}^d\Int V_k$ such that $\alpha_k$ runs from $p$ to $q_k$ and $\Int\alpha_k$ are mutually disjoint.
We denote by $\{\gamma_i\}_{i=1}^{2g}$ the standard basis of $\pi_1\Sigma_{g,1}$ in Figure~\ref{fig:spine_gamma}.
We also denote by $x_{k,1},\dots,x_{k,2h_k}$ loops obtained by connecting the standard basis of $\pi_1(\partial V_k)$ and the path $\alpha_k$ for $1\le k\le d$,
where $h_k$ is the genus of $V_k$.
Set $M'=M\setminus \bigcup_{k=1}^d\Int(V_k\cup N(\alpha_k))$.
\begin{figure}[h]
 \centering
 \includegraphics[width=0.7\textwidth]{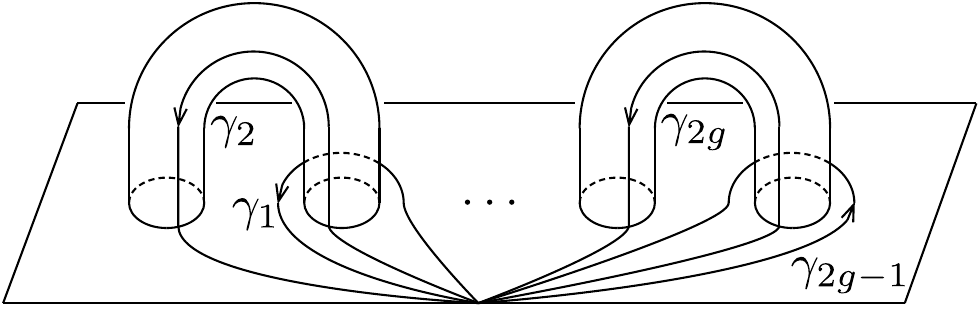}
 \caption{Oriented simple closed curves $\gamma_1,\dots,\gamma_{2g}$ on $\Sigma_{g,1}$.}
 \label{fig:spine_gamma}
\end{figure}

\begin{lemma}\label{lem:presentation-of-complement}
The fundamental group $\pi_1(M')$ has a presentation $P_0$ with generating set 
\[
\{i_-(\gamma_1),\ldots,i_-(\gamma_{2g})\}\cup\{x_{k,1},\ldots, x_{k,2h_k}\mid 1\le k\le d\}\cup \{y_1,\ldots, y_l\}
\]
and some relators $s_1,s_2,\ldots,s_{l+\sum_{k=1}^dh_k}$.
\end{lemma}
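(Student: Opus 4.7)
The strategy has two parts: first obtain some presentation of $\pi_1(M')$ coming from a 2-dimensional spine of $M'$ (whose deficiency is determined by $\chi(M')$), and then use Tietze transformations to introduce the specified generators $i_-(\gamma_i)$ and $x_{k,j}$. For the first part, I would rely on the standard fact that a compact connected 3-manifold with non-empty boundary admits a handle decomposition with only $1$- and $2$-handles (3-handles can be eliminated because $\partial M'\neq\emptyset$), so that $M'$ collapses onto a 2-complex; contracting a maximal tree in the 1-skeleton then yields a presentation $\langle g_1,\dots,g_n\mid r_1,\dots,r_m\rangle$ of $\pi_1(M')$ with $n-m=1-\chi(M')$.

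I would compute $\chi(M')$ by Mayer-Vietoris applied to the decomposition $M=M'\cup\bigsqcup_{k=1}^d(V_k\cup N(\alpha_k))$: each piece $V_k\cup N(\alpha_k)$ is a handlebody of genus $h_k$ (so $\chi=1-h_k$); the intersection $(V_k\cup N(\alpha_k))\cap M'$ is the frontier of $V_k\cup N(\alpha_k)$ in $M$ away from $\partial M$, which deformation retracts to $\Sigma_{h_k,1}$ (so $\chi=1-2h_k$); and $\chi(M)=1-2g$. Combining gives $\chi(M')=1-2g-\sum_{k=1}^d h_k$, and hence the deficiency of the presentation above is $n-m=2g+\sum_k h_k$.

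For the Tietze step, each of the $2g+2\sum_k h_k$ elements $i_-(\gamma_i)$ and $x_{k,j}$ is expressible as a word in $g_1,\dots,g_n$; adding them as new generators together with their defining relators yields an equivalent presentation with $n+2g+2\sum_k h_k$ generators and $m+2g+2\sum_k h_k$ relators. Setting $l:=n$ and renaming $y_i:=g_i$, the relator count becomes $(l-2g-\sum_k h_k)+2g+2\sum_k h_k=l+\sum_k h_k$ and the generating set takes the form demanded by the lemma. The main technical point is the initial step --- realizing $\pi_1(M')$ by a presentation whose deficiency is forced by $\chi(M')$, which rests on the 2-dimensional spine argument; once this is granted, the Mayer-Vietoris calculation and the Tietze bookkeeping are routine.
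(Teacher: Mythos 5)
Your proof is correct and follows essentially the same outline as the paper: build a two-dimensional spine of $M'$ (so the presentation's deficiency is $1-\chi(M')$), compute $\chi(M')$ by the Mayer--Vietoris/inclusion-exclusion count, and then account for the generators. The one place you diverge is in how the specified generators $i_-(\gamma_i)$ and $x_{k,j}$ are made to appear: the paper appeals to a ``suitable'' triangulation in which these loops already occur among the $1$-cells, while you take an arbitrary spine presentation and introduce them afterward by Tietze transformations, with the arithmetic $m+2g+2\sum_k h_k = l+\sum_k h_k$ (after setting $l:=n$) giving exactly the stated relator count. This is a cleaner way to handle that step, since it avoids justifying the existence of a triangulation containing the chosen curves as edges, and the later use of $P_0$ (only to build the balanced presentation $P$ and take Fox derivatives) depends only on the presentation, not on any geometric realization of the generators as $1$-cells. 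Your Euler characteristic computation --- $\chi(V_k\cup N(\alpha_k))=1-h_k$, $\chi((V_k\cup N(\alpha_k))\cap M')=\chi(\Sigma_{h_k,1})=1-2h_k$, $\chi(M)=1-2g$, hence $\chi(M')=1-2g-\sum_k h_k$ --- agrees with the paper's.
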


\begin{proof}[Sketch of the proof]
The proof proceeds in the same way as \cite[Lemma~4]{GoSa13},
and we only sketch the proof.
First, take a ``suitable'' triangulation of $\partial M'$, and extend it to $N$.
Shrinking all 3-simplices into the 2-skeleton one after another and taking the quotient by a maximal tree of the 1-skeleton,
we obtain a 2-dimensional CW-complex having only one vertex.
By the Mayer-Vietoris exact sequence,
we can compute the Euler number
\[
\chi(M') = 1-\left(2g+\sum_{k=1}^d h_k\right).
\]
Thus, if we denote the number of 1-simplices in the CW-complex by $2g+2\sum_{k=1}^dh_k+l$,
the number of 2-simplices is $\sum_{k=1}^dh_k+l$.
The desired presentation is obtained from this CW-complex.
\end{proof}

We may assume that the loops $x_{k,1},\ldots, x_{k,h_k}$ in $\partial V_k$ bound disjoint disks $D_{k,1},\ldots, D_{k,h_k}$ in $V_k$ which cut $V_k$ into a 3-ball.
We also assume that $x_{k,1},\ldots, x_{k,2h_k}$ represent a symplectic basis of $H_1(\partial V_k;\Z)$,
namely,
\[
[x_{k,i}]\cdot [x_{k,j}]=
\begin{cases}
1&\text{if }i-j=-h_k,\\
-1&\text{if }i-j=h_k,\\
0&\text{otherwise},
\end{cases}
\]
for $1\le i\le 2h_k$ and $1\le j\le 2h_k$,
where $x\cdot y$ denotes the intersection number of $x,y\in H_1(\partial V_k;\Z)$.
The element $i_+(\gamma_i) \in \pi_1(M')$ for $1\le i\le 2g$ is represented as a product of generators of $P_0$.
Add generators $i_+(\gamma_1), \ldots, i_+(\gamma_{2g})$ and such relators to the presentation $P_0$.
We obtain a balanced presentation $P$ of $\pi_1(M)$ in Proposition~\ref{prop:torsion-of-presentation} by adding relators $x_{k,1},\ldots, x_{k,h_k}$ for $1\le k\le d$ to $P_0$.
By Proposition~\ref{prop:torsion-of-presentation},
we have 
\[
\epsilon(\tilde{\rho}(A(P)))=\pm1\in K_1(\Q),
\]
and by changing the order of generators or relators,
we may assume $\epsilon(\tilde{\rho}(A(P)))=1$.

In Theorem~\ref{thm:Y-filtration},
for a subset $J\subset\{1,2,\ldots,d\}$,
we denote
\[
M_J=\left(M\setminus \bigsqcup_{i\in J} \Int V_i\right) \cup_{\{\varphi_i\}_{i\in J}} \bigsqcup_{i\in J}V_i,
\]
where $\varphi_i$ is a mapping class of $\partial V_i$ which acts on $H_1(\partial V_i;\Z)$ trivially.
Let $N(q_k)=\partial V_k\cap N(\alpha_k)$, which is a neighborhood of $q_k\in \partial V_k$.
Denote by $\pi_0\Diff_+(\partial V_k, N(q_k))$ the mapping class group of $\partial V_k$ which fixes $N(q_k)$ pointwise,
and let 
\[
\I(\partial V_k,N(q_k))=\Ker\bigl(\pi_0\Diff_+(\partial V_k, N(q_k))\to \Aut H_1(\partial V_k;\Z)\bigr)
\]
be its Torelli subgroup.
Choose a lift of $\varphi_k$ to an element in $\I(\partial V_k,N(q_k))$.
Then, we obtain a presentation $P_J$ of $\pi_1(M_J)$ of the form in Lemma~\ref{lem:presentation-of-complement} by adding relators $c_k(x_{k,1}),\ldots, c_k(x_{k,h_k})$ to $P_0$ for $1\le k\le d$,
where $c_k$ is $\id_{\partial V_k}$ if $k\notin J$, and $\varphi_k$ if $k\in J$.
Let 
\[
\tau_1\colon \I(\partial V_k,N(q_k))\to \Lambda^3 H_1(\partial V_k;\Z)
\]
denote the first Johnson homomorphism,
and define a contraction map $C_k\colon \Lambda^3H_1(\partial V_k;\Z) \to 2H_1(\partial V_k;\Z)$ by
\[
C_k(x\wedge y\wedge z) = 2((x\cdot y)z+(y\cdot z)x+(z\cdot x)y).
\]

\begin{lemma}\label{lem:torsion-presentation}
Let $P$ and $P_J$ denote the presentations of $\pi_1 M$ and $\pi_1M_J$ as above, respectively.
We have
\[
\tilde{\rho}(A(P_J))
=\tilde{\rho}\left(-\frac{1}{2}\sum_{k\in J}((\incl_k)_*\circ C_k\circ\tau_1)(\varphi_k)\right)\tilde{\rho}(A(P))
\in K_1(\widehat{\Q\pi}/\hat{I}^2),
\]
where $(\incl_k)_*\colon H_1(\partial V_k;\Z)\to H_1(M;\Z)$ is the induced homomorphism by the inclusion, and the first $\tilde{\rho}$ in the right-hand side is the induced map by
$\tilde{\rho}\colon \pi_1(M)\to \widehat{\Q\pi}\twoheadrightarrow \widehat{\Q\pi}/\hat{I}^2$.
\end{lemma}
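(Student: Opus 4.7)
The plan is to compare $A(P)$ and $A(P_J)$ column-by-column via Fox calculus and extract the first-order correction in $\hat{I}/\hat{I}^2 \cong H$. The two matrices agree except in the $h=\sum_{k\in J}h_k$ columns indexed by the relators $c_k(x_{k,i})$ for $k\in J$, $i=1,\dots,h_k$. Since each $x_{k,i}$ is itself a generator, the column of $A(P)$ for the relator $x_{k,i}$ has a $1$ at the row for $x_{k,i}$ and zeros elsewhere. Permuting rows and columns so that these $h$ generators and the corresponding $h$ relators occupy the last positions, we obtain the block form
\[
A(P)=\begin{pmatrix}A_{11}&0\\ A_{21}&I_h\end{pmatrix}.
\]
Writing $\varphi_k(x_{k,i})=x_{k,i}\cdot w_{k,i}$ with $w_{k,i}\in[\pi_1(\partial V_k,q_k),\pi_1(\partial V_k,q_k)]$ (since $\varphi_k$ lies in the Torelli group) and applying the Leibniz rule, together with $\tilde{\rho}(x_{k,i}^{-1})\equiv 1\pmod{\hat{I}}$ and $\overline{\partial w_{k,i}/\partial z}\in I$, yields $A(P_J)\equiv A(P)+\Delta\pmod{\hat{I}^2}$ where $\Delta$ is supported in the last $h$ columns and has all entries in $\hat{I}$.

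Next, the standard expansion $\det(I+X)\equiv 1+\tr(X)\pmod{\hat{I}^2}$ for matrices $X$ with entries in $\hat{I}$ gives
\[
\frac{\tilde{\rho}(A(P_J))}{\tilde{\rho}(A(P))}\equiv 1+\tilde{\rho}(\tr(A(P)^{-1}\Delta))\pmod{\hat{I}^2}.
\]
Inverting the block form of $A(P)$ and multiplying out, one finds $\tr(A(P)^{-1}\Delta)=\tr(D_\Delta)-\tr(A_{21}A_{11}^{-1}E_\Delta)$, where $D_\Delta$ and $E_\Delta$ denote the lower-right and upper-right sub-blocks of $\Delta$. Because each $\varphi_k$ only modifies generators of its own $V_k$, both sub-blocks are block-diagonal in $k\in J$, so the contributions from different handlebodies add independently; hence it suffices to prove the case $|J|=1$.

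For a single handlebody $V_k$, the final step is identifying the trace with $-\frac{1}{2}(\incl_k)_*(C_k(\tau_1(\varphi_k)))\in H_\Z$. This proceeds via a symplectic expansion of $\pi_1(\partial V_k,q_k)$ as in \cite[Definition~2.15]{Mas12}: expanding $w_{k,i}$ to second order expresses each Fox derivative $\overline{\partial w_{k,i}/\partial x_{k,j}}\in\hat{I}/\hat{I}^2\cong H$ linearly in the components of $\tau_1(\varphi_k)\in\Lambda^3 H_1(\partial V_k;\Z)$. The diagonal contribution $\tr(D_\Delta)$ realizes a partial contraction along the ``$a$-generators'' $x_{k,1},\dots,x_{k,h_k}$, while the off-diagonal $\tr(A_{21}A_{11}^{-1}E_\Delta)$ supplies the complementary contraction along the ``$b$-generators'' $x_{k,h_k+1},\dots,x_{k,2h_k}$, so that together they assemble into the full symplectic contraction $C_k$ with the normalization $-\frac{1}{2}$ familiar from \cite[Lemma~3.12]{MaMe13}. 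The main obstacle is verifying that the off-diagonal trace $\tr(A_{21}A_{11}^{-1}E_\Delta)$, though defined through the ambient presentation via $A_{11}^{-1}$ and $A_{21}$, reduces modulo $\hat{I}$ to data intrinsic to $\partial V_k$ and its inclusion in $M$; this relies on $M$ being a homology cylinder (so the augmentation of $A(P)$ is invertible over $\Z$) combined with the block structure of $\Delta$, which restricts the relevant matrix entries to a subblock tied to the symplectic pairing on $H_1(\partial V_k;\Z)$ and the map $(\incl_k)_*$.
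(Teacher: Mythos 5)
Your structural reduction is sound and essentially matches the paper: ordering the relators so that the $h=\sum_{k\in J}h_k$ meridian relators $c_k(x_{k,i})$, $k\in J$, occupy the last columns and the $a$-generators $x_{k,i}$, $k\in J$, the last rows, you get the block form $A(P)=\begin{pmatrix}A_{11}&0\\A_{21}&I_h\end{pmatrix}$, and the expansion $\ldet_1(\tilde\rho(A(P_J)))-\ldet_1(\tilde\rho(A(P)))=\tr\bigl(\epsilon(A(P))^{-1}\Delta\bigr)=\tr(D_\Delta)-\tr\bigl(\epsilon(A_{21})\epsilon(A_{11})^{-1}E_\Delta\bigr)$ is correct. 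However, your final identification is wrong, and this is a genuine gap. You claim the diagonal term $\tr(D_\Delta)$ is a \emph{partial} contraction along the $a$-generators and the off-diagonal $\tr\bigl(\epsilon(A_{21})\epsilon(A_{11})^{-1}E_\Delta\bigr)$ supplies the complementary $b$-contraction, so that they assemble into $-\tfrac12 C_k\circ\tau_1$. In fact the off-diagonal term vanishes, and $\tr(D_\Delta)$ \emph{by itself} already equals $-\tfrac12\sum_{k\in J}(\incl_k)_*C_k\tau_1(\varphi_k)$: with $C_k(x\wedge y\wedge z)=2\bigl((x\cdot y)z+(y\cdot z)x+(z\cdot x)y\bigr)$, the Fox-derivative identity $\overline{\partial(\varphi_k(x_{k,i})x_{k,i}^{-1})/\partial x_{k,i}}=-(\ev_{x_{k,i}x_{k,h_k+i}}\circ\tau_1)(\varphi_k)$ makes the sum of diagonal entries precisely $-\tfrac12 C_k\circ\tau_1(\varphi_k)$, because the factor of $2$ in $C_k$ is exactly what converts the sum over the $a$-generators alone into the full symplectic contraction.

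The vanishing of the off-diagonal is the substantive lemma your argument is missing, and it is not automatic. The only nonzero rows of $E_\Delta$ correspond to the $b$-generators $x_{k,h_k+i}$, and the resulting $N_2\times N_2$ block $W_J$ (with $(i,j)$-entry $\overline{\partial\varphi_k(x_{k,j})/\partial x_{k,h_k+i}}\bmod\hat I^2$) is \emph{alternating} precisely because $\tau_1(\varphi_k)\in\Lambda^3H_1(\partial V_k;\Z)$. The corresponding block $T$ of $\epsilon(A(P))^{-1}$ has entries which are linking numbers of the longitudes $x_{k,h_k+i}$ and $x_{l,h_l+j}$ in the homology cube obtained by adding $2$-handles along half the $\gamma$'s, so $T$ is \emph{symmetric}. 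Hence $\tr(W_JT)=0$. Moreover, $T$ couples different handlebodies, so your claim that the off-diagonal "reduces to data intrinsic to $\partial V_k$" is false before the vanishing is established, and your reduction to $|J|=1$ by block-diagonality does not apply to $\epsilon(A_{21})\epsilon(A_{11})^{-1}E_\Delta$ even though it does to $D_\Delta$. In short: you have the right trace formula, but the symmetric/alternating cancellation argument, not the "complementary contraction" picture, is what closes the proof.
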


\begin{proof}
Let $N=2g+2\sum_{k=1}^dh_k+l$.
The matrices $A(P)$ and $A(P_J)$ are square matrices of size $N$.
We may assume that the relators are ordered so that the last $\sum_{k=1}^{d}h_k$ relators are
\[
c_1(x_{1,1}),c_1(x_{1,2}),\ldots, c_1(x_{1,h_1}),\ldots, c_d(x_{d,1}),c_d(x_{d,2}),\ldots, c_d(x_{d,h_d})
\]
and that the generators are ordered so that the last $2\sum_{k=1}^d h_k$ generators are
\begin{align*}
&x_{1,h_1+1},\ldots, x_{1,2h_1},x_{2,h_2+1},\ldots, x_{2,2h_2},\ldots, x_{d,h_d+1},\ldots, x_{d,2h_d},\\
&x_{1,1},\ldots,x_{1,h_1},x_{2,1},\ldots,x_{2,h_2},\ldots,x_{d,1},\ldots,x_{d,h_d}.
\end{align*}
Since the presentation $P_J$ differs from $P$ only in relators coming from the meridian disks $D_{k,i}$ of $V_k$ for $k\in J$,
we see that $\epsilon(A(P_J))=\epsilon(A(P))\in\GL(N,\Q)$.
Denote it by $\epsilon_0$.

The homology cylinder $M$ is homotopy equivalent to 
\[
M'\cup \Bigl(\bigsqcup_{k=1}^d\Bigl(\bigsqcup_{i=1}^{h_k} D_{k,i}\Bigr)\Bigr).
\]
The Mayer-Vietoris exact sequence for it implies that $H_1(M';\Z)$ is freely generated by the homology classes $[i_-(\gamma_1)],\ldots, [i_-(\gamma_{2g})]$ and $[x_{k,1}],\ldots, [x_{k,h_k}]$ for $1\le k\le d$.
Thus, every element in $H_1(M';\Z)$ is written as a linear combination of these homology classes.
In particular, we can write
\[
[x_{l,h_l+j}]=\sum_{i=1}^{2g}t'_{i,(l,j)}[i_-(\gamma_i)]+\sum_{k=1}^d\sum_{i=1}^{h_k} t_{(k,i),(l,j)}[x_{k,i}]\in H_1(M';\Z)
\]
for $1\le l\le d$ and $1\le j\le h_l$.
Set $N_1=2g+l$ and $N_2=\sum_{k=1}^dh_k$.
Note that the first $N_1+N_2$ columns of $\epsilon_0$ correspond to relations of $H_1(M';\Z)$.
Thus, after applying identical column addition transformations to the first $N_1+N_2$ columns of $\rho(A(P_J))$ and $\rho(A(P))$,
we can write
\[
\epsilon_0=
\begin{pmatrix}
I_{N_1}&O_{N_1,N_2}&O_{N_1,N_2}\\
O_{N_2,N_1}&I_{N_2}&O_{N_2,N_2}\\
S&T&I_{N_2}
\end{pmatrix}
\in\GL(N,\Q)
\]
for some matrices $S$ of size $N_2\times N_1$ and $T$ of size $N_2\times N_2$, where $O_{m,n}$ denotes the zero matrix of size $m\times n$.
More precisely, setting 
\[
T_{kl}=(t_{(k,i),(l,j)})_{\substack{1\le i\le h_k\\1\le j\le h_l}},
\]
we can write $T=(T_{kl})_{\substack{1\le k\le d\\1\le l\le d}}$.
If we glue 2-handles to $M$ along the loops $i_+(\gamma_2),i_+(\gamma_4),\ldots,i_+(\gamma_{2g})$, $i_-(\gamma_1),i_-(\gamma_3),\ldots,i_-(\gamma_{2g-1})$,
we obtain a homology cube as in \cite[Theorem~2.10 and Lemma~2.12]{CHM08}.
We see that $t_{(k,i),(l,j)}$ corresponds to the linking number of the loops $x_{k,h_k+i}$ and $x_{l,h_l+j}$ in the homology cube.
Thus, we obtain
\[
t_{(k,i),(l,j)}=t_{(l,j),(k,i)},
\]
namely, $T$ is symmetric.

On the other hand,
we can write
\[
\tilde{\rho}(A(P_J))-\tilde{\rho}(A(P))=
\begin{pmatrix}
O_{N_1,N_1}&O_{N_1,N_2}&O_{N_1,N_2}\\
O_{N_2,N_1}&O_{N_2,N_2}&W_J\\
O_{N_2,N_1}&O_{N_2,N_2}&X_J
\end{pmatrix}\in M(N,\hat{I}/\hat{I}^2),
\]
and it does not change under the above column transformations.
Here, $W_J=(\tilde{\rho}(W_{kl}))_{\substack{1\le k\le d\\1\le l\le d}}$  and $X_J=(\tilde{\rho}(X_{kl}))_{\substack{1\le k\le d\\1\le l\le d}}$,
where
\[
W_{kk}=\begin{pmatrix}
\overline{\dfrac{\partial c(x_{k,j})}{\partial x_{k,h_k+i}}}
\end{pmatrix}_{\substack{1\le i\le h_k\\1\le j\le h_k}}
\text{ and }
X_{kk}=\begin{pmatrix}
\overline{\dfrac{\partial c(x_{k,j})}{\partial x_{k,i}}}
\end{pmatrix}_{\substack{1\le i\le h_k\\1\le j\le h_k}}-I_{h_k}
\]
and $W_{kl}=X_{kl}=O_{h_k,h_l}$ if $k\ne l$.
Thus, we obtain
\begin{align*}
\ldet_1(\tilde{\rho}(A(P_J)))-\ldet_1(\tilde{\rho}(A(P)))
&=(\theta_*\circ \tr)((\tilde{\rho}(A(P_J))-\tilde{\rho}(A(P)))\epsilon_0^{-1})\\
&=(\theta_*\circ \tr)(-W_JT+X_J)\in H.
\end{align*}

First, we show that $\tr(W_JT)=0$.
Let $\pi'=\pi_1(\partial V_k\setminus \Int N(q_k))$,
and let $\ev_{x_{k,i}x_{k,j}}\colon H_1(\partial V_k;\Z)^{\otimes 3}\to H_1(\partial V_k;\Z)$ denote the map
\[
a\otimes b\otimes c\mapsto (a\cdot x_{k,i})(b\cdot x_{k,j})c
\]
for $1\le i\le 2h_k$ and $1\le j\le 2h_k$.
Fox's free derivative $\frac{\partial}{\partial x_{k,i}}\colon \pi'\to \Q\pi'$ induces a homomorphism
\[
\frac{\partial}{\partial x_{k,i}}\colon \pi'(2)\to \frac{I\pi'}{(I\pi')^2} \cong H_1(\partial V_k;\Z),
\]
where $I\pi'$ denotes the augmentation ideal of $\Q\pi'$.
We can check that
\[
\overline{\frac{\partial \varphi_k(x_{k,j})}{\partial x_{k,h_k+i}}}
=\overline{\frac{\partial(\varphi_k(x_{k,j})x_{k,j}^{-1})}{\partial x_{k,h_k+i}}}
=(\ev_{x_{k,j}x_{k,i}}\circ \tau_1)(\varphi_k) \in H_1(\partial V_k;\Z)
\]
for $1\le i\le h_k$ and $1\le j\le h_k$.
Since $\tau_1(\varphi_k)\in \Lambda^3(H_1(\partial V_k;\Z))$,
we see that 
\[
\overline{\frac{\partial \varphi_k(x_{k,j})}{\partial x_{k,h_k+i}}}
=(\ev_{x_{k,j}x_{k,i}}\circ \tau_1)(\varphi_k)
=-(\ev_{x_{k,i}x_{k,j}}\circ \tau_1)(\varphi_k)
=-\overline{\frac{\partial \varphi_k(x_{k,i})}{\partial x_{k,h_k+j}}}
\]
for $1\le i\le h_k$ and $1\le j\le h_k$.
Thus, $W_J\in M(N_2,\widehat{\Q\pi}/\hat{I}^2)$ is an alternating matrix,
and we see that $\tr(W_JT)=0$.

Next, we compute $(\theta_*\circ \tr)(X_J)$.
By the definition of $C_k$, we have
\[
\frac{1}{2}(C_k\circ\tau_1)(\varphi_k)
=\sum_{i=1}^{h_k}(\ev_{x_{k,i}x_{k,h_k+i}}\circ\tau_1)(\varphi_k)\in H_1(\partial V_k;\Z).
\]
On the other hand, we can check that
\[
\overline{\frac{\partial \varphi_k(x_{k,i})}{\partial x_{k,i}}}-1
=\overline{\frac{\partial(\varphi_k(x_{k,i})x_{k,i}^{-1})}{\partial x_{k,i}}}
=-(\ev_{x_{k,i}x_{k,h_k+i}}\circ \tau_1)(\varphi_k)\in H_1(\partial V_k;\Z)
\]
for $1\le i\le h_k$.
Thus, we obtain that
\begin{align*}
(\theta_*\circ \tr)(X_J)
&=(\incl_k)_*\left(\sum_{k\in J}\sum_{i=1}^{h_k}\overline{\frac{\partial(\varphi_k(x_{k,i})x_{k,i}^{-1})}{\partial x_{k,i}}}\right)\\
&=-\frac{1}{2}\sum_{k\in J}((\incl_k)_*\circ C_k\circ\tau_1)(\varphi_k)\in H,
\end{align*}
where we identify $H_1(M;\Z)\cong H_\Z$.
As a conclusion, we obtain
\[
\ldet_1(\tilde{\rho}(A(P_J)))-\ldet_1(\tilde{\rho}(A(P)))=-\frac{1}{2}\sum_{k\in J}((\incl_k)_*\circ C_k\circ\tau_1)(\varphi_k).
\]
Since
\[
\ldet_1\colon \Ker\left(K_1(\widehat{\Q\pi}/\hat{I}^2)\to K_1(\widehat{\Q\pi}/\hat{I})\right)\to H
\]
is an isomorphism as in Corollary~\ref{cor:gr}, we have
\[
\tilde{\rho}(A(P_J))
=\tilde{\rho}\left(-\frac{1}{2}\sum_{k\in J}((\incl_k)_*\circ C_k\circ\tau_1)(\varphi_k)\right) \tilde{\rho}(A(P))
\in K_1(\widehat{\Q\pi}/\hat{I}^2).
\]
\end{proof}

The next theorem describes the change of the Johnson homomorphism on $\I\C$ under Torelli surgery along $\Sigma_{h}$.
Let $\M_{h,1}$ denote the mapping class group of the surface $\Sigma_{h,1}$ which fixes the boundary pointwise.
We consider $\varphi\in\M_{h,1}$ as a mapping class of the surface $\Sigma_{h}$ by extending it by the identity map on the open disk $\Sigma_h\setminus \Sigma_{h,1}$.

\begin{theorem}[{\cite[Theorem~2]{GaLe05}}]\label{lem:johnsonhomo}
Let $(M,i) \in J_d\C$ and let $\iota\colon \Sigma_{h} \to M$ be an embedding.
For $\varphi \in J_d\M_{h,1}$, 
let $M_\varphi$ be the homology cylinder obtained from $M\setminus\Int(\iota(\Sigma_{h})\times[-1,1])$ and $\Sigma_{h}\times[-1,1]$ by the identification
\[
(\iota(x),1)=(\varphi(x),1)\text{ and }(\iota(x),-1)=(x,-1).
\]
Then, the homology cylinder $M_\varphi$ lies in $J_d\C$ and the equality
\[
\tau_d(M_\varphi) = \tau_d(M) +(i_-)_\ast^{-1}\circ\iota_\ast(\tau_d(\varphi)) \in H_\Z\otimes L_{d+1}(H_\Z)
\]
holds.
\end{theorem}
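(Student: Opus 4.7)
The plan is to perform a van Kampen decomposition of both $M$ and $M_\varphi$ along $\iota(\Sigma_h)$ and to track how the twist $\varphi$ perturbs the top embedding modulo deep terms of the lower central series. Write $N=\iota(\Sigma_h)\times[-1,1]$ for a regular neighborhood and $M^\circ=M\setminus\Int N$. Both $M$ and $M_\varphi$ are obtained by gluing $\Sigma_h\times[-1,1]$ to $M^\circ$ along the two boundary copies of $\Sigma_h$; the only difference is that in $M_\varphi$ the top gluing is twisted by $\varphi$, while the bottom gluing and the boundary identification $i\colon\partial(\Sigma_{g,1}\times[-1,1])\to\partial M$ are unchanged. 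Since $\varphi\in J_d\M_{h,1}\subset\mathcal{I}_{h,1}$ acts trivially on $H_1(\Sigma_h;\Z)$, a Mayer-Vietoris argument yields $H_\ast(M_\varphi;\Z)\cong H_\ast(M;\Z)$, so $M_\varphi$ is a homology cobordism; by Stallings' theorem both $i_\pm$ induce isomorphisms $\pi/\pi(d+2)\cong\pi_1(M_\varphi)/\pi_1(M_\varphi)(d+2)$.

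To prove $M_\varphi\in J_d\C$ and to compute $\tau_d(M_\varphi)$, I would proceed as follows. For $x\in\pi$, take a based loop representing $(i_+)_\ast(x)$ and decompose it via van Kampen as a concatenation of arcs alternately in $M^\circ$ and in the slab $\Sigma_h\times[-1,1]$. In $M_\varphi$ the same combinatorial decomposition produces a loop that differs from the one in $M$ only by a sequence of \emph{correction loops} in $\iota(\Sigma_h)$, each of the form $\varphi(y_j)y_j^{-1}$ for elements $y_j\in\pi_1(\Sigma_h)$ determined by the decomposition. Since $\varphi\in J_d\M_{h,1}$, every $\varphi(y_j)y_j^{-1}$ lies in $\pi_1(\Sigma_h)(d+1)$, so $(i_-)_\ast^{-1}\circ(i_+)_\ast^{M_\varphi}$ agrees with $(i_-)_\ast^{-1}\circ(i_+)_\ast^{M}$ modulo $\pi(d+1)$, proving $M_\varphi\in J_d\C$.

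For the formula, I would refine the above computation modulo $\pi_1(M)(d+2)$. Since $\varphi(y_j)y_j^{-1}\in\pi_1(\Sigma_h)(d+1)$, the sum of the correction terms is linear in $[x]\in H_\Z$ and defines an element of $L_{d+1}(H_\Z)$ after pushing forward by $(i_-)_\ast^{-1}\circ\iota_\ast$. A direct bookkeeping using a symplectic basis of $H_1(\Sigma_h;\Z)$ identifies the map $[x]\mapsto \sum_j[y_j]\in H_1(\Sigma_h;\Z)$ as the Poincaré-dual of $\iota_\ast\colon H_1(\Sigma_h;\Z)\to H_\Z$, so that the total correction, viewed in $H_\Z\otimes L_{d+1}(H_\Z)\cong\Hom(H_\Z,L_{d+1}(H_\Z))$, is exactly $(i_-)_\ast^{-1}\circ\iota_\ast(\tau_d(\varphi))$. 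The main obstacle is this last Poincaré-duality identification: one must verify that the combinatorial coefficients $y_j$ produced by the van Kampen decomposition are precisely those predicted by the intersection pairing. I would carry this out by testing the formula on a free generating set of $\pi$, using the defining formula $\tau_d(\varphi)([y])=\varphi(y)y^{-1}\pmod{\pi_1(\Sigma_h)(d+2)}$ to reduce the problem to a direct computation in $L_{d+1}$.
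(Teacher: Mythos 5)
The paper does not prove this statement; it is cited from Garoufalidis--Levine~\cite[Theorem~2]{GaLe05}, so there is no in-paper argument to compare against, and your proposal must be evaluated on its own.

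Your outline has the right shape (Mayer--Vietoris and Stallings reduce the problem to comparing nilpotent quotients; $\varphi\in J_d\M_{h,1}$ produces corrections lying in degree $\geq d+1$, hence central modulo $\pi(d+2)$), but it contains two genuine gaps. First, the geometry is off: a based loop representing $(i_+)_\ast(x)$ lies in $i_+(\Sigma_{g,1})\subset\partial M$, which is contained in $M^\circ=M\setminus\Int N$ and disjoint from the slab $N=\iota(\Sigma_h)\times[-1,1]$, so it cannot be ``decomposed as a concatenation of arcs alternately in $M^\circ$ and in the slab''. The corrections do not come from decomposing $(i_+)_\ast(x)$; they come from \emph{rewriting} $(i_+)_\ast(x)$ as a word in the $(i_-)_\ast$-generators modulo $\pi(d+2)$, which is what $(i_-)_\ast^{-1}$ implicitly does. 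Concretely, $\pi_1M$ and $\pi_1M_\varphi$ are both quotients of $\pi_1(M^\circ)\ast\langle t\rangle$ whose relator sets differ by elements $\iota_\ast\bigl(\varphi(z)z^{-1}\bigr)$, $z\in\pi_1\Sigma_h$; the argument should track how this change of relators perturbs the rewriting, not decompose a boundary loop that never meets the surgery region.

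Second, and more seriously, the step you yourself label ``the main obstacle'' --- that the total correction, read in $\Hom(H_\Z,L_{d+1}(H_\Z))$, equals $(i_-)_\ast^{-1}\circ\iota_\ast$ applied to $\tau_d(\varphi)$, with the left $H$-factor of $\tau_d(\varphi)\in H_1(\Sigma_h;\Z)\otimes L_{d+1}(H_1(\Sigma_h;\Z))$ matched to $H_\Z$ via the intersection pairings on $\Sigma_h$ and $\Sigma_{g,1}$ --- is the entire mathematical content of the theorem. One must actually verify that modulo $\pi(d+2)$ the correction is additive in $[x]$, that each term $[\varphi(y)y^{-1}]$ enters with coefficient equal to a signed algebraic intersection number of a representing chain with $\iota(\Sigma_h)$, and that this geometric count agrees in sign with the Poincar\'e-duality identification $H_\Z\cong H_\Z^\ast$ built into the definition of $\tau_d$. ``Testing on a free generating set'' is precisely where the work lives; as written the proposal is a plausible sketch whose central computation is deferred, not carried out.
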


We can describe the Euler structure corresponding to $\tilde{\rho}(A(P_J))$.
Let 
\[
C\colon \Lambda^3 H_1(M) \xrightarrow{(i_+)_\ast^{-1}} \Lambda^3 H_1(\Sigma_{g,1})\to H_1(\Sigma_{g,1})
\]
be the contraction map defined in the same way as $C_k$.

\begin{proposition}
If $\tilde{\rho}(A(P))=\tilde{\alpha}(M)\in K_1(\widehat{\Q\pi})$, the equality
\[
\tilde{\rho}(A(P_J))
=\tilde{\rho}\left(\frac{1}{2}\sum_{k\in J}\Bigl(-((\incl_k)_*\circ C_k\circ\tau_1)(\varphi_k)+(C\circ(\incl_k)_*\circ \tau_1)(\varphi_k) \Bigr)\right)\tilde{\alpha}(M_J)
\]
in $K_1(\widehat{\Q\pi})$ holds.
In other words, if the presentation $P$ corresponds to the preferred Euler structure of $M$,
the presentation $P_J$ corresponds to the Euler structure
\[
\xi_0+\frac{1}{2}\sum_{k\in J}\Bigl(-((\incl_k)_*\circ C_k\circ\tau_1)(\varphi_k)+(C\circ(\incl_k)_*\circ \tau_1)(\varphi_k)\Bigr),
\]
where we also denote by $\xi_0$ the preferred Euler structure of $M_J$.
\end{proposition}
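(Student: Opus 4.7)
The plan is to carry out the computation modulo $\hat{I}^2$, where the Reidemeister--Turaev torsion is expressible through the first Johnson homomorphism via Lemma~\ref{lem:euler-str}, and then lift the resulting equality to $K_1(\widehat{\Q\pi})$ via Lemma~\ref{lem:euler structure}. The lift works because the natural map $H_\Z \to K_1(\widehat{\Q\pi}/\hat{I}^2)$ induced by $\tilde\rho$ is injective: by Corollary~\ref{cor:gr} the composition $H_\Z\to K_1(\widehat{\Q\pi}/\hat{I}^2)\xrightarrow{\ldet_1}H$ is the standard inclusion $H_\Z\hookrightarrow H=H_\Z\otimes\Q$.

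First, I would combine Lemma~\ref{lem:torsion-presentation} with the hypothesis $\tilde\rho(A(P))=\tilde\alpha(M)$ to obtain, in $K_1(\widehat{\Q\pi}/\hat{I}^2)$,
\[
\tilde\rho(A(P_J))=\tilde\rho\!\left(-\tfrac{1}{2}\sum_{k\in J}((\incl_k)_\ast\circ C_k\circ\tau_1)(\varphi_k)\right)\cdot \tilde\alpha(M).
\]
Next, since the handlebodies $V_k$ are pairwise disjoint, Theorem~\ref{lem:johnsonhomo} applied iteratively (each successive Torelli surgery along $\partial V_k$ leaves the remaining $\partial V_{k'}$ untouched, so the contributions are additive) yields
\[
\tau_1(M_J)-\tau_1(M)=\sum_{k\in J}(i_-)_\ast^{-1}\circ(\incl_k)_\ast(\tau_1(\varphi_k)).
\]
Applying the contraction $C$ and using Lemma~\ref{lem:euler-str} for both $M$ and $M_J$, together with $(i_+)_\ast=(i_-)_\ast$ on $H$ for homology cylinders, I deduce
\[
\tilde\alpha(M)\cdot\tilde\alpha(M_J)^{-1}=\tilde\rho\!\left(\tfrac{1}{2}\sum_{k\in J}(C\circ(\incl_k)_\ast\circ\tau_1)(\varphi_k)\right)\in K_1(\widehat{\Q\pi}/\hat{I}^2).
\]
Multiplying the two displayed identities then produces $\tilde\rho(A(P_J))\equiv \tilde\rho(\eta)\cdot\tilde\alpha(M_J)$ modulo $\hat{I}^2$, with $\eta$ the element appearing in the statement.

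Finally, Lemma~\ref{lem:euler structure} provides a unique $\xi\in H_\Z$ with $\tilde\rho(A(P_J))=\tilde\rho(\xi)\cdot\tilde\alpha(M_J)$ in $K_1(\widehat{\Q\pi})$, and $\xi$ is by definition the offset from $\xi_0$ of the Euler structure on $M_J$ encoded by $P_J$. Comparing modulo $\hat{I}^2$ gives $\tilde\rho(\xi)\equiv\tilde\rho(\eta)$, and the injectivity from the first paragraph forces $\xi=\eta$ in $H_\Z$, which upgrades the congruence to the claimed identity in $K_1(\widehat{\Q\pi})$ and simultaneously identifies the Euler structure associated to $P_J$ as $\xi_0+\eta$. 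The main point requiring care is separating the two geometric sources of $\eta$: the term $-((\incl_k)_\ast\circ C_k\circ\tau_1)(\varphi_k)$ comes from the modified Fox Jacobian of $P_J$ (Lemma~\ref{lem:torsion-presentation}), whereas $(C\circ(\incl_k)_\ast\circ\tau_1)(\varphi_k)$ captures the genuine shift in the preferred Euler structure under Torelli surgery. That $C_k$ and $C$ rely on different intersection forms---on $\partial V_k$ and on $\Sigma_{g,1}$, respectively---is precisely why these two contributions do not collapse.
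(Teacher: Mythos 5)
Your proof is correct and follows essentially the same route as the paper: reduce modulo $\hat{I}^2$ using Lemma~\ref{lem:torsion-presentation}, Lemma~\ref{lem:euler-str} (applied to both $M$ and $M_J$), and Theorem~\ref{lem:johnsonhomo}, then lift to $K_1(\widehat{\Q\pi})$ via Lemma~\ref{lem:euler structure}. The only difference is that you spell out the lifting step (the $H_\Z$-ambiguity of the Euler structure plus injectivity of $H_\Z\to K_1(\widehat{\Q\pi}/\hat{I}^2)$) which the paper leaves implicit.
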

\begin{proof}
By Lemma~\ref{lem:euler-str},
\[
\tilde{\alpha}(M)
=\tau^{\tilde{\rho}}(M,\partial_-M,\xi_0)
=\tilde{\rho}\Bigl(-\frac{1}{2}C\circ\tau_1(M)\Bigr)\in K_1(\widehat{\Q\pi}/\hat{I}^2).
\]
By Lemma~\ref{lem:torsion-presentation},
we have
\[
\tilde{\rho}(A(P_J))=\tilde{\rho}\left(-\frac{1}{2}\sum_{k\in J}((\incl_k)_*\circ C_k\circ\tau_1)(\varphi_k)\right) \tilde{\rho}(A(P))
\in K_1(\widehat{\Q\pi}/\hat{I}^2).
\]
If $\tilde{\rho}(A(P))=\tilde{\rho}(-\frac{1}{2}C\circ\tau_1(M)) \in K_1(\widehat{\Q\pi}/\hat{I}^2)$,
we have
\begin{align*}
&\tilde{\rho}(A(P_J))\\
&=\tilde{\rho}\left(-\frac{1}{2}\sum_{k\in J}((\incl_k)_*\circ C_k\circ\tau_1)(\varphi_k)-\frac{1}{2}C\circ\tau_1(M)\right)\\
&=\tilde{\rho}\left(\frac{1}{2}\sum_{k\in J}\Bigl(-((\incl_k)_*\circ C_k\circ\tau_1)(\varphi_k)+(C\circ(\incl_k)_*\circ \tau_1)(\varphi_k)\Bigr)-\frac{1}{2}C\circ\tau_1(M_J)\right)\\
&=\tilde{\rho}\left(\frac{1}{2}\sum_{k\in J}\Bigl(-((\incl_k)_*\circ C_k\circ\tau_1)(\varphi_k)+(C\circ(\incl_k)_*\circ \tau_1)(\varphi_k)\Bigr)\right)\tilde{\alpha}(M_J)
\end{align*}
by Theorem~\ref{lem:johnsonhomo} 
as elements in $K_1(\widehat{\Q\pi}/\hat{I}^2)$,
and it shows what we desired.
\end{proof}

\begin{corollary}\label{cor:changeeulerstr}
For $d\ge2$,
\[
\prod_{J\subset\{1,2,\ldots, d\}}\tilde{\alpha}(M_J)^{(-1)^{|J|}}
=\prod_{J\subset\{1,2,\ldots, d\}}\tilde{\rho}(A(P_J))^{(-1)^{|J|}}
\in K_1(\widehat{\Q\pi}).
\]
\end{corollary}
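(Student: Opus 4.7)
The plan is to combine the preceding proposition with an inclusion-exclusion cancellation. By Lemma~\ref{lem:torsion-of-presentation} I may choose the balanced presentation $P$ of $\pi_1 M$ so that $\tilde{\rho}(A(P)) = \tilde{\alpha}(M) \in K_1(\widehat{\Q\pi})$, i.e.\ so that $P$ realizes the preferred Euler structure $\xi_0$ of $M$. With this choice, the preceding proposition yields, for every $J \subset \{1,\ldots,d\}$,
\[
\tilde{\rho}(A(P_J)) = \tilde{\rho}\Bigl(\sum_{k\in J} v_k\Bigr)\,\tilde{\alpha}(M_J) \in K_1(\widehat{\Q\pi}),
\]
where
\[
v_k = \tfrac{1}{2}\Bigl(-((\incl_k)_*\circ C_k\circ\tau_1)(\varphi_k) + (C\circ(\incl_k)_*\circ\tau_1)(\varphi_k)\Bigr) \in H
\]
depends only on the individual handlebody $V_k$ and the mapping class $\varphi_k$, not on $J$. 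The key feature that makes the argument work is this additive separation of the correction term over the index $k$.

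Next, I would form the alternating product over $J$. Since $K_1(\widehat{\Q\pi})$ is abelian and the correction terms lie in the image of the abelian group $H$ under $\tilde{\rho}$, the correction factors combine additively, so
\[
\prod_{J}\tilde{\rho}(A(P_J))^{(-1)^{|J|}} = \tilde{\rho}\Bigl(\sum_{J}(-1)^{|J|}\sum_{k\in J} v_k\Bigr)\cdot\prod_J\tilde{\alpha}(M_J)^{(-1)^{|J|}}.
\]
Swapping the order of summation gives
\[
\sum_{J\subset\{1,\ldots,d\}}(-1)^{|J|}\sum_{k\in J} v_k = \sum_{k=1}^d v_k \sum_{J\ni k}(-1)^{|J|},
\]
and because $d\geq 2$, each inner sum vanishes: subsets containing $k$ are in bijection with subsets of $\{1,\ldots,d\}\setminus\{k\}$, and the signed count of such subsets is zero as soon as $d-1\geq 1$. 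Hence the correction factor is trivial, and the claimed identity follows.

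There is no real obstacle here: the argument is essentially bookkeeping, and its only substantive input is the preceding proposition together with the abelianness of $K_1(\widehat{\Q\pi})$. It is worth noting that the hypothesis $d\geq 2$ enters precisely through the inclusion-exclusion cancellation, which is consistent with the fact that $\tilde{\alpha}$ need not be trivial on $Y_1\I\C$.
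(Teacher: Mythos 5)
Your proposal is correct and takes essentially the same route as the paper's (implicit) proof: choose the balanced presentation $P$ realizing the preferred Euler structure via Lemma~\ref{lem:torsion-of-presentation}, apply the preceding proposition to write $\tilde{\rho}(A(P_J))=\tilde{\rho}\bigl(\sum_{k\in J}v_k\bigr)\tilde{\alpha}(M_J)$, and note that since each $v_k$ is independent of $J$, the alternating product of the correction factors collapses by the identity $\sum_{J\ni k}(-1)^{|J|}=-(1-1)^{d-1}=0$ for $d\geq 2$.
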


\subsection{Proof of Theorem~\ref{thm:Y-filtration}}
When $d=1$, Theorem~\ref{thm:Y-filtration} is easy since
\[
\epsilon(\tilde{\alpha}(M))=\epsilon(\tilde{\alpha}(M_J))=1\in K_1(\Q).
\]

Assume $d\ge2$.
By Corollary~\ref{cor:changeeulerstr} and Proposition~\ref{prop:ldet},
it suffices to show that 
\[
\ldet\left(\prod_{J\subset\{1,2,\ldots, d\}}\tilde{\rho}(A(P_J))^{(-1)^{|J|}}\right) \in \Ker\left(H_1^{\Lie}(\hat{I})\to H_1^{\Lie}(\hat{I}/\hat{I}^d)\right).
\]

Define a map
\[
\Delta_d\colon M(N,\hat{I})\times M(N,\hat{I})\times \cdots \times M(N,\hat{I})\to H_1^\Lie(\hat{I})
\]
by $\Delta_d(A_1,\ldots, A_d)=\sum_{J\subset\{1,2,\ldots, d\}}(-1)^{|J|}\ldet(I_N+\sum_{k\in J} A_j)\in H_1^\Lie(\hat{I})$,
where $I_N$ denotes the identity matrix of size $N$.
\begin{lemma}\label{lem:altprod}
Let $A_1,A_2,\ldots,A_d\in M(N,\hat{I})$.
Then, we have
\[
\Delta_d(A_1,A_2,\ldots,A_d) \in \Ker\left(H_1^{\Lie}(\hat{I})\to H_1^{\Lie}(\hat{I}/\hat{I}^d)\right).
\]
\end{lemma}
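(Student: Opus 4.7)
The plan is to expand $\Delta_d$ as a power series in the $A_k$ and then apply a simple inclusion-exclusion identity. Since each $A_k\in M(N,\hat I)$, we have $\epsilon\bigl(I_N+\sum_{k\in J}A_k\bigr)=I_N$ for every $J\subset\{1,\dots,d\}$, so by the definition \eqref{eq:log} of $\log$ on the kernel of $\epsilon$,
\[
\ldet\Bigl(I_N+\sum_{k\in J}A_k\Bigr)=\tr\log\Bigl(I_N+\sum_{k\in J}A_k\Bigr)=\sum_{n\ge 1}\frac{(-1)^{n-1}}{n}\sum_{(k_1,\dots,k_n)\in J^n}\tr(A_{k_1}\cdots A_{k_n}),
\]
the series converging in the $I$-adic topology on $\hat R$ (and then reduced modulo $\cl[\hat I,\hat I]$ to land in $H_1^\Lie(\hat I)$).

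Next I would substitute this expansion into $\Delta_d$ and swap the order of summation over $J$ and over tuples $(k_1,\dots,k_n)$, obtaining
\[
\Delta_d(A_1,\dots,A_d)=\sum_{n\ge 1}\frac{(-1)^{n-1}}{n}\sum_{(k_1,\dots,k_n)}\tr(A_{k_1}\cdots A_{k_n})\sum_{J\supset\{k_1,\dots,k_n\}}(-1)^{|J|},
\]
where the inner sum runs over $J\subset\{1,\dots,d\}$ containing the underlying set $S=\{k_1,\dots,k_n\}$. A standard inclusion-exclusion identity shows $\sum_{J\supset S}(-1)^{|J|}=0$ whenever $S\subsetneq\{1,\dots,d\}$, so only those tuples $(k_1,\dots,k_n)$ whose underlying set equals $\{1,\dots,d\}$ contribute. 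In particular each such tuple has length $n\ge d$.

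For every surviving tuple the product $A_{k_1}\cdots A_{k_n}$ lies in $M(N,\hat I^n)\subset M(N,\hat I^d)$, so its trace lies in $\hat I^d$. Therefore $\Delta_d(A_1,\dots,A_d)$ lies in the image of $\hat I^d$ in $H_1^\Lie(\hat I)=\hat I/\cl[\hat I,\hat I]$, which is precisely the kernel of the natural projection $H_1^\Lie(\hat I)\to H_1^\Lie(\hat I/\hat I^d)$. This finishes the plan. The only technical point is justifying the interchange of summations; this is routine because for each $N_0$ only finitely many terms are nonzero modulo $\hat I^{N_0}$ and $\hat R$ is $I$-adically complete, so convergence and rearrangement cause no trouble. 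I do not foresee a genuine obstacle — the entire argument is driven by the combinatorial cancellation given by $\sum_{J\supset S}(-1)^{|J|}=0$.
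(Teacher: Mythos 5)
Your proposal is correct and is essentially the paper's argument: the paper expands $\ldet$ as a power series, works modulo $\hat{I}^d$, and invokes the ``finiteness property'' that $\sum_{J\subset\{1,\dots,d\}}(-1)^{|J|}\bigl(\sum_{j\in J}A_j\bigr)^k=0$ for $1\le k\le d-1$, which is exactly your inclusion-exclusion identity $\sum_{J\supset S}(-1)^{|J|}=0$ for $S\subsetneq\{1,\dots,d\}$ after expanding the $k$th power into monomials. You merely spell out this combinatorial cancellation at the level of individual monomials rather than at the level of the $k$th powers, but the underlying mechanism is identical.
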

\begin{proof}
\begin{align*}
\Delta_d(A_1,\ldots, A_d)
&=\sum_{J\subset\{1,2,\ldots, d\}}(-1)^{|J|}\ldet\left(I_N+\sum_{j\in J} A_j\right)\\
&= \sum_{k=1}^{d-1}\frac{(-1)^{k-1}}{k}\tr\left(\sum_{J\subset\{1,2,\ldots, d\}}(-1)^{|J|} \Biggl(\sum_{j\in J}A_j\Biggr)^k \right) \in H_1^{\Lie}(\hat{I}/\hat{I}^d).
\end{align*}
Here, note that the $k$th powers of matrices satisfy the finiteness property, that is, 
$\sum_{J\subset\{1,2,\ldots, d\}}(-1)^{|J|}(\sum_{j\in J}A_j)^k$ equals to the zero matrix for
$1\le k\le d-1$.
Thus, we obtain
$\Delta_d(A_1,A_2,\ldots,A_d) = 0\in H_1^{\Lie}(\hat{I}/\hat{I}^d)$.
\end{proof}
Let us denote $A_k=\tilde{\rho}(A(P_{\{k\}}))-\tilde{\rho}(A(P))$.
In the same way as the proof of Lemma~\ref{lem:torsion-presentation},
we have 
\[
\tilde{\rho}(A(P_J))=\tilde{\rho}(A(P))+\sum_{k\in J}A_k.
\]
Here, note that $A_k\in M(N,\hat{I})$ since $\varphi_k$ acts trivially on $H_1(\partial V_k;\Z)$.
We obtain
\begin{align*}
\ldet\left(\prod_{J\subset\{1,2,\ldots, d\}}\tilde{\rho}(A(P_J))^{(-1)^{|J|}}\right)
&=\sum_{J\subset\{1,2,\ldots, d\}}(-1)^{|J|}\ldet\left(\tilde{\rho}(A(P))+\sum_{k\in J}A_k\right)\\
&=\sum_{J\subset\{1,2,\ldots, d\}}(-1)^{|J|}\ldet\left(I_N+\sum_{k\in J}A'_k\right)\\
&=\Delta_d(A'_1,\ldots, A'_d),
\end{align*}
where we denote $A'_k=\tilde{\rho}(A(P))^{-1}A_k$.
By Lemma~\ref{lem:altprod},
it is in the kernel $\Ker\left(H_1^{\Lie}(\hat{I})\to H_1^{\Lie}(\hat{I}/\hat{I}^d)\right)$.
This finishes the proof of Theorem~\ref{thm:Y-filtration}.

\section{A clasper surgery formula}
\label{section:change-rt}
In this section, we investigate the behavior of $\tilde{\alpha}(M)$ under surgeries along $k$-loop graph claspers for $k\ge1$.
We first describe the fundamental group of a tubular neighborhood of a graph clasper embedded in a 3-manifold.

\subsection{A lower bound on the degree over the fundamental group}
A graph clasper is an embedded surface in a $3$-manifold consisting of three kinds of constituents called leaves, nodes, and bands.
By assigning vertices to leaves and nodes and assigning edges to bands,
we obtain a uni-trivalent graph, which is similar to taking an inverse image in the surgery map in \cite[Section~2.2]{HaMa09}.
Note that we consider a uni-trivalent graph without labels in univalent vertices.

Let $G$ be a graph clasper of degree $d$,
and let $J$ be the corresponding uni-trivalent graph.
We write $E_U$ for the set of edges of $G$ incident to univalent vertices and $E_T$ for the set of half-edges of $G$ contained in edges which connect trivalent vertices, where a half-edge of $G$ is a piece obtained by cutting an edge of $G$ at its midpoint.
We also denote by $E(J)=E_U\cup E_T$.
The clasper $G$ can be decomposed into $Y$-graphs by \cite[Move~2]{Hab00C},
and there is a one-to-one correspondence between $E(J)$ and the set of leaves of the $Y$-graphs.

In this paper, we endow every leaf and node in $G$ with orientations.
An edge is said to be \emph{twisted} if the orientations of the nodes connected by the edge are not coherent.
Unless otherwise stated, the visible side of a node is positively oriented in the figures of this paper.
For $e\in E(J)$, we denote by $\alpha_e$ and $\beta_e$ a meridian and a longitude of the leaf, respectively (see Figure~\ref{fig:Y-graph}).
Note that their orientations depend on the orientation of the node incident to $e$.
Here, we care about only the conjugacy classes of $\alpha_e$ and $\beta_e \in \pi_1N(G)$, and do not specify the basepoint in $N(G)$.

For $x\in \pi_1(N(G))$,
we denote by 
\[
\deg(x)=\sup\{n\mid x\in \pi_1(N(G))(n)\}\in \Z_{\ge1}\cup\{\infty\}.
\]
Note that, if $\deg(x)=\infty$, we have $x=1\in\widehat{\Q\pi_1N(G)}$.
We use the fundamental (in)equalities
\begin{align*}
 \deg(xy) &\geq \min\{\deg x, \deg y\}, \\
 \deg(yxy^{-1})&=\deg x, \\
 \deg[x,y] &\geq \deg x+\deg y
\end{align*}
throughout this paper.
Here, we denote $[x,y]=xyx^{-1}y^{-1}$.

\begin{proposition}\label{prop:connected}
Let $e\in E(J)$.
\begin{enumerate}
 \item If $e\in E_U$,
we have $\deg\beta_e=1$.
 \item If $e\in E_T$ and $J\setminus\Int e$ is connected, we have $\deg\beta_e=\infty$.
\end{enumerate}
\end{proposition}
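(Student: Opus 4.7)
The plan is to use that $N(G)$ is a regular neighborhood of a compact (embedded) surface, so it is a handlebody and $\pi_1(N(G))$ is a free group; in particular the degree filtration is exactly the lower central series of a free group. For (1), when $e\in E_U$ the associated leaf is an annular component of the clasper coming from a univalent vertex of $J$, and the longitude $\beta_e$ is the core circle of that annulus. This core represents one of the free generators of $H_1(N(G);\mathbb Z)$, so $\beta_e\notin[\pi_1(N(G)),\pi_1(N(G))]=\pi_1(N(G))(2)$, giving $\deg\beta_e=1$ at once.

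For (2), let $E$ be the interior edge of $J$ whose half-edge is $e$, and let $e'$ be the partner half-edge. After Habiro's Move~2, $L_e$ and $L_{e'}$ are a pair of Hopf-linked leaves, so in $\pi_1(N(G))$ (up to conjugation) the meridian/longitude of one is identified with the longitude/meridian of the other; symbolically $\alpha_e\sim\beta_{e'}^{\pm 1}$ and $\alpha_{e'}\sim\beta_{e}^{\pm 1}$. The plan is: first, use the standard fundamental-group computation for the thickening of a $Y$-graph to produce, at the trivalent vertex adjacent to $e$, a commutator identity expressing $\beta_e$ (times a null-homotopic meridian) as a conjugate of $[\beta_{e_1},\beta_{e_2}]$ where $e_1,e_2$ are the other two half-edges at that vertex. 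Second, substitute each $\beta_{e_i}$ using the Hopf-link identification with the longitude on the other half-edge of the corresponding edge, which effectively ``walks'' the formula along an edge of $J$. The hypothesis that $J\setminus\Int e$ is connected supplies a cycle through $E$, so this substitution can be performed around the cycle and eventually returns to $\beta_e$ itself (or its conjugate), producing a self-referential commutator expansion. Each pass around the cycle increases the commutator depth by at least one, so by induction $\beta_e\in\pi_1(N(G))(n)$ for every $n$, yielding $\deg\beta_e=\infty$.

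The hardest step will be making the self-referential iteration rigorous: one must show that each pass around the cycle genuinely deepens the outermost commutator nesting of $\beta_e$, rather than merely rewriting it in a circular manner that produces no depth gain. To handle this, I would measure progress by the lower-central-series depth in which $\beta_e$ can be placed after $k$ iterations of the substitution and verify monotonicity via a Magnus-style bookkeeping in the free group $\pi_1(N(G))$. Orientation and framing conventions (the ``twisted versus untwisted'' distinction for edges, and the sign of the Hopf linking) also need to be tracked since they govern the signs in the commutator identities; an inductive set-up on the length of a chosen cycle through $E$, or equivalently on the number of trivalent vertices in $J$, seems the cleanest way to organize the whole argument.
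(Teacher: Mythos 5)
Your overall strategy matches the paper's: in part~(1) you argue that $\beta_e$ survives in $H_1(N(G);\Z)$ (the paper makes exactly this point, noting $\pi_1 N(G)/\pi_1 N(G)(2)=H_1(N(G);\Z)$ is free of rank $\#E_U+b_1(J)$), and in part~(2) you propose to walk around a cycle through $e$, substituting at each trivalent vertex a commutator-type relation and using a degree estimate to force $\deg\beta_e=\infty$. This is the right plan, and the infinite-descent conclusion (if $\deg\beta_e<\infty$ then $\deg\beta_e\ge\deg\beta_e+k$) is how the paper finishes.

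The gap is that your ``self-referential commutator expansion'' is not yet backed by a precise superadditivity statement, and the commutator identity you posit at a trivalent vertex is not quite what the Wirtinger presentation produces. The paper isolates the needed estimate as a standalone lemma (Lemma~\ref{lem:superadditive}): if $e_1\cup e_1'$ is an edge with $e_1$ at $v$ and $e_1'$ at $v'$, and $e_2',e_3'$ are the other half-edges at $v'$, then
\[
\deg\beta_{e_1}\;\ge\;\deg\beta_{e_2'}+\deg\beta_{e_3'}.
\]
Proving it requires the explicit presentation of $\pi_1 N(Y)$ (Lemma~\ref{lem:pi1(Y)}), which gives
\[
\alpha_1'=[\beta_2'^{-1},\beta_3']\,\beta_3'\,[\beta_2'^{-1},\alpha_2']\,\beta_3'^{-1},
\]
together with the Hopf-link identification $\beta_{e_1}\sim(\alpha_{e_1'})^{\pm1}$ (up to conjugation, with care about twists). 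Note the right-hand side is \emph{not} merely a conjugate of $[\beta_{e_2'},\beta_{e_3'}]$: there is also a meridian $\alpha_2'$, and one must first use the analogous relation for $\alpha_2'$ to verify $\deg\alpha_2'>\deg\beta_3'$ before the degree of the whole word can be bounded below by $\deg\beta_2'+\deg\beta_3'$. So your ``Magnus-style bookkeeping'' needs this two-stage estimate, not a single substitution. Once Lemma~\ref{lem:superadditive} is in hand, the cycle argument you sketch goes through exactly as in the paper, since every $\beta$ has degree at least $1$, so traversing a length-$k$ cycle gives $\deg\beta_e\ge\deg\beta_e+k$.
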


We treat the case when $e\in E_T$ and $J\setminus\Int e$ is disconnected in Proposition~\ref{prop:disconnected}.
Let $Y$ be a $Y$-graph, that is, a graph clasper of degree $1$.

\begin{figure}[h]
 \centering
 \includegraphics[width=0.35\textwidth]{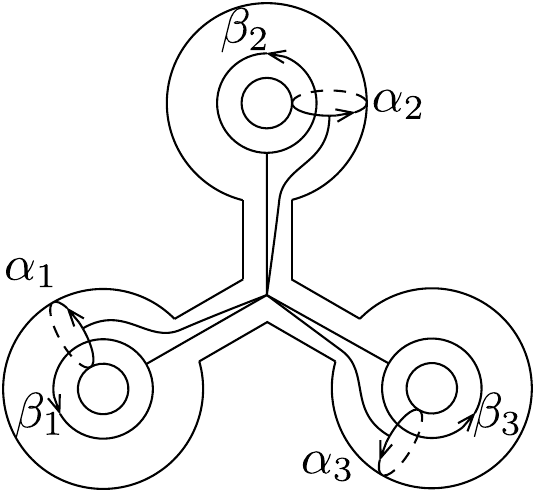}
 \caption{Meridians $\alpha_i$ and longitudes $\beta_i$.}
 \label{fig:Y-graph}
\end{figure}

\begin{lemma}
\label{lem:pi1(Y)}
The group $\pi_1(N(Y))$ has a presentation $P_Y$
with a generating set $\{\alpha_i,\beta_i\}_{i=1}^3$ as in Figure~\ref{fig:Y-graph} and relators
\begin{align*}
r_1&=\alpha_1\beta_3\alpha_2\beta_2^{-1}\alpha_2^{-1}\beta_3^{-1}\beta_2,\\
r_2&=\alpha_2\beta_1\alpha_3\beta_3^{-1}\alpha_3^{-1}\beta_1^{-1}\beta_3,\\
r_3&=\alpha_3\beta_2\alpha_1\beta_1^{-1}\alpha_1^{-1}\beta_2^{-1}\beta_1.
\end{align*}
\end{lemma}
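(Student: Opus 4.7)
The plan is to present $\pi_1(N(Y))$ by means of a CW-decomposition of the clasper surface $Y$ itself, using that $N(Y)$ is an $I$-bundle over $Y$ and hence deformation retracts onto $Y$. Since $Y$ is built by attaching three bands between the node disk $D$ and the three annular leaves $L_1,L_2,L_3$, a direct count gives $\chi(Y)=1+3+0-3-3=-2$ together with four boundary components (the three inner boundary circles of the leaves and one outer circle winding around $\partial D$ and the bands), so $Y$ is a four-holed sphere and $N(Y)$ is a genus-$3$ handlebody whose fundamental group is free of rank $3$. Any presentation of deficiency $3$ produced by such a decomposition will therefore automatically present the correct group, so my task reduces to constructing a CW-structure on $Y$ with one vertex, six edges (the $\alpha_i,\beta_i$), and three 2-cells whose boundary words are the stated $r_i$.

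I would pick a base point $\ast$ on $D$ and, following Figure~\ref{fig:Y-graph}, realise the six edges as loops based at $\ast$: $\beta_i$ runs from $\ast$ through the band $b_i$ and around the longitude of $L_i$, while $\alpha_i$ represents the meridian of $L_i$ recorded on $Y$ via the product structure $Y\times I$. After removing small open regular neighborhoods of these six arcs, what remains of $Y$ is a disjoint union of three open disks, one associated to each cyclic pair of adjacent bands around $\partial D$; these are the three 2-cells of the decomposition.

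Next, I would read off the boundary attaching word of each disk by walking around it once, recording each oriented crossing with an $\alpha_j$ or $\beta_j$. Tracing the $i$-th sector starting at $\ast$ and going along the outer boundary in the positive direction produces $\alpha_i$, $\beta_{i-1}$, $\alpha_{i+1}$, $\beta_{i+1}^{-1}$, $\alpha_{i+1}^{-1}$, $\beta_{i-1}^{-1}$, and $\beta_{i+1}$ in that order, i.e.\ exactly
\[
r_i=\alpha_i\,\beta_{i-1}\,\alpha_{i+1}\,\beta_{i+1}^{-1}\,\alpha_{i+1}^{-1}\,\beta_{i-1}^{-1}\,\beta_{i+1}.
\]
Combined with the deficiency argument in the first paragraph, this finishes the proof.

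The main obstacle is the orientation bookkeeping in this last step: the cyclic order of the three bands around $\partial D$, the positively-oriented visible side of the node, and the meridian/longitude orientation conventions for each leaf must all be threaded through the trace consistently. A reversed convention would give a cyclic conjugate or an inverse of $r_i$, which presents the same normally-generated subgroup but not in the exact form stated, so the crux is to follow the conventions of the paper to the letter when traversing each sector's boundary.
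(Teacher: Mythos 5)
You have misidentified what $N(Y)$ is, and this breaks the whole argument. In this section $N(Y)$ is not the tubular neighborhood $N_0(Y)\cong Y\times I$ of the clasper surface; it is the region of the surgered manifold $M_Y$ corresponding to that neighborhood, i.e.\ the genus-$3$ handlebody \emph{after} surgery along the framed link $L(Y)$ associated to the $Y$-graph. The paper's proof states this explicitly: ``The neighborhood $N(Y)$ is obtained by surgery along the link in the handlebody as in Figure~\ref{fig:Wirtinger}.'' Consequently $N(Y)$ does not deformation retract onto $Y$, and a CW-decomposition of the four-holed sphere $Y$ cannot produce the presentation $P_Y$.

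There is a concrete internal check that your proposed relators cannot arise from $\pi_1(Y)$. In $Y\times I$ each meridian $\alpha_i$ bounds a disk (it is $\partial D^2\times\{\ast\}$ in the solid torus $L_i\times I$), so $\alpha_i=1$ in $\pi_1(N_0(Y))$; the natural presentation of the unsurgered handlebody with these six generators is simply $\langle\alpha_i,\beta_i\mid\alpha_1,\alpha_2,\alpha_3\rangle$. The group $G=\langle\alpha_i,\beta_i\mid r_1,r_2,r_3\rangle$ of the lemma is genuinely different: setting $\alpha_1=\alpha_2=\alpha_3=1$ in $r_1,r_2,r_3$ forces $[\beta_2^{-1},\beta_3]=[\beta_3^{-1},\beta_1]=[\beta_1^{-1},\beta_2]=1$, so the quotient of $G$ by the normal closure of the $\alpha_i$ is $\Z^3$, not a free group of rank $3$. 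Hence $\alpha_i\neq 1$ in $G$, and the $r_i$ genuinely encode the surgery. Any attaching words read off from $2$-cells of a CW-structure on $Y$ would have to present $\pi_1(Y)$, with $\alpha_i$ trivial after including $Y\hookrightarrow Y\times I$, so no amount of orientation bookkeeping can make the tracing produce the stated $r_i$. The paper instead realizes $N_0(Y)$ as a tangle complement in $D^3$, performs the surgery along the link associated to the clasper, writes a Wirtinger-type presentation with the two-handle relators added, and then eliminates generators to reach $P_Y$; this surgery step is the essential ingredient your proposal omits.

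Your deficiency count is also doing no real work: a deficiency-$3$ presentation with abelianization $\Z^3$ need not present $F_3$ in general, and in any case the issue here is not the isomorphism type of the group but the specific images of $\alpha_i,\beta_i$, which the surgery alters.
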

\begin{proof}
Let us describe a handlebody of genus 3 as the complement of a neighborhood of a tangle consisting of 3 arcs in $D^3$.
The neighborhood $N(Y)$ is obtained by surgery along the link in the handlebody as in Figure~\ref{fig:Wirtinger}.
Denoting by the same symbol $a_{i,j}$ the meridian of the arc $a_{i,j}$ in Figure~\ref{fig:Wirtinger}, we obtain a presentation with generating set $\{a_{i,j}\}_{\substack{1\le i\le3\\1\le j\le 7}}$ and relators
\begin{figure}[h]
 \centering
 \includegraphics[width=0.7\textwidth]{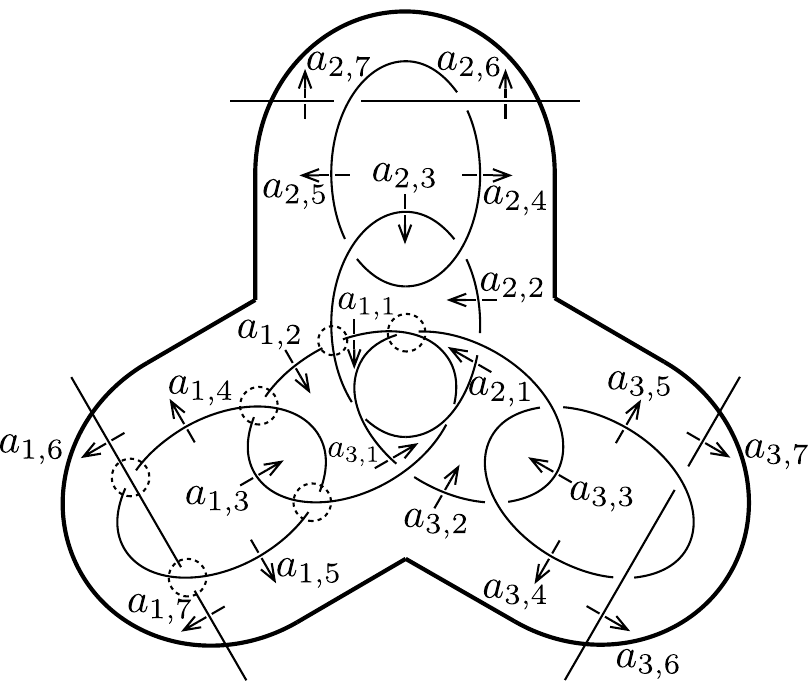}
 \caption{Generators $a_{i,j}$ of the complement of a clasper.}
 \label{fig:Wirtinger}
\end{figure}
\[
\begin{cases}
a_{i,1}a_{i+2,1}a_{i,1}^{-1}a_{i+2,3}^{-1},\\
a_{i,1}a_{i+1,3}^{-1}a_{i,2}^{-1}a_{i+1,3},\\
a_{i,2}a_{i,4}^{-1}a_{i,3}^{-1}a_{i,4},\\
a_{i,3}a_{i,4}a_{i,3}^{-1}a_{i,5}^{-1},\\
a_{i,5}a_{i,6}^{-1}a_{i,4}^{-1}a_{i,6},\\
a_{i,5}a_{i,7}a_{i,5}^{-1}a_{i,6}^{-1},
\end{cases}
\begin{cases}
a_{i,3}a_{i,6},\\
a_{i+1,3}^{-1}a_{i,4}^{-1}a_{i+1,1},
\end{cases}
\]
where the former relators come from the Wirtinger presentation of the tangle and the link,
and the latter comes from attaching $2$-handles by surgery.
For example, the former relators for $i=1$ correspond to the dotted circles in Figure~\ref{fig:Wirtinger}.
Using these relators, one can eliminate the generators $a_{i,j}$ for $i=1,2,3$ and $j=2,3,5,7$, and obtain a relation $a_{i,1} = a_{i+1,6}a_{i,4}^{-1}a_{i,6}^{-1}a_{i,4}a_{i+1,6}^{-1}$.
This relation eliminates $a_{i,1}$'s and gives a presentation with generating set $\{a_{i,4}, a_{i,6}\}_{1\le i\le 3}$ and relators $\{a_{i,4}^{-1}a_{i+2,6}a_{i+1,4}^{-1}a_{i+1,6}^{-1}a_{i+1,4}a_{i+2,6}^{-1}a_{i+1,6}\}_{1\le i\le 3}$.
By denoting $\alpha_{i}=a_{i,4}^{-1}$, $\beta_{i}=a_{i,6}$ as in Figure~\ref{fig:Y-graph},
the relators turn into $\alpha_i\beta_{i+2}\alpha_{i+1}\beta_{i+1}^{-1}\alpha_{i+1}^{-1}\beta_{i+2}^{-1}\beta_{i+1}$ for $1\le i\le 3$.
\end{proof}

\begin{lemma}\label{lem:superadditive}
Suppose that two trivalent vertices $v$ and $v'$ in $J$ is connected by an edge $e_1\cup e_1'$, where $e_1$ and $e_1'$ are half-edges incident to $v$ and $v'$, respectively.
We also denote by $e_2', e_3'\in E(J)$ the \textup{(}half-\textup{)}edges incident to $v'$ other than $e_1'$.
Then, for $\beta_{e_1}, \beta_{e_2'}, \beta_{e_3'} \in \pi_1N(G)$, we have
\[
\deg\beta_{e_1}\ge \deg\beta_{e_2'}+\deg\beta_{e_3'}.
\]
\end{lemma}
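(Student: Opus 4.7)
The plan is to reduce the estimate on $\beta_{e_1}$ to one on $\alpha_{e_1'}$ by a Hopf-link identification coming from the $Y$-graph decomposition of $G$, and then to extract the bound from the local $Y$-graph relations at $v'$ by rewriting them in commutator form. The key ingredients are Habiro's Move~2 applied to the edge $e_1\cup e_1'$, the presentation in Lemma~\ref{lem:pi1(Y)}, and the elementary estimates $\deg[x,y]\geq\deg x+\deg y$ and $\deg(xy)\geq\min\{\deg x,\deg y\}$.

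First, I would use Habiro's Move~2 to decompose $G$ at the edge $e_1\cup e_1'$ into the two $Y$-graphs centered at $v$ and $v'$; the newly introduced paired leaves form a Hopf link in $N(G)$. In the complement of a Hopf link the longitude of one component is freely homotopic to a meridian of the other, so $\beta_{e_1}$ is conjugate to $\alpha_{e_1'}$ in $\pi_1N(G)$, and hence $\deg\beta_{e_1}=\deg\alpha_{e_1'}$ since $\deg$ is a conjugacy invariant.

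Next, I would apply Lemma~\ref{lem:pi1(Y)} to the local $Y$-graph at $v'$, whose three relators arise as the cyclic shifts of $r_1$. A direct manipulation rewrites the first and second of these as single commutators
\[
\alpha_{e_1'}=[\beta_{e_2'}^{-1},\,\beta_{e_3'}\alpha_{e_2'}],\qquad
\alpha_{e_2'}=[\beta_{e_3'}^{-1},\,\beta_{e_1'}\alpha_{e_3'}].
\]
The second identity yields $\deg\alpha_{e_2'}\geq \deg\beta_{e_3'}+\deg(\beta_{e_1'}\alpha_{e_3'})\geq \deg\beta_{e_3'}+1$, so in particular $\deg\alpha_{e_2'}>\deg\beta_{e_3'}$. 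Feeding this into the first identity,
\[
\deg\beta_{e_1}=\deg\alpha_{e_1'}\geq \deg\beta_{e_2'}+\deg(\beta_{e_3'}\alpha_{e_2'})\geq \deg\beta_{e_2'}+\min\{\deg\beta_{e_3'},\deg\alpha_{e_2'}\}=\deg\beta_{e_2'}+\deg\beta_{e_3'},
\]
which is the desired inequality.

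The most delicate step I foresee is the Hopf-link identification at the very beginning: one has to argue from the clasper calculus that splitting the internal edge $e_1\cup e_1'$ really produces a Hopf-linked pair of leaves, so that the exchange $\beta_{e_1}\leftrightarrow\alpha_{e_1'}$ is realized inside $\pi_1N(G)$ itself and not merely in the complement of the two split leaves. Once this geometric fact is in hand, the remaining content of the proof is the routine rewriting of the relators of Lemma~\ref{lem:pi1(Y)} in commutator form and two applications of the basic degree estimates recalled above.
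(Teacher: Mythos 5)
Your argument is essentially the paper's own proof: a van Kampen/Hopf-link identification reducing $\deg\beta_{e_1}$ to $\deg\alpha_{e_1'}$, followed by degree estimates from the relators of Lemma~\ref{lem:pi1(Y)}. Your single-commutator packaging $\alpha_{e_1'}=[\beta_{e_2'}^{-1},\beta_{e_3'}\alpha_{e_2'}]$ and $\alpha_{e_2'}=[\beta_{e_3'}^{-1},\beta_{e_1'}\alpha_{e_3'}]$ is algebraically equivalent to the paper's product-of-two-commutators expression, and the chain of degree estimates afterwards matches the paper's step for step.

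The one place where you are less careful than the paper is the conjugacy claim at the start. The statement that $\beta_{e_1}$ is conjugate to $\alpha_{e_1'}$ in $\pi_1N(G)$ holds only when the internal edge $e_1\cup e_1'$ is untwisted. If the edge carries a half-twist, van Kampen gives instead $l\alpha_{e_1'}\bar{l}=\beta_{e_1}^{-1}\alpha_{e_1}\beta_{e_1}\alpha_{e_1}^{-1}\beta_{e_1}=[\beta_{e_1}^{-1},\alpha_{e_1}]\,\beta_{e_1}$, which in general lies in a different conjugacy class from $\beta_{e_1}^{\pm1}$. The desired equality of degrees nonetheless survives, since $\deg[\beta_{e_1}^{-1},\alpha_{e_1}]\ge\deg\beta_{e_1}+\deg\alpha_{e_1}>\deg\beta_{e_1}$, hence $\deg\bigl([\beta_{e_1}^{-1},\alpha_{e_1}]\beta_{e_1}\bigr)=\deg\beta_{e_1}$. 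The paper records this twisted case explicitly; you should either do the same or downgrade your intermediate claim from ``conjugate'' to ``has the same degree.'' Apart from this, your proof is correct and the method coincides with the paper's.
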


\begin{proof}
Let us denote by $Y$ and $Y'$ the $Y$-graphs corresponding to $v$ and $v'$, respectively.
We also denote simply $\alpha_i=\alpha_{e_i}$, $\beta_i=\beta_{e_i}$, $\alpha'_i=\alpha_{e_i'}$, and $\beta_i'=\beta_{e_i'}$ for $1\le i\le 3$.
If the edge $e\cup e'$ is untwisted,
by van Kampen's theorem, we have the relation
\[
l\alpha_1'\bar{l}=\beta_1^{-1},\quad
l\beta_1'\bar{l}=\alpha_1^{-1}
\in \pi_1(N(G),\tilde{*}),
\]
where $l$ is a path in $\partial (N(G))$ from the basepoint $\tilde{*}$ in the neighborhood $N(Y)$ to $\tilde{*'}$ in $N(Y')$ (see Figure~\ref{fig:path-l}).
\begin{figure}[h]
 \centering
 \includegraphics[width=0.5\textwidth]{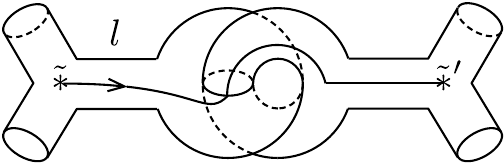}
 \caption{A path $l$ from $\tilde{\ast}$ to $\tilde{\ast}'$.}
 \label{fig:path-l}
\end{figure}
If the edge $e\cup e_1'$ is half-twisted, we may assume that the leaf corresponding to $e$ is half-twisted.
Since the disk twist induces the identity on $\pi_1(N(G))$,
we may assume that the leaf is positively half-twisted.
We consider that the leaf $e_1$ depicted on the left of Figure~\ref{fig:path-l} is also positively half-twisted,
and we define the path $l$ by the same figure.
In this case, we also have the relations
\[
l\alpha'_1\bar{l}=\beta_1^{-1}\alpha_1\beta_1\alpha_1^{-1}\beta_1, \quad
l\beta'_1\bar{l}=\beta_1^{-1}\alpha_1\beta_1.
\]
We may assume that the edges $e_1', e_2', e_3'$ are ordered clockwise as in Figure~\ref{fig:Y-graph}.
Then one has
\[
\alpha_1'=[\beta_2'^{-1},\beta'_3]\beta_3'[\beta_2'^{-1},\alpha_2']\beta_3'^{-1},\quad
\alpha_2'=[\beta_3'^{-1},\beta'_1]\beta_1'[\beta_3'^{-1},\alpha_3']\beta_1'^{-1}
\]
by Lemma~\ref{lem:pi1(Y)}.
Hence we obtain
\begin{align*}
 \deg\alpha_2' &\geq \min\{\deg[\beta_3'^{-1},\beta'_1], \deg[\beta_3'^{-1},\alpha_3']\} \\
 &\geq \min\{\deg\beta_3'+\deg\beta'_1, \deg\beta_3'+\deg\alpha_3'\} \\
 &> \deg\beta_3'.
\end{align*}
Therefore,
\begin{align*}
 \deg\beta_1 = \deg\alpha_1' 
 &\geq \min\{\deg[\beta_2'^{-1},\beta'_3], \deg[\beta_2'^{-1},\alpha_2']\} \\
 &= \deg\beta_2'+\deg\beta_3'.
\end{align*}
\end{proof}

\begin{proof}[Proof of Proposition~\ref{prop:connected}]
(1)
First note that
\[
\frac{\pi_1N(G)}{\pi_1N(G)(2)} = H_1(N(G);\Z)
\]
is a free abelian group of rank $\#E_U + b_1(J)$, where $b_1(J)$ denotes the first Betti number of $J$.
If $e \in E_U$, then $\beta_e$ is non-trivial in $H_1(N(G);\Z)$, and hence $\deg\beta_e=1$.

(2)
Since $J\setminus\Int e$ is connected, there is a cycle $C$ in $J$ containing the half-edge $e$.
We denote by $k$ the number of edges of $C$.
Applying Lemma~\ref{lem:superadditive} along $C$ inductively, we obtain $\deg\beta_e \geq \deg\beta_e+k$.
Thus, we conclude that $\deg\beta_e = \infty$.
\end{proof}

Using an isotopy, we have an embedding $\iota_0\colon G \hookrightarrow N_0(G)$ such that its image is disjoint from $G \subset N_0(G)$.
This map derives an embedding $\iota\colon G \hookrightarrow N(G)$.
Since $\iota_0$ induces an isomorphism on $H_1(\textendash;\Z)$ and a surjection on $H_2(\textendash;\Z)$, the same holds for $\iota$ by \cite[Proposition~5.4]{HaMa12} for instance.
It follows from \cite[Theorem~5.1]{Sta65} that $\iota$ induces an isomorphism
\[
\frac{\pi_1(G)}{\pi_1(G)(n)}\cong \frac{\pi_1N(G)}{\pi_1N(G)(n)}
\]
between the nilpotent quotients for $n\ge1$.
Furthermore, $\iota$ gives an isomorphism $\frac{\pi_1N(G)(n)}{\pi_1N(G)(n+1)} \cong L_n(H_1(G;\Z))$, where $L_n(H_1(G;\Z))$ denotes the degree $n$ part of the free Lie algebra generated by a module $H_1(G;\Z)$.

\begin{proposition}\label{prop:disconnected}
Suppose that $J\setminus\Int e$ is disconnected for a half-edge $e\in E_T$.
Let us denote by $J_e$ a connected component of $J\setminus \Int e$ which does not contain the trivalent vertex incident to $e$ in $J$.
\begin{enumerate}
 \item If $J_e$ is a tree, we regard the endpoint in $e$ as the root of $J_e$ and assign $[\beta_{e'}] \in H_1(N(G);\Z)$ to each leaf $e'\in E_U$ of $J_e$.
Giving the cyclic order coming from the orientation of nodes in $G$ to the trivalent vertices in $J_e$,
we regard $J_e$ as a Jacobi diagram.
Let $n\geq 2$ be the number of leaves of $J_e$,
and let $T_e \in L_n(H_1(G;\Z))$ denote the iterated Lie bracket encoded by $J_e$,
which is denoted as $\comm(J_e)$ in \cite[Section~4.3]{HaMa12}.
Then, $\beta_e=(-1)^k T_e$ holds under the above isomorphism, where $k$ is the number of twists in $J_e$. 
 \item If $J_e$ is not a tree, $\deg \beta_e=\infty$ holds.
\end{enumerate}
\end{proposition}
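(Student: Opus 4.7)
The plan is to induct on the number of trivalent vertices of $J_e$, using the $Y$-graph relations of Lemma~\ref{lem:pi1(Y)} at each node together with the gluing identities from the proof of Lemma~\ref{lem:superadditive} along each edge, in order to build up $\beta_e$ recursively from the leaves of $J_e$ toward its root. Throughout, one works modulo successive terms of the lower central series of $\pi_1N(G)$, using the identification $\pi_1N(G)(n)/\pi_1N(G)(n+1)\cong L_n(H_1(G;\Z))$.

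For (1), the base case takes $J_e$ to consist of a single trivalent vertex $v'$ whose two half-edges $e_2',e_3'$ other than $e_1'\subset e$ lie in $E_U$. Rearranging the relator $r_1$ in Lemma~\ref{lem:pi1(Y)} at $v'$ gives
\[
\alpha_{e_1'}=[\beta_{e_2'}^{-1},\beta_{e_3'}]\cdot\beta_{e_3'}[\beta_{e_2'}^{-1},\alpha_{e_3'}]\beta_{e_3'}^{-1}.
\]
Since $\deg\beta_{e_i'}=1$ by Proposition~\ref{prop:connected}(1), the first factor represents $-[\beta_{e_2'},\beta_{e_3'}]=-T_e\in L_2(H_1(G;\Z))$ while the second factor lies in a strictly higher power of $\pi_1N(G)$. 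Combined with the gluing identity $\alpha_{e_1'}\sim \beta_e^{-1}$ (up to conjugation, or its half-twisted variant) from the proof of Lemma~\ref{lem:superadditive}, this yields $\beta_e\equiv(-1)^kT_e$ in $L_2(H_1(G;\Z))$, with $(-1)^k$ accounting for the twist of the edge $e\cup e_1'$.

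For the inductive step, let $v'$ be the trivalent vertex of $J_e$ adjacent to $e$. Each of its other two half-edges $e_i'$ either lies in $E_U$ (so $\deg\beta_{e_i'}=1$ by Proposition~\ref{prop:connected}(1)) or is the root of a strictly smaller subtree $J_{e_i'}\subset J_e$, to which the inductive hypothesis supplies $\beta_{e_i'}\equiv(-1)^{k_i}T_{e_i'}$ modulo $\pi_1N(G)(n_i+1)$. Substituting these into the same $Y$-graph relation at $v'$ and propagating through the gluing along $e\cup e_1'$ gives
\[
\beta_e\equiv(-1)^k[T_{e_2'},T_{e_3'}]=(-1)^k T_e\pmod{\pi_1N(G)(n+1)},
\]
where $n=n_2+n_3$ and $k=k_2+k_3+(\text{twist count of }e\cup e_1')$ equals the total number of twists in $J_e$.

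For (2), since $J_e$ is not a tree, pick any edge $e'$ contained in a cycle of $J_e$. Then $J\setminus\Int e'$ remains connected, so Proposition~\ref{prop:connected}(2) gives $\deg\beta_{e'}=\infty$. Propagating along a path from $e'$ to $e$ in $J_e$ via Lemma~\ref{lem:superadditive} at each intermediate trivalent vertex, the bound $\deg\beta_{\mathrm{out}}\ge\deg\beta_{\mathrm{in}_1}+\deg\beta_{\mathrm{in}_2}$ carries infinite degree all the way to $\beta_e$, forcing $\deg\beta_e=\infty$. The main obstacle is the sign bookkeeping: reconciling the orientation conventions for meridians and longitudes under half-twists with the cyclic order at each node used to define $T_e$, and verifying rigorously that the residual terms such as $[\beta_{e_2'}^{-1},\alpha_{e_3'}]$ in the $Y$-graph relations are of strictly higher degree than the leading commutator, which in turn requires controlling the degrees of the $\alpha$'s alongside those of the $\beta$'s throughout the induction.
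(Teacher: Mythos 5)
Your proof takes essentially the same route as the paper: induction on the size of the tree $J_e$, feeding the $Y$-graph relator from Lemma~\ref{lem:pi1(Y)} through the gluing identities $l\alpha_1'\bar{l}=\beta_1^{-1}$ (resp.\ its half-twisted variant) established in the proof of Lemma~\ref{lem:superadditive}, and for part (2) propagating $\deg=\infty$ from a cycle along a path via Lemma~\ref{lem:superadditive}. Two small remarks: in your base-case rearrangement the residual commutator should read $[\beta_{e_2'}^{-1},\alpha_{e_2'}]$, not $[\beta_{e_2'}^{-1},\alpha_{e_3'}]$, matching the identity $\alpha_1'=[\beta_2'^{-1},\beta_3']\beta_3'[\beta_2'^{-1},\alpha_2']\beta_3'^{-1}$ already derived in the proof of Lemma~\ref{lem:superadditive}; and the degree control you flag as an obstacle (showing the residual term is of strictly higher order) is in fact exactly the inequality $\deg\alpha_2'>\deg\beta_3'$ proven there, so no new estimate on the $\alpha$'s is needed.
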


\begin{proof}
(1)
First note that $\deg\beta_e \geq n$ by Lemma~\ref{lem:superadditive}, namely $\beta_e \in \pi_1N(G)(n)$.
We use the same notation as in Lemma~\ref{lem:superadditive} by putting $e_1=e$.
Let us prove by induction on $n$.
When $n=2$, the graph $J_e$ is a $Y$-graph.
It follows from the equalities in the proof of Lemma~\ref{lem:superadditive} that
\[
\beta_e^{-1} = l\alpha_1'\bar{l} = l[\beta_2'^{-1},\beta'_3]\beta_3'[\beta_2'^{-1},\alpha_2']\beta_3'^{-1}\bar{l}
\]
when $e_2'$, $e_3'$, and the edge containing $e$ are untwisted.
If the edge is twisted, the leftmost $\beta_e^{-1}$ should be replaced with $\beta_1^{-1}\alpha_1\beta_e\alpha_1^{-1}\beta_1$.
Also, the sign of the exponent of $\beta_j'$ in the right-hand side should be switched if $e_j'$ is twisted for $j=2,3$.
Since $\deg\alpha_2' > \deg\beta'_3$, we have 
\[
\beta_e=(-1)^k[\beta_2',\beta'_3]=(-1)^kT_e \in \frac{\pi_1N(G)(2)}{\pi_1N(G)(3)}.
\]

Suppose $n>2$.
The induction hypothesis implies 
\[
[\beta_2'^{-1},\beta'_3] = (-1)^{k'}[-T_{e_2'},T_{e_3'}] \in \frac{\pi_1N(G)(n)}{\pi_1N(G)(n+1)},
\]
where $k'=k$ if the edge containing $e$ is untwisted, and $k'=k-1$ if it is twisted.
By the same argument as in the case $n=2$, we conclude that $\beta_e = (-1)^k[T_{e_2'},T_{e_3'}] = (-1)^kT_e$.

(2)
Since $J_e$ is not a tree, there is a cycle in $J_e$.
Take a path from a trivalent vertex in the cycle to $e$.
By Proposition~\ref{prop:connected}(2), $\deg \beta_e'=\infty$ for every half-edge in $C$.
Applying Lemma~\ref{lem:superadditive} along the path inductively, we obtain $\deg\beta_e = \infty$.
\end{proof}

\subsection{The invariance of $\tau$ under $k$-loop surgeries for $k\ge2$}
Here, we prove the following:
\begin{theorem}\label{thm:k-loop}
Let $k\ge2$,
and let $G$ be a $k$-loop graph clasper in a homology cylinder $M$.
We have
\[
\tilde{\alpha}(M_G)=\tilde{\alpha}(M).
\]
\end{theorem}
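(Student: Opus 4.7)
The plan is to compare suitable balanced presentations of $\pi_1(M)$ and $\pi_1(M_G)$ that differ only in a block localized at $N(G)$, and then to show, via the $\widetilde{\ldet}$ description of $K_1(\widehat{\Q\pi})$ in Proposition~\ref{prop:ldet}, that the resulting change in the Alexander matrix is trivial whenever $k\ge 2$.

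First, I would localize the comparison to $N(G)$. Choose a regular neighborhood $N(G)\subset M$ and use \cite[Move~2]{Hab00C} to decompose $G$ into its constituent $Y$-graphs $Y_1,\dots,Y_d$. Following the template of Lemma~\ref{lem:presentation-of-complement}, construct a balanced presentation $P$ of $\pi_1(M)$ whose generators split into (i) generators coming from $\pi_1(M\setminus\Int N(G))$ and (ii) the meridians $\alpha^j_i$ and longitudes $\beta^j_i$ at each node $v_j$, with relators given by Lemma~\ref{lem:pi1(Y)} for each $Y$-graph together with attaching relations along $\partial N(G)$. A parallel presentation $P_G$ of $\pi_1(M_G)$ is then obtained by modifying only those attaching relations lying on leaves, so that $A(P_G)-A(P)$ is supported in the block of rows and columns indexed by the clasper generators. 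By adapting the Euler-structure tracking argument used after Lemma~\ref{lem:torsion-presentation}, both $P$ and $P_G$ can be arranged to correspond to the preferred Euler structure $\xi_0$ without any $\tilde{\rho}(h)$-ambiguity.

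Second, I would reduce the statement to a computation with $\widetilde{\ldet}$. Writing $A(P_G)=A(P)(I_N+B)$ with $B\in M(N,\hat I)$ supported on the clasper block, Proposition~\ref{prop:ldet} says that $\tilde{\alpha}(M_G)=\tilde{\alpha}(M)$ in $K_1(\widehat{\Q\pi})$ is equivalent to the pair of equalities $\det\epsilon(I_N+B)=1$ in $\Q^\times$ and $\ldet(I_N+B)=0$ in $H_1^\Lie(\hat I)$. The first equality holds by the augmentation argument used in Proposition~\ref{prop:crossed homo}, so the real task is to show
\[
\ldet(I_N+B)=\tr\log(I_N+B)=\sum_{n\ge 1}\frac{(-1)^{n-1}}{n}\tr(B^n)=0\in H_1^\Lie(\hat I).
\]
Each entry of $B$ is, by construction of the clasper surgery, a difference of conjugates of the longitudes $\beta^j_i$ by commutators of the neighboring longitudes, so every monomial occurring in $\tr(B^n)$ is a product of such quantities indexed by edges of the underlying Jacobi diagram $J$.

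Third, I would invoke the degree estimates of Section~6.1. Since $G$ is $k$-loop with $k\ge 2$, any half-edge $e\in E_T$ of $J$ satisfies one of the following: (a) $J\setminus\Int e$ is connected, in which case $b_1(J\setminus\Int e)\ge k-1\ge 1$ and Proposition~\ref{prop:connected}(2) gives $\deg\beta_e=\infty$; (b) $J\setminus\Int e$ is disconnected and the component $J_e$ not containing the incident node is non-tree, so Proposition~\ref{prop:disconnected}(2) again gives $\deg\beta_e=\infty$; or (c) $J_e$ is a tree, in which case $J\setminus J_e$ still contains all $k\ge 2$ loops of $J$, so iterating Lemma~\ref{lem:superadditive} through a cycle in $J\setminus J_e$ forces $\deg\beta_{e'}=\infty$ for every half-edge $e'$ on the complementary side. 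In every monomial of $\tr(B^n)$ there must appear at least one factor $\beta_e$ of infinite degree, since a $Y$-graph $Y_j$ in $G$ with at least two of its three half-edges in $E_T$ (which is forced when $u\le d-2$, i.e.\ $k\ge 2$) contributes commutators $[\beta^j_{i_1},\beta^j_{i_2}]$ containing such a longitude. Consequently $\tr(B^n)\in\hat I^n\cap \bigcap_m\hat I^m=0$ for all $n$, and $\ldet(I_N+B)=0$.

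The main obstacle is step three: one must verify that in every monomial of the trace expansion of $\log(I_N+B)$ an infinite-degree longitude really appears, or that those monomials built purely from tree-longitudes cancel by the cyclic symmetry $H^{\otimes n}/\Z_n$ coming from the trace. This requires a careful bookkeeping between the block structure of $B$ (indexed by pairs of $Y$-graphs sharing a bridge) and the combinatorics of the uni-trivalent graph $J$; a secondary, purely technical, obstacle is confirming the equality of presentations up to Tietze moves that do not disturb the Euler structure, so that no spurious factor of $\tilde{\rho}(h)$ for $h\in H_\Z$ appears in the final comparison.
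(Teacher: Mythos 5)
Your approach differs substantially from the paper's, and the step you yourself flag as ``the main obstacle'' is a genuine gap.

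The paper does not expand $\ldet(I_N+B)$ as a power series in the full clasper block.  Instead it isolates a single $Y$-graph: Lemma~\ref{lem:trivvert-infty} produces a trivalent vertex $v$ of $J$ such that \emph{all three} half-edges $e$ incident to $v$ satisfy $\deg\beta_e=\infty$.  (Your trichotomy for half-edges in $E_T$ is not needed; only the existence of one such vertex.)  With this vertex in hand, one decomposes $G$ into $Y$-graphs and considers the $Y$-graph $Y_v$ corresponding to $v$.  Since $\beta_e=1\in\widehat{\Q\pi_1(M_G)}$ for the three half-edges $e$ of $v$, Lemma~\ref{lem:Y-graph} applies: the $6\times 3$ Fox-derivative block of $A(P_Y)$ for this $Y$-graph degenerates to exactly the block coming from an \emph{unsurgered} genus-three handlebody (all the $\alpha_i$'s, which are iterated commutators in the $\beta_i$'s, also become $1$, and the off-diagonal entries $(\alpha_3-\beta_1^{-1})\beta_2$, etc.\ vanish).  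This single observation suffices; there is no need to control all $\tr(B^n)$ globally.

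Your step three does not go through as written.  You claim that ``in every monomial of $\tr(B^n)$ there must appear at least one factor $\beta_e$ of infinite degree,'' but your case (c) of the trichotomy is wrong: if $e$ has $J_e$ a tree, $\deg\beta_e$ is finite (Proposition~\ref{prop:disconnected}(1) gives $\deg\beta_e=n$, not $\infty$), and you then assert incorrectly that ``every half-edge $e'$ on the complementary side'' of $J_e$ has $\deg\beta_{e'}=\infty$ — this fails if $e'$ itself disconnects $J$ with a tree component.  The later remark that ``at least two of the three half-edges in $E_T$'' (for a given $Y$-graph) forces an infinite-degree longitude conflates $E_T$ membership with $\deg=\infty$: membership in $E_T$ does not imply infinite degree.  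And the Euler-count $u\le d-2$ is a global statement about $J$, not a statement about an individual $Y$-graph.  Because of these gaps, you have not shown that $\tr(B^n)=0$ for all $n$, nor ruled out the possibility that the finite-degree pieces must instead cancel in $H_1^\Lie(\hat{I})$ via cyclic symmetry, which you list as an alternative but do not establish.  The paper's route circumvents all of this bookkeeping: it requires only one vertex with all three longitudes trivial, at which point the Alexander-matrix comparison becomes a one-block triviality rather than a trace-series estimate.
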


Theorem~\ref{thm:k-loop} follows from the two lemmas below.
\begin{lemma}~\label{lem:Y-graph}
Let $Y$ be a $Y$-graph in a homology cylinder $M$ such that the longitudes of the three leaves represent $1\in\widehat{\Q\pi_1(M_Y)}$.
We have
\[
\tilde{\alpha}(M_Y)=\tilde{\alpha}(M).
\]
\end{lemma}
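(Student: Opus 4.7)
\textbf{Proof plan for Lemma~\ref{lem:Y-graph}.}

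The plan is to compare $\tilde\alpha(M)$ and $\tilde\alpha(M_Y)$ via balanced presentations and Fox matrices, combining Lemma~\ref{lem:torsion-of-presentation} with the explicit description of the surgered neighborhood provided by Lemma~\ref{lem:pi1(Y)}.

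First I would fix a genus-$3$ handlebody $V\subset M$ that is a regular neighborhood of (the underlying graph of) $Y$, so that the corresponding region of $M_Y$ is the surgered piece $N(Y)$ of Lemma~\ref{lem:pi1(Y)}; both share the same boundary surface $\Sigma_3$. Setting $W=M\setminus\Int V = M_Y\setminus\Int N(Y)$, I apply Lemma~\ref{lem:presentation-of-complement} to obtain a presentation of $\pi_1 W$ whose generating set contains the meridians $\alpha_1,\alpha_2,\alpha_3$ and longitudes $\beta_1,\beta_2,\beta_3$ of the three leaves. Adjoining the three relators $\alpha_1,\alpha_2,\alpha_3$ (the meridian disks of $V$) gives a balanced presentation $P$ of $\pi_1 M$, while adjoining $r_1,r_2,r_3$ from Lemma~\ref{lem:pi1(Y)} gives a balanced presentation $P'$ of $\pi_1 M_Y$. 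Using the flexibility in the proof of Lemma~\ref{lem:torsion-of-presentation} (reordering and conjugating relators), I arrange that $P$ and $P'$ simultaneously represent the preferred Euler structures of $M$ and $M_Y$; this is plausible because the surgery is supported in $\Int V$ and leaves the combinatorial data near $\partial_-M$ undisturbed.

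The heart of the argument is then a Fox matrix comparison. Since $\alpha_i$ bounds a disk in $V$ it is trivial in $\pi_1 M$, so $\tilde\rho(\alpha_i)=1$ in $\widehat{\Q\pi}\cong\widehat{\Q\pi_1 M}$ automatically. On the $M_Y$ side, the hypothesis $\tilde\rho(\beta_j)=1$ combined with the relation $r_i=1$ in $\pi_1 M_Y$ and the explicit shape of $r_i$ in Lemma~\ref{lem:pi1(Y)} immediately forces $\tilde\rho(\alpha_i)=1$ in $\widehat{\Q\pi_1 M_Y}$ as well. With every $\alpha_i,\beta_j$ collapsing to $1$, a direct computation for each $r_i$ and each generator $x$ yields
\[
\tilde\rho\!\left(\frac{\partial r_i}{\partial \alpha_k}\right)=\delta_{ik},\quad \tilde\rho\!\left(\frac{\partial r_i}{\partial \beta_j}\right)=0,\quad \tilde\rho\!\left(\frac{\partial r_i}{\partial x}\right)=0\text{ for other }x,
\]
matching the corresponding row for $\alpha_i$ exactly: for instance, $\tilde\rho(\partial r_1/\partial\beta_2) = -1+1 = 0$ and $\tilde\rho(\partial r_1/\partial\alpha_2) = 1-1=0$. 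Since the remaining rows of $A(P)$ and $A(P')$ come from the common relators of $P_0$ inside $W$ and coincide tautologically, we conclude $\tilde\rho(A(P))=\tilde\rho(A(P'))\in K_1(\widehat{\Q\pi})$, and hence $\tilde\alpha(M)=\tilde\alpha(M_Y)$.

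The main obstacle I expect is the Euler structure bookkeeping in the first step, i.e.\ certifying that $P$ and $P'$ really do correspond to the preferred Euler structures on both sides so that $\tilde\rho(A(P))$ and $\tilde\rho(A(P'))$ equal the torsions rather than $H_\Z$-translates. This should follow from the locality of the clasper surgery inside $V$ and from tracking the lifts of cells outside $V$, in the spirit of the Euler structure comparison carried out in Section~\ref{section:finitetype}; by contrast, the Fox derivative cancellations above are mechanical once $\tilde\rho(\alpha_i)=\tilde\rho(\beta_j)=1$ is established.
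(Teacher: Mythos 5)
Your proposal is correct and follows essentially the same route as the paper: both set up balanced presentations of $\pi_1 M$ and $\pi_1 M_Y$ differing only in the three relators coming from the meridian disks vs.\ the clasper relators of Lemma~\ref{lem:pi1(Y)}, observe that $\tilde\rho$ kills all $\alpha_i,\beta_j$ (the paper using the rewritten relation $\alpha_i=[\beta_{i+1}^{-1},\beta_{i+2}]\beta_{i+2}[\beta_{i+1}^{-1},\alpha_{i+1}]\beta_{i+2}^{-1}$, you using the relators directly), and then check by Fox calculus that the corresponding blocks of $A(P)$ and $A(P')$ agree under $\tilde\rho$, forcing the torsions to coincide. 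The paper is equally terse about the Euler-structure matching that you flag as the main obstacle; your heuristic that the surgery is supported away from $\partial_- M$ is essentially what is implicitly invoked, and it can be made precise via Lemma~\ref{lem:euler-str} together with the observation that the hypothesis forces $\tau_1(M)=\tau_1(M_Y)$.
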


\begin{lemma}\label{lem:trivvert-infty}
Let $G$ be a graph clasper in a homology cylinder $M$,
and let $J$ be the corresponding Jacobi diagram.
If $G$ is a $k$-looped graph clasper for $k\ge2$,
there is a trivalent vertex in $J$ such that $\deg\beta_e=\infty$ for the three half-edges $e$ incident to the vertex.
\end{lemma}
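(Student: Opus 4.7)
The plan is to locate a trivalent vertex $v$ of $J$ each of whose three incident half-edges belongs to $E_T$ and satisfies the hypothesis of either Proposition~\ref{prop:connected}(2) or Proposition~\ref{prop:disconnected}(2). The natural candidate is an interior trivalent vertex of the topological $2$-core of $J$. So I first construct $J_0$ as this $2$-core, namely the maximal subgraph of $J$ with minimum degree at least $2$, obtained by iteratively deleting vertices of degree less than $2$ together with their incident edges. Each such deletion preserves both connectedness and the first Betti number, so $J_0$ is a connected subgraph of $J$ with $b_1(J_0)=k\ge 2$ whose vertices all have degree $2$ or $3$. Since a connected $2$-regular graph is a single cycle with first Betti number $1$, the hypothesis $k\ge 2$ forces $J_0$ to contain some vertex $v$ of degree $3$.

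Fix such a $v$. Each neighbor of $v$ in $J$ must also lie in $J_0$, for otherwise it would have been pruned and the degree of $v$ in $J_0$ would drop below $3$; hence no half-edge at $v$ lies in $E_U$. It then remains to verify $\deg\beta_e=\infty$ for each of the three half-edges $e$ at $v$. If the underlying edge of $e$ is not a bridge of $J_0$, then $J_0\setminus\Int e$ is connected, and reattaching to it the trees that were pruned in forming $J_0$ keeps $J\setminus\Int e$ connected, so Proposition~\ref{prop:connected}(2) applies. Otherwise the underlying edge is a bridge of $J_0$; let $B$ denote the component of $J_0\setminus\Int e$ not containing $v$, and let $u$ be the endpoint of this edge lying in $B$. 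Every vertex of $B$ has degree at least $2$ in $B$ except possibly $u$, which has degree at least $1$ in $B$. Since a nontrivial tree has at least two vertices of degree $1$, the component $B$ cannot be a tree; the corresponding component $J_e$ of $J\setminus\Int e$ obtained by reattaching the pruned trees therefore also contains a cycle, and Proposition~\ref{prop:disconnected}(2) applies.

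The main technical care lies in the bridge subcase, in handling degenerate configurations such as $B$ consisting of a single vertex supporting a self-loop, or $u$ having degree exactly $2$ in $J_0$. In each such situation the minimum-degree condition on $J_0$ is exactly what rules out $B$ being a tree, so no separate case-by-case argument is required beyond what is sketched above.
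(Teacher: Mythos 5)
Your proof is correct but takes a genuinely different route from the paper. The paper picks two distinct cycles $C_1,C_2$ in $J$ (possible since $b_1(J)\ge 2$) and splits into two cases: if $C_1\cap C_2\neq\emptyset$, then a trivalent vertex in the intersection already has all three incident half-edges lying on a cycle, so Proposition~\ref{prop:connected}(2) finishes directly; if $C_1\cap C_2=\emptyset$, it takes a path $P$ joining the cycles, applies Proposition~\ref{prop:connected}(2) at the endpoints $v_1,v_2$ of $P$, and then propagates the value $\deg\beta_e=\infty$ along $P$ using Lemma~\ref{lem:superadditive} inductively, landing on the desired vertex $v_2$. You instead pass to the $2$-core $J_0$, locate a degree-$3$ vertex $v$ there (which exists precisely because $b_1(J_0)=k\ge 2$ rules out $J_0$ being a single cycle), and treat each half-edge at $v$ by a local bridge/non-bridge dichotomy in $J_0$: non-bridges give connected $J\setminus\Int e$ and are handled by Proposition~\ref{prop:connected}(2), while bridges are handled by Proposition~\ref{prop:disconnected}(2) after observing that the minimum-degree-two condition on $J_0$ forces the far-side component to contain a cycle. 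This packages the argument more uniformly around a single vertex and avoids invoking Lemma~\ref{lem:superadditive} directly (though it still enters indirectly through the proof of Proposition~\ref{prop:disconnected}(2)). One small imprecision in the bridge subcase: the component $B$ of $J_0\setminus\Int e$ still carries the pendant half-edge $e'$, whose free endpoint is a degree-one vertex of $B$, so the claim that $u$ is the only vertex of $B$ of degree less than two is not literally correct; it becomes correct once you discard that pendant, and since the pendant contributes no cycle, the conclusion that $J_e$ is not a tree is unaffected.
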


\begin{proof}
Since $b_1(J)\geq 2$, there are two distinct cycles $C_1$ and $C_2$ in $J$.
In the case $C_1\cap C_2 \neq \emptyset$, there exists a trivalent vertex $v \in C_1\cap C_2$.
Proposition~\ref{prop:connected}(2) implies that $v$ is what we want.

Next we consider the case $C_1\cap C_2 = \emptyset$.
Then there exists a path $P$ connecting $C_1$ and $C_2$ such that $P\cap C_j$ is a vertex, say $v_j$, for $j=1,2$.
It follows from Proposition~\ref{prop:connected}(2) that $\deg\beta_e = \infty$ for the two half-edges $e \subset C_j$ incident to $v_j$.
Applying Lemma~\ref{lem:superadditive} along $P$ inductively, we also conclude that $\deg\beta_e = \infty$ for the half-edge $e\subset P$ incident to $v_2$.
\end{proof}

For a 3-manifold $M$ and a presentation $P$ of $\pi_1M$ with generators $\{x_j\}_{j=1}^p$ and relators $\{r_i\}_{i=1}^q$,
we denote by $A(P)$ the $p\times q$ matrix whose $(i,j)$-entry is $\overline{\frac{\partial r_i}{\partial x_j}}\in\widehat{\Q\pi_1(M)}$.
Here, for $v\in \Z\pi_1M$, $\bar{v}\in \Z\pi_1M$ is defined in Section~\ref{section:compute-RT}.
First, we prove Lemma~\ref{lem:Y-graph}.
Let $N(Y)$ denote a tubular neighborhood of $Y$,
and let $\{\alpha_i,\beta_i\}_{i=1}^3$ be a generating set of $\pi_1(N(Y))$ depicted in Figure~\ref{fig:Y-graph}.

\begin{proof}[Proof of Lemma~\ref{lem:Y-graph}]
Since 
\[
\alpha_i=[\beta_{i+1}^{-1},\beta_{i+2}]\beta_{i+2}[\beta_{i+1}^{-1},\alpha_{i+1}]\beta_{i+2}^{-1}\in\pi_1(N(Y))
\]
for $i=1,2,3$,
we see that $\alpha_i=1\in\widehat{\Q\pi_1(M_G)}$ if $\beta_{i+1}=1\in\widehat{\Q\pi_1(M_G)}$.
With respect to the presentation $P_Y$ given in Lemma~\ref{lem:pi1(Y)},
\[
A(P_Y)=
\begin{pmatrix}
\left(\overline{\dfrac{\partial r_l}{\partial \alpha_k}}\right)_{\substack{1\le k\le 3\\1\le l\le 3}}\\
\left(\overline{\dfrac{\partial r_l}{\partial \beta_k}}\right)_{\substack{1\le k\le 3\\1\le l\le 3}}
\end{pmatrix},
\]
where
\begin{align*}
\left(\frac{\partial r_l}{\partial \alpha_k}\right)_{\substack{1\le k\le 3\\1\le l\le 3}}
&=\begin{pmatrix}
1&0&(\alpha_3-\beta_1^{-1})\beta_2\\
(\alpha_1-\beta_2^{-1})\beta_3&1&0\\
0&(\alpha_2-\beta_3^{-1})\beta_1&1
\end{pmatrix},\\
\left(\frac{\partial r_l}{\partial \beta_k}\right)_{\substack{1\le k\le 3\\1\le l\le 3}}
&=\begin{pmatrix}
0&\alpha_2-\beta_3^{-1}&\beta_1^{-1}(1-\beta_2\alpha_1)\\
\beta_2^{-1}(1-\beta_3\alpha_2)&0&\alpha_3-\beta_1^{-1}\\
\alpha_1-\beta_2^{-1}&\beta_3^{-1}(1-\beta_1\alpha_3)&0
\end{pmatrix}.
\end{align*}
It is the same as that of the handlebody of genus three.
Thus, If we replace the neighborhood of $G$ with the handlebody,
the torsion does not change,
and we obtain
\[
\tilde{\alpha}(M_G)=\tilde{\alpha}(M).
\]
\end{proof}

\subsection{Values on $1$-loop surgeries}
Here, we prove the following surgery formula of $\tilde{\alpha}\colon \I\C\to K_1(\widehat{\Q\pi})$ for 1-loop graph claspers.

\begin{theorem}\label{thm:value-1-loop}
Let $G$ be a $1$-loop graph clasper of degree $d$ embedded in a homology cylinder $M$ as in Figure~\ref{fig:MaMe13},
where $\epsilon_i$ denotes a rectangle or a rectangle with a positive half-twist.
We choose a path $l$ in $M$ from a basepoint $*\in m_-(\Sigma_{g,1})$ to $*_1$,
and let $\gamma_i=l f_i\bar{l}$ and $\delta=l f_0 \bar{l}$.
We also denote $\epsilon_i=1,0\in\Z/2\Z$, which is the same symbol as the rectangle,
depending on whether the rectangle $\epsilon_i$ is twisted or not.
Then, we have the equality
\[
\tilde{\alpha}(M_G)=
\left(\delta+(-1)^{\epsilon+1}\prod_{i=1}^d(1-\gamma_{d+1-i})\right)
\left(\delta^{-1}+(-1)^{\epsilon+1}\prod_{i=1}^d(1-\gamma_i^{-1})\right)
\tilde{\alpha}(M)
\]
in $(\widehat{\Q\pi})_{\ab}^{\times}$, where we identify $\widehat{\Q\pi_1(M)}\cong \widehat{\Q\pi}$
and denote $\epsilon=\sum_{i=1}^d\epsilon_i\in\mathbb{Z}/2\mathbb{Z}$.
\end{theorem}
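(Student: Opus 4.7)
My strategy is to compute both $\tilde{\alpha}(M)$ and $\tilde{\alpha}(M_G)$ via Lemma~\ref{lem:torsion-of-presentation} from balanced presentations that agree outside a small block encoding the clasper, and then to simplify the ratio of their Alexander determinants using the structure of a $1$-loop graph. Throughout I will pass from $K_1(\widehat{\Q\pi})$ to $(\widehat{\Q\pi})^\times_{\ab}$ via the Dieudonn\'e determinant, which by Proposition~\ref{prop:ldet} is an isomorphism, so that the final identity becomes an equality in $(\widehat{\Q\pi})^\times_{\ab}$.

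First, I would fix a tubular neighborhood $N(G)$ and, as in Lemma~\ref{lem:presentation-of-complement}, build a balanced presentation $P$ of $\pi_1(M)$ whose generators include the meridians and longitudes $\{\alpha_e,\beta_e\}_{e\in E(J)}$ of a $Y$-graph decomposition of $G$, and whose relations include the three Wirtinger-type relations $r_1,r_2,r_3$ at each node supplied by Lemma~\ref{lem:pi1(Y)}. By Lemma~\ref{lem:torsion-of-presentation} applied with $R=\widehat{\Q\pi}$ (which is a local ring), I can arrange $\tilde{\alpha}(M)=\tilde{\rho}(A(P))$ after adjusting the representative of the preferred Euler structure as in Section~\ref{subsec:crossed_hom}. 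The group $\pi_1(M_G)$ then admits a balanced presentation $P_G$ of the same size obtained from $P$ by replacing the $3d$ node-relations by the ``post-surgery'' relations that glue the dual $Y$-graphs into place, with an additional sign flip in the contributions from any edge meeting a twisted rectangle.

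Second, I would identify these generators in terms of the loop data. By Proposition~\ref{prop:connected}(1) the leaf longitudes $\beta_e$ for $e\in E_U$ correspond, after conjugation by the path $l$, to the classes $\gamma_1,\dots,\gamma_d$. Choose one internal edge $e_0$ of the loop and cut there: by Proposition~\ref{prop:disconnected}, every other internal longitude is expressible as an iterated commutator of the $\gamma_i$'s and thus has infinite $I$-adic degree modulo any chosen truncation, while the holonomy that closes the loop is exactly $\delta$. Therefore, after invertible row and column operations on $A(P_G)-A(P)$ that do not alter the Dieudonn\'e determinant of either matrix, the difference concentrates on the two columns indexed by $\alpha_{e_0},\beta_{e_0}$, with nontrivial entries only in the two rows coming from the relations adjacent to $e_0$.

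Third, I would expand the resulting Schur complement. The two surviving rows record the two halves of the loop, traversed in opposite cyclic orders: walking in one direction accumulates $\delta+(-1)^{\epsilon+1}\prod_{i=1}^d(1-\gamma_{d+1-i})$ from the leaf contributions together with the $\delta$ closing the loop, and the opposite walk yields $\delta^{-1}+(-1)^{\epsilon+1}\prod_{i=1}^d(1-\gamma_i^{-1})$. The leaf factor $(1-\gamma_i^{\pm 1})$ at the $i$-th node is the Fox derivative $\overline{\partial r_j/\partial \beta_k}$ of a modified Wirtinger relation that wraps the node's free leaf once around the passing strand. Each twisted rectangle $\epsilon_i=1$ introduces a half-twist which, by the formulas for $l\alpha'_1\bar{l}$ and $l\beta'_1\bar{l}$ in the proof of Lemma~\ref{lem:superadditive}, reverses the sign of the corresponding Fox derivative; summing these reversals around the loop produces the global sign $(-1)^{\epsilon}$ in each factor, and the extra overall sign $-1$ accounts for the change of framing on the cut edge. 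The product of the two $1\times 1$ cofactors is then exactly the right-hand side of Theorem~\ref{thm:value-1-loop}.

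The main obstacle is the second step: showing that the ``internal'' Fox-derivative contributions at the $d-1$ nodes away from $e_0$ telescope cleanly along the loop and leave only the product $\prod_{i=1}^d(1-\gamma_i^{\pm 1})$ together with the boundary term $\delta^{\pm 1}$. This is an inductive application of Lemma~\ref{lem:pi1(Y)} combined with the identities from the proof of Lemma~\ref{lem:superadditive}, where one must carefully track that the cyclic reindexing $i\mapsto d+1-i$ in the first factor reflects the opposite orientation along the loop. Once this bookkeeping is in place, the reduction to a $2\times 2$ Dieudonn\'e determinant is routine.
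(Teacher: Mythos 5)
Your strategy is essentially the same as the paper's: decompose $G$ into $Y$-graphs, apply van~Kampen and the node presentation from Lemma~\ref{lem:pi1(Y)} to build a presentation of $\pi_1 N(G)$, pass to the associated Alexander matrix, identify the images of the generators in $\widehat{\Q\pi}$, and telescope along the cycle by column operations until the difference between $A(P_G)$ and $A(P)$ is concentrated in a single $2\times 2$ block whose Dieudonn\'e determinant yields the two factors. The paper carries this out with an explicit $(6d+1)\times 5d$ matrix $A_N$ and reduces to the vectors $v_d'$, $w_1'$; your ``Schur complement/cofactor'' framing is a reasonable restatement of that same reduction, and you correctly flag the telescoping as the crux.

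One step of your justification is wrong, however. You claim that after cutting at an internal edge $e_0$ the remaining internal longitudes ``are iterated commutators of the $\gamma_i$'s and thus have infinite $I$-adic degree modulo any chosen truncation.'' An iterated commutator of finitely many nontrivial elements of $\hat I$ does \emph{not} have infinite degree --- by the fundamental inequality $\deg[x,y]\geq \deg x+\deg y$ it lies in a finite filtration level. What actually forces the vanishing is Proposition~\ref{prop:connected}(2): for an internal half-edge $e$ of the cycle, $J\setminus\Int e$ is still connected (a cycle minus a half-edge stays connected), so Lemma~\ref{lem:superadditive} applied around the cycle gives $\deg\beta_e\geq\deg\beta_e+k$, hence $\deg\beta_e=\infty$ and $\beta_e=1$ in $\widehat{\Q\pi}$. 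This is a genuine vanishing, not a truncation effect, and it is exactly what lets the paper's computation take place in $K_1(\widehat{\Q\pi})$ itself rather than in a quotient. With that justification replaced, the rest of your plan lines up with the paper's proof; the sign bookkeeping you sketch for the twisted rectangles is handled in the paper through the $\epsilon_i$-dependent relators $r_{i,4},r_{i,5}$ rather than through the half-twist formulas in Lemma~\ref{lem:superadditive}, but both accomplish the same thing.
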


\begin{figure}[h]
 \centering
 \includegraphics[width=0.8\textwidth]{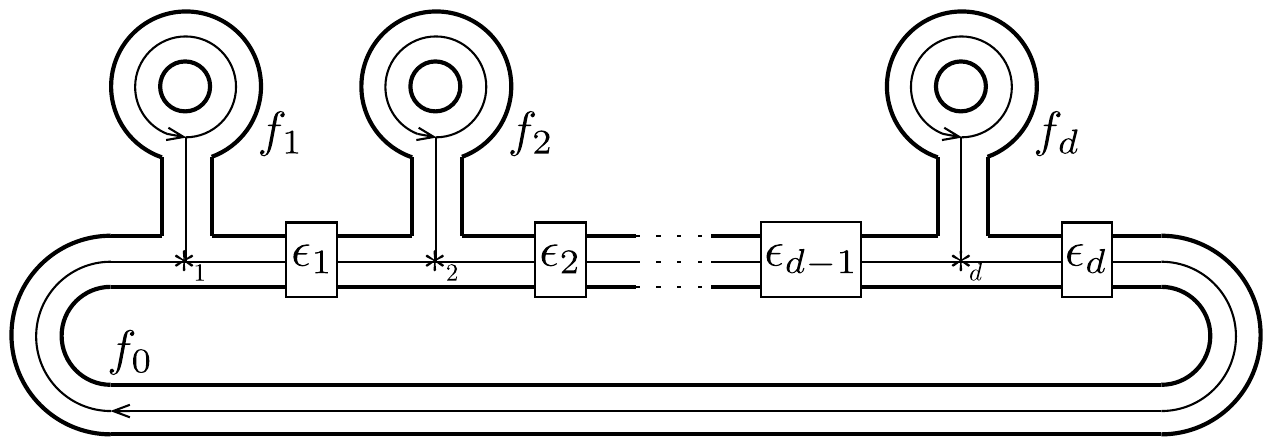}
 \caption{A $1$-loop graph clasper with twists $\epsilon_j$, basepoints $\ast_j$, and loops $f_j$.}
 \label{fig:MaMe13}
\end{figure}

\begin{remark}
A surgery formula of the $K_1(Q(\Z H_\Z))$-valued torsion $\alpha$ of homology cylinders for 1-loop surgeries is given in \cite[Lemma~3.15]{MaMe13}.
\end{remark}

\begin{proof}[Proof of Theorem~\ref{thm:value-1-loop}]
Let us decompose $G$ into $Y$-graphs embedded in $M$ by splitting the edges in $G$ each of which connects two nodes.
We denote by $Y_i$ the $Y$-graph containing the leaf whose one longitude represents the same free loop as $f_i$,
and assign a positive half-twist to one leaf of $Y_i$ if $\epsilon_i$ is half-twisted for $1\le i\le d$.
The regular neighborhood $N_0(Y_i)$ can be considered as a handlebody of genus three,
and we choose it so that $N_0(G)=\bigcup_{i=1}^d N_0(Y_i)$ as in Figure~\ref{fig:path-l}.
Let us also denote by $N(G)$ the corresponding neighborhood of $G$ in $M_G$,
and we identify $M \setminus \Int N_0(G)$ and $M_G \setminus \Int N(G)$.

Let us denote by $\tilde{*}_i$ points in $\partial N_0(G)$ obtained by pushing the points $*_i\in Y_i$ depicted in Figure~\ref{fig:MaMe13} to an identical normal direction.
We also pick a path $l_i$ in $\partial N_0(G)$ connecting $\tilde{*}_i$ and $\tilde{*}_{i+1}$ in the same way as $l$ in Figure~\ref{fig:path-l}.
Here, we denote $\tilde{*}_{d+1}=\tilde{*}_1$.

We consider that the leaves $\beta_1$, $\beta_2$, and $\beta_3$ in Figure~\ref{fig:Y-graph} correspond to
the leaf of $Y_i$ whose longitude represents the same free loop as $f_i$,
the half-edge connecting to $Y_{i+1}$,
and the half-edge connecting to $Y_{i-1}$, respectively.
For $1\le i\le d$ and $1\le j\le 3$,
let us denote by $\alpha_{i,j}\in\pi_1(\partial N_0(G),\tilde{*}_1)$ (resp.\ $\beta_{i,j}$) the loop obtained by connecting $l_1l_2\cdots l_{i-1}$ to the meridian corresponding to $\alpha_j$ (resp.\ the longitude corresponding to $\beta_j$) in $\partial N_0(Y_i)$.
We also denote the loop $\tilde{l}=l_1l_2\cdots l_d$.
By applying van Kampen's theorem when we glue $Y$-graphs,
we obtain a presentation $P_N$ of $\pi_1N(G)$ with the generating set
\[
\{\alpha_{i,j},\beta_{i,j}\mid 1\le i\le d, 1\le j\le 3\}\cup\{\tilde{l}\}
\]
and relators
\begin{align*}
r_{i,1}&=\alpha_{i,1}\beta_{i,3}\alpha_{i,2}\beta_{i,2}^{-1}\alpha_{i,2}^{-1}\beta_{i,3}^{-1}\beta_{i,2},\\
r_{i,2}&=\alpha_{i,2}\beta_{i,1}\alpha_{i,3}\beta_{i,3}^{-1}\alpha_{i,3}^{-1}\beta_{i,1}^{-1}\beta_{i,3},\\
r_{i,3}&=\alpha_{i,3}\beta_{i,2}\alpha_{i,1}\beta_{i,1}^{-1}\alpha_{i,1}^{-1}\beta_{i,2}^{-1}\beta_{i,1},\\
r_{i,4}&=
\begin{cases}
\alpha_{i,2}\beta_{i+1,3}
&\text{if $\epsilon_i=0$ and $i<d$,}\\
\beta_{i,2}^{-1}\alpha_{i,2}\beta_{i,2}\beta_{i+1,3}^{-1}
&\text{if $\epsilon_i=1$ and $i<d$,}\\
\alpha_{d,2}\tilde{l}\beta_{1,3}\tilde{l}^{-1}
&\text{if $\epsilon_i=0$ and $i=d$,}\\
\beta_{d,2}^{-1}\alpha_{d,2}\beta_{d,2}\tilde{l}\beta_{1,3}^{-1}\tilde{l}^{-1}
&\text{if $\epsilon_i=1$ and $i=d$,}\\
\end{cases}\\
r_{i,5}&=
\begin{cases}
\beta_{i,2}\alpha_{i+1,3}
&\text{if $\epsilon_i=0$ and $i<d$,}\\
\beta_{i,2}^{-1}\alpha_{i,2}\beta_{i,2}\alpha_{i,2}^{-1}\beta_{i,2}\alpha_{i+1,3}^{-1}
&\text{if $\epsilon_i=1$ and $i<d$,}\\
\beta_{d,2}\tilde{l}\alpha_{1,3}\tilde{l}^{-1}
&\text{if $\epsilon_i=0$ and $i=d$,}\\
\beta_{d,2}^{-1}\alpha_{d,2}\beta_{d,2}\alpha_{d,2}^{-1}\beta_{d,2}\tilde{l}\alpha_{1,3}^{-1}\tilde{l}^{-1}
&\text{if $\epsilon_i=1$ and $i=d$.}
\end{cases}
\end{align*}

By slightly pushing the endpoint of $l$ attached to $*_1$ in the normal direction of $G$,
we have a path connecting the basepoint $*\in m_-(\Sigma_{g,1})$ to $\tilde{*}_1$.
We may assume $\tilde{l}$ is contained in $M\setminus G$.
Changing the basepoint of loops from $\tilde{*}_1$ to $*$ via $\tilde{l}$,
the loops $\beta_{i,3}$ and $\tilde{l}$ are homotopic to $\gamma_i$ and $\delta$ in $M$, respectively.
Under the isomorphism
\[
\widehat{\Q\pi_1(M,*)}\cong \widehat{\Q\pi_1(M_G,*)}\cong \widehat{\Q\pi}
\]
coming from \cite[Theorem~5.1]{Sta65} and \cite{Qui68},
we have
\[
\alpha_{i,1}=\alpha_{i,2}=\alpha_{i,3}=\beta_{i,2}=\beta_{i,3}=1,\quad
\beta_{i,1}=\gamma_i, \quad
\tilde{l}=\delta\in\widehat{\Q\pi}
\]
by Proposition~\ref{prop:disconnected}(2) and the relators $r_{i,1}$, $r_{i,2}$, and $r_{i,3}$.

Denote the generators of $\pi_1(M_G,*)$ as
\[
x_{6(i-1)+2j-1}=\alpha_{i,j},\quad
x_{6(i-1)+2j}=\beta_{i,j},\quad
x_{6d+1}=\tilde{l}
\]
for $1\le i\le d$ and $1\le j\le 3$.

Let $A_N$ be a $(6d+1)\times 5d$ matrix defined by
\[
A_N=
\begin{pmatrix}
\biggl(
\overline{\dfrac{\partial r_{l,2}}{\partial x_k}}
\biggr)_{k,l}
&
\biggl(
\overline{\dfrac{\partial r_{l,4}}{\partial x_k}}
\biggr)_{k,l}
&
\biggl(
\overline{\dfrac{\partial r_{l,3}}{\partial x_k}}
\biggr)_{k,l}
&
\biggl(
\overline{\dfrac{\partial r_{l,1}}{\partial x_k}}
\biggr)_{k,l}
&
\biggl(
\overline{\dfrac{\partial r_{l,5}}{\partial x_k}}
\biggr)_{k,l}
\end{pmatrix},
\]
where $1\leq k\leq 6d+1$ and $1\leq l\leq d$.
In the following, we consider the matrix $A_N$ when $d\ge2$.
Theorem~\ref{thm:value-1-loop} for the case when $d=1$ is proved in the same way.
We have
\begin{align*}
\biggl(
\overline{\dfrac{\partial r_{l,2}}{\partial x_k}}
\biggr)_{\substack{1\le k\le 6d+1\\1\le l\le d}}
&=
\begin{pmatrix}
v_1& \bm{0}&\cdots&\bm{0}&\bm{0}\\
\bm{0}&v_2&\cdots&\bm{0}&\bm{0}\\
\vdots&\vdots&&\vdots&\vdots\\
\bm{0}&\bm{0}&\cdots&v_{d-1}&\bm{0}\\
\bm{0}&\bm{0}&\cdots&\bm{0}&v_d\\
0&0&\cdots&0&0
\end{pmatrix},
\\
\biggl(
\overline{\dfrac{\partial r_{l,4}}{\partial x_k}}
\biggr)_{\substack{1\le k\le 6d+1\\1\le l\le d}}
&=
\begin{pmatrix}
e_3& \bm{0}&\cdots&\bm{0}&(-1)^{\epsilon_d}\delta^{-1}e_6\\
(-1)^{\epsilon_1}e_6&e_3&\cdots&\bm{0}&\bm{0}\\
\bm{0}&(-1)^{\epsilon_2}e_6&\cdots&\bm{0}&\bm{0}\\
\vdots&\vdots&&\vdots&\vdots\\
\bm{0}&\bm{0}&\cdots&e_3&\bm{0}\\
\bm{0}&\bm{0}&\cdots&(-1)^{\epsilon_{d-1}}e_6&e_3\\
0&0&\cdots&0&0
\end{pmatrix},
\end{align*}
where $v_i=
\begin{pmatrix}
0&0&1&0&0&1-\gamma_i^{-1}
\end{pmatrix}^T$,
$\bm{0}=
\begin{pmatrix}
0&0&0&0&0&0
\end{pmatrix}^T$,
and $e_i\in\mathbb{R}^6$ is the $i$th standard unit vector for $1\le i\le 6$.

Applying column addition transformations to the submatrix
$L=\begin{pmatrix}
\biggl(
\overline{\dfrac{\partial r_{l,2}}{\partial x_k}}
\biggr)_{k,l}
&
\biggl(
\overline{\dfrac{\partial r_{l,4}}{\partial x_k}}
\biggr)_{k,l}
\end{pmatrix}$
of $A_N$, one can change
$\biggl(
\overline{\dfrac{\partial r_{l,2}}{\partial x_k}}
\biggr)_{k,l}$
into the form
\[
\begin{pmatrix}
e_3& \bm{0}&\cdots&\bm{0}&\bm{0}\\
\bm{0}&e_3&\cdots&\bm{0}&\bm{0}\\
\vdots&\vdots&&\vdots&\vdots\\
\bm{0}&\bm{0}&\cdots&e_3&\bm{0}\\
\bm{0}&\bm{0}&\cdots&\bm{0}&v_d'\\
0&0&\cdots&0&0
\end{pmatrix},
\]
where 
$v_d'=
\begin{pmatrix}
0&0&1+(-1)^{\epsilon+1}\delta\prod_{i=1}^d(1-\gamma_i^{-1})&0&0&0
\end{pmatrix}^T$.
Indeed, we first add $-(-1)^{\epsilon_{d-1}}(1-\gamma_d^{-1})$ times the $(2d-1)$st column and $(-1)^{\epsilon_{d-1}}(1-\gamma_d^{-1})$ times the $(d-1)$st column of $L$ to the $d$th column.
Here, scalar multiplication is done from the right.
Then, the $(6d-6,d)$-entry is $(-1)^{\epsilon_{d-1}}(1-\gamma_{d-1}^{-1})(1-\gamma_d^{-1})$.
We next continue this process and see that the $d$th column is $
\begin{pmatrix}
 \bm{0} & \cdots & \bm{0} & v'_d & 0
\end{pmatrix}^T
$.
Since $1+(-1)^{\epsilon+1}\delta\prod_{i=1}^d(1-\gamma_i^{-1}) \in \widehat{\Q\pi}$ is a unit, a similar procedure changes the $(d-1)$st column into a standard unit vector.
Continuing this process, we finally obtain the desired matrix.

We also have
\begin{align*}
\biggl(
\overline{\dfrac{\partial r_{l,1}}{\partial x_k}}
\biggr)_{\substack{1\le k\le 6d+1\\ 1\le l\le d}}
&=
\begin{pmatrix}
e_1& \bm{0}&\cdots&\bm{0}&\bm{0}\\
\bm{0}&e_1&\cdots&\bm{0}&\bm{0}\\
\vdots&\vdots&&\vdots&\vdots\\
\bm{0}&\bm{0}&\cdots&e_1&\bm{0}\\
\bm{0}&\bm{0}&\cdots&\bm{0}&e_1\\
0&0&\cdots&0&0
\end{pmatrix},
\\
\biggl(
\overline{\dfrac{\partial r_{l,5}}{\partial x_k}}
\biggr)_{\substack{1\le k\le 6d+1\\1\le l\le d}}
&=
\begin{pmatrix}
e_4& \bm{0}&\cdots&\bm{0}&(-1)^{\epsilon_d}\delta^{-1}e_5\\
(-1)^{\epsilon_1}e_5&e_4&\cdots&\bm{0}&\bm{0}\\
\bm{0}&(-1)^{\epsilon_2}e_5&\cdots&\bm{0}&\bm{0}\\
\vdots&\vdots&&\vdots&\vdots\\
\bm{0}&\bm{0}&\cdots&e_4&\bm{0}\\
\bm{0}&\bm{0}&\cdots&(-1)^{\epsilon_{d-1}}e_5&e_4\\
0&0&\cdots&0&0
\end{pmatrix}.
\end{align*}
Let $w_i=
\begin{pmatrix}
1-\gamma_i&0&0&1-\gamma_i&1&0
\end{pmatrix}^T$.
Applying column addition transformations to $A_N$,
one can also change the submatrix
\[
\begin{pmatrix}
\overline{\dfrac{\partial r_{l,3}}{\partial x_k}}
\end{pmatrix}_{\substack{1\le k\le 6d+1\\ 1\le l\le d}}
=\begin{pmatrix}
w_1& \bm{0}&\cdots&\bm{0}&\bm{0}\\
\bm{0}&w_2&\cdots&\bm{0}&\bm{0}\\
\vdots&\vdots&&\vdots&\vdots\\
\bm{0}&\bm{0}&\cdots&w_{d-1}&\bm{0}\\
\bm{0}&\bm{0}&\cdots&\bm{0}&w_d\\
0&0&\cdots&0&0
\end{pmatrix}
\]
into the form
\[
\begin{pmatrix}
w_1'& \bm{0}&\cdots&\bm{0}&\bm{0}\\
\bm{0}&e_5&\cdots&\bm{0}&\bm{0}\\
\vdots&\vdots&&\vdots&\vdots\\
\bm{0}&\bm{0}&\cdots&e_5&\bm{0}\\
\bm{0}&\bm{0}&\cdots&\bm{0}&e_5\\
0&0&\cdots&0&0
\end{pmatrix}
\]
in the same way,
where 
$w_1'=
\begin{pmatrix}
0&0&0&0&1+(-1)^{\epsilon+1}\delta^{-1}\prod_{i=1}^{d}(1-\gamma_{d+1-i})&0
\end{pmatrix}^T$.

The presentation $P_G$ of the fundamental group of $M_G=(M\setminus\Int N_0(G))\cup N(G)$ is obtained by adding a generating set of $\pi_1(M\setminus\Int N_0(G))$ and some relators to $P_N$.
On the other hand, the presentation $P$ of $\pi_1M$ is obtained by adding the same generating set and relators of $\pi_1(M\setminus\Int N_0(G))$ to $P_N$ and changing the relators $r_{i,1}$, $r_{i,2}$, $r_{i,3}$ into
\[
r_{i,1}=\alpha_{i,1},\ 
r_{i,2}=\alpha_{i,2},\ 
r_{i,3}=\alpha_{i,3}.
\]
The difference between the presentations of $\pi_1(M_G)$ and $\pi_1(M)$ only lies in $r_{i,1}$, $r_{i,2}$, and $r_{i,3}$.
Thus, the difference between $A(P_G)$ and $A(P)$ is concentrated in $v_1'$ and $w_1'$,
and we obtain the following equalities in $K_1(\widehat{\Q\pi})/{\pm}H_\Z$:
\begin{align*}
\tilde{\alpha}(M_G)
&=
\left(1+(-1)^{\epsilon+1}\delta\prod_{i=1}^d(1-\gamma_i^{-1})\right)
\left(1+(-1)^{\epsilon+1}\delta^{-1}\prod_{i=1}^{d}(1-\gamma_{d+1-i})\right)\tilde{\alpha}(M)\\
&=\left(\delta^{-1}+(-1)^{\epsilon+1}\prod_{i=1}^d(1-\gamma_i^{-1})\right)\left(\delta+(-1)^{\epsilon+1}\prod_{i=1}^d(1-\gamma_{d+1-i})\right)\tilde{\alpha}(M).
\end{align*}
Since $\tau_1(M)=\tau_1(M_G)$,
we have the same equality in $K_1(\widehat{\Q\pi})$.
\end{proof}

Recall the surgery map defined in \cite[Section~8.5]{Hab00C} and \cite[Section~2.2]{HaMa09}.
Let $\J$ denote the set of connected Jacobi diagrams with at least one trivalent vertex and with univalent vertices colored by $H_\Z$ and totally ordered.
The module $\A^{<,c}(H_\Z)$ is obtained by taking the quotient of free module $\Z\J$ generated by $\J$ with respect to four relations called the AS, IHX, multilinear and STU-like relations.

The surgery map $\psi\colon\A^{<,c}_d(H_\Z)\to Y_d\I\C/Y_{d+1}$ is defined as follows.
Let $J\in\A^{<,c}(H_\Z)$ be a connected Jacobi diagram whose labels of the univalent vertices are primitive elements in $H_\Z$.
We construct a graph clasper $G(J)$ from $J$ below,
and set $\psi(J)=(\Sigma_{g,1}\times[-1,1])_{G(J)}$.
First, replace the vertices of $J$ to disks,
and thicken the edges of $J$ to bands so that they respect the cyclic order of edges on each trivalent vertex.
Then, we obtain a compact oriented surface $S$ with orientation induced by the cyclic orders on the nodes.
Cutting a smaller disk in the interior of each disk coming from a univalent vertex,
we obtain the oriented surface $G(J)$.
We also endow the core of the resulting annulus, called a leaf, with the orientation induced by $\partial S$.
Next, we embed it in the trivial homology cylinder $\Sigma_{g,1}\times[-1,1]$ so that the core of each leaf represents the same homology class as the label of the univalent vertex. 
Moreover, we assume that each leaf lies in a horizontal slice of $\Sigma_{g,1}\times[-1,1]$
and that the vertical heights of leaves along $[-1,1]$ respect the total ordering of the univalent vertices of $J$.

Let $O(a_1,a_2,\ldots,a_d)\in \A_d^{<,c}(H_\Z)$ denote the Jacobi diagram appearing in Section~\ref{sec:Introduction} endowed with an arbitrary order in the univalent vertices.
Noting that $\tilde{\alpha}_d$ is the composition map
\[
Y_d\I\C/Y_{d+1}\xrightarrow{\alpha} \Ker\left(K_1(\widehat{\Q\pi}/\hat{I}^{d+1}) \to K_1(\widehat{\Q\pi}/\hat{I}^d)\right) \xrightarrow{\ldet_d} (H^{\otimes d})_{\Z_d},
\]
we have the following by Theorem~\ref{thm:value-1-loop} and Corollary~\ref{cor:gr}.

\begin{corollary}\label{cor:1-loopvalue}
\[
\tilde{\alpha}_d(\psi(O(x_1,x_2,\ldots,x_d)))=-x_1\otimes x_2\otimes\cdots \otimes x_d -(-1)^d x_d\otimes \cdots\otimes x_2\otimes x_1\in (H^{\otimes d})_{\Z_d}.
\]
\end{corollary}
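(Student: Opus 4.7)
The plan is to apply the surgery formula of Theorem~\ref{thm:value-1-loop} to the wheel clasper $G=G(O(x_1,\ldots,x_d))$ embedded in the trivial cylinder $M=\Sigma_{g,1}\times[-1,1]$, and then extract the $d$-graded part using the isomorphism $\ldet_d$ of Corollary~\ref{cor:gr}. Since $\psi(O(x_1,\ldots,x_d))=M_G$ by definition, this reduces the corollary to a direct computation inside $\widehat{\Q\pi}$.

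First I would pin down the ingredients of Theorem~\ref{thm:value-1-loop} for this embedding. The standard embedding used to define $\psi$ places the $i$th leaf of $G$ in a horizontal slice of $\Sigma_{g,1}\times[-1,1]$ with core representing $x_i\in H_\Z$, with every rectangle untwisted; in particular $\epsilon_i=0$ for all $i$, so $\epsilon=0$ and $(-1)^{\epsilon+1}=-1$. Under the identification $\widehat{\Q\pi_1(M)}\cong\widehat{\Q\pi}$, the loop $\gamma_i$ is a lift of $x_i$ to $\pi$, and the ``inner'' loop $\delta$ is null-homotopic in $M\setminus G$ (the cycle of nodes of the wheel bounds a small disk, as the clasper is contained in a regular neighborhood), hence $\delta=1\in\widehat{\Q\pi}$. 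Finally $\tilde{\alpha}(M)=1$ for the trivial cylinder. With these values the surgery formula collapses to
\[
\tilde{\alpha}(M_G)=\left(1-\prod_{i=1}^d(1-\gamma_{d+1-i})\right)\left(1-\prod_{i=1}^d(1-\gamma_i^{-1})\right)\in K_1(\widehat{\Q\pi}).
\]

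Next I would expand this modulo $\hat{I}^{d+1}$. Writing $y_i=\gamma_i-1\in\hat{I}$, one has $1-\gamma_i=-y_i$ exactly, and $1-\gamma_i^{-1}=y_i-y_i^2+\cdots\equiv y_i\pmod{\hat{I}^2}$. Therefore
\[
\prod_{i=1}^d(1-\gamma_{d+1-i})=(-1)^dy_dy_{d-1}\cdots y_1,\qquad \prod_{i=1}^d(1-\gamma_i^{-1})\equiv y_1y_2\cdots y_d\pmod{\hat{I}^{d+1}},
\]
and the cross term in the product lies in $\hat{I}^{2d}\subset\hat{I}^{d+1}$, so
\[
\tilde{\alpha}(M_G)\equiv 1-(-1)^dy_dy_{d-1}\cdots y_1-y_1y_2\cdots y_d\pmod{\hat{I}^{d+1}}.
\]
Applying $\ldet_d=\theta_*\circ\tr\circ\log$ (the trace is trivial on a $1\times 1$ matrix) and using that $\log(1-z)\equiv -z\pmod{\hat{I}^{d+1}}$ when $z\in\hat{I}^d$ (since $z^k\in\hat{I}^{kd}$ for $k\ge 2$), one obtains
\[
\ldet_d(\tilde{\alpha}(M_G))=\theta_*\!\left(-(-1)^dy_dy_{d-1}\cdots y_1-y_1y_2\cdots y_d\right)
=-(-1)^dx_d\otimes\cdots\otimes x_1-x_1\otimes\cdots\otimes x_d
\]
in $H^{\otimes d}/\Z_d$, because the chosen Magnus expansion sends $\gamma_i-1\mapsto x_i+(\text{higher degree})$ and hence $y_{i_1}y_{i_2}\cdots y_{i_d}\equiv x_{i_1}\otimes x_{i_2}\otimes\cdots\otimes x_{i_d}\pmod{\hat{T}_{d+1}}$. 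Since $\tilde{\alpha}_d$ is by definition $\ldet_d\circ\tilde{\alpha}$ restricted to $Y_d\I\C$, this gives the claimed formula.

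The main obstacle I expect is the geometric identification of the ingredients $\gamma_i$, $\delta$, and $\tilde{\alpha}(M)$ for the wheel embedded in the trivial cylinder—in particular, justifying that the internal loop $\delta$ of the wheel is trivial in $\widehat{\Q\pi}$, which amounts to exhibiting an embedded disk (or at least a null-homotopy) in $M\setminus G$. Once this bookkeeping is done, the remainder is a formal calculation in $\widehat{\Q\pi}$ that is essentially a one-variable Taylor expansion of the surgery formula of Theorem~\ref{thm:value-1-loop}.
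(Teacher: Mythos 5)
Your proof is correct and takes essentially the same route as the paper, which simply derives the corollary "by Theorem~\ref{thm:value-1-loop} and Corollary~\ref{cor:gr}" without spelling out the expansion; you have supplied the omitted computation. One small remark on the justification: you assert $\delta=1$ on geometric grounds, but in the published description of $\psi$ the embedding of the internal part of the wheel is not pinned down, so $\delta$ is not canonically trivial. Fortunately the claim is unnecessary: since $\prod_{i=1}^d(1-\gamma_{d+1-i})$ and $\prod_{i=1}^d(1-\gamma_i^{-1})$ both lie in $\hat{I}^d$ and $\epsilon(\delta)=1$, one has $\delta\prod_{i=1}^d(1-\gamma_i^{-1})\equiv\prod_{i=1}^d(1-\gamma_i^{-1})$ and $\prod_{i=1}^d(1-\gamma_{d+1-i})\,\delta^{-1}\equiv\prod_{i=1}^d(1-\gamma_{d+1-i})$ modulo $\hat{I}^{d+1}$, so the leading term is independent of $\delta$; your formula therefore survives without the (questionable) null-homotopy argument. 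The only remaining input that genuinely depends on the conventions is $\epsilon\equiv 0 \pmod 2$, i.e.\ that the wheel clasper coming from $O(x_1,\dots,x_d)$ with the orientation induced by the cyclic orders at the nodes is all-untwisted in the sense of Figure~\ref{fig:MaMe13}; this is consistent with the paper's conventions and is what produces the overall minus sign.
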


\section{Proof of Theorem~\ref{thm:main}}
\label{section:proof-of-main}
In this section, we prove Theorem~\ref{thm:main}.
For $M=(M, i)\in \C$, 
let us denote the mirror image as $\overline{M}=(\overline{M}, \bar{i})$,
where $\overline{M}$ is the 3-manifold obtained by reversing the orientation of $M$
and $\bar{i}\colon\partial(\Sigma_{g,1}\times [-1,1])\to \partial \overline{M}$ is defined by $\bar{i}(x,t)=i(x,-t)$.
Since taking the mirror images does not affect Torelli surgeries,
we have the following.
\begin{lemma}\label{lem:Y-equiv-mirror}
Let $d\ge1$.
If $M$ is $Y_d$-equivalent to $N$,
the mirror image $\overline{M}$ is $Y_d$-equivalent to $\overline{N}$.
\end{lemma}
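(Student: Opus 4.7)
The plan is to reduce to the case of a single elementary move and to check that the mirror image operation sends such a move to a move of the same type. By the definition of $Y_d$-equivalence recalled in Section~\ref{sec:homologycylinders}, we may assume that $N$ is obtained from $M$ by a single surgery along a connected graph clasper $G$ of degree $d$ embedded in $M$. It then suffices to prove that $\overline{N}=\overline{M}_{\overline{G}}$ for a suitable connected graph clasper $\overline{G}$ of degree $d$ in $\overline{M}$, after which iterating the argument along a sequence of clasper surgeries relating $M$ to $N$ gives the lemma.

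To construct $\overline{G}$, I would simply take the same embedded surface $G\subset M$ viewed inside $\overline{M}$, and let $\overline{G}$ be this surface with its node- and leaf-orientations reversed (and with the edge twists adjusted so that the resulting $Y$-graph decomposition of \cite[Move~2]{Hab00C} goes through in the orientation-reversed ambient manifold). The underlying uni-trivalent graph $J$ is unchanged by this mirror operation, and in particular the number of trivalent vertices, hence the degree, is still $d$, and connectedness is preserved. It remains to check that the result of surgery along $\overline{G}$ in $\overline{M}$ is the mirror image of the result of surgery along $G$ in $M$: this is a purely local statement about the surgery description of a clasper (replacing a regular neighborhood $N(G)$ by a prescribed homology handlebody via a framed-link calculus), which is compatible with reversing the ambient orientation once the clasper orientations are reversed accordingly.

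Finally, I would verify that the boundary parametrization behaves correctly. The mirror image is defined by $\bar{i}(x,t)=i(x,-t)$, which swaps $\partial_+$ and $\partial_-$. Since a graph clasper is always embedded away from a collar of $\partial M$, it gives a well-defined clasper in $\overline{M}$ with respect to the swapped parametrization, and the surgery respects this.

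The only mildly subtle point is the bookkeeping of orientations and half-twists of nodes and edges when passing from $G$ to $\overline{G}$, since a mirror reversal interchanges ``left'' and ``right'' framings; however this only affects the combinatorial data attached to $G$, not the degree or connectedness of the underlying graph $J$, so the $Y_d$-equivalence class of the surgery move is preserved. I do not foresee any genuine obstacle here beyond this bookkeeping.
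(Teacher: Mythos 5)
Your proposal is correct in spirit but takes a genuinely different route from the paper, and it passes over a real subtlety that the paper itself has to deal with elsewhere.

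The paper's proof is a one-liner relying on the equivalence recalled in Section~\ref{sec:homologycylinders}: $Y_d$-equivalence is the same as being related by a sequence of Torelli surgeries of class $d$, i.e.\ cutting along an embedded $\Sigma_{h,1}$ and regluing by an element of $\mathcal{I}_{h,1}(d)$. Reversing the ambient orientation visibly carries such a surgery to another such surgery, so the lemma is immediate. You instead argue directly at the level of clasper surgeries. That is viable, but the step you describe as ``mildly subtle bookkeeping'' is precisely where the work lies: the mirror image of the Habiro framed link $L(G)$ is not $L(G')$ for another clasper $G'$ in Habiro's convention, but rather $L'(G')$ in Goussarov's convention (the paper makes exactly this observation in the proof of the proposition following Lemma~\ref{lem:GHsurgeries2}, in the model case $M=\Sigma_{g,1}\times[-1,1]$). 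The two conventions do not literally agree; one needs the comparison carried out in Lemma~\ref{lem:GHsurgeries2} (or the original arguments of \cite{GGP01,Hab00C}) to know that both kinds of degree-$d$ connected-clasper surgeries realize the same $Y_d$-equivalence. With that additional ingredient, your plan does close, but it is more work than the paper's route, which bypasses the whole $L$-versus-$L'$ discussion by working with Torelli surgeries instead of claspers. So: not wrong, but you should be explicit that the ``convention flip'' under mirroring must be reconciled, and cite the appropriate comparison lemma rather than treating it as mere bookkeeping.
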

In particular,
if $M\in Y_d\I\C$, the mirror image $\overline{M}$ also lies in $Y_d\I\C$.
Here, recall that $p_+\colon (H^{\otimes d})_{\Z_d}\to \A_{d,1}^c(H)$ is the homomorphism defined by
\[
p_+(x_1\otimes x_2\otimes\cdots\otimes x_d)
=O(x_1,x_2,\ldots,x_d),
\]
and $p_-\colon (H^{\otimes d})_{\Z_d} \to (H^{\otimes d})_{\Z_d}^-$ is the projection given by
\[
p_-(x_1\otimes x_2\otimes\cdots\otimes x_d)
=\frac{1}{2}(x_1\otimes x_2\otimes\cdots\otimes x_d-(-1)^dx_d\otimes \cdots \otimes x_2\otimes x_1).
\]
Theorem~\ref{thm:main} follows from the lemmas below.
\begin{lemma}\label{lem:-1-eigenspace}
For $M\in Y_d\I\C/Y_{d+1}$,
\begin{align*}
p_+(-\tilde{\alpha}_d(M)+\tilde{\alpha}_d(\overline{M}))&=0\in \A^c_{d,1}(H),\\
p_-(-\tilde{\alpha}_d(M)+\tilde{\alpha}_d(\overline{M}))&=(\Tr_d\circ\tau_d)(M)\in (H^{\otimes d})_{\Z_d}.
\end{align*}
\end{lemma}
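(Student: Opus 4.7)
The plan is to prove the refined equality
\[
\tilde{\alpha}_d(\overline{M}) - \tilde{\alpha}_d(M) = \Tr_d \circ \tau_d(M) \quad \text{in } H^{\otimes d}/\Z_d,
\]
from which both assertions of the lemma follow at once: by Conant's theorem recalled in the introduction, the right-hand side lies in $(H^{\otimes d}/\Z_d)^-$, so $p_+$ annihilates it while $p_-$ restricts to the identity on it.

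The strategy is to apply Milnor's duality theorem for the Reidemeister torsion of the 3-manifold $M$. Since the embedding $\bar{i}_-\colon \Sigma_{g,1}\to\overline{M}$ coincides with $i_+\colon \Sigma_{g,1}\to M$ as a set map, and the identification $\widehat{\Q\pi_1 M}\cong\widehat{\Q\pi}$ used in defining $\tilde{\alpha}$ changes from $(i_-)_\ast^{-1}$ to $(i_+)_\ast^{-1}$, one has
\[
\tilde{\alpha}(\overline{M}) = (\sigma_M^{-1})_\ast\, \tau^{\tilde{\rho}}(M,\partial_+ M,\bar{\xi}_0),
\]
where $\bar{\xi}_0$ denotes the preferred Euler structure of $\overline{M}$ transported to $M$. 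Milnor's duality then identifies $\tau^{\tilde{\rho}}(M,\partial_+ M,\bar{\xi}_0)$ with the bar conjugate of $\tilde{\alpha}(M)$, up to a correction by an element $\tilde{\rho}(h_M)\in\tilde{\rho}(H_\Z)$ reflecting the discrepancy $\bar{\xi}_0^\ast-\xi_0\in H_1(M;\Z)$; here the bar involution on $K_1(\widehat{\Q\pi})$ is induced by $g\mapsto g^{-1}$.

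Next I pass to $H^{\otimes d}/\Z_d$ via $\ldet_d$ of Corollary~\ref{cor:gr}. Since $M\in Y_d\I\C\subset J_d\C$ (see Section~\ref{subsec:Johnson}), a direct computation via a Magnus expansion shows $\sigma_M\equiv\id\pmod{\hat{I}^{d+1}}$, so $(\sigma_M^{-1})_\ast$ induces the identity on $K_1(\widehat{\Q\pi}/\hat{I}^{d+1})$ and may be dropped. Meanwhile, under a Magnus expansion the bar involution on $\hat{T}$ becomes the anti-involution $x_1\otimes\cdots\otimes x_n\mapsto(-1)^n x_n\otimes\cdots\otimes x_1$, which on $H^{\otimes n}/\Z_n$ is precisely $\rr$. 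Thus, modulo the Euler-structure correction, $\tilde{\alpha}_d(\overline{M})$ is expressible in terms of $\rr(\tilde{\alpha}_d(M))$, and the entire degree-$d$ discrepancy is carried by the contribution of $\tilde{\rho}(h_M)$ to $\ldet_d$.

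The final step is to verify that, after application of $\ldet_d$, this Euler-structure correction is exactly $\Tr_d\circ\tau_d(M)$. As a consistency check, in degree $d=1$ the identity reads $\tilde{\alpha}_1(\overline{M})-\tilde{\alpha}_1(M)=\Tr_1\circ\tau_1(M)$, which follows immediately from Lemma~\ref{lem:euler-str} combined with $\tau_1(\overline{M})=-\tau_1(M)$. The principal obstacle is the careful bookkeeping in Milnor's duality---fixing the correct sign convention, evaluating $\bar{\xi}_0^\ast-\xi_0$ for the Massuyeau-Meilhan preferred Euler structure, and Fox-calculus computation identifying the composite effect of $\ldet_d$ on this correction with the Enomoto-Satoh contraction $C$ applied to $\tau_d(M)$, followed by the projection onto $H^{\otimes d}/\Z_d$.
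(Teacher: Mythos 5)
Your proposal charts a genuinely different route from the paper's proof, but it leaves the crucial step unverified, so it is not yet a proof.

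The paper establishes the refined identity
\[
-\tilde{\alpha}_d(M)+\tilde{\alpha}_d(\overline{M})=(\Tr_d\circ\tau_d)(M)
\]
not via Milnor duality but by invoking Theorem~\ref{thm:torsion-magnus}, which identifies $\tilde{\alpha}(M)^{-1}\cdot(\sigma_M)_*\tilde{\alpha}(\overline{M})$ with the Magnus representation $r(M)\in\GL(2g,\widehat{\Q\pi})$, together with Lemma~\ref{lem:degree-d}, which shows that $\theta_*\circ\tr_d\circ\log\circ\Mag$ agrees with $\Tr_d\circ\log\circ\theta_*$ on $\Ker(\Aut\widehat{\Q\pi}\to\Aut(\widehat{\Q\pi}/\hat{I}^{d+1}))$. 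Composing these with Corollary~\ref{cor:gr} gives the Enomoto--Satoh trace of the Johnson homomorphism directly, and Conant's theorem then yields the two eigenspace equalities. Your plan of working via Milnor duality to relate $\tilde{\alpha}(\overline{M})$ to the bar conjugate of $\tilde{\alpha}(M)$ is a reasonable alternative in spirit (this is how the $Q(\Z H_\Z)$-valued torsion is handled in Massuyeau--Meilhan and Friedl--Juh\'asz--Rasmussen), but in the non-commutative setting $K_1(\widehat{\Q\pi})$ such a duality statement would itself need to be proved --- the standard form over a commutative field of fractions does not transfer for free --- and you do not supply that argument.

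More seriously, the load-bearing step --- that after applying $\ldet_d$ the combination of the bar-conjugate term and the Euler-structure discrepancy $\tilde{\rho}(h_M)$ equals exactly $(\Tr_d\circ\tau_d)(M)$ --- is stated as ``the principal obstacle'' and never carried out. Two specific issues: first, for $h_M\in H_\Z$ nontrivial, $\tilde{\rho}(h_M)$ is not in $\Ker\bigl(K_1(\widehat{\Q\pi}/\hat{I}^{d+1})\to K_1(\widehat{\Q\pi}/\hat{I}^{d})\bigr)$ for $d\ge 2$, so its ``contribution to $\ldet_d$'' is not a priori well-defined in isolation; second, the term $-\tilde{\alpha}_d(M)+\rr\bigl(\tilde{\alpha}_d(M)\bigr)=-2p_-\bigl(\tilde{\alpha}_d(M)\bigr)$ lies in $(H^{\otimes d}/\Z_d)^-$, and to reach $(\Tr_d\circ\tau_d)(M)$ the correction $h_M$ must exactly account for the difference $(\Tr_d\circ\tau_d)(M)+2p_-\bigl(\tilde{\alpha}_d(M)\bigr)$, which is not at all evident and essentially amounts to re-deriving the relation between the torsion and the Johnson homomorphism that the paper packages into Theorem~\ref{thm:torsion-magnus} and Lemma~\ref{lem:degree-d}. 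Without these verifications the proposal is a plausible outline rather than a proof, and it is also likely no shorter than the paper's argument once the duality for $K_1(\widehat{\Q\pi})$ and the Euler-structure bookkeeping are made precise.
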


\begin{lemma}\label{lem:1-eigenspace}
For $M\in Y_d\I\C/Y_{d+1}$,
\begin{align*}
(p_+\circ \tilde{\alpha}_d)(M\circ \overline{M})&=-2Z_{d,1}(M\circ \overline{M})\in \A^c_{d,1}(H),\\
(p_-\circ \tilde{\alpha}_d)(M\circ \overline{M})&=0\in (H^{\otimes d})_{\Z_d}.
\end{align*}
\end{lemma}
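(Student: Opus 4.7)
The plan is to reduce both identities to explicit computations on generators of $Y_d\I\C/Y_{d+1}$ via the surgery map $\psi\colon\A^{<,c}_d(H_\Z)\twoheadrightarrow Y_d\I\C/Y_{d+1}$, splitting into 1-loop and tree cases. Theorem~\ref{thm:k-loop} lets us discard the $\geq 2$-loop contributions to $\tilde{\alpha}_d$, and $Z_{d,1}$ automatically vanishes on non-1-loop Jacobi diagrams. Before reducing, I establish the additivity
\[
\tilde{\alpha}_d(M\circ\overline{M})=\tilde{\alpha}_d(M)+\tilde{\alpha}_d(\overline{M}),\qquad Z_{d,1}(M\circ\overline{M})=Z_{d,1}(M)+Z_{d,1}(\overline{M}).
\]
Proposition~\ref{prop:crossed homo} gives $\tilde{\alpha}(M\circ\overline{M})=\tilde{\alpha}(M)\cdot(\sigma_M)_\ast\tilde{\alpha}(\overline{M})$; since $M\in Y_d\I\C\subset J_d\C$, the automorphism $\sigma_M$ acts as the identity on $\pi/\pi(d+1)$, and the inclusion $\pi(d+1)\subset 1+\hat{I}^{d+1}$ forces $(\sigma_M)_\ast$ to be the identity on $K_1(\widehat{\Q\pi}/\hat{I}^{d+1})$, yielding the first equality. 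The second follows from $Z\colon\I\C\to\A(H)$ being a monoid homomorphism, combined with the observation that $Z(M)-1$ and $Z(\overline{M})-1$ are concentrated in degrees $\geq d$ for $M\in Y_d\I\C$, so the product reduces to the sum at degree $d$.

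For the 1-loop generator $J=O(x_1,\dots,x_d)$ with $M=\psi(J)$, Corollary~\ref{cor:1-loopvalue} gives
\[
\tilde{\alpha}_d(M)=-x_1\otimes\cdots\otimes x_d-(-1)^d x_d\otimes\cdots\otimes x_1\in(H^{\otimes d}/\Z_d)^+,
\]
so $p_-(\tilde{\alpha}_d(M))=0$. The AS relation $O(x_d,\dots,x_1)=(-1)^d O(x_1,\dots,x_d)$ in $\A^c_{d,1}(H)$ then yields $p_+(\tilde{\alpha}_d(M))=-2\,O(x_1,\dots,x_d)=-2Z_{d,1}(M)$, using the LMO surgery identification $Z_{d,1}(M)=O(x_1,\dots,x_d)$. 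Since $\tau_d$ vanishes on 1-loop claspers, Lemma~\ref{lem:-1-eigenspace} gives $\tilde{\alpha}_d(\overline{M})=\tilde{\alpha}_d(M)$, and the analogous mirror behavior of $Z$ gives $Z_{d,1}(\overline{M})=Z_{d,1}(M)$. Additivity then closes this case.

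For a tree diagram $J$, $Z_{d,1}(\psi(J))=0$ since the LMO of a tree clasper at degree $d$ is concentrated on tree Jacobi diagrams, so the right-hand side of the $p_+$ identity vanishes. The crucial input is the tree analog of Corollary~\ref{cor:1-loopvalue}, namely
\[
\tilde{\alpha}_d(\psi(J))=-\tfrac{1}{2}(\Tr_d\circ\tau_d)(\psi(J))\in(H^{\otimes d}/\Z_d)^-
\]
(the eigenspace containment being Conant's theorem). Granting this, together with $\tau_d(\overline{\psi(J)})=-\tau_d(\psi(J))$ and Lemma~\ref{lem:-1-eigenspace}, we obtain $\tilde{\alpha}_d(\overline{\psi(J)})=\tfrac{1}{2}(\Tr_d\circ\tau_d)(\psi(J))$, so by additivity $\tilde{\alpha}_d(\psi(J)\circ\overline{\psi(J)})=0$, and both $p_\pm$ statements hold on tree generators.

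The main obstacle is proving the tree analog above: one constructs a balanced presentation of $\pi_1(\psi(J))$ from a tubular neighborhood of the tree clasper, computes the Fox matrix $A(P)$, and applies $\ldet_d$ via Corollary~\ref{cor:gr}. Iterating the $Y$-graph relations of Lemma~\ref{lem:pi1(Y)} along the edges of the tree, and using Proposition~\ref{prop:disconnected}(1) to identify the leading-degree longitudes with the iterated Lie bracket $\comm(J)$, should produce $-\tfrac{1}{2}\Tr_d(\comm(J))=-\tfrac{1}{2}(\Tr_d\circ\tau_d)(\psi(J))$; the bookkeeping of signs arising from orientations, twists, and cyclic orderings along the tree is the technical heart of the argument.
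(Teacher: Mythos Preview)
Your reduction to generators via the surgery map and your treatment of the $1$-loop case are fine. The tree case, however, is where the argument breaks down, and not merely because the direct Fox-derivative computation is left unfinished: the two key claims you make about tree diagrams are both false.

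First, the assertion that $Z_{d,1}(\psi(J))=0$ for a tree diagram $J$ is incorrect. The composite $Z_d\circ\psi$ is governed by the isomorphism $\chi_d^{-1}\colon\A^{<,c}_d(H_\Z)\to\A^c_d(H)$, and this map is \emph{not} simply ``forget the order'': passing from the ordered to the unordered space via the STU-like relation introduces $1$-loop correction terms. Concretely (as the paper states using \cite[Theorem~6.7]{HaMa12} and \cite[Proposition~3.1]{HaMa09}), for $M=\psi(J_0)$ with $J_0$ a tree one has
\[
Z_{d,1}(\psi(J_0)) = \chi'_{d,1}\Bigl(\tfrac{1}{2}C(J_0)\Bigr),
\]
where $C(J_0)$ is the sum over pairs of univalent vertices $v<w$ of $(\col(v)\cdot\col(w))$ times the $1$-loop diagram obtained by gluing $v$ to $w$. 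This is nonzero in general. Second, and consequently, your proposed ``tree analog'' $\tilde{\alpha}_d(\psi(J))=-\tfrac{1}{2}(\Tr_d\circ\tau_d)(\psi(J))$ cannot hold: that formula would force $p_+\tilde{\alpha}_d(\psi(J))=0$, contradicting $p_+\tilde{\alpha}_d(\psi(J))=-2Z_{d,1}(\psi(J))\neq 0$. So the Fox-derivative computation you sketch, if carried out correctly, would not produce the formula you expect.

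The paper circumvents this entirely with a different idea. It first establishes, via a comparison of Habiro's and Goussarov's clasper conventions, the mirror formula $\overline{\psi(J)}=\psi(\rr(J))$ for the involution $\rr(J)=(-1)^{b_1(J)+1}\overline{J}$. Applying the STU-like relation to reorder $\overline{J_0}$ yields $\rr(J_0)=-J_0+C(J_0)+(\text{loops }\geq 2)$. Hence $J_0+\rr(J_0)$ has \emph{no tree part}: adding the mirror converts the tree contribution into the explicit $1$-loop quantity $C(J_0)$, and Theorem~\ref{thm:k-loop} kills everything of higher loop number. Both sides of the lemma then reduce to Corollary~\ref{cor:1-loopvalue}, with the $C(J_0)$ terms matching on the $\tilde{\alpha}_d$ and $Z_{d,1}$ sides. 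This is the mechanism you are missing.
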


First, we prepare to prove Lemma~\ref{lem:-1-eigenspace}.
Let us briefly review the Magnus representation $r\colon \H\to \GL(2g,\widehat{\Q\pi})$ on the homology cobordism group $\H$ with coefficients in $\widehat{\Q\pi}$ following \cite{MaSa20}.
Let $(M,i)\in\C$.
By Lemma~\ref{lem:acylic1} and the homology long exact sequence of the triples $(M,i_{\pm}(\Sigma_{g,1}),*)$,
the homomorphisms
\[
(i_\pm)_*\colon H_1(\Sigma_{g,1},*;i_\pm^*\widehat{\Q\pi_1M})\to H_1(M,*;\widehat{\Q\pi_1M})
\]
are right $\widehat{\Q\pi_1M}$-module isomorphisms,
where $i_\pm^*\widehat{\Q\pi_1M}$ denote the pullbacks of the local system $\widehat{\Q\pi_1M}$ on $M$ under $i_\pm\colon \Sigma_{g,1}\hookrightarrow M$.
Let us denote
\[
r(M)=(i_-)_*^{-1}\circ (i_+)_*\colon H_1(\Sigma_{g,1},*;i_+^*\widehat{\Q\pi_1M})\to H_1(\Sigma_{g,1},*;i_-^*\widehat{\Q\pi_1M}).
\]
Using the spine $S=\bigcup_{i=1}^{2g}\gamma_i$ in Figure~\ref{fig:spine_gamma},
we have identifications
\begin{equation}
\label{eq:isom}
H_1(\Sigma_{g,1},*;i_{\pm}^*\widehat{\Q\pi_1M})\cong C_1(S;i_\pm^*\widehat{\Q\pi_1M})\cong (\widehat{\Q\pi_1M})^{2g}.
\end{equation}
Thus, $r(M)$ gives an element in $\GL(2g,\widehat{\Q\pi_1M})$.
Under the identification $i_-\colon \widehat{\Q\pi}\cong \widehat{\Q\pi_1M}$,
we may regard $r(M)$ as an element in $\GL(2g,\widehat{\Q\pi})$.
The map $r\colon \C\to \GL(2g,\widehat{\Q\pi})$ is a crossed homomorphism,
and is known to factor through the homology cobordism group $\H$.
See \cite[Section~3]{Sak08}, for example.

On the other hand,
consider the composition map
\[
H_1(\Sigma_{g,1},*;\widehat{\Q\pi})\xrightarrow{\partial_*} H_0(*;\widehat{\Q\pi})\cong\widehat{\Q\pi},
\]
where $\partial_*$ is the connecting  homomorphism of the homology long exact sequence of the pair $(\Sigma_{g,1},*)$.
It sends $\tilde{\gamma}_i\otimes u\mapsto (\gamma_i^{-1}-1)u$,
where $u\in \widehat{\Q\pi}$,
$\{\gamma_i\}_{i=1}^{2g}$ is the standard basis of $\pi_1\Sigma_{g,1}$,
and $\tilde{\gamma}_i$ is the lift of $\gamma_i$ starting at the basepoint of the universal covering.
Every element $v\in \hat{I}$ uniquely decomposes into the form $v=\sum_{i=1}^{2g}(\gamma_i-1)\partial_i v$ for some $\partial_iv\in\widehat{\Q\pi}$ as in \cite[Proposition~5.3]{MaSa20}.
Thus, $\partial_*$ induces an isomorphism $\Psi\colon H_1(\Sigma_{g,1},*;\widehat{\Q\pi})\cong \hat{I}$
whose inverse map $\Psi^{-1}\colon \hat{I}\to H_1(\Sigma_{g,1},*;\widehat{\Q\pi})$ is written as
\begin{equation}
\label{eq:psi}
\Psi^{-1}(v)= -\sum_{i=1}^{2g}\tilde{\gamma}_i\otimes \gamma_i\partial_i v \in H_1(\Sigma_{g,1},*;\widehat{\Q\pi}).
\end{equation}
In the same way as the proof of \cite[Theorem~7.2]{MaSa20},
we have:

\begin{lemma}
\label{lem:T_1}
The diagram
\[
\xymatrix{
(\widehat{\Q\pi})^{2g}\ar[d]_-{r(M)\circ \sigma_M}
&H_1(\Sigma_{g,1},*;\widehat{\Q\pi})\ar[l]_-{\cong}\ar[r]^-{\Psi}
&\hat{I}\ar[d]^-{\sigma_M}\\
(\widehat{\Q\pi})^{2g}
&H_1(\Sigma_{g,1},*;\widehat{\Q\pi})\ar[l]^-{\cong}\ar[r]_-{\Psi}
&\hat{I}
}
\]
commutes for $M\in\C$,
where the isomorphism $H_1(\Sigma_{g,1},*;\widehat{\Q\pi})\cong (\widehat{\Q\pi})^{2g}$ is defined in the same way as \textup{\eqref{eq:isom}},
and $\sigma_M=(i_-)_*^{-1}\circ (i_+)_*\in \Aut\widehat{\Q\pi}$.
\end{lemma}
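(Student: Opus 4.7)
My plan is to read the diagram as an instance of the naturality of the connecting homomorphism with respect to morphisms of local systems. The map $\Psi$ is by construction the connecting homomorphism $\partial_\ast\colon H_1(\Sigma_{g,1},\ast;R)\to H_0(\ast;R)=R$ of the pair $(\Sigma_{g,1},\ast)$, composed with the inclusion of its image $\hat{I}\hookrightarrow R$. Hence, for any $\pi$-equivariant ring map $f\colon R\to R'$, naturality gives a commutative square
\[
\xymatrix{
H_1(\Sigma_{g,1},\ast;R) \ar[r]^-{\Psi} \ar[d]_-{f_\ast} & R \ar[d]^-{f} \\
H_1(\Sigma_{g,1},\ast;R') \ar[r]^-{\Psi} & R'.
}
\]

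I would then interpret both vertical maps of the lemma as an instance of this square for a specific $f$. Using $(i_-)_\ast\colon\widehat{\Q\pi}\cong\widehat{\Q\pi_1M}$ to identify coefficient rings, the local system $i_-^\ast\widehat{\Q\pi_1M}$ becomes $\widehat{\Q\pi}$ with its standard $\pi$-action, while $i_+^\ast\widehat{\Q\pi_1M}$ becomes $\widehat{\Q\pi}$ equipped with the twisted action $\gamma\cdot u=\sigma_M(\gamma)u$. The ring automorphism $\sigma_M\colon\widehat{\Q\pi}\to\widehat{\Q\pi}$, viewed as a morphism from the twisted to the standard system, is $\pi$-equivariant and hence induces $(\sigma_M)_\ast$ on $H_1$. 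The claim to verify is that after identifying both ends with $(\widehat{\Q\pi})^{2g}$ via the spine $S=\bigcup\gamma_i$ and the $(i_-)_\ast^{-1}$-identification of coefficients used throughout the paper, $(\sigma_M)_\ast$ coincides with the composition $r(M)\circ\sigma_M$ appearing in the diagram. The extra $\sigma_M$-factor arises because the spine identification of $H_1(\Sigma_{g,1},\ast;i_+^\ast\widehat{\Q\pi_1M})$ with $(\widehat{\Q\pi})^{2g}$ is forced to go through $(i_-)_\ast^{-1}$ rather than the natural $(i_+)_\ast^{-1}$, producing the mismatch measured precisely by $\sigma_M$ acting componentwise.

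Granting this identification, the commutativity in the lemma is immediate from the naturality square above applied to $f=\sigma_M$. The main technical step is therefore the verification of the identification $(\sigma_M)_\ast=r(M)\circ\sigma_M$, and this I would check on the basis $\{\tilde{\gamma}_i\otimes 1\}_{i=1}^{2g}$: push $\tilde{\gamma}_i\otimes 1$ forward by $(i_+)_\ast$ to $\widetilde{i_+(\gamma_i)}\otimes 1 \in H_1(M,\ast;\widehat{\Q\pi_1M})$, observe that the $M$-analogue of $\partial_\ast$ sends this to $(i_+)_\ast(\gamma_i)^{-1}-1$, and then use naturality of $\partial_\ast$ along $(i_-)_\ast$ to read off the spine coordinates of $r(M)(\tilde{\gamma}_i\otimes 1)$, which by definition match the image of $\sigma_M(\gamma_i)^{-1}-1$ under $\Psi^{-1}$. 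This is the same line of argument as in the proof of \cite[Theorem~7.2]{MaSa20}, and the present lemma is obtained by adapting it to our setting; the only obstacle is the careful bookkeeping of the left/right $\Z\pi$-module conventions and of the two distinct identifications of the pulled-back local systems with $\widehat{\Q\pi}$.
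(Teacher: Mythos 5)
The framing of the argument has a genuine flaw. You claim that the ring automorphism $\sigma_M\colon\widehat{\Q\pi}\to\widehat{\Q\pi}$, viewed as a morphism from the twisted system ($\gamma\cdot u=\sigma_M(\gamma)u$) to the standard one ($\gamma\cdot u=\gamma u$), is $\pi$-equivariant and hence induces $(\sigma_M)_\ast$ on $H_1$. This is not true: equivariance would require $\sigma_M(\sigma_M(\gamma)u)=\gamma\,\sigma_M(u)$, i.e.\ $\sigma_M^2(\gamma)=\gamma$, which fails unless $\sigma_M$ is an involution. (It is $\sigma_M^{-1}$, not $\sigma_M$, that is a left $\pi$-module map from the twisted system to the standard one.) Consequently $(\sigma_M)_\ast$ is not defined at the chain level, the proposed identity $(\sigma_M)_\ast = r(M)\circ\sigma_M$ is not well-posed, and the claim that ``the commutativity in the lemma is immediate from the naturality square applied to $f=\sigma_M$'' does not go through: the kind of naturality actually needed is that of $\partial_\ast$ with respect to the maps of pairs $(i_\pm)_\ast\colon(\Sigma_{g,1},\ast)\to(M,\ast)$, not with respect to a coefficient change on a fixed space.

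The concrete computation at the end is on the right track and is exactly the content of the argument in \cite[Theorem~7.2]{MaSa20}: using $\Psi^M\circ(i_\pm)_\ast=\Psi_\pm$ one finds $r(M)=\Psi^{-1}\circ\Psi_+$, and evaluating on the basis yields $r(M)(\tilde{\gamma}_i\otimes 1)=\Psi^{-1}(\sigma_M(\gamma_i)^{-1}-1)=\Psi^{-1}\circ\sigma_M\circ\Psi(\tilde{\gamma}_i\otimes 1)$, which together with $\sigma_M(e_i)=e_i$ gives the diagram on basis vectors. But you should not call the result of this computation $(\sigma_M)_\ast$, and you also need the final (omitted) step: $\sigma_M$ is not $\widehat{\Q\pi}$-linear, so agreement on a basis does not immediately give equality of maps; you must first observe that both $r(M)\circ\sigma_M$ and $\Psi^{-1}\circ\sigma_M\circ\Psi$ are $\sigma_M$-semilinear with respect to the right $\widehat{\Q\pi}$-module structure, $f(vu)=f(v)\,\sigma_M(u)$, and only then conclude from the basis check.
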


Massuyeau and Sakasai~\cite{MaSa20} related the Magnus representation over $\H$ to Morita's trace map and some variants.
The Enomoto-Satoh trace is considered as a refinement of Morita's trace map in \cite{Mor93}.
In the following,
we relate the Magnus representation $r\colon\H\to\GL(2g,\widehat{\Q\pi})$ to the Enomoto-Satoh trace.
Let $\Aut\hat{T}$ and $\Aut\widehat{\Q\pi}$ denote the group of filtration-preserving automorphisms of $\hat{T}$ and $\widehat{\Q\pi}$, respectively.
We also denote by $\IAut\hat{T}$ and $\IAut\widehat{\Q\pi}$ the subgroups of the automorphisms which induce the identity at the graded levels.
In \cite{Sat12}
the degree-preserving $\GL(H)$-equivariant $\Q$-linear map
\[
\Tr\colon \Der (\hat{T},\hat{T}_2)\cong \Hom(H,\hat{T}_2)\to H_1^{\Lie}(\hat{T}_1)\cong \prod_{d=1}^\infty (H^{\otimes d})_{\Z_d}
\]
was constructed,
whose degree $d$ part is described by the composition map
\[
\Tr_d\colon \Hom(H,H^{\otimes(d+1)})\cong H^*\otimes H^{\otimes(d+1)}\xrightarrow{C} H^{\otimes d} \twoheadrightarrow (H^{\otimes d})_{\Z_d}
\]
in Section~\ref{sec:Introduction}.
See also \cite{EnSa14}.
Let us define $\Mag\colon \Aut\widehat{\Q\pi}\to \GL(2g,\widehat{\Q\pi})$ by
\[
\Mag(\varphi)\colon (\widehat{\Q\pi})^{2g}\xrightarrow{\varphi^{-1}}(\widehat{\Q\pi})^{2g}\xrightarrow{\Psi}\hat{I}\xrightarrow{\varphi} \hat{I}\xrightarrow{\Psi^{-1}}(\widehat{\Q\pi})^{2g},
\]
Note that $\Mag(\sigma_M)=r(M)$ for $M\in\H$ by Lemma~\ref{lem:T_1}
and that $\Mag$ is a crossed homomorphism.
Consider the two logarithmic maps
\begin{align*}
\log\colon& \IAut\hat{T}\to \Der(\hat{T},\hat{T}_2),\\
\log\colon& \Ker\left(\GL(2g,\widehat{\Q\pi})\to \GL(2g,\Q)\right) \to 
M(2g,\hat{I}),
\end{align*}
where the former is defined by
$\log(\varphi)=\sum_{k=1}^\infty\frac{(-1)^{k-1}}{k}(\varphi-\id)^k$
and the latter is explained in \eqref{eq:log}.
It is well-known that the image of the former logarithmic map is in $\Der(\hat{T},\hat{T}_2)$.
The proof is almost the same as that of \cite[Proposition~5.12]{Mas12}.

Note that as in \cite[Section~6.1]{MaSa20},
for $M\in J_d\C$,
the element $(\log\circ \theta_*)(\sigma_M)$ in $\Der(\hat{T},\hat{T}_{d+1}/\hat{T}_{d+2})$ coincides with the $d$th Johnson homomorphism $\tau_d(M)\in\Hom(H,H^{\otimes(d+1)})$
under the natural identification.
The diagram
\[
\xymatrix{
\IAut\widehat{\Q\pi}\ar[r]^-{\theta_*}_-{\cong}\ar[rd]_-{\Mag}&\IAut\hat{T} \ar[r]^-{\log}&\Der(\hat{T},\hat{T}_2)\ar[r]^{\Tr}&H_1^{\Lie}(\hat{T}_1)\\
&\Ker(\GL(2g,\widehat{\Q\pi})\to\GL(2g,\Q))\ar[r]_-{\log}&M(2g,\hat{I})\ar[r]_-{\tr}&H_1^\Lie(\hat{I})\ar[u]^-{\cong}_-{\theta_*}
}
\]
describes some relation between the Enomoto-Satoh trace and the Magnus representation,
where $\theta_*(\varphi)=\theta\circ\varphi\circ\theta^{-1}$ for $\varphi\in\IAut\widehat{\Q\pi}$.
The diagram does not commute in general.
However, in \cite[Lemma~4.3]{MaSa20},
by taking some quotient of $\hat{T}$,
Massuyeau and Sakasai treated a commutative diagram similar to the above.
Here, we prove that the degree $d$ part of the above diagram commutes if we restrict to $\Ker(\Aut\widehat{\Q\pi}\to \Aut(\widehat{\Q\pi}/\hat{I}^{d+1}))$.
Let us denote by $\tr_d\colon M(2g,\hat{I}^d)\to \hat{I}^d/\hat{I}^{d+1}$ the trace map,
and denote by the same symbol $\theta_*$ the natural homomorphism $\hat{I}^d/\hat{I}^{d+1}\to (H^{\otimes d})_{\Z_d}$ induced by $\theta$.
Note that $\tr_d$ is different from $\Tr_d$ defined above.

\begin{lemma}
\label{lem:degree-d}
For $\varphi\in \Ker(\Aut\widehat{\Q\pi}\to \Aut(\widehat{\Q\pi}/\hat{I}^{d+1}))$,
we have
\[
(\Tr_d\circ\log\circ \theta_*)(\varphi)
=(\theta_*\circ \tr_d\circ\log\circ\Mag)(\varphi)\in (H^{\otimes d})_{\Z_d}.
\]
\end{lemma}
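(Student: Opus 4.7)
The plan is to compute both sides explicitly in coordinates and check they produce the same element of $H^{\otimes d}/\Z_d$. Throughout, I will use the right $\widehat{\Q\pi}$-module decomposition $v=\sum_i(\gamma_i-1)\partial_i v$ for $v\in\hat{I}$, so that $\partial_i\colon\hat{I}\to\widehat{\Q\pi}$ is right $\widehat{\Q\pi}$-linear; a short verification also yields the Leibniz-type identity $\partial_i(ab)=\partial_i(a)b+\epsilon(a)\partial_i(b)$ whenever $ab\in\hat{I}$ (extending $\partial_i(a):=\partial_i(a-\epsilon(a))$ to all of $\widehat{\Q\pi}$). On the $\hat{T}$-side I will use the analogous decomposition $w=\sum_k x_k\cdot D'_k(w)$ for $w\in\hat{T}_1$ with respect to the basis $\{x_k\}$ of $H$.

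First, from \eqref{eq:psi} and $\partial_j(\gamma_i^{-1}-1)=-\delta_{j,i}\gamma_i^{-1}$, one derives
\[
\Mag(\varphi)_{j,i}=-\gamma_j\,\partial_j(\varphi(\gamma_i^{-1})-1)=\delta_{j,i}-\gamma_j\,\partial_j(w_i),
\]
where $w_i=\varphi(\gamma_i^{-1})-\gamma_i^{-1}\in\hat{I}^{d+1}$ by hypothesis. Hence $\Mag(\varphi)-I_{2g}\in M(2g,\hat{I}^d)$, and since $(\Mag(\varphi)-I_{2g})^k\in M(2g,\hat{I}^{kd})$ only the linear logarithm term matters modulo $\hat{I}^{d+1}$, giving
\[
\tr_d(\log\Mag(\varphi))\equiv-\sum_j\partial_j w_j\pmod{\hat{I}^{d+1}}
\]
after replacing each $\gamma_j$ by $1$. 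A geometric-series expansion gives $w_j\equiv-\gamma_j^{-1}z_j\gamma_j^{-1}\pmod{\hat{I}^{2d+2}}$ with $z_j=\varphi(\gamma_j)-\gamma_j\in\hat{I}^{d+1}$; applying the right-linearity and Leibniz-type rule for $\partial_j$ and reducing modulo $\hat{I}^{d+1}$ then yields $\partial_j w_j\equiv-\partial_j z_j$, so that
\[
\tr_d(\log\Mag(\varphi))\equiv\sum_j\partial_j(\varphi(\gamma_j)-\gamma_j)\pmod{\hat{I}^{d+1}}.
\]

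To finish, transfer everything to $\hat{T}$ via $\theta$. Setting $\tilde\gamma_j=\theta(\gamma_j)=1+x_j+O(\hat{T}_2)$ and $D=\log\theta_*(\varphi)\in\Der(\hat{T},\hat{T}_2)$ (so the derivation property forces $D(\hat{T}_n)\subseteq\hat{T}_{n+d}$), the bound $D^k(\tilde\gamma_j-1)\in\hat{T}_{kd+1}\subseteq\hat{T}_{d+2}$ for $k\geq2$ gives $\theta_*(\varphi)(\tilde\gamma_j)-\tilde\gamma_j\equiv D(\tilde\gamma_j)\equiv D(x_j)\pmod{\hat{T}_{d+2}}$. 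Since $\tilde\gamma_k-1\equiv x_k\pmod{\hat{T}_2}$, the change-of-basis matrix between $\{\tilde\gamma_k-1\}$ and $\{x_k\}$ lies in $I_{2g}+M(2g,\hat{T}_1)$, so the coordinate projections $\tilde\partial_k$ and $D'_k$ agree modulo $\hat{T}_{d+1}$ on elements of $\hat{T}_{d+1}$. Writing $D(x_j)\equiv\sum_l x_l\otimes E_{l,j}\pmod{\hat{T}_{d+2}}$ with $E_{l,j}\in H^{\otimes d}$, this gives $\theta_*(\tr_d(\log\Mag(\varphi)))=\sum_j E_{j,j}$ in $H^{\otimes d}/\Z_d$. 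For the left-hand side, under the Poincar\'e duality identification $\Hom(H,H^{\otimes(d+1)})\cong H\otimes H^{\otimes(d+1)}$ sending $D|_H$ to $\sum_k y_k\otimes D(x_k)$ (with $y_k\cdot x_l=\delta_{k,l}$), the contraction $C$ directly produces the same sum $\sum_k E_{k,k}$, completing the proof. The main obstacle will be bookkeeping the non-commutativity throughout --- in particular, verifying the Leibniz rule for $\partial_j$ and confirming that the leading-order agreement of the two coordinate projections survives the reductions down to $H^{\otimes d}/\Z_d$.
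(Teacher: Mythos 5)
Your proposal is correct and follows essentially the same route as the paper's proof: both sides of the identity are reduced to the linear term of the respective logarithms, the formula \eqref{eq:psi} for $\Psi^{-1}$ is used to compute $\tr_d(\Mag(\varphi)-I_{2g})=-\sum_j\gamma_j\partial_j(\varphi(\gamma_j^{-1})-\gamma_j^{-1})$, the identity $\varphi(\gamma_j)^{-1}-\gamma_j^{-1}\equiv-(\varphi(\gamma_j)-\gamma_j)$ modulo higher filtration degree is exploited, and the final transfer to $\hat{T}$ rests on $\theta(\gamma_j)-1\equiv[\gamma_j]\pmod{\hat{T}_2}$ to identify the two coordinate projections at leading order. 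Your account of the change-of-basis between $\{\tilde\gamma_k-1\}$ and $\{x_k\}$ and of the Poincar\'e-duality description of $\Tr_d$ makes explicit exactly the steps the paper leaves implicit in its abuse of the notation $\partial_i$ on both $\hat{I}$ and $\hat{T}_1$.
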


\begin{proof}
By definition,
$\Mag(\varphi)\in \Ker(\GL(2g,\widehat{\Q\pi})\to \GL(2g,\widehat{\Q\pi}/\hat{I}^d))$.
Thus, we have
\[
\log(\Mag(\varphi))=\Mag(\varphi)-I_{2g}\in M(2g,\hat{I}^d/\hat{I}^{d+1}).
\]
Note that we identify $H_1(\Sigma_{g,1},*;\widehat{\Q\pi})$ and $(\widehat{\Q\pi})^{2g}$ by the right $\widehat{\Q\pi}$-module isomorphism which sends $\tilde{\gamma}_i\otimes 1$ to the $i$th standard unit vector for $1\le i\le 2g$.
By the description of $\Psi^{-1}$ in (\ref{eq:psi}),
we obtain
\begin{align*}
\tr_d(\Mag(\varphi)-I_{2g})
&=\tr_d(\Psi^{-1}\circ\varphi\circ\Psi\circ\varphi^{-1}-I_{2g}) \\
&=-\sum_{i=1}^{2g}\gamma_i\partial_i(((\varphi-\id_{\hat{I}})\circ\Psi)(\tilde{\gamma}_i\otimes 1))\\
&=-\sum_{i=1}^{2g}\gamma_i\partial_i(\varphi(\gamma_i)^{-1}-\gamma_i^{-1})\\
&=\sum_{i=1}^{2g}\partial_i(\varphi(\gamma_i)-\gamma_i)\in \hat{I}^d/\hat{I}^{d+1}.
\end{align*}
Here, the last equality follows from the fact that
\[
\varphi(\gamma_i)^{-1}-\gamma_i^{-1}
=-\varphi(\gamma_i)^{-1}(\varphi(\gamma_i)-\gamma_i)\gamma_i^{-1}
=-(\varphi(\gamma_i)-\gamma_i)\in \hat{I}^{d+1}/\hat{I}^{d+2}.
\]

On the other hand,
as an element of $\Der(\hat{T},\hat{T}_2/\hat{T}_{d+1})=\Hom(H,\hat{T}_2/\hat{T}_{d+1})$,
\[
(\log\circ\theta_*)(\varphi)=
\theta\circ\log\varphi\circ\theta^{-1}
=\theta\circ\varphi\circ\theta^{-1}-\id_H.
\]
We denote by $[x]\in H_\Z$ the homology class represented by $x\in\pi$.
Every element $X\in \hat{T}_1$ uniquely decomposes into the form $X=\sum_{i=1}^{2g}[\gamma_i]\partial_i X$ for some $\partial_i X\in\hat{T}$.
If we denote $\theta^{-1}([\gamma_i])=(\gamma_i-1)+v_i$ for some $v_i\in \hat{I}^2$,
we have
\begin{align*}
\Tr_d(\theta\circ\varphi\circ\theta^{-1}-\id_H)
&=\sum_{i=1}^{2g}\partial_i(\theta\circ\varphi\circ\theta^{-1}([\gamma_i])-[\gamma_i])\\
&=\sum_{i=1}^{2g}\partial_i(\theta(\varphi(\gamma_i+v_i)-(\gamma_i+v_i))).
\end{align*}
Since 
\[
\varphi(xy)-xy=(\varphi(x)-x)\varphi(y)+x(\varphi(y)-y)\in \hat{I}^{d+2}
\]
for $x,y\in \hat{I}$, 
we see that it is equal to
\begin{align*}
\sum_{i=1}^{2g}\partial_i(\theta(\varphi(\gamma_i)-\gamma_i))
=\sum_{i=1}^{2g}\theta_*(\partial_i(\varphi(\gamma_i)-\gamma_i))\in (H^{\otimes d})_{\Z_d}.
\end{align*}
Thus, we see that the degree $d$ part of the diagram commutes.
\end{proof}

The Magnus representation is related to our torsion as follows.
\begin{theorem}
\label{thm:torsion-magnus}
For $M\in\C$,
\[
\tilde{\alpha}(M)^{-1}\cdot (\sigma_M)_* \tilde{\alpha}(\overline{M})=r(M) \in K_1(\widehat{\Q\pi})/\tilde{\rho}(H_\Z).
\]
Moreover, for $M\in\I\C$,
\[
\tilde{\alpha}(M)^{-1}\cdot (\sigma_M)_* \tilde{\alpha}(\overline{M})=r(M) \in K_1(\widehat{\Q\pi}).
\]
\end{theorem}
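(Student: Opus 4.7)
The plan is to reinterpret $(\sigma_M)_*\tilde{\alpha}(\overline{M})$ as $\tau^{\tilde{\rho}}(M,\partial_+M)$ (with a possibly different Euler structure) and to compute the ratio $\tau^{\tilde{\rho}}(M,\partial_+M)/\tau^{\tilde{\rho}}(M,\partial_-M)$ by the multiplicativity of Reidemeister torsion. First, since $\overline{M}$ has the same underlying CW-complex as $M$ and $\bar{i}_-=i_+$, the definition gives $\tilde{\alpha}(\overline{M})=\tau^{\tilde{\rho}\circ\sigma_M^{-1}}(M,\partial_+M;\xi_0)$. Applying the automorphism $\sigma_M$ of $\widehat{\Q\pi}$ entry-wise to a matrix representative yields $(\sigma_M)_*\tilde{\alpha}(\overline{M}) = \tau^{\tilde{\rho}}(M,\partial_+M;\xi_0')$ for the Euler structure $\xi_0'$ obtained by transporting $\xi_0$ through $\sigma_M$. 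The discrepancy between $\xi_0'$ and a preferred Euler structure on $(M,\partial_+M)$ lies in $\tilde{\rho}(H_\Z)$, which is harmless for the first assertion.

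Next, Lemma~\ref{lem:acylic1} ensures that $C_*(M,\partial_\epsilon M;\widehat{\Q\pi})$ is acyclic for $\epsilon\in\{+,-\}$, so the short exact sequences
\[
0\to C_*(\partial_\epsilon M,*;\widehat{\Q\pi})\to C_*(M,*;\widehat{\Q\pi})\to C_*(M,\partial_\epsilon M;\widehat{\Q\pi})\to 0
\]
induce isomorphisms $(i_\epsilon)_*\colon H_*(\partial_\epsilon M,*;\widehat{\Q\pi})\xrightarrow{\cong}H_*(M,*;\widehat{\Q\pi})$. Let $h=\{\tilde{\gamma}_i\}_{i=1}^{2g}$ denote the basis of $H_1(\Sigma_{g,1},*;\widehat{\Q\pi})\cong(\widehat{\Q\pi})^{2g}$ coming from the spine $S$; since $(\Sigma_{g,1},*)$ collapses onto $(S,*)$ whose chain complex is concentrated in degree $1$ with chain basis $h$, one has $\tau(C_*(\partial_\epsilon M,*;\widehat{\Q\pi}),h)=1$. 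Multiplicativity of torsion for the above short exact sequences, with homology basis $(i_\epsilon)_*(h)$ on the middle complex, then yields
\[
\tau^{\tilde{\rho}}(M,\partial_\epsilon M)=\tau\bigl(C_*(M,*;\widehat{\Q\pi}),(i_\epsilon)_*(h)\bigr)\in K_1(\widehat{\Q\pi})/\tilde{\rho}(H_\Z).
\]
Comparing the two cases $\epsilon=\pm$ via Lemma~\ref{lem:choiceofC}, the ratio is the determinant of the change-of-basis matrix from $(i_-)_*(h)$ to $(i_+)_*(h)$, which is by definition $r(M)=(i_-)_*^{-1}\circ(i_+)_*\in\GL(2g,\widehat{\Q\pi})$.

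Combining with the first paragraph, this yields the first equality. For the refinement to $M\in\I\C$, we use that $\sigma_M$ acts trivially on $H_\Z$ (since $i_\pm$ induce the same map on $H_1$), so $(\sigma_M)_*$ fixes $\tilde{\rho}(H_\Z)\subset K_1(\widehat{\Q\pi})$; one then checks that the preferred Euler structure on $\overline{M}$ transports to the preferred Euler structure on $(M,\partial_+M)$ under the involution $M\leftrightarrow\overline{M}$, so the Euler-structure factor $\xi_0'-\xi_0$ in Step~1 is trivial in $K_1(\widehat{\Q\pi})$. The main obstacle will be precisely this bookkeeping of signs and Euler structures: verifying that the preferred Euler structure is compatible with mirroring and that the multiplicativity formula introduces no extraneous $\tilde{\rho}(H_\Z)$-correction in the $\I\C$-refinement.
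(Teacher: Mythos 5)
Your argument is correct in spirit but takes a genuinely different route from the paper's, and it leaves a real gap in the $\I\C$ refinement.

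\textbf{Comparison.} The paper follows \cite[Theorem~5.3(1)]{Sak15}: it chooses a balanced presentation $P$ of $\pi_1M$, invokes Sakasai's Fox-derivative formula $r(M)=-\tilde{\rho}\bigl(C\begin{pmatrix}A\\B\end{pmatrix}^{-1}\begin{pmatrix}I_{2g}\\O_{l,2g}\end{pmatrix}\bigr)$ from \cite[Proposition~3.9(2)]{Sak08}, and obtains the statement by the block-matrix identity $r(M)\cdot\tilde{\rho}\begin{pmatrix}A\\B\end{pmatrix}=\tilde{\rho}\begin{pmatrix}C\\B\end{pmatrix}=(\sigma_M)_*\tilde{\alpha}(\overline{M})$. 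Your approach instead invokes multiplicativity of torsion for the sequences $0\to C_*(\partial_\epsilon M,*;\widehat{\Q\pi})\to C_*(M,*;\widehat{\Q\pi})\to C_*(M,\partial_\epsilon M;\widehat{\Q\pi})\to 0$ and reduces the statement to a change-of-homology-basis computation. This is conceptually cleaner and presentation-free, but it uses Reidemeister torsion of non-acyclic complexes with distinguished homology bases, a framework the paper never sets up; the citation of Lemma~\ref{lem:choiceofC} (which concerns changing the chain basis, not the homology basis) is not quite the right tool, and one must verify that the exponent in the homology-basis change rule really gives $r(M)$ rather than $r(M)^{-1}$. Modulo those standard sign/exponent verifications, the first assertion (equality in $K_1(\widehat{\Q\pi})/\tilde{\rho}(H_\Z)$) goes through.

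\textbf{The gap.} For the $\I\C$ refinement you assert ``one then checks that the preferred Euler structure on $\overline{M}$ transports to the preferred Euler structure on $(M,\partial_+M)$'' but you do not supply the check, and you acknowledge it is ``the main obstacle.'' This is exactly the content that must be proved, and it requires unpacking \cite[Definition~3.8]{MaMe13} together with its behavior under orientation reversal and under the multiplicativity sequence; it is not clear this is simpler than the computation it replaces. The paper sidesteps Euler structures entirely: once the first assertion gives equality modulo $\tilde{\rho}(H_\Z)$, it suffices to check that $\ldet_1$ of both sides agree in $K_1(\widehat{\Q\pi}/\hat{I}^2)$, since the ambiguity $\tilde{\rho}(H_\Z)$ injects there. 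That check uses Lemma~\ref{lem:euler-str} (giving $\ldet_1(\tilde{\alpha}(M))=-\tfrac12(C\circ\tau_1)(M)$ and hence, via $\tau_1(\overline{M})=-\tau_1(M)$, the value $(C\circ\tau_1)(M)$ for the difference) and Lemma~\ref{lem:degree-d} (identifying $\ldet_1(r(M))=(\Tr_1\circ\log\circ\theta_*)(\sigma_M)=(C\circ\tau_1)(M)$). If you want to salvage your route, replacing the Euler-structure transport argument with this $\ldet_1$-comparison is the cleanest way to close the gap.
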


\begin{proof}
The proof is almost the same as \cite[Theorem~5.3(1)]{Sak15}.
Consider the square matrix 
$A(P)=\begin{pmatrix}
A\\
B
\end{pmatrix}$ of size $2g+l$ for some balanced presentation of $\pi_1M$ as in Section~\ref{section:compute-RT}.
Let $C$ be a $2g\times (2g+l)$-matrix defined by
$C=
\begin{pmatrix}
\overline{\frac{\partial r_j}{\partial i_-(\gamma_i)}}
\end{pmatrix}_{\substack{1\le i\le 2g\\1\le j\le 2g+l}}$.
As in \cite[Proposition~3.9(2)]{Sak08},
we have
\[
r(M)=-\tilde{\rho}\left(C\begin{pmatrix}
A\\B
\end{pmatrix}^{-1}
\begin{pmatrix}
I_{2g}\\ O_{l,2g}
\end{pmatrix}\right)
\in \GL(2g,\widehat{\Q\pi}).
\]
Recall that the determinant map $\det\colon K_1(\widehat{\Q\pi})\to (\widehat{\Q\pi}^{\times})_{\ab}$ is an isomorphism as in \cite[Corollary~2.2.6]{Ros94}.
Thus, as in the proof of \cite[Theorem~5.3(1)]{Sak15},
we obtain
\[
r(M)\cdot \tilde{\rho}\begin{pmatrix}
A\\B
\end{pmatrix}
=\tilde{\rho}
\begin{pmatrix}
C\\
B
\end{pmatrix}
=(\sigma_M)_*\tilde{\alpha}(\overline{M})\in K_1(\widehat{\Q\pi})/\tilde{\rho}(H_\Z).
\]
It implies that
\[
\tilde{\alpha}(M)^{-1}\cdot (\sigma_M)_* \tilde{\alpha}(\overline{M})=r(M) \in K_1(\widehat{\Q\pi})/\tilde{\rho}(H_\Z)
\]
for $M\in \C$.

Next, let $M\in\I\C$.
Since $\ldet_1\colon \Ker\left(K_1(\widehat{\Q\pi}/\hat{I}^2)\to K_1(\widehat{\Q\pi}/\hat{I})\right)\to H$ is an isomorphism,
it suffices to show that
\[
\ldet_1(\tilde{\alpha}(M)^{-1}\cdot (\sigma_M)_* \tilde{\alpha}(\overline{M}))
=\ldet_1(r(M)) \in H.
\]
Since $\sigma_M$ acts on $K_1(\widehat{\Q\pi}/\hat{I}^2)$ trivially,
by Lemma~\ref{lem:euler-str}, we have
\begin{align*}
\ldet_1(\tilde{\alpha}(M)^{-1}\cdot (\sigma_M)_* \tilde{\alpha}(\overline{M}))
&=-\ldet_1(\tilde{\alpha}(M))+\ldet_1(\tilde{\alpha}(\overline{M})) \\
&=(C\circ\tau_1)(M).
\end{align*}
On the other hand,
noting that $\epsilon(r(M))\in\GL(2g,\Q)$ is the identity matrix,
by Lemma~\ref{lem:degree-d},
we have
\[
\ldet_1(r(M))=(\theta_*\circ \tr_1\circ \log)(r(M))
=(\Tr_1\circ\log\circ\theta_*)(\sigma_M)
=(C\circ\tau_1)(M).
\]
Thus, we obtain the equality.
\end{proof}

\begin{proof}[Proof of Lemma~\ref{lem:-1-eigenspace}]
Recall the isomorphism 
\[
\ldet_d\colon \Ker\left(K_1(\widehat{\Q\pi}/\hat{I}^{d+1}) \to K_1(\widehat{\Q\pi}/\hat{I}^{d})\right) \to (H^{\otimes d})_{\Z_d}
\]
defined in Corollary~\ref{cor:gr}.
As we can see from the isomorphism,
$\sigma_M$ acts on $\Ker(K_1(\widehat{\Q\pi}/\hat{I}^{d+1}) \to K_1(\widehat{\Q\pi}/\hat{I}^{d}))$ trivially.
By Theorem~\ref{thm:torsion-magnus} and Lemma~\ref{lem:degree-d},
we have
\begin{align*}
-\tilde{\alpha}_d(M)+\tilde{\alpha}_d(\overline{M})
= \ldet_d(\tilde{\alpha}(M)^{-1}\cdot (\sigma_M)_*^{-1}\tilde{\alpha}(\overline{M}))
&= (\ldet_d\circ r)(M)\\
&= (\theta_*\circ \tr_d\circ \log \circ r)(M)\\
&= (\Tr_d\circ\log\circ\theta_*)(\sigma_M)\\
&= (\Tr_d\circ\tau_d)(M)
\end{align*}
for $M\in Y_d\I\C/Y_{d+1}$.
Since the Enomoto-Satoh trace takes value in $(H^{\otimes d})_{\Z_d}^{-}$ as shown in \cite[Theorem~4.2(1)]{Con15},
we see that
\[
p_-(-\tilde{\alpha}_d(M)+\tilde{\alpha}_d(\overline{M}))=(\Tr_d\circ\tau_d)(M),\quad
p_+(-\tilde{\alpha}_d(M)+\tilde{\alpha}_d(\overline{M}))=0.
\]
\end{proof}

Next, we prepare to prove Lemma~\ref{lem:1-eigenspace}.
Let $G$ be a graph clasper in a 3-manifold $M$,
which is an embedded compact connected surface consisting of 3 kinds of constituents called leaves, nodes, and bands.
The clasper surgery is surgery along some framed link in $M$ associated with the graph clasper $G$,
and there are two conventions for it.
The first framed link is given in \cite[Section~2]{Hab00C},
which is the same as that used in \cite[Section~7 and Appendix~E]{Oht02}, \cite{MaMe13}, and \cite{HaMa12},
and let us denote it by $L(G)$.
Let us denote by $L'(G)$ the framed link in \cite[Figure~4]{GGP01},
which is also used in \cite{Gou99} and \cite{Gus00}.
We also denote by $M_L$ the 3-manifold obtained by surgery along a link $L$ in $M$.

\begin{figure}[h]
 \centering
 \includegraphics[width=0.6\textwidth]{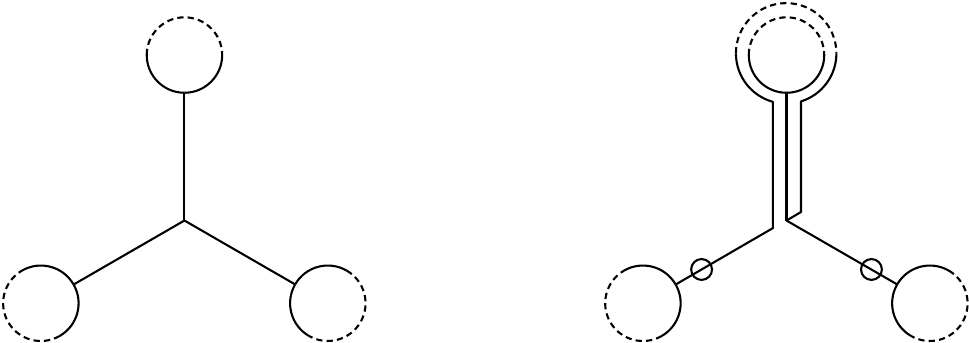}
 \caption{Claspers $Y_1$ (left) and $Y_2$ (right), where $\ominus$ represents a negative half-twist.}
 \label{fig:GHY}
\end{figure}

\begin{lemma}
\label{lem:GH-surgeries0}
Let $Y_1$ and $Y_2$ be tree claspers of degree $1$ in a $3$-manifold $M$ as in Figure~\ref{fig:GHY}.
Then, $M_{L'(Y_1)}$ is homeomorphic to $M_{L(Y_2)}$.
\end{lemma}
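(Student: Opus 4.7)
The plan is to verify the homeomorphism by an explicit Kirby calculus comparison, reduced to a local model. Since both $L(Y_2)$ and $L'(Y_1)$ are supported in a regular neighborhood of the corresponding $Y$-graph---a genus-$3$ handlebody whose identification with its image in $M$ is the same in both cases---it suffices to show that the two surgery links define the same $3$-manifold rel boundary inside this handlebody. In particular, the ambient $3$-manifold $M$ plays no role once the local equivalence is established.

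First I would draw the two framed surgery links explicitly. Following \cite[Section~2]{Hab00C}, the link $L(Y_2)$ consists of six components: three ``leaf'' components inheriting their framings from the leaves of $Y_2$, together with three further components that encode the node and the bands in the Borromean-like pattern prescribed by Habiro. Following \cite[Figure~4]{GGP01}, the link $L'(Y_1)$ consists of three leaf components together with a simpler configuration for the band/node region, and the negative half-twist of $Y_1$ is to be incorporated into the framings and crossings of the corresponding leaf component.

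Next I would transform $L(Y_2)$ into $L'(Y_1)$ by a sequence of Kirby moves. The strategy is to perform handle slides of the three leaf components of $L(Y_2)$ over the Borromean-style components associated with the node, and then blow down the resulting $\pm 1$-framed unknots in the node region. Each blow-down typically introduces a half-twist on the surviving components; the key computation is to check that the accumulated twists contribute exactly one negative half-twist on a single edge, thereby matching the $\ominus$ in $Y_1$. Once this is done, Kirby's theorem furnishes a homeomorphism rel boundary between the two surgery results, which is what we want.

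The main obstacle is the careful bookkeeping of framings, crossing signs, and orientation conventions through this sequence of moves. In particular, one must reconcile Habiro's convention for leaf framings and band orientations with those of \cite{GGP01}, and verify that the accumulated half-twist is \emph{negative} and located on the edge indicated in Figure~\ref{fig:GHY} (rather than, say, on a different edge or with the opposite sign). The computation is routine but sign-sensitive; once the conventions are matched and the Kirby moves tabulated, the homeomorphism follows immediately.
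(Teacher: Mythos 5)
Your overall approach is the same as the paper's: localize to a regular neighborhood of the $Y$-graph (a genus-$3$ handlebody) and compare the two framed surgery links by Kirby calculus. So there is no disagreement at the level of strategy. However, the proposal stops exactly where the proof has to begin. You correctly identify that the content of the lemma is a sign-sensitive verification that the accumulated twist is a single \emph{negative} half-twist on the indicated edge, but you then defer it as ``routine but sign-sensitive'' bookkeeping. That deferred computation \emph{is} the lemma; without carrying it out (or exhibiting the explicit sequence of Kirby moves in a figure), nothing has actually been shown. In particular, the convention-matching you flag as the ``main obstacle'' is not something you can black-box here --- the whole point of Lemma~\ref{lem:GH-surgeries0} is to pin down precisely how Habiro's and Garoufalidis--Goussarov--Polyak's surgery links differ.

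A secondary point: the sequence of moves you envision (handle slides of the three leaf components over the node components, followed by blow-downs of $\pm1$-framed unknots) is more elaborate than what is actually needed, and I am not convinced the blow-down step is correct --- the node components of both $L(Y_2)$ and $L'(Y_1)$ are $0$-framed, and the moves that arise are handle slides, not blow-downs. The paper's proof starts from $L'(Y_1)$ and reaches $L(Y_2)$ in two steps: an isotopy followed by a single Kirby move, with the whole transformation exhibited in one figure. If you want to complete your argument along the lines you sketch, you would need to write down the two six-component links explicitly in the handlebody, perform the moves, and track the framings; the cleaner route, and the one the paper takes, is to find the short isotopy-plus-one-Kirby-move path directly.
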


\begin{proof}
Let us transform the framed link $L'(Y_1)$ as follows:
\begin{center}
 \includegraphics[width=0.99\textwidth]{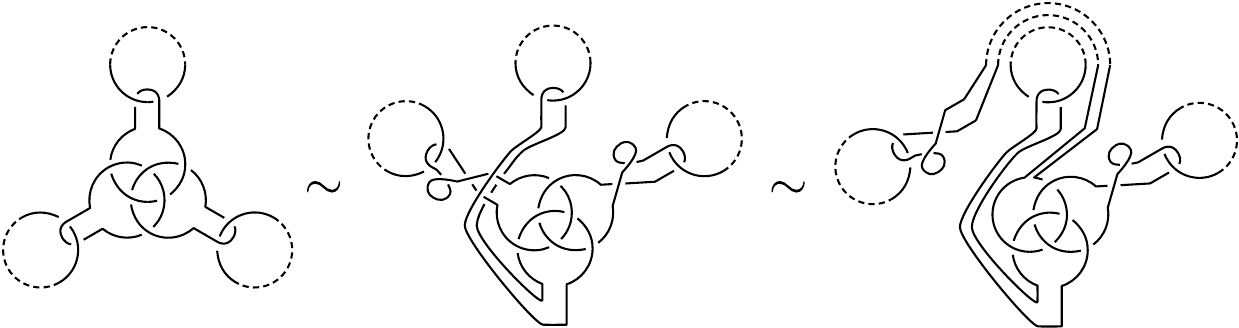}
\end{center}
Here the first $\sim$ is obtained by isotopy and the second one follows from a Kirby move.
The resulting framed link is $L(Y_2)$.
\end{proof}

\begin{figure}[h]
 \centering
 \includegraphics[width=0.9\textwidth]{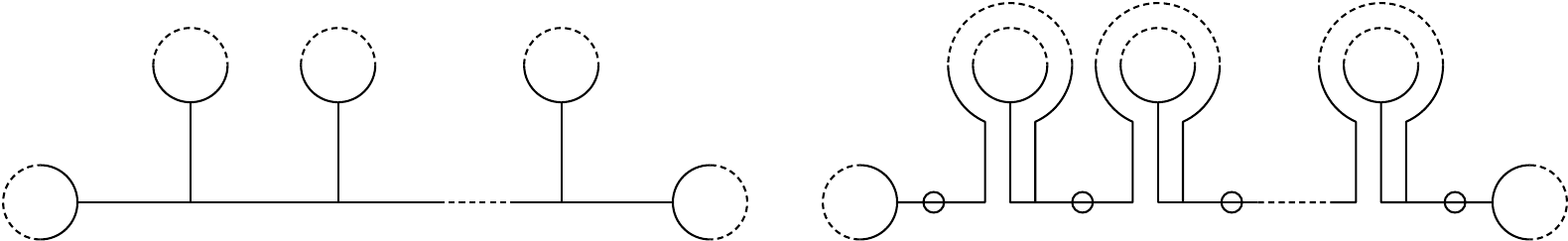}
 \caption{Claspers $G_1$ (left) and $G_2$ (right).}
 \label{fig:GHT}
\end{figure}

\begin{lemma}
\label{lem:GH-surgeries1}
Let $G_1$ and $G_2$ be tree claspers of degree $d$ in a $3$-manifold $M$ as in Figure~\ref{fig:GHT}.
Then, $M_{L'(G_1)}$ is homeomorphic to $M_{L(G_2)}$.
\end{lemma}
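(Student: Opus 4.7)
The plan is to reduce to the degree~1 case (Lemma~\ref{lem:GH-surgeries0}) by using Habiro's splitting move \cite[Move~2]{Hab00C}, which expresses a tree clasper of degree $d$ as a union of $Y$-graphs joined along Hopf-linked pairs of leaves. This splitting operation holds in both the $L$ and $L'$ surgery conventions: performing surgery on the split family yields the same $3$-manifold as surgery on the original clasper. Therefore it suffices to compare the two conventions after completely splitting both $G_1$ and $G_2$ into collections of $Y$-graphs.

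I would proceed by induction on the degree $d$, the base case $d=1$ being exactly Lemma~\ref{lem:GH-surgeries0}. For the inductive step, choose an extremal $Y$-component $Y\subset G_1$ (a subgraph of the form of a single trivalent vertex with two leaves) and apply Move~2 to cut the edge connecting $Y$ to the rest, producing a tree clasper $G_1''$ of degree $d-1$ together with $Y$, connected by a Hopf pair. Perform the analogous splitting on $G_2$ to obtain $G_2''$ together with a $Y$-graph $Y'$, in such a way that the local models $Y$ and $Y'$ differ exactly as $Y_1$ and $Y_2$ in Figure~\ref{fig:GHY}, while $G_1''$ and $G_2''$ are identical claspers in the ambient manifold. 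Lemma~\ref{lem:GH-surgeries0} applied to the local $Y$-component converts $L'(Y)$ to $L(Y')$, and the inductive hypothesis converts $L'(G_1'')$ to $L(G_2'')$. Combining these two local replacements, each performed by isotopy and a Kirby move that is supported in disjoint regions, yields the required homeomorphism $M_{L'(G_1)}\cong M_{L(G_2)}$.

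The main obstacle is bookkeeping: one must check that the half-twist introduced on the middle edge of the degree~1 model in Figure~\ref{fig:GHY} is precisely the twist that appears on the corresponding edge of $G_2$ in Figure~\ref{fig:GHT}, and that the cyclic order at the nodes and the orientations of the leaves are preserved by Habiro's splitting move. I would handle this by tracking the splitting operation locally in a standard $3$-ball neighborhood of each trivalent vertex; because both Move~2 and the Kirby move used in Lemma~\ref{lem:GH-surgeries0} are supported in disjoint $3$-balls, the local data can be compared independently of the global embedding. With this check in place the induction closes and completes the proof.
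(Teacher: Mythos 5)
Your overall strategy --- split the tree clasper into $Y$-pieces, compare the $L'$ and $L$ conventions on each piece via Lemma~\ref{lem:GH-surgeries0}, then reassemble --- is the same one the paper uses. The paper structures it non-inductively: split completely using \cite[Theorem~2.4]{GGP01}, apply Lemma~\ref{lem:GH-surgeries0} to every $Y$-piece, flip a leaf, and reassemble by Habiro's Move~2. You instead run an induction, peeling off one extremal $Y$ at a time. In principle that also works.

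There is, however, an internal inconsistency in your inductive step that amounts to a real gap. You assert that after splitting, $G_1''$ and $G_2''$ are \emph{identical} claspers, and yet you then invoke the inductive hypothesis to convert $M_{L'(G_1'')}$ to $M_{L(G_2'')}$. The inductive hypothesis applies only when $G_1''$ and $G_2''$ stand in the relation of the lemma's $G_1$ and $G_2$ in degree $d-1$, i.e.\ differ by the prescribed half-twists --- not when they are identical. If they really were identical, the inductive hypothesis would convert $M_{L'(G_1'')}$ to $M_{L(\widetilde{G}_1'')}$ for a \emph{different}, twisted clasper $\widetilde{G}_1''$, and after reassembly you would not obtain $G_2$ but some other twist pattern. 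Which twist pattern you get is exactly the bookkeeping you defer to the end, but that bookkeeping is the entire content of the inductive step: until it is done, the step is not proved. A smaller point in the same vein: to split off a $Y$-piece compatibly with the $L'$ surgery you need \cite[Theorem~2.4]{GGP01}; Habiro's Move~2, which you cite, is the splitting move adapted to the $L$ convention. You acknowledge that both conventions admit a splitting move, but the $L'$-side citation is missing, and the two moves differ precisely in where they place half-twists --- which is again the bookkeeping you would need to track.
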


\begin{proof}
We give the proof for $d=2$.
The cases $d\geq 3$ are shown in much the same way.
By \cite[Theorem~2.4]{GGP01}, $M_{L'(G_1)}$ is homeomorphic to $M_{L'(G_3)}$, where $G_3$ is the following clasper:
\begin{center}
 \includegraphics[width=0.4\textwidth]{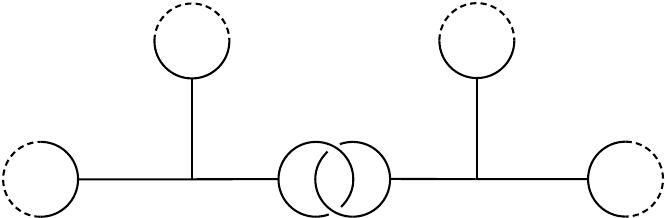}
\end{center}
It follows from Lemma~\ref{lem:GH-surgeries0} that $M_{L'(G_3)}$ is homeomorphic $M_{L(G_4)}$, where $G_4$ is the clasper on the left:
\begin{center}
 \includegraphics[width=0.9\textwidth]{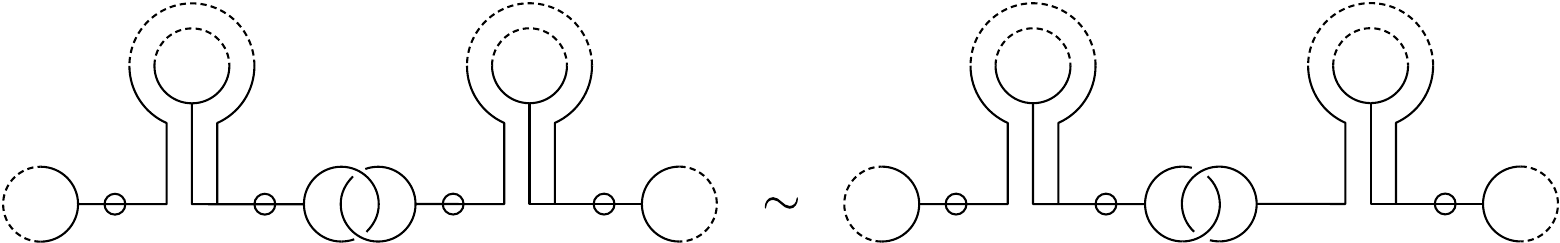}
\end{center}
Here the right one is obtained by the flip of a leaf.
Finally, \cite[Move~2]{Hab00C} completes the proof.
\end{proof}

\begin{lemma}
\label{lem:GHsurgeries2}
Let $G$ be a graph clasper of degree $d$ embedded in $M$.
\begin{enumerate}
\item When $b_1(G)$ is odd,
$M_{L(G)}\sim_{Y_{d+1}}M_{L'(G)}$.

\item When $b_1(G)$ is even,
$M_{L(G')}\sim_{Y_{d+1}}M_{L'(G)}$,
where $G'$ is a graph clasper obtained by applying a half-twist to one edge in $G$.
\end{enumerate}
\end{lemma}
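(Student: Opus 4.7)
The plan is to reduce to the tree case (Lemma~\ref{lem:GH-surgeries1}) by induction on the first Betti number $b_1(G)$.

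For the base case $b_1(G) = 0$, the clasper $G$ is a tree of degree $d$, and Lemma~\ref{lem:GH-surgeries1} gives $M_{L'(G)} \cong M_{L(G')}$, where $G'$ is obtained from $G$ by a half-twist on a single edge. Since $0$ is even, this establishes case (2) for the base.

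For the inductive step with $b_1(G) \geq 1$, I would choose an edge $e$ of $G$ lying on a cycle, and apply the standard edge-cutting move from the calculus of claspers (for instance, \cite[Move~2]{Hab00C}, or the analogous move in \cite{GGP01}), which replaces the internal edge $e$ with two new leaves configured as a Hopf link. This produces a graph clasper $\hat{G}$ of the same degree $d$ with one fewer loop, that is, $b_1(\hat{G}) = b_1(G) - 1$, together with $Y_{d+1}$-equivalences
\[
M_{L(G)} \sim_{Y_{d+1}} M_{L(\hat{G})}, \qquad M_{L'(G)} \sim_{Y_{d+1}} M_{L'(\hat{G})}.
\]
Applying the inductive hypothesis to $\hat{G}$ relates $M_{L(\hat{G})}$ and $M_{L'(\hat{G})}$ with a half-twist configuration determined by the parity of $b_1(\hat{G})$; chaining the three equivalences completes the induction.

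The main obstacle is tracking the parity of the required half-twist through the cutting reduction. Because $b_1(\hat{G}) = b_1(G) - 1$ has opposite parity to $b_1(G)$, the cutting operation itself must contribute exactly one half-twist, consistently on both the $L$ and $L'$ sides, for the parity statement to propagate correctly. Verifying this amounts to a local model calculation: analyze the Hopf-linked pair of leaves introduced at the cut and compare the $L$ and $L'$ conventions using isotopies and Kirby moves, in the spirit of the proofs of Lemmas~\ref{lem:GH-surgeries0} and \ref{lem:GH-surgeries1}. A secondary subtlety is that an edge in a clasper with loops may be already twisted, so one should keep track of the twist on $e$ itself and absorb it into the choice of $G'$; if the local parity check yields the opposite sign, one fixes it by placing the cut-induced leaves with the appropriate half-twist before invoking the inductive hypothesis.
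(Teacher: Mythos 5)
Your overall strategy matches the paper's: reduce to the tree case by cutting cycle-forming edges and track the half-twists via the AS relation. The paper cuts all $b_1(G)$ loop edges at once, obtaining tree claspers $G_1$ with $L(G_1)=L(G)$ via Habiro's Move~2 and $G_2$ with $L'(G_2)=L'(G)$ via \cite[Theorem~2.4]{GGP01}, which differ by a half-twist on each cut edge; you propose cutting one loop at a time by induction on $b_1$. Either bookkeeping scheme can be made to work, but your write-up contains a concrete error in the inductive step.

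The displayed equivalences $M_{L(G)}\sim_{Y_{d+1}} M_{L(\hat{G})}$ and $M_{L'(G)}\sim_{Y_{d+1}} M_{L'(\hat{G})}$ with the \emph{same} clasper $\hat{G}$ on both sides cannot both hold. The two edge-cutting moves (Habiro's Move~2 for the $L$-convention, \cite[Theorem~2.4]{GGP01} for the $L'$-convention) replace an internal edge by a Hopf pair of leaves, but the resulting claspers $\hat{G}_L$ (with $M_{L(G)}\cong M_{L(\hat{G}_L)}$) and $\hat{G}_{L'}$ (with $M_{L'(G)}\cong M_{L'(\hat{G}_{L'})}$) differ by a half-twist on the cut edge, and this difference is precisely what flips the parity when $b_1$ drops by one. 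If the two equivalences held with a single $\hat{G}$, then already for $b_1(G)=1$ the inductive hypothesis combined with your chain would force $M_{L(\hat{G})}\sim_{Y_{d+1}} M_{L(\hat{G}')}$, contradicting AS at the graded level. The fix is to carry $\hat{G}_L\neq\hat{G}_{L'}$ through the induction and absorb the extra half-twist when applying the inductive hypothesis; your closing sentence gestures toward this, but the displayed step is the one that would fail if taken literally. A secondary gap: in the base case you invoke Lemma~\ref{lem:GH-surgeries1} directly for an arbitrary tree, but that lemma is stated only for the caterpillar-shaped trees of Figure~\ref{fig:GHT}; as in the paper's proof, one must first reduce a general tree to that shape using IHX, then apply the lemma followed by AS to move the half-twist to the desired edge.
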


\begin{proof}
First, consider the case when $G$ is a tree clasper, namely, $b_1(G)=0$.
By the IHX relation (see \cite[Lemma~E.22]{Oht02} and \cite[Theorem~4.11]{GGP01}, for example),
we may assume that the clasper $G$ is homeomorphic to $G_1$ in Figure~\ref{fig:GHT} as surfaces.
By Lemma~\ref{lem:GH-surgeries1},
$M_{L'(G)}$ is homeomorphic to $M_{L(G_2)}$,
and by the AS relation (see \cite[Lemmas~E.7 and E.9]{Oht02} and \cite[Lemma~4.4 and Corollary~4.6]{GGP01}, for example),
$M_{L(G_2)}$ is $Y_{d+1}$-equivalent to $M_{L(G')}$.

Next, consider the case when $b_1(G)>0$. 
Applying Move~2 in \cite[Figure~9]{Hab00C} to several edges of $G$,
we can replace $G$ by a tree clasper $G_1$ such that $L(G_1)=L(G)$.
In the same way, 
applying \cite[Theorem~2.4]{GGP01} to the same edges,
we can also replace $G$ by another tree clasper $G_2$ such that $L'(G_2)=L'(G)$.
The claspers $G_1$ and $G_2$ differ only in half-twists along the edges to which Move~2 in \cite[Figure~9]{Hab00C} or \cite[Theorem~2.4]{GGP01} was applied.
Thus, we have 
$M_{L(G_1)}\sim_{Y_{d+1}} M_{L(G'_2)}$ when $b_1(G)$ is odd,
and $M_{L(G_1)}\sim_{Y_{d+1}} M_{L(G_2)}$ when $b_1(G)$ is even,
where $G'_2$ is a graph clasper obtained by applying a half-twist to one edge in $G_2$.
since $b_1(G_2)=0$, we also have $M_{L(G'_2)}\sim_{Y_{d+1}} M_{L'(G_2)}$.
In summary, when $b_1(G)$ is odd,
we have
\begin{align*}
M_{L(G)}
=M_{L(G_1)}
\sim_{Y_{d+1}} M_{L(G'_2)}
\sim_{Y_{d+1}} M_{L'(G_2)}
=M_{L'(G)}.
\end{align*}
The case when $b_1(G)$ is even is proved in the same way.
\end{proof}

For a connected Jacobi diagram $J\in \A_d^{<,c}(H_\Z)$,
let $\overline{J}\in \A_d^{<,c}(H_\Z)$ denote the Jacobi diagram obtained by reversing the order among the vertices in $J$.
Since the AS, IHX, multilinear, and STU-like relations are preserved,
the involution $\rr\colon \A^{<,c}(H_\Z)\to\A^{<,c}(H_\Z)$ defined by $\rr(J)=(-1)^{b_1(J)+1}\bar{J}$ is well-defined.
Here, we use the same symbol as $\rr\colon (H^{\otimes d})_{\Z_d}\to (H^{\otimes d})_{\Z_d}$ in Section~\ref{sec:Introduction}.

\begin{proposition}
For $J\in \A_d^{<,c}(H_\Z)$,
\[
\overline{\psi(J)}=\psi(\rr(J))\in Y_d\I\C/Y_{d+1}.
\]
\end{proposition}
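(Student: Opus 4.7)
The plan is to realize the mirror image as a clasper surgery via the orientation-reversing diffeomorphism $\phi\colon \Sigma_{g,1}\times[-1,1]\to \Sigma_{g,1}\times[-1,1]$ defined by $\phi(x,t)=(x,-t)$. This diffeomorphism identifies $\overline{\Sigma_{g,1}\times[-1,1]}$ with $\Sigma_{g,1}\times[-1,1]$ compatibly with the boundary identifications in the mirror construction, hence
\[
\overline{\psi(J)}=(\Sigma_{g,1}\times[-1,1])_{\phi(G(J))}\in\I\C.
\]
The task thus reduces to comparing the clasper $\phi(G(J))$ with the standard construction $G(\bar J)$.

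Next, I would analyze $\phi(G(J))$ as an abstract clasper. Its underlying uni-trivalent graph is the same as that of $J$, and its leaves carry the same $H_\Z$-labels. Because $\phi$ reverses the vertical height, the total order on the leaves matches $\bar J$. The orientation reversal of the ambient then produces two further local modifications of the clasper relative to $G(\bar J)$: the cyclic order at each of the $d$ trivalent vertices gets reversed, and each internal band (a band joining two nodes) acquires an additional half-twist.

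A sign count then finishes the proof. Modulo $Y_{d+1}$, the AS relation shows that reversing the cyclic order at each trivalent vertex contributes $(-1)^d$. By the framing-twist conventions already used in the proof of Lemma~\ref{lem:GHsurgeries2}, each extra half-twist on an internal band contributes $(-1)$, for a total of $(-1)^{E_{\mathrm{int}}}$, where $E_{\mathrm{int}}$ is the number of internal bands. If $u$ denotes the number of univalent vertices of $J$, then $E_{\mathrm{int}}=(3d-u)/2$ and $b_1(J)=(d-u)/2+1$, so
\[
d+E_{\mathrm{int}}=\frac{5d-u}{2}\equiv\frac{d-u}{2}\equiv b_1(J)+1\pmod{2}.
\]
Combining these contributions yields
\[
\overline{\psi(J)}=(-1)^{b_1(J)+1}\psi(\bar J)=\psi(\rr(J))\in Y_d\I\C/Y_{d+1}.
\]

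The hard part will be justifying the two local modifications in the analysis of $\phi(G(J))$: that the orientation reversal of the ambient induces precisely a reversal of every cyclic order at a trivalent vertex, together with an additional half-twist on every internal band. The cleanest route, I expect, is to decompose $G(J)$ into $Y$-graphs via Move~2 of \cite{Hab00C}, reducing the verification to the case of a single $Y$-graph and a single internal band, each of which can be checked by a direct local analysis.
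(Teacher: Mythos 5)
Your outline is plausible and the final arithmetic checks out, but the argument has a genuine gap that you yourself flag: the claim that orientation reversal of the ambient manifold transforms $G(J)$ into $G(\bar J)$ with reversed cyclic orders at every node \emph{and} an extra half-twist on every internal band is never established, and it is precisely where the whole content of the proposition lives. It is also not obviously the right local model: you do not say what happens to the leaf-to-node bands, and if those also pick up half-twists the count would be $(-1)^{(3d+u)/2}$ rather than $(-1)^{E_{\mathrm{int}}}$, which changes the answer. Moreover, the two modifications you name are not independent --- by the AS relation a cyclic-order reversal at a trivalent vertex can be traded for a half-twist on an incident edge --- so stating them as two separate contributions needs care to avoid double-counting.

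The paper avoids all of this by not attempting a local normal form for $\phi(G(J))$ at all. Instead it observes that passing to the mirror turns surgery in Habiro's convention $L(\cdot)$ into surgery in Goussarov's convention $L'(\cdot)$ on $G(\bar J)$, and then applies Lemma~\ref{lem:GHsurgeries2}, which was already proved and directly says that $L$-surgery and $L'$-surgery on a clasper of degree $d$ agree modulo $Y_{d+1}$ up to a single half-twist when $b_1(G)$ is even. That lemma absorbs the entire sign bookkeeping into $b_1(G)$, so no Euler-characteristic computation like your $d+E_{\mathrm{int}}\equiv b_1(J)+1 \pmod 2$ is needed. If you want to salvage your route, the honest way to do it would be to reduce the local verification to $Y$-graphs and single edges via Move~2 of \cite{Hab00C} and then carry out the direct link computations --- but at that point you would essentially be re-deriving Lemmas~\ref{lem:GH-surgeries0}--\ref{lem:GHsurgeries2}, so invoking them as the paper does is both shorter and more reliable.
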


\begin{proof}
Let $G(J)$ denote the graph clasper appeared right before Corollary~\ref{cor:1-loopvalue} such that $\psi(J)=(\Sigma_{g,1}\times[-1,1])_{G(J)}$.
If we take the mirror image,
the vertical heights of the leaves of $G(J)$ are reversed,
and the surface $G(J)$ is sent to $G(\overline{J})$ in $\Sigma_{g,1}\times [-1,1]$.
Moreover, clasper surgery in Habiro's sense along $G(J)$ turns into clasper surgery along $G(\overline{J})$ in Goussarov's sense.
Thus, by Lemma~\ref{lem:GHsurgeries2},
we have equalities
\begin{align*}
\overline{\psi(J)}
=\overline{(\Sigma_{g,1}\times[-1,1])_{L(G(J))}}
&=(\Sigma_{g,1}\times[-1,1])_{L'(G(\overline{J}))}\\
&=(-1)^{b_1(J)+1}\psi(\overline{J})\\
&=\psi(\rr(J))
\end{align*}
in $Y_d\I\C/Y_{d+1}$.
\end{proof}

\begin{proof}[Proof of Lemma~\ref{lem:1-eigenspace}]
We may assume $M=\psi\left(\sum_{b=0}^\infty J_b\right) \in Y_d\I\C/Y_{d+1}$,
where $J_b$ is a sum of connected Jacobi diagrams whose first Betti numbers are $b$.
Let $\col(v)\in H_\Z$ denote the label of a uni-trivalent vertex $v$ in a Jacobi diagram.
By the STU-like relation,
\begin{align*}
\rr(J_1)&=J_1+(\text{Jacobi diagrams with $b_1\ge2$}),\\
\rr(J_0)&=-J_0+C(J_0)+(\text{Jacobi diagrams with $b_1\ge2$}),
\end{align*}
where $C(J_0)$ denotes the sum of all ways of gluing two univalent vertices $v$ and $w$ in $J_0$ such that $v<w$ with coefficient $\col(v)\cdot \col(w) \in \Z$.
Thus, we obtain
\[
\tilde{\alpha}_d(M)+\tilde{\alpha}_d(\overline{M})
=\tilde{\alpha}_d(\psi(J_0+J_1+r(J_0)+r(J_1)))
=\tilde{\alpha}_d(\psi(C(J_0)+2J_1)).
\]
by Theorem~\ref{thm:k-loop}.
Let $\A_{d,\ge 1}^{<,c}(H_\Z)$ denote the submodule of $\A_d^{<,c}(H_\Z)$ generated by $k$-loop Jacobi diagrams for $k\ge 1$.
By the IHX-relation,
we see that the module $\A_{d,\ge1}^{<,c}(H_\Z)$ is generated by 1-loop Jacobi diagrams of the form $O(a_1,a_2,\ldots,a_d)$ and $k$-loop Jacobi diagrams for $k\ge2$ with arbitrary orders in the same way as \cite[Proposition~5.1]{NSS22GT}.
Thus, by Theorem~\ref{thm:k-loop} and Corollary~\ref{cor:1-loopvalue},
we see that
\[
(p_-\circ \tilde{\alpha}_d)(M\circ \overline{M})=0\in (H^{\otimes d})_{\Z_d}.
\]
For a $1$-loop Jacobi diagram $J\in \A^{<,c}_d(H_\Z)$,
let $\chi'_{d,1}(J)\in \A^c_{d,1}(H) $ denote the same Jacobi diagram obtained by forgetting the order,
and for a $k$-loop Jacobi diagram $J\in \A^{<,c}_d(H_\Z)$ where $k\ge2$,
set $\chi'_{d,1}(J)=0\in \A^c_{d,1}(H)$.
The induced homomorphism $\chi'_{d,1}\colon \A_{d,\ge1}^{<,c}(H_\Z)\to \A_{d,1}^c(H)$ is equal to the composition
\[
\A_{d,\ge1}^{<,c}(H_\Z)\xrightarrow{\incl} \A_d^{<,c}(H_\Z)\xrightarrow{\chi_d^{-1}}\A^c_d(H)\xrightarrow{\pr} \A^c_{d,1}(H),
\]
where $\chi_d$ is the restriction of $\chi$ in \cite[Section~3.1]{HaMa09} to the degree $d$ part.
By the definition of $p_+$ and Corollary~\ref{cor:1-loopvalue},
we have
\[
(p_+\circ\tilde{\alpha}_d)(M\circ\overline{M})=-2\chi'_{d,1}(C(J_0)+2J_1)\in\A^c_{d,1}(H).
\]
On the other hand,
by the commutative diagram in \cite[Theorem~6.7]{HaMa12} and \cite[Proposition~3.1]{HaMa09}, we have
\begin{align*}
Z_{d,1}(M)&=(Z_{d,1}\circ\psi)\left(\sum_{b=0}^\infty J_b\right)
=\chi_{d,1}' \left(\frac{1}{2}C(J_0)+J_1\right),\\
Z_{d,1}(\overline{M})&=\chi_{d,1}' \left(\frac{1}{2}C(J_0)+J_1\right).
\end{align*}
Thus, we obtain
\[
(p_+\circ \tilde{\alpha}_d)(M\circ \overline{M})
=-2Z_{d,1}(M\circ \overline{M})\in \A^c_{d,1}(H).
\]
\end{proof}

\def\cprime{$'$} \def\cprime{$'$} \def\cprime{$'$}

\end{document}